\numberwithin{equation}{section}
\theoremstyle{plain}
\newtheorem{theorem}{Theorem}[section]
\newtheorem{lemma}[theorem]{Lemma}
\newtheorem{proposition}[theorem]{Proposition}
\newtheorem{corollary}[theorem]{Corollary}
\newtheorem{conjecture}[theorem]{Conjecture}
\theoremstyle{definition}
\newtheorem{definition}[theorem]{Definition}
\newtheorem{example}[theorem]{Example}
\newtheorem{remark}[theorem]{Remark}
\newtheorem{question}[theorem]{Question}
\setlist[itemize]{leftmargin=*, itemsep={2pt}}
\setlist[enumerate]{leftmargin=*, itemsep={2pt}}
\newcommand*{\da@rightarrow}{\mathchar"0\hexnumber@\symAMSa 4B }
\newcommand*{\da@leftarrow}{\mathchar"0\hexnumber@\symAMSa 4C }
\newcommand*{\xdashrightarrow}[2][]{%
  \mathrel{%
    \mathpalette{\da@xarrow{#1}{#2}{}\da@rightarrow{\,}{}}{}%
  }%
}
\newcommand{\xdashleftarrow}[2][]{%
  \mathrel{%
    \mathpalette{\da@xarrow{#1}{#2}\da@leftarrow{}{}{\,}}{}%
  }%
}
\newcommand*{\da@xarrow}[7]{%
  \sbox0{$\ifx#7\scriptstyle\scriptscriptstyle\else\scriptstyle\fi#5#1#6\m@th$}%
  \sbox2{$\ifx#7\scriptstyle\scriptscriptstyle\else\scriptstyle\fi#5#2#6\m@th$}%
  \sbox4{$#7\dabar@\m@th$}%
  \dimen@=\wd0 %
  \ifdim\wd2 >\dimen@
    \dimen@=\wd2 %
  \fi
  \count@=2 %
  \def\da@bars{\dabar@\dabar@}%
  \@whiledim\count@\wd4<\dimen@\do{%
    \advance\count@\@ne
    \expandafter\def\expandafter\da@bars\expandafter{%
      \da@bars
      \dabar@ 
    }%
  }%
  \mathrel{#3}%
  \mathrel{%
    \mathop{\da@bars}\limits
    \ifx\\#1\\%
    \else
      _{\copy0}%
    \fi
    \ifx\\#2\\%
    \else
      ^{\copy2}%
    \fi
  }%
  \mathrel{#4}%
}
\newcommand{\bijartop}[1][]{%
 \ar[#1]
 \ar@<0.7ex>@{}[#1]|-*=0[@]{\sim}} 
 \newcommand{\bijarbottom}[1][]{%
 \ar[#1]
 \ar@<-0.95ex>@{}[#1]|-*=0[@]{\sim}} 
\newcommand{\st}{\mid} 
\newcommand{\set}[1]{\left\{ \, #1 \, \right\}}
\newcommand{\Perf}{\mathrm{Perf}}
\newcommand{\Db}{\mathrm{D^b_{coh}}}
\newcommand{\cKb}{\cK^\mathrm{b}_\mathrm{coh}}
\newcommand{\perf}{\mathrm{perf}}
\DeclareMathOperator{\length}{\mathrm{length}}
\newcommand{\hpd}{{\natural}}
\newcommand{\svee}{\scriptscriptstyle\vee}
\newcommand{\cAd}{\cA^\hpd}
\newcommand{\cJd}{\mathcal{J}^{\hpd}}
\newcommand{\llangle}{\left \langle}
\newcommand{\rrangle}{\right \rangle}
\newcommand{\sotimes}{\otimes}
\newcommand{\QCoh}{\mathrm{D_{qc}}}
\newcommand{\HH}{\mathrm{HH}}
\newcommand{\Sing}{\mathrm{Sing}}
\newcommand{\Gr}{\mathrm{Gr}}
\newcommand{\OGr}{\mathrm{OGr}}
\newcommand{\OGrp}{\mathrm{OGr_+}}
\newcommand{\OGrm}{\mathrm{OGr_-}}
\newcommand{\Spin}{\mathrm{Spin}}
\newcommand{\br}[1]{{#1}_{\textrm{br}}}
\newcommand{\cov}[1]{{#1}_{\textrm{cov}}}
\newcommand{\barbH}{\bar{\bH}}
\newcommand{\wtilde}{\widetilde}
\newcommand{\tC}{{\tilde{\bC}}}
\newcommand{\tCC}{{\widetilde{\bC\bC}}}
\newcommand{\tJ}{{\tilde\bJ}}
\newcommand{\tJv}{{{\tilde\bJ}^{\svee}}}
\newcommand{\tJJ}{{\widetilde{\bJ\bJ}}}
\newcommand{\barQ}{\bar{Q}}
\newcommand{\eps}{\varepsilon}
\newcommand{\tp}{{\tilde{p}}}
\newcommand{\barp}{\bar{p}}
\DeclareMathOperator{\Sym}{Sym}
\DeclareMathOperator{\Proj}{Proj}
\DeclareMathOperator{\Pic}{Pic}
\newcommand{\tQ}{{\wtilde{Q}}}
\newcommand{\tX}{{\wtilde{X}}}
\newcommand{\stimes}{\times}
\newcommand{\cUv}{\mathcal{U}^{\svee}}
\DeclareMathOperator{\Cl}{\mathsf{Cliff}}
\newcommand{\barV}{\bar{V}}
\newcommand{\vV}{V^{\svee}}
\newcommand{\tV}{\tilde{V}} 
\newcommand{\tW}{\wtilde{W}}
\newcommand{\barH}{\bar{H}}
\newcommand{\cHom}{\mathcal{H}\!{\it om}}
\newcommand{\cEnd}{\mathcal{E}\!{\it nd}}
\DeclareMathOperator{\Tor}{Tor}
\newcommand{\id}{\mathrm{id}}
\newcommand{\pr}{\mathrm{pr}}
\newcommand{\rank}{\mathrm{rank}}
\newcommand{\cO}{\mathcal{O}}
\newcommand{\cA}{\mathcal{A}}
\newcommand{\cB}{\mathcal{B}}
\newcommand{\cC}{\mathcal{C}}
\newcommand{\cD}{\mathcal{D}}
\newcommand{\cJ}{\mathcal{J}}
\newcommand{\cK}{\mathcal{K}}
\newcommand{\cL}{\mathcal{L}}
\newcommand{\cQ}{\scalerel*{\mathcal{Q}}{Q}}
\newcommand{\cR}{\mathcal{R}}
\newcommand{\cS}{\mathcal{S}}
\newcommand{\cU}{\mathcal{U}}
\newcommand{\cV}{\mathcal{V}}
\newcommand{\rH}{\mathrm{H}}
\newcommand{\rS}{\mathrm{S}}
\newcommand{\rT}{\mathrm{T}}
\newcommand{\fa}{\mathfrak{a}}
\newcommand{\fq}{\mathfrak{q}}
\newcommand{\bC}{\mathbf{C}}
\newcommand{\bE}{\mathbf{E}}
\newcommand{\bH}{\mathbf{H}}
\newcommand{\bJ}{\mathbf{J}}
\newcommand{\bZ}{\mathbf{Z}}
\newcommand{\bP}{\mathbf{P}}
\newcommand{\bk}{\mathbf{k}}
\newcommand{\sS}{\mathsf{S}}
\newcommand{\sSs}{\mathsf{S}_{16}}
\begin{document}

\title{Categorical cones and quadratic homological projective duality} 

\author{Alexander Kuznetsov}
\address{{\sloppy
\parbox{0.9\textwidth}{
Steklov Mathematical Institute of Russian Academy of Sciences,\\
8 Gubkin str., Moscow 119991 Russia
\\[5pt]
Interdisciplinary Scientific Center J.-V. Poncelet (CNRS UMI 2615), Moscow, Russia
\hfill\\[5pt]
National Research University Higher School of Economics, Moscow, Russia
}\bigskip}}
\email{akuznet@mi.ras.ru \medskip}

\author{Alexander Perry}
\address{Department of Mathematics, Columbia University, New York, NY 10027 \smallskip}
\email{aperry@math.columbia.edu}

\thanks{A.K. was partially supported by the Russian Academic Excellence Project ``5-100''. 
A.P. was partially supported by an NSF postdoctoral fellowship, DMS-1606460.}

\begin{abstract}
We introduce the notion of a categorical cone, which provides
a categorification of the classical cone over a projective variety, 
and use our work on categorical joins to describe its behavior under homological projective duality. 
In particular, our construction provides well-behaved categorical resolutions of singular quadrics, 
which we use to obtain an explicit quadratic version of the main theorem of 
homological projective duality. 
As applications, we 
prove the duality conjecture for Gushel--Mukai varieties, and 
produce interesting examples of conifold transitions between 
noncommutative and honest Calabi--Yau threefolds.   
\end{abstract}

\maketitle



\section{Introduction}
\label{section-intro}

This paper is a sequel to \cite{categorical-joins}, where we introduced categorical joins 
in the context of homological projective duality (HPD). 
Building on that work, our goals here are to study a categorical version of the classical cone over 
a projective variety, 
to use categorical quadratic cones to give a powerful method for studying derived categories of quadratic sections of 
varieties, and to give several applications. 

\subsection{Background} 

The basic object of HPD is a \emph{Lefschetz variety}, which consists of a 
variety mapping to a projective space $X \to \bP(V)$ equipped with a 
\emph{Lefschetz decomposition} of its derived category (a special type of semiorthogonal decomposition). 
The theory in this form was introduced and developed in~\cite{kuznetsov-hpd}. 
However, already at that point it was clear that the theory is more categorical in nature, 
and that for applications it is useful to replace the (perfect) derived category $\Perf(X)$ of $X$
by a more general (suitably enhanced) triangulated category~$\cA$ equipped with a Lefschetz decomposition; 
the structure of a map $X \to \bP(V)$ is then replaced by a~$\bP(V)$-linear structure (an action of the monoidal category $\Perf(\bP(V))$) on $\cA$. 
We call such data a \emph{Lefschetz category} over $\bP(V)$ and think of it as of a \emph{noncommutative} Lefschetz variety. 
The reader is encouraged to focus on the case where $X \to \bP(V)$ is 
an ordinary morphism of varieties for this introduction, and to consult 
\cite{NCHPD, categorical-joins} for more details on the noncommutative situation. 

The HPD of a (noncommutative) Lefschetz variety~$X \to \bP(V)$ is another (noncommutative) Lefschetz variety
\begin{equation*}
X^{\hpd} \to \bP(\vV)
\end{equation*}
over the dual projective space, 
which governs the derived categories of linear sections of $X$ and can be thought of as a 
categorical version of the classical projective dual. 
For details and applications of this theory, see \cite{kuznetsov-hpd, NCHPD, kuznetsov2014semiorthogonal, thomas2015notes}.

In~\cite{categorical-joins} given a pair of (noncommutative) Lefschetz varieties $X_1 \to \bP(V_1)$ and $X_2 \to \bP(V_2)$, 
we constructed a (noncommutative) Lefschetz variety 
\begin{equation*}
\cJ(X_1, X_2) \to \bP(V_1 \oplus V_2) 
\end{equation*}
called their \emph{categorical join}, which can be thought of as a noncommutative resolution of singularities 
of the classical join of $X_1$ and~$X_2$.
Moreover, we proved that various classical properties of joins can be lifted to this  
level; in particular, (under suitable assumptions) there is an equivalence of Lefschetz varieties 
\begin{equation}
\label{HPD-joins}
\cJ(X_1, X_2)^{\hpd} \simeq \cJ(X_1^{\hpd}, X_2^{\hpd}) 
\end{equation} 
over $\bP(\vV_1 \oplus \vV_2)$, i.e.\ the HP dual of a categorical join is the categorical join of HP duals.
This leads to numerous applications, including a nonlinear HPD theorem 
(see also \cite{categorical-plucker}) giving an equivalence between the ``essential parts'' of the derived categories of 
fiber products
\begin{equation*}
X_1 \times_{\bP(V)} X_2 \qquad \text{and} \qquad X^\hpd_1 \times_{\bP(\vV)} X^\hpd_2 . 
\end{equation*} 
The simplest case of this result --- when $X_2$ is a linear subspace of $\bP(V)$ 
and $X_2^\hpd$ is its orthogonal linear subspace of $\bP(\vV)$ --- reduces to the main theorem of HPD,
and other examples of HPD pairs $(X_2,X_2^\hpd)$ provide a generalization of this.
Such generalizations are more useful in cases when $X_2$ and $X_2^\hpd$ both have a nice geometric description.
One of the goals of this paper is to produce such pairs where both $X_2$ and $X_2^\hpd$ 
are \emph{categorical resolutions} of singular quadrics and to relate in this way quadratic sections of $X_1$ and $X_1^\hpd$.
Allowing the quadrics to be singular is crucial for applications, as we will explain below in~\S\ref{subsection:importance}. 

\subsection{Categorical cones}
Assume given an exact sequence of vector spaces 
\begin{equation}
\label{V0VbarV}
0 \to V_0 \to V \to \barV \to 0
\end{equation} 
and a closed subvariety $X \subset \bP(\barV)$. 
Recall that the \emph{classical cone} over $X$ with vertex $\bP(V_0)$ is the strict transform 
\begin{equation*}
\bC_{V_0}(X) \subset \bP(V) 
\end{equation*}
of $X$ under the linear projection $\bP(V) \dashrightarrow \bP(\barV)$ from $\bP(V_0)$. 
Note that $\bC_{V_0}(X)$ is usually highly singular along its vertex $\bP(V_0) \subset \bC_{V_0}(X)$. 

In this paper, given a (noncommutative) Lefschetz variety $X \to \bP(\barV)$, we construct a (noncommutative) Lefschetz variety 
\begin{equation*}
\cC_{V_0}(X) \to \bP(V)
\end{equation*}
called the \emph{categorical cone} which provides (if $X$ is smooth) a categorical resolution of~$\bC_{V_0}(X)$. 
The basic idea of the construction is to first replace the classical cone with 
the \emph{resolved cone}~$\tC_{V_0}(X) \to \bP(V)$ given by the blowup along 
$\bP(V_0) \subset \bC_{V_0}(X)$; the resolved cone is the projectivization of the 
pullback of a natural vector bundle on $\bP(\barV)$, and hence makes sense 
even when $X \to \bP(\barV)$ is not an embedding. 
The categorical cone is then defined as a certain 
triangulated subcategory of $\Perf(\tC_{V_0}(X))$ following a construction in~\cite{kuznetsov2008lefschetz}, 
and can be thought of as a 
noncommutative birational modification of $\tC_{V_0}(X)$ along its exceptional divisor. 

The categorical cone
has several advantages over its classical counterpart: 
\begin{itemize}
\item $\cC_{V_0}(X)$ is defined when $X \to \bP(\barV)$ is not an embedding, 
and even when $X$ is noncommutative (Definition~\ref{definition-cat-cone}). 
\item $\cC_{V_0}(X)$ naturally has the structure of a Lefschetz variety over $\bP(V)$ induced by that of~$X$
(Theorem~\ref{theorem-cone-lef-cat}). 
\item $\cC_{V_0}(X)$ is smooth and proper if $X$ is (Lemma~\ref{lemma-cC-smooth-proper}). 
\end{itemize}

For us, however, the main advantage of the categorical cone is its compatibility with HPD:
our first main result 
is the identification of the HPD of a categorical cone as another categorical cone. 
In fact, we work in a more general setup than above, that simultaneously allows for extensions 
of the ambient projective space, because this extra generality is useful in applications (see~\S\ref{subsection:importance}). 
Namely, let $V$ be a vector space and assume given a pair of subspaces 
\begin{equation*}
V_0 \subset V \qquad \text{and} \qquad V_{\infty} \subset \vV 
\end{equation*}
such that~$V_0 \subset V_\infty^\perp$, or equivalently $V_\infty \subset V_0^\perp$, 
where the orthogonals are taken with respect to the natural pairing between $V$ and $\vV$. 
Let 
\begin{equation*}
\barV = V_\infty^\perp/V_0,\qquad\text{so that}\qquad
\barV^{\svee} \cong V_0^\perp/V_\infty. 
\end{equation*}
For~\mbox{$V_{\infty} = 0$} this reduces to the situation~\eqref{V0VbarV} above. 
Let $X \to \bP(\barV)$ be a 
Lefschetz variety, with HPD variety $X^\hpd \to \bP(\barV^{\svee})$. 
The categorical cone $\cC_{V_0}(X)$ is then a Lefschetz variety over~$\bP(V_{\infty}^{\perp})$. 
Via the inclusion $\bP(V_{\infty}^{\perp}) \to \bP(V)$ we can regard~$\cC_{V_0}(X)$ 
as a Lefschetz variety over $\bP(V)$, which we write as 
$\cC_{V_0}(X)/\bP(V)$ for emphasis. 
Similarly, we have a Lefschetz variety $\cC_{V_{\infty}}(X^\hpd)/\bP(\vV)$ over $\bP(\vV)$. 

\begin{theorem}[{Theorem~\ref{theorem-cones-HPD}}]
\label{cone-theorem-intro} 
In the above situation if $X$ is a right strong, moderate Lefschetz variety over $\bP(\barV)$
and $X^\hpd$ is its HPD over $\bP(\barV^{\svee})$, then 
there is an equivalence 
\begin{equation*}
(\cC_{V_0}(X)/\bP(V))^\hpd \simeq \cC_{V_\infty}(X^\hpd)/\bP(\vV)
\end{equation*}
of Lefschetz varieties over $\bP(\vV)$,
i.e., $\cC_{V_\infty}(X^\hpd)$ is the HPD of $\cC_{V_0}(X)$ over $\bP(V)$.
\end{theorem}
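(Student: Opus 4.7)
The plan is to reduce Theorem~\ref{theorem-cones-HPD} to the HPD theorem for categorical joins from \cite{categorical-joins}, Equation~\eqref{HPD-joins}, by identifying categorical cones with categorical joins against projective spaces. The key technical ingredient is a \emph{cone-as-join lemma}: for any Lefschetz variety $X/\bP(\barV)$ and short exact sequence $0 \to V_0 \to V \to \barV \to 0$, there is an equivalence of Lefschetz categories over $\bP(V)$
\[
\cC_{V_0}(X)/\bP(V) \simeq \cJ(\bP(V_0), X),
\]
where $\bP(V_0)$ carries its standard identity Lefschetz structure over itself. Classically this simply encodes the fact that the cone over $X$ with vertex $\bP(V_0)$ coincides with the join of $X$ with $\bP(V_0)$; on the categorical side one verifies it by comparing the projectivization-of-a-vector-bundle constructions underlying both sides and matching the resulting Lefschetz decompositions.

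Granting this lemma, one first handles the case $V_\infty = 0$ (``pure cone''). Applying \eqref{HPD-joins} to the identification above yields
\[
(\cC_{V_0}(X)/\bP(V))^\hpd \simeq \cJ(\bP(V_0)^\hpd, X^\hpd).
\]
A direct computation from the Beilinson decomposition of $\Perf(\bP(V_0))$ shows that the HPD of $\bP(V_0)$ (with identity Lefschetz structure) is the trivial Lefschetz category over $\bP(V_0^\svee)$, and hence the join with it degenerates, by unwinding the join construction (or equivalently by the degenerate case of the cone-as-join lemma), to the extension of $X^\hpd$ along $\bP(V_0^\perp) \hookrightarrow \bP(\vV)$. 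This is precisely $\cC_0(X^\hpd)/\bP(\vV)$, confirming the theorem when $V_\infty = 0$.

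The general case then follows by HPD reflexivity. The pure cone case, read on the dual side, says that extending a Lefschetz variety $Y/\bP(W)$ along $\bP(W) \hookrightarrow \bP(V)$ is HPD-dual to forming the categorical cone with vertex $\bP(W^\perp) \subset \bP(\vV)$ of $Y^\hpd/\bP(W^{\svee})$. Applying the pure cone case first to $\cC_{V_0}(X)/\bP(V_\infty^\perp)$, and then extending from $\bP(V_\infty^\perp)$ to $\bP(V)$ on the original side while correspondingly coning with vertex $\bP(V_\infty)$ on the dual side, assembles into the desired equivalence $(\cC_{V_0}(X)/\bP(V))^\hpd \simeq \cC_{V_\infty}(X^\hpd)/\bP(\vV)$.

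The principal obstacle is the cone-as-join lemma at the level of Lefschetz structures: matching the two intricate decompositions --- the one on the join constructed by the ``unfolding'' procedure of \cite{categorical-joins} and the one on the cone inherited from the subcategory of $\Perf(\tC_{V_0}(X))$ in Definition~\ref{definition-cat-cone} --- requires a careful comparison, and one must verify the resulting equivalence is canonical, independent of any splitting of $0 \to V_0 \to V \to \barV \to 0$. A further bookkeeping task is to check that the ``right strong, moderate'' hypotheses on $X$ propagate through the join and cone constructions, so that \eqref{HPD-joins} can legitimately be applied at each stage.
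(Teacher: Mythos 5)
Your proposal correctly identifies the central reduction --- the cone-as-join identification $\cC_{V_0}(X)\simeq\cJ(\bP(V_0),X)$ (the paper's Proposition~\ref{proposition-cJ-cC}) combined with the HPD theorem for categorical joins --- but the order in which you propose to handle the cases does not work, and this is not a bookkeeping issue.

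Your plan is to settle the pure-cone case $V_\infty=0$ first by applying~\eqref{HPD-joins} to $\cJ(\bP(V_0),X)$. But in that join, $\Perf(\bP(V_0))$ must be a Lefschetz category over $\bP(V_0)$ itself (so that the join lands over $\bP(V_0\oplus\barV)=\bP(V)$), and as such it has length equal to $\rank(V_0)$, hence is \emph{not} moderate. Theorem~\ref{theorem-joins-HPD} therefore does not apply. Moreover the conclusion you draw from it cannot be right: the HPD of $\Perf(\bP(V_0))/\bP(V_0)$ with its full Beilinson decomposition is the \emph{zero} category, and the categorical join of anything with the zero category is zero, not the base extension of $X^\hpd$ along $\bP(V_0^\perp)\hookrightarrow\bP(\vV)$. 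So the ``degenerate case of the cone-as-join lemma'' you invoke produces $0$, not $\cC_0(X^\hpd)/\bP(\vV)$. The paper avoids this by treating the case $V_0\neq 0$, $V_\infty\neq 0$ first: there one sets $V_1=V_0\oplus\vV_\infty$, so that $\Perf(\bP(V_0))/\bP(V_1)$ \emph{is} moderate with HPD $\Perf(\bP(V_\infty))/\bP(\vV_1)$ by linear HPD (Example~\ref{ex:categorical-linear-hpd}), and Theorem~\ref{theorem-joins-HPD} applies. The case $V_\infty=0$ is then deduced by artificially enlarging $V$ to $\tV=V\oplus\tV_\infty^\svee$, applying the nonzero case, and restricting back via the compatibility of HPD with linear projection (Proposition~\ref{proposition-HPD-projection}). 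Finally $V_0=0$ is obtained by HPD reflexivity (Remark~\ref{remark:left-hpd}), as you suggest.

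A second issue you flag but do not resolve is splitting-independence. The identification $\cC_{V_0}(X)\simeq\cJ(\bP(V_0),X)$ \emph{cannot} be made canonical, because the right-hand side already requires a choice of splitting $V\cong V_0\oplus\barV$ to be defined as a $\bP(V)$-linear category. The paper circumvents this by constructing a globally defined, splitting-free $\bP(\vV)$-linear functor $\gamma_\tC\colon\tC_{V_\infty}(\cA^\hpd)/\bP(\vV)\to\bH(\tC_{V_0}(\cA)/\bP(V))$ via a double-resolved-cone kernel, then verifies fpqc-locally (where a splitting always exists) that $\gamma_\tC$ restricts to the desired equivalence, and invokes Proposition~\ref{proposition-equivalence-local} / Corollary~\ref{corollary-equivalence-lef-cat-local} to globalize. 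Without some such device, the cone-as-join reduction only proves the theorem after an fpqc base change, not over $S$.
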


In the statement of the theorem ``right strong'' and ``moderate'' refer to technical assumptions on a Lefschetz variety 
(see Definitions~\ref{definition:strong} and~\ref{def:moderateness}) which are essentially always satisfied in practice.
The theorem
categorifies an analogous classical relation between cones and projective duality: 
for a variety $X \subset \bP(\barV)$ we have 
\begin{equation*}
(\bC_{V_0}(X) \subset \bP(V))^{\svee} = \bC_{V_\infty}(X^{\svee}) \subset \bP(\vV) , 
\end{equation*} 
where $(-)^{\svee}$ denotes the operation of classical projective duality. 

Our proof of Theorem~\ref{cone-theorem-intro} is based on the 
HPD result~\eqref{HPD-joins} for categorical joins mentioned above, and the following relation between 
categorical cones and joins.  
Given an exact sequence~\eqref{V0VbarV} 
with $V_0 \neq 0$, we show that if $X \to \bP(\barV)$ is a Lefschetz variety, 
then the choice of a splitting of~\eqref{V0VbarV} induces a natural equivalence 
\begin{equation}
\label{cones-joins-equivalence}
\cC_{V_0}(X) \simeq \cJ(\bP(V_0), X)
\end{equation}
of Lefschetz categories over $\bP(V$) (Proposition~\ref{proposition-cJ-cC}). 

\begin{remark}
For $V_0 = 0$ the identification \eqref{cones-joins-equivalence}
fails, since then $\cC_{V_0}(X) \simeq X$ whereas $\cJ(\bP(V_0), X) = 0$. 
Moreover, even if~$V_0 \neq 0$, we need to choose a splitting of~\eqref{V0VbarV} 
to be able to form $\cJ(\bP(V_0), X)$. 
When working over a field (as we tacitly do in the introduction) 
this is not a problem, but it is typically not possible when working 
over a general base scheme, as we do in the body of the paper with a view 
toward applications. 
Finally, when~\eqref{cones-joins-equivalence} holds, there is an advantage of 
working with the categorical cone description: $\cC_{V_0}(X)$ becomes isomorphic 
to the classical cone $\bC_{V_0}(X)$ over an a priori bigger Zariski open locus than 
the categorical join $\cJ(\bP(V_0), X)$, which is also important for geometric applications. 
\end{remark} 

\subsection{Quadratic HPD}

We use categorical cones and results from~\cite{kuznetsov-perry-HPD-quadrics} to develop HPD for singular quadrics.
By a quadric, we mean an integral scheme isomorphic to a degree~$2$ 
hypersurface in a projective space. 
Any quadric $Q$ can be expressed as a classical cone $Q = \bC_K(\barQ)$ 
over a smooth quadric $\barQ$, where $\bP(K) = \Sing(Q)$. 
We consider the categorical cone
\begin{equation*}
\cQ = \cC_{K}(\barQ), 
\end{equation*}
where $\barQ$ is equipped with a natural Lefschetz decomposition involving spinor bundles, see Lemma~\ref{lemma-cQ-lc}. 
This $\cQ$ is in fact a crepant categorical resolution of singularities of $Q$, see 
Lemma~\ref{lemma-cQ-categorical-resolution}. 
We call it the \emph{standard categorical resolution} of~$Q$.

We deduce from Theorem~\ref{cone-theorem-intro} and~\cite[Theorem~1.1]{kuznetsov-perry-HPD-quadrics} that
the class of standard categorical resolutions of quadrics is closed under HPD. 
Namely, we consider pairs $(Q, f)$ where $Q$ is a quadric and~$f \colon Q \to \bP(V)$ is a \emph{standard morphism}, 
i.e. such that $f^*\cO_{\bP(V)}(1)$ is the ample line bundle that realizes $Q$ as a quadric hypersurface in a projective space. 
In other words, $f$ is either an embedding as a quadric hypersurface into a linear subspace of $\bP(V)$,
or a double covering of a linear subspace of~$\bP(V)$ branched along a quadric hypersurface.
We define in Definition~\ref{definition:generalized-duality} a \emph{generalized duality} operation $(Q, f) \mapsto (Q^{\hpd}, f^{\hpd})$ 
on such pairs, where the target of $f^{\hpd} \colon Q^{\hpd} \to \bP(\vV)$ is the dual projective space. 
This generalized duality reduces to classical projective duality when 
$Q$ has even rank and $f \colon Q \to \bP(V)$ is an embedding, 
and involves passing to a double covering or branch divisor in other cases. 

\begin{theorem}[{Theorem~\ref{theorem:hpd-quadrics-general}}]
\label{intro-hpd-quadrics} 
Let $(Q, f)$ and $(Q^{\natural}, f^{\natural})$ be a generalized dual pair 
as above. 
Then the HPD of the standard categorical resolution of $Q$ over $\bP(V)$  
is equivalent to the standard categorical resolution of $Q^{\natural}$ over $\bP(\vV)$.  
\end{theorem}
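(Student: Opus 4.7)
The plan is to derive Theorem~\ref{intro-hpd-quadrics} by combining Theorem~\ref{cone-theorem-intro} (HPD for categorical cones) with the HPD result for smooth quadrics stated as \cite[Theorem~1.1]{kuznetsov-perry-HPD-quadrics}, which describes $\bar\cQ^\hpd$ for the standard spinor Lefschetz structure on a smooth quadric $\barQ$. By construction, the standard categorical resolution $\cQ = \cC_K(\bar\cQ)$ is already a categorical cone, so the entire argument is essentially a matter of setting up the correct cone presentation on each side and invoking Theorem~\ref{cone-theorem-intro} to exchange them.

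The first step is to realize $\cQ$ as a cone $\cC_{V_0}(\bar\cQ)/\bP(V)$ in the two-sided framework of Theorem~\ref{cone-theorem-intro}, i.e.\ to choose subspaces $V_0 \subset V$ and $V_\infty \subset \vV$ adapted to the data of the standard morphism $f\colon Q \to \bP(V)$. If $f$ is an embedding of a quadric hypersurface into a linear subspace, take $V_0 = K$ to be the singular kernel and $V_\infty$ to be the orthogonal of that linear subspace; if $f$ is a double cover of a linear subspace $\bP(W) \subset \bP(V)$ branched along a quadric, take $V_\infty = W^\perp$ and $V_0 = K$ the kernel of the branch quadric. In both cases $V_0 \subset V_\infty^\perp$ and $\barV = V_\infty^\perp/V_0$ is the ambient space of the smooth quadric $\barQ$. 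After verifying that the spinor Lefschetz structure on $\barQ$ is right strong and moderate (both standard checks for this particular decomposition), Theorem~\ref{cone-theorem-intro} gives
\[
\cQ^\hpd \;\simeq\; \cC_{V_\infty}(\bar\cQ^\hpd)/\bP(\vV),
\]
and \cite[Theorem~1.1]{kuznetsov-perry-HPD-quadrics} identifies $\bar\cQ^\hpd$ with the spinor Lefschetz structure on another smooth quadric: the classical projective dual of $\barQ$ in $\bP(\barV^\svee)$ when $\dim\barV$ is even, or a double cover of $\bP(\barV^\svee)$ branched along this dual quadric when $\dim\barV$ is odd.

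It remains to match the output $\cC_{V_\infty}(\bar\cQ^\hpd)/\bP(\vV)$ with the standard categorical resolution of the generalized dual $Q^\natural$ from Definition~\ref{definition:generalized-duality}. This is a finite case analysis over the four combinations of (embedding vs.\ double cover) for $f$ and (even vs.\ odd parity) for $\dim\barV$, determining whether the roles of $V_0$ and $V_\infty$ get swapped and whether the smooth HPD of $\barQ$ produces a dual quadric or a branched double cover on the other side. I expect this bookkeeping to be the main obstacle: the whole point of Definition~\ref{definition:generalized-duality} is to absorb the parity dichotomy of smooth quadric HPD into the embedding/double-cover dichotomy of standard morphisms, so verifying that this absorption commutes with our cone construction amounts to checking case-by-case that the singular kernel on one side and the linear-subspace defect on the other are exchanged exactly as prescribed by the generalized duality.
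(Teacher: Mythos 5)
Your proposal matches the paper's proof: the paper proves Theorem~\ref{theorem:hpd-quadrics-general} by exactly the combination you propose, namely Theorem~\ref{theorem-cones-HPD} (HPD for categorical cones), Theorem~\ref{theorem-HPD-quadrics} (HPD for non-degenerate standard morphisms of smooth quadrics), and a case-by-case check that the output lines up with Definition~\ref{definition:generalized-duality}. Your choice of $(V_0, V_\infty) = (K, W^\perp)$ is the right one in both the embedding and covering cases, and the bookkeeping you anticipate is precisely the four-case match between the parity/type dichotomy in Theorem~\ref{theorem-HPD-quadrics} and the definition of $Q^\natural$, as recorded in Remark~\ref{remark:duality-spaces} and Remark~\ref{remark-natural-invariants}.
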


By combining Theorem~\ref{intro-hpd-quadrics} with our  
nonlinear HPD Theorem from \cite{categorical-joins}, we prove the 
following quadratic HPD theorem. 

\begin{theorem}[{Theorem~\ref{theorem-quadric-intersection} and Lemma~\ref{lemma-cQ-resolution}}] 
\label{intro-quadratic-HPD}
Let $X \to \bP(V)$ be a right strong, moderate Lefschetz variety. 
Let $f \colon Q \to \bP(V)$ and $f^{\hpd} \colon Q^{\hpd} \to \bP(\vV)$
be a generalized dual pair of quadrics, 
with standard categorical resolutions $\cQ$ and $\cQ^\natural$, respectively.  
Then there are induced semiorthogonal decompositions of 
\begin{equation*}
\Perf(X) \otimes_{\Perf(\bP(V))} \cQ 
\qquad\text{and}\qquad
\Perf(X^\natural) \otimes_{\bP(\vV)} \cQ^\natural 
\end{equation*} 
which have a distinguished component in common. 

Moreover, when $X$ and $X^{\natural}$ are supported away from the singular loci of 
$Q$ and $Q^{\hpd}$, the above tensor product categories 
are identified with the derived categories of the fiber products 
\begin{equation*}
X \times_{\bP(V)} Q \qquad \text{and} \qquad 
X^\natural \times_{\bP(\vV)} Q^\natural 
\end{equation*} 
of $X$ and $X^\natural$ with the underlying quadrics~$Q$ and~$Q^\hpd$.
\end{theorem}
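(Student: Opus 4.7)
The plan is to reduce Theorem~\ref{intro-quadratic-HPD} to the nonlinear HPD theorem of~\cite{categorical-joins} applied to the pair $(X, \cQ)$, with Theorem~\ref{intro-hpd-quadrics} supplying the identification of the HPD of $\cQ$ with $\cQ^\natural$. Recall that the nonlinear HPD theorem asserts: whenever $Y_1, Y_2$ are right strong moderate Lefschetz varieties over $\bP(V)$ with HPD duals $Y_1^\hpd, Y_2^\hpd$ over $\bP(\vV)$, the linear tensor products $\Perf(Y_1) \otimes_{\Perf(\bP(V))} Y_2$ and $\Perf(Y_1^\hpd) \otimes_{\Perf(\bP(\vV))} Y_2^\hpd$ each carry a semiorthogonal decomposition in which one distinguished component matches on the two sides.

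First I would take $Y_1 = X$ and $Y_2 = \cQ$, and verify that $\cQ$ is a right strong moderate Lefschetz variety over $\bP(V)$. The Lefschetz structure itself is given by Theorem~\ref{theorem-cone-lef-cat}, so what must be checked is the two technical hypotheses; for this I would use the join description~\eqref{cones-joins-equivalence} together with the spinor-bundle Lefschetz decomposition of the smooth quadric $\barQ$ recorded in Lemma~\ref{lemma-cQ-lc} and the trivial verification of these hypotheses for the cone factor $\bP(V_0)$. Once this is in place, Theorem~\ref{intro-hpd-quadrics} identifies $\cQ^\hpd$ over $\bP(\vV)$ with the standard categorical resolution $\cQ^\natural$ of the generalized dual quadric. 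Feeding $(X, \cQ)$ into the nonlinear HPD theorem then produces the two semiorthogonal decompositions with a common distinguished component, which is precisely the first assertion of the theorem.

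For the moreover statement the key input is Lemma~\ref{lemma-cQ-resolution}, which asserts that $\cQ$ is a categorical resolution of $Q$ that is an equivalence over the smooth locus $Q \setminus \bP(K) = Q \setminus \Sing(Q)$. When the image of $X \to \bP(V)$ avoids $\Sing(Q)$, the pullback of $\cQ$ along this map only involves the smooth part of $Q$, so $\bP(V)$-linear base change combined with the projection formula give
\[
\Perf(X) \otimes_{\Perf(\bP(V))} \cQ \;\simeq\; \Perf(X) \otimes_{\Perf(\bP(V))} \Perf(Q) \;\simeq\; \Perf(X \times_{\bP(V)} Q),
\]
the last equivalence relying on the Tor-independence that holds away from $\Sing(Q)$. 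The same argument applies verbatim to $(X^\natural, \cQ^\natural)$ over $\bP(\vV)$, completing the moreover statement.

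I expect the main obstacle to be the verification that the standard categorical resolution $\cQ$ is right strong and moderate as a Lefschetz variety over the full ambient~$\bP(V)$, rather than merely over~$\bP(V_\infty^\perp)$; the extension-of-ambient-space built into the categorical cone must be shown to interact cleanly with the hypotheses of the nonlinear HPD theorem so that $X$ and $\cQ$ are legitimately paired over the same $\bP(V)$. A secondary subtlety is that the generalized duality of Definition~\ref{definition:generalized-duality} may replace $Q$ by a double cover or by its branch divisor, so one must be careful to form the fiber product $X^\natural \times_{\bP(\vV)} Q^\natural$ with respect to the target of the generalized dual morphism $f^\hpd$ rather than the target of $f$.
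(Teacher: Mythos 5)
Your proposal is correct and follows the same route the paper takes: the body-text proof of Theorem~\ref{theorem-quadric-intersection} is precisely an application of the nonlinear HPD Theorem~\ref{theorem-nonlinear-HPD} with $\cA^1 = \Perf(X)$ and $\cA^2 = \cQ$, using Theorem~\ref{theorem:hpd-quadrics-general} to identify $\cQ^\hpd$ with $\cQ^\natural$ and Lemma~\ref{lemma-cQ-lc} (together with base extension, cf.\ Remark~\ref{remark-base-extension}) to supply the right strong, moderate hypotheses; the ``moreover'' part is exactly Lemma~\ref{lemma-cQ-resolution}\eqref{cQ-fiber-product}. The subtleties you flag at the end (extending the ambient space from $\bP(V_\infty^\perp)$ to $\bP(V)$, and forming the dual fiber product over the target of $f^\hpd$) are both already absorbed into Definition~\ref{definition-standard-cat-res-Q} and Remark~\ref{remark-base-extension}, so they do not pose a genuine obstacle.
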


The semiorthogonal decompositions mentioned above are described in Theorem~\ref{theorem-quadric-intersection}.

\subsection{The importance of being singular}
\label{subsection:importance}

An interesting feature of generalized duality of quadrics is that the dimension of~$Q^{\hpd}$ 
may be very different from the dimension of~$Q$; 
in fact, the dimension of~$Q^{\hpd}$ decreases as the dimension of the singular locus of $Q$ increases, see~\eqref{dimQd-quadric}.
This observation has interesting consequences.

Indeed, imagine we are interested in a fiber product 
\begin{equation}
\label{eq:z-x-q}
Z = X \times_{\bP(V)} Q, 
\end{equation} 
where $X$ is a Lefschetz variety over $\bP(V)$, whose homological projective dual variety $X^\hpd$ is known,
and $Q \subset \bP(V)$ is a quadric hypersurface (the case where $Q \to \bP(V)$ is a standard morphism of a quadric of other type works similarly). 
Imagine also that $X$ itself is an intersection of quadrics 
(or at least there is a big family of quadrics in~$\bP(V)$ containing the image of~$X$);
note that this assumption is usually satisfied in applications, since most of varieties 
for which the HPD is known are homogeneous, and every homogeneous variety (in an equivariant embedding) is an intersection of quadrics.

Under this assumption the quadric~$Q$ such that $Z$ is defined by a fiber product~\eqref{eq:z-x-q} is not unique;
indeed, it can be replaced by any quadric in the affine space of quadrics which contain $Z$ but not~$X$.
Typically, the rank of~$Q$ varies in this family.
From this we obtain a family of ``dual fiber products''~\mbox{$X^\hpd \times_{\bP(\vV)} Q^\hpd$} parameterized by the same affine space,
which have varying dimension, but all contain the distinguished component $\cK(Z) \subset \Perf(Z)$.
If we want to use these varieties to understand the structure of~$\cK(Z)$, it is natural to chose 
a fiber product $X^\hpd \times_{\bP(\vV)} Q^\hpd$ of smallest possible dimension 
(hence the most singular quadric~$Q$ defining~$Z$) and use its geometry.

To show how this works consider for example the Fermat quartic surface
\begin{equation*}
Z = \{ x_0^4 + x_1^4 + x_2^4 + x_3^4 = 0 \} \subset \bP^3 =: \bP(W) . 
\end{equation*}
Note that it can be realized as a fiber product~\eqref{eq:z-x-q}, 
where \mbox{$X = \bP(W)$}, \mbox{$V = \Sym^2W$}, the map \mbox{$X \to \bP(V)$} is the double Veronese embedding, 
and~\mbox{$Q \subset \bP(V)$} is any quadric hypersurface, corresponding to a point of an affine space 
over the vector space \mbox{$\ker(\Sym^2\Sym^2W^\vee \to \Sym^4W^\vee$)} of quadrics containing $X$.
The most singular quadric among these is the quadric
\begin{equation*}
Q_0 = \{ x_{00}^2 + x_{11}^2 + x_{22}^2 + x_{33}^2 = 0 \} \subset \bP^9 = \bP(V),
\end{equation*}
where $x_{ij}$ is the coordinate on $\bP(V)$ corresponding to the quadratic function $x_i \cdot x_j$ on $\bP(W)$.
Note that the kernel space $K$ of the corresponding quadratic form on $V$ is 6-dimensional. 
In this case, the generalized dual $Q_0^{\hpd}$ of $Q_0$ 
coincides with the classical projective dual $Q_0^{\svee}$ of $Q_0$, which is 
a smooth quadric surface in the linear space 
\begin{equation*}
\bP(\langle x_{00}, x_{11}, x_{22}, x_{33} \rangle) = \bP(K^\perp) \subset \bP(\vV) = \bP^9
\end{equation*}
of codimension~6.  
On the other hand, in this case $X^\hpd = (\bP(\vV),\Cl_0)$ is the noncommutative variety 
whose derived category is the category of coherent sheaves of $\Cl_0$-modules on $\bP(\vV)$, 
where $\Cl_0$ is the universal sheaf of even parts of Clifford algebras on $\bP(\vV)$ \cite{kuznetsov08quadrics}.
Therefore, the dual fiber product can be rewritten as 
\begin{equation*}
X^\hpd \times_{\bP(\vV)} Q_0^\hpd = \left({Q_0^{\svee}},\Cl_0\vert_{{Q_0^{\svee}}}\right)
\end{equation*}
and Theorem~\ref{intro-quadratic-HPD} gives an equivalence of categories
\begin{equation}
\label{eq:fermat}
\Perf(Z) \simeq \Perf\left({Q_0^{\svee}},\Cl_0\vert_{{Q_0^{\svee}}}\right). 
\end{equation}
(See Remark~\ref{remark:rhs-explanation} below for a more precise description of the right hand side).

Note that if we replace $Q_0$ with a general quadric cutting out $Z$ in $X$, 
then instead of the above equivalence we would obtain a fully faithful embedding of $\Perf(Z)$
into the derived category of sheaves of $\Cl_0$-modules over an 8-dimensional quadric in $\bP(\vV)$,
which is definitely less effective.

\begin{remark}
\label{remark:rhs-explanation}
In fact, the equivalence~\eqref{eq:fermat} can be made more precise as follows.
Consider the union of coordinate hyperplanes in the above space $\bP(K^\perp) =  \bP^3$ (this is a reducible quartic hypersurface)
and the double covering $Z' \to {Q_0^{\svee}}$ branched along the intersection of ${Q_0^{\svee}}$ with these hyperplanes.
Then $Z'$ is a K3 surface with 12 ordinary double points.
Then the sheaf of algebras $\Cl_0$ defines a Brauer class of order~2 on the resolution of singularities of $Z'$,
and the right hand side of~\eqref{eq:fermat} is equivalent to the corresponding twisted derived category.
\end{remark}

\subsection{Duality of Gushel--Mukai varieties}
\label{subsection-intro-applications} 

As an application of our results,
we prove the duality conjecture for \emph{Gushel--Mukai} (GM) varieties 
from~\cite{kuznetsov2016perry}. 
Abstractly, the class of smooth GM varieties consists of smooth Fano varieties of Picard number~$1$, coindex~$3$, 
and degree~$10$, together with Brill--Noether general polarized K3 surfaces of degree~$10$; 
concretely, any such variety can be expressed as an intersection of the cone over the Pl\"{u}cker 
embedded Grassmannian $\Gr(2,5) \subset \bP^9$ with a linear space and a quadric~$Q$,
or equivalently, as a fiber product of~$\Gr(2,5)$ with a standard morphism $Q \to \bP^9$.

In~\cite[Definitions~3.22 and~3.26]{debarre2015kuznetsov} the notions of \emph{period partnership} and \emph{duality} 
for a pair of GM varieties of the same dimension were introduced, 
and in~\cite[Proposition~3.28]{debarre2015kuznetsov} the notion of duality was related to projective duality of quadrics.
Moreover, in~\cite[Corollary~4.16 and Theorem~4.20]{debarre2015kuznetsov} it was shown 
that smooth period partners and dual GM varieties are always birational.
Finally, \cite[Theorem~5.1 and Remark~5.28]{debarre-kuznetsov-periods} combined with~\cite[Theorem~1.3]{ogrady2015periods} 
proved that period partners of any smooth GM variety of even dimension form the fiber 
of the period map from the moduli space of smooth GM varieties to the appropriate period domain 
(a similar result is also conjectured for GM varieties of odd dimension).

On the other hand, in~\cite[Proposition~2.3]{kuznetsov2016perry} a semiorthogonal decomposition
of the derived category $\Db(Y)$ of a smooth GM variety $Y$ consisting of exceptional vector bundles 
and the \emph{GM category} $\cK(Y) \subset \Db(Y)$ was constructed.
The GM category was shown to be a noncommutative K3 or Enriques surface according to whether $\dim(Y)$ is even or odd. 
In~\cite[Definition~3.5]{kuznetsov2016perry} the notions of period partnership and duality 
were generalized to allow GM varieties of different dimension (but of the same parity!). 
The following result settles the duality conjecture~\cite[Conjecture~3.7]{kuznetsov2016perry}, 
which previously was only known in a very special case by~\cite[Theorem~4.1 and Corollary~4.2]{kuznetsov2016perry}.

\begin{theorem}[{Corollary~\ref{corollary-duality-GM}}] 
\label{theorem-duality-conjecture} 
Let $Y_1$ and $Y_2$ be smooth GM varieties whose associated Lagrangian subspaces do not contain decomposable vectors. 
If $Y_1$ and $Y_2$ are generalized partners or duals, then there is an equivalence $\cK(Y_1) \simeq \cK(Y_2)$.  
\end{theorem}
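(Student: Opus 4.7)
The plan is to realize each GM variety uniformly as a categorical quadratic section of a categorical cone over a linear section of $\Gr(2,5)$, and then to apply the quadratic HPD theorem (Theorem~\ref{intro-quadratic-HPD}) to identify $\cK(Y_1)$ and $\cK(Y_2)$ with the same ``universal'' category on the dual side, depending only on the Lagrangian data shared by generalized partners or duals.

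First I would recall the standard presentation: any smooth GM variety $Y$ of dimension $n$ can be written as
\[
Y = \bP(W) \cap \bC_{V_0}(\Gr(2,5)) \cap Q,
\]
where $\bC_{V_0}(\Gr(2,5)) \subset \bP(V_0 \oplus \wedge^2 V_5)$ is the classical cone over the Pl\"ucker-embedded Grassmannian (with $\dim V_0 = \max(0, 5-n)$), $\bP(W)$ is an appropriate linear subspace, and $Q$ is a quadric whose class together with the unique quadric vanishing on the cone spans a pencil encoded by the Lagrangian $A(Y) \subset \wedge^3 V_6$ with $V_6 = V_0 \oplus V_5$. Replacing the classical cone by the categorical cone $\cC_{V_0}(\Gr(2,5))$ with its induced Lefschetz structure, and then cutting by $\bP(W)$ using linear HPD, I obtain a right strong, moderate Lefschetz category $X$ over some $\bP(V)$ such that $\cK(Y)$ is a distinguished component of the categorical intersection of $X$ with the quadric $Q$. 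This is the step where the flexibility of the categorical cone construction is essential: the classical cone presentation depends on the dimension $n$, but its categorical refinement allows a uniform treatment of GM varieties of different dimensions sharing the same Lagrangian.

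Next I would apply Theorem~\ref{intro-quadratic-HPD} to identify $\cK(Y)$ with the analogous distinguished component of
\[
\Perf(X^\hpd) \otimes_{\Perf(\bP(\vV))} \cQ^\hpd,
\]
where $\cQ^\hpd$ is the standard categorical resolution of the generalized dual quadric $Q^\hpd$ (Definition~\ref{definition:generalized-duality}). The HPD category $X^\hpd$ is computed explicitly by combining Theorem~\ref{cone-theorem-intro} on HPD of categorical cones, the nonlinear HPD theorem of~\cite{categorical-joins} to handle the linear section by $\bP(W)$, and the classical self-HPD of $\Gr(2,5)$, which produces a Lefschetz structure on the projective dual Grassmannian $\Gr(2,5)^\svee$ inside the dual Pl\"ucker space. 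The hypothesis that $A(Y_i)$ contains no decomposable vectors guarantees that the relevant linear and quadric sections are transverse and avoid the singular loci of $Q$ and $Q^\hpd$, so by the last part of Theorem~\ref{intro-quadratic-HPD} the tensor product is a genuine derived category of a smooth fiber product (modulo the explicit semiorthogonal complement).

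Finally I would verify that the dual-side category depends only on the pair $(V_6, A)$ up to the involution $A \leftrightarrow A^\perp$ induced by generalized duality of quadrics. Varying the dimension $n$ or the auxiliary data $(V_0, W, Q)$ inside a fixed Lagrangian class produces quadrics $Q$ of varying rank in varying ambient spaces, but the generalized dual quadric $Q^\hpd$ and the HPD cone $X^\hpd$ change in a compensating way, so that the combined dual-side category is canonically attached to $(V_6, A)$. Since generalized partners share $A$ while generalized duals share $A^\perp$, both $\cK(Y_1)$ and $\cK(Y_2)$ are equivalent to this single universal category, yielding the desired equivalence. The main obstacle is precisely this last invariance step: showing that the distinguished component really only sees $(V_6, A)$ up to the duality involution requires a careful reconciliation of three classical dualities --- the self-HPD of $\Gr(2,5)$, the generalized projective duality of quadrics, and the EPW duality $A \leftrightarrow A^\perp$ of Lagrangians --- and tracking all Lefschetz decompositions through the composition of Theorems~\ref{cone-theorem-intro} and~\ref{intro-quadratic-HPD}.
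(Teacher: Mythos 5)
Your approach takes a genuinely different route from the paper, and there is a real gap at the step you yourself flag as the main obstacle.

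The paper does not present a GM variety as a quadric section of a linear cut of a categorical cone over $\Gr(2,5)$. Instead it uses the fiber-product presentation $Y = \Gr(2,V_5) \times_{\bP(\wedge^2 V_5)} Q$ (Definition~\ref{def:gm}), where $Q \to \bP(\wedge^2 V_5)$ is a \emph{standard morphism} of a possibly singular quadric, possibly of covering type. All the dimension-varying behavior is absorbed into the quadric $Q$: its vertex and its linear span vary, while $\Gr(2,V_5)$ stays fixed. The quadratic HPD theorem (Theorem~\ref{theorem-quadric-intersection}) is then applied \emph{directly} with $\cA = \Perf(\Gr(2,V_5))$ and yields Theorem~\ref{theorem-GM-duality}: if $X$ and $Y$ are smooth GM varieties cut out of $\Gr(2,V_5)$ and $\Gr(2,\vV_5)$ by a generalized dual pair of quadrics $Q$, $Q^\hpd$, then $\cK(X) \simeq \cK(Y)$. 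In your setup the categorical cone sits on the Grassmannian side, which is possible but less economical; more importantly, the claim that the quadric class ``together with the unique quadric vanishing on the cone spans a pencil encoded by the Lagrangian'' is incorrect (the Pl\"ucker quadrics on $\bC_{V_0}(\Gr(2,5))$ form a $5$-dimensional space, not a line), and your formula $\dim V_0 = \max(0,5-n)$ does not match the classical presentation, where $\dim V_0 = 1$ for ordinary GM varieties of every dimension.

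The genuine gap is your final ``invariance step.'' You want to show that the distinguished dual-side component depends only on $(V_6, A)$ up to the duality involution $A \leftrightarrow A^\perp$, and you acknowledge this requires reconciling three dualities and tracking all Lefschetz decompositions --- but you give no argument for it. The paper avoids this entirely with a short combinatorial detour. The key input is \cite[Proposition~3.28]{debarre2015kuznetsov}: if $X$ and $Y$ are generalized dual GM varieties and the point $\mathbf{q}_Y \in \bP(V_6(X))$ corresponding to $V_5(Y)$ lies \emph{outside} $\bP(V_5(X))$, then $X$ and $Y$ literally arise from a single generalized dual pair of quadrics over $\bP(\wedge^2 V_5(X))$ and its dual, so Theorem~\ref{theorem-GM-duality} applies verbatim. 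For generalized partners $X, Y$, one chooses a generic hyperplane $V_5 \subset V_6(X)^\vee$ and constructs an auxiliary smooth GM variety $Z$ from $(V_6(X)^\vee, V_5, A(X)^\perp)$ which is a generalized dual of both $X$ and $Y$ satisfying the genericity condition, giving $\cK(X) \simeq \cK(Z) \simeq \cK(Y)$. The remaining degenerate dual case reduces to these two. This bootstrap replaces your open invariance claim by two applications of an already-proved equivalence; without some such device, your plan does not close.
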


For the notion of the Lagrangian subspace associated to a GM variety see~\cite[\S3]{debarre2015kuznetsov} 
and the discussion in~\S\ref{subsection:duality-gm} below. 
For now we just note that, with the exception of some GM surfaces, the assumption of the theorem holds for all smooth GM varieties. 

Let us explain some consequences of this result. 
In combination with the period results from~\cite{debarre-kuznetsov-periods} mentioned above, 
Theorem~\ref{theorem-duality-conjecture} shows that the assignment $Y \rightsquigarrow \cK(Y)$ 
is constant on the fibers of the period morphism (at least in the even-dimensional case); 
since these fibers are positive-dimensional, this is an interesting phenomenon connecting 
Hodge theory to derived categories.
Moreover, in combination with the birationality results from~\cite{debarre2015kuznetsov} also mentioned above, 
Theorem~\ref{theorem-duality-conjecture} gives strong evidence for the following 
conjecture. 

\begin{conjecture} 
\label{conjecture-equivalent-K3-birational} 
If $Y_1$ and $Y_2$ are GM varieties of the same dimension such that there is 
an equivalence $\cK(Y_1) \simeq \cK(Y_2)$, then $Y_1$ and $Y_2$ are birational. 
\end{conjecture}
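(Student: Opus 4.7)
My plan is to reduce the conjecture to the classical birationality results recalled in~\S\ref{subsection-intro-applications} by establishing a derived Torelli-type statement for GM categories. More precisely, I would try to show that an equivalence $\cK(Y_1) \simeq \cK(Y_2)$ forces $Y_1$ and $Y_2$ to be generalized partners or duals in the sense of \cite[Definition~3.5]{kuznetsov2016perry}; once this is in hand, \cite[Corollary~4.16 and Theorem~4.20]{debarre2015kuznetsov} gives the desired birationality. In this way Theorem~\ref{theorem-duality-conjecture} would become the ``only if'' half of a complete geometric characterization of the equivalence relation $\cK(Y_1) \simeq \cK(Y_2)$ on GM varieties of fixed dimension.

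The first step is to extract Hodge-theoretic invariants from $\cK(Y)$. Since $\cK(Y)$ is a noncommutative K3 (even-dimensional case) or noncommutative Enriques (odd-dimensional case) surface, its topological $K$-theory carries a Mukai-type lattice equipped with a weight-two Hodge structure, and an equivalence $\cK(Y_1) \simeq \cK(Y_2)$ induces a Hodge isometry between these. The second step is to identify this Hodge structure with the period data of $Y$: one expects the transcendental part to recover the Hodge structure on the primitive cohomology of the double EPW sextic attached to the Lagrangian subspace $A(Y) \subset \wedge^3 V_6$, and hence the period point of $Y$. The third step is then to lift this Hodge isomorphism to an isomorphism of Lagrangian data, showing that $A(Y_1)$ and $A(Y_2)$ are equal or exchanged by the natural duality; here one would combine a global Torelli-type statement for Lagrangian data in the spirit of \cite[Theorem~5.1]{debarre-kuznetsov-periods} with a monodromy analysis \`a la Markman.

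The main obstacle is the third step. Deducing an isomorphism of Lagrangian subspaces from a noncommutative Hodge isometry is the direct analogue of Mukai's derived Torelli theorem for K3 surfaces, and it requires a complete description of the image of the autoequivalence group $\mathrm{Aut}(\cK(Y))$ acting on the Mukai-type lattice. For ordinary K3 surfaces this is already a nontrivial result; for the noncommutative K3 categories arising from GM fourfolds and sixfolds it is presently open. The odd-dimensional Enriques case is still more delicate, since the lattice of an Enriques-type category carries strictly less Hodge-theoretic information than in the K3 case and likely must be supplemented by finer invariants, such as the action of the canonical $\bZ/2$-symmetry or Hochschild-level data. Controlling these invariants is where the real difficulty lies, and also explains why the statement is only recorded as a conjecture here rather than a theorem.
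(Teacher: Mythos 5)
This statement is explicitly labeled as a \emph{conjecture} in the paper, and the paper supplies no proof of it: the surrounding text merely asserts that Theorem~\ref{theorem-duality-conjecture} (the duality conjecture for GM categories, proved as Corollary~\ref{corollary-duality-GM}) ``gives strong evidence'' for Conjecture~\ref{conjecture-equivalent-K3-birational}. So there is no argument in the paper against which your proposal could be compared, and no proposal --- yours or anyone else's --- can be ``checked'' in the usual sense here.

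That said, your roadmap is the natural one and is essentially the converse direction of what the paper proves. Theorem~\ref{theorem-duality-conjecture} shows that generalized partnership or duality of $Y_1, Y_2$ implies $\cK(Y_1) \simeq \cK(Y_2)$; you propose to show the reverse implication (a noncommutative derived Torelli statement) and then invoke the birationality results of~\cite[Corollary~4.16, Theorem~4.20]{debarre2015kuznetsov}. Two remarks on this reduction. First, it is formally stronger than what the conjecture asks: the conjecture requires only birationality, not that the equivalence be induced by a partnership or duality, so one should at least keep in mind that the desired statement might be accessible even if the full Torelli statement is not. Second, as you correctly observe, the heart of the matter --- describing the image of $\mathrm{Aut}(\cK(Y))$ in the Hodge isometries of the Mukai-type lattice, and then lifting a Hodge isometry between $\cK(Y_1)$ and $\cK(Y_2)$ to an identification of Lagrangian data $A(Y_1) = A(Y_2)$ or $A(Y_1) = A(Y_2)^\perp$ --- is open. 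In the even-dimensional case the Mukai lattice framework (à la Addington--Thomas and Bayer--Macr\`\i) gives at least the right language, but the Enriques-type case in odd dimension is, as you say, worse: the lattice carries less information and has to be supplemented by the canonical involution or finer invariants. Your own final paragraph already concedes this gap, so the proposal is a plausible research program rather than a proof; it should not be presented as settling the conjecture.

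One further small caveat: the conjecture as stated makes no hypothesis that the associated Lagrangians avoid decomposable vectors, whereas Theorem~\ref{theorem-duality-conjecture} does. Any argument routed through Lagrangian data will need to say something about the excluded GM surfaces; the paper's own remark after Corollary~\ref{corollary-duality-GM} indicates this case requires separate treatment.
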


Because of the tight parallels between GM fourfolds, cubic fourfolds, and their associated K3 
categories (see \cite[Theorem 1.3]{kuznetsov2016perry}), 
Theorem~\ref{theorem-duality-conjecture} can also be considered as evidence for the analogous 
conjecture for cubic fourfolds suggested by Huybrechts (see \cite[Question 3.25]{macri-K3-survey}). 
We note that every GM fivefold or sixfold is rational \cite[Proposition 4.2]{debarre2015kuznetsov}, so 
Conjecture~\ref{conjecture-equivalent-K3-birational} is of interest specifically for 
GM threefolds and fourfolds. 
As explained in \cite[\S3.3]{kuznetsov2016perry}, Theorem~\ref{theorem-duality-conjecture} also verifies 
cases of the derived category heuristics for rationality discussed in \cite{kuznetsov2010derived, kuznetsov2015rationality}. 

Finally, we note that Theorem~\ref{theorem-duality-conjecture} implies that for certain special GM 
fourfolds and sixfolds~$Y$, there exists a K3 surface $S$ such that $\cK(Y) \simeq \Db(S)$ 
(see \cite[\S3.2]{kuznetsov2016perry}). 
In fact, for some GM fourfolds this is the main result of~\cite{kuznetsov2016perry}, and our proof of 
Theorem~\ref{theorem-duality-conjecture} gives an extension and a conceptual new proof of this result. 
We expect this fact that even GM categories are ``deformation equivalent'' to an ordinary K3 surface to 
be very important for future applications. 
For instance, the analogous property for the K3 category of a cubic fourfold is crucial in \cite{families-stability}, 
where a structure theory for moduli of Bridgeland stable 
objects in such a category is developed, giving (among other 
results) infinitely many new locally-complete unirational families of polarized hyperk\"{a}hler varieties. 

\subsection{Other applications}
\label{subsection-intro-other-applications}

For another application of the quadratic HPD theorem, we introduce a class of \emph{spin GM varieties}. 
Roughly, these are obtained by replacing the role of the Grassmannian~$\Gr(2,5) \subset \bP^9$ in the definition of 
GM varieties with the connected component~$\OGr_+(5,10) \subset \bP^{15}$ of the orthogonal Grassmannian~$\OGr(5,10)$ 
in its spinor embedding. 
The \emph{spin GM category} $\cK(Y) \subset \Db(Y)$ corresponding to such a variety can be thought of as 
a $3$-dimensional analogue of a GM category, as it is (fractional) Calabi--Yau of dimension $3$. 
In this setting, we deduce from Theorem~\ref{intro-quadratic-HPD} a spin analogue 
of Theorem~\ref{theorem-duality-conjecture} (see Theorem~\ref{theorem-sGM-duality}). 

Going further, we consider the case where $Y$ is a fivefold, which is particularly interesting 
from the perspective of rationality. 
The heuristics of \cite{kuznetsov2010derived, kuznetsov2015rationality} 
lead to the following conjecture: if such a $Y$ is rational, then 
$\cK(Y) \simeq \Db(M)$ for a Calabi--Yau threefold $M$. 
We show that such an equivalence cannot exist if $Y$ is smooth (Lemma~\ref{lemma-sGM-not-geometric}), 
and hence we expect $Y$ to be irrational. 
On the other hand, we use Theorem~\ref{intro-quadratic-HPD} to prove the following result 
(stated somewhat imprecisely here), which verifies the conjecture in a mildly degenerate case.

\begin{theorem}[{Theorem~\ref{theorem-singular-sGM} and Corollary~\ref{corollary-singular-sGM-rational}}]
\label{theorem-nodal-spinGM-intro}
For certain nodal spin GM fivefolds $Y$, the variety $Y$ is rational 
and there exists a smooth Calabi--Yau threefold $M$ which gives a crepant categorical resolution of $\cK(Y)$. 
\end{theorem}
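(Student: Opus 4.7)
The overall plan is to realize a nodal spin GM fivefold $Y$ as a fiber product and apply the quadratic HPD theorem (Theorem~\ref{intro-quadratic-HPD}) with a cleverly chosen singular quadric to pass from a $5$-dimensional picture to a $3$-dimensional one. Concretely, I would write $Y = \OGr_+(5,10) \times_{\bP^{15}} Q$, where $Q \to \bP(V) = \bP^{15}$ is a standard morphism of a quadric, and I would show that the nodes of $Y$ force $Q$ to be singular of a controlled corank $k>0$. The key point is that \emph{we have freedom in choosing $Q$}: $Y$ is cut out in the spin Lefschetz variety $X = \OGr_+(5,10)$ by an affine space of quadrics, and among them one can pick one whose kernel subspace is as large as possible while still ensuring $Y$ has only ordinary double points. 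The generalized dual $Q^{\hpd}$ then lives in a proper subspace of $\bP(\vV) = \bP^{15}$, and by the formula for $\dim Q^{\hpd}$ in terms of $\corank Q$ (see~\eqref{dimQd-quadric}) its dimension drops accordingly.

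Next I would combine Theorem~\ref{intro-quadratic-HPD} with the known HPD of the spinor tenfold $\OGr_+(5,10)$ (which is itself of the spinor type, by a self-duality of orthogonal Grassmannians in the spinor embedding). The fiber product
\[
M' := X^{\hpd} \times_{\bP(\vV)} Q^{\hpd}
\]
should then be $3$-dimensional for the chosen corank of $Q$, and the distinguished component common to $\Perf(X) \sotimes_{\Perf(\bP(V))} \cQ$ and $\Perf(X^\hpd) \sotimes_{\Perf(\bP(\vV))} \cQ^\hpd$ is precisely (a crepant resolution of) $\cK(Y)$ on the $Y$-side. Since $Y$ is only nodal, Theorem~\ref{intro-quadratic-HPD} identifies the relevant tensor product on the $Y$-side with $\Perf$ of the fiber product itself (once $Y$ avoids $\Sing Q$, which holds when $Y$ only inherits the singularities of $Q$ as ODPs). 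A Calabi--Yau check for $M'$ follows from the fractional CY3 property of $\cK(Y)$ via the equivalence, together with a canonical class computation for the intersection of a standard morphism from a low-dimensional quadric with the HPD spinor variety.

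The final two points are (i) verifying that, after possibly a small resolution, $M'$ is a smooth Calabi--Yau threefold giving a crepant categorical resolution of $\cK(Y)$, and (ii) establishing rationality of $Y$. For (i) one needs to check that the chosen $Q$ can be arranged so that $Q^\hpd$ meets~$X^\hpd$ transversely away from any bad locus, which amounts to a dimension count plus a genericity argument; the CY condition then propagates through the categorical equivalence. For (ii), the rationality of $Y$ should follow from a classical construction: projecting from one of the nodes of $Y$ (equivalently, from an isotropic subspace accounting for the excess kernel of~$Q$) produces a birational map from $Y$ to a simpler rational variety --- a linear or quadric section of the spinor tenfold of lower degree --- which one can verify is rational by a direct Mukai-type argument.

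The main obstacle, in my view, is the compatibility check in step (i): one must show that for the specific coranks producing a CY3 on the dual side, a general such $Q$ gives $Y$ that is genuinely nodal (neither smooth nor worse than nodal) \emph{and} makes $M'$ (or a small resolution) smooth. This requires an explicit analysis of the stratification of quadrics containing $Y$ by corank, together with a transversality statement between $X^\hpd = \OGr_\pm(5,10)$ and the linear subspaces arising as $Q^\hpd$. Everything else --- the categorical content, the crepancy, the CY property, and the rationality --- should follow relatively formally from Theorem~\ref{intro-quadratic-HPD}, the spinor HPD, and a projection from a node.
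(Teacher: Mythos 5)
Your overall strategy is the right one: realize the nodal fivefold as a fiber product of $\OGrp(5,10)$ with a singular quadric $Q$, apply quadratic HPD with the spinor self-duality, and read off a Calabi--Yau threefold on the dual side. But there is a genuine confusion at the heart of your step (i) that, as written, undermines the whole argument. You write that one needs ``$Y$ avoids $\Sing Q$,'' so that the tensor product on the fivefold side becomes $\Perf$ of the fiber product. This is backwards. In the paper's construction one takes $K \subset W \subset \sS_{16}$ with $\dim K = 6$, $\dim W = 12$, and a general smooth $\barQ \subset \bP(W/K)$, and the $12$ nodes arise \emph{precisely because} $\OGrp(5,10)$ meets $\Sing Q = \bP(K)$ in $12$ reduced points. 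So the hypothesis of the second half of Theorem~\ref{intro-quadratic-HPD} fails on the fivefold side, and the tensor product $\Perf(\OGrp)\otimes_{\Perf(\bP(\sSs))}\cQ$ is \emph{not} $\Perf$ of the fiber product; rather, by Lemma~\ref{lemma-cQ-categorical-resolution}, the functors $\pi_*,\pi^*$ coming from the standard categorical resolution $\cQ \to Q$ base-change to give this category the structure of a \emph{weakly crepant categorical resolution} of $\Db(X)$. That is exactly where the crepant resolution structure on $\cK(X)$ comes from; if $\OGrp$ avoided $\Sing Q$, the fivefold would be smooth and there would be nothing to resolve. By contrast it is on the \emph{dual} (threefold) side, where $\OGrm(5,10)$ misses $\Sing Q^\hpd = \bP(W^\perp)$ for generic $W$, that the tensor product collapses to $\Perf$ of a genuine smooth threefold $M$. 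Your proposal also leaves open the precise numerology (corank $6$, span of dimension $12$), which is where the dimension count ensuring $\dim M = 3$ and the $12$ nodes both live.

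Your rationality argument also diverges from the paper's, and it is too vague to evaluate. You propose projecting from a node, hoping to land on a lower-degree section of the spinor tenfold and invoke a ``Mukai-type argument.'' The paper instead projects from a generic maximal isotropic $\bP^8 = \bP(I) \subset \bP(W)$ for $Q$ containing $\bP(K)$ (Lemma~\ref{lemma-singular-sGM-fibration}). This produces a resolution $X' \to X$ fibered over $\bP^2$ whose general fiber is a prime Fano threefold of degree $12$ and index $1$, and crucially the $12$ points of $\OGrp\cap\bP(K)$ lie in every residual isotropic space $I_b$ and hence give $12$ sections of the fibration. Rationality then follows from the rationality criterion over nonclosed fields of \cite{kuznetsov-prokhorov} applied to the generic fiber with one of those rational points. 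Projecting from a single ODP does not obviously produce such a structure, and no ``Mukai-type argument'' for rationality of a general degree-$12$ section of the spinor tenfold over a non-closed field is available without the rational point. So this part of your proposal would need a substantially different and more specific idea.
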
 

Finally, we note that Theorem~\ref{theorem-nodal-spinGM-intro} can be regarded as giving a 
noncommutative conifold transition from a smooth spin GM category to the Calabi--Yau threefold $M$. 
This suggests a noncommutative version of Reid's fantasy \cite{reid-fantasy}: by degenerations and 
crepant resolutions, can we connect any noncommutative Calabi--Yau threefold to the 
derived category of a smooth projective Calabi--Yau threefold? 
When the answer to this question is positive, it opens the way to proving results 
by deforming to a geometric situation. 
For instance, using the methods of \cite{families-stability}, this gives a potential way to reduce the 
construction of stability conditions on noncommutative Calabi--Yau threefolds to the geometric 
case. 
Further, once stability conditions are known to exist, one can try to analyze the corresponding 
moduli spaces of semistable objects by relating them to the case of geometric Calabi--Yau threefolds; 
this would be a higher-dimensional version of the approach to studying moduli spaces of objects in 
the K3 category of a cubic fourfold carried out in \cite{families-stability}. 

\subsection{Conventions} 
\label{subsection:conventions}

In this paper, we follow the conventions laid out in \cite[\S1.7]{categorical-joins}, which we 
briefly summarize here. 
All schemes are quasi-compact and separated, and we work relative to a 
fixed base scheme $S$. 
For the applications in~\S\ref{section-HPD-for-quadrics} and~\S\ref{section:applications}, 
we assume the base scheme $S$ is the spectrum of an algebraically closed field $\bk$ of characteristic $0$. 
A vector bundle $V$ on a scheme $T$ means a finite locally free $\cO_T$-module; we use the 
convention that 
\begin{equation*}
\bP(V) = {\bP_T(V) = {}} \Proj(\Sym^\bullet(V^{\svee})) \to T
\end{equation*}
with $\cO_{\bP(V)}(1)$ normalized so that its pushfoward to $T$ is $V^{\svee}$. 
A subbundle $W \subset V$ is an inclusion of vector bundles whose cokernel is a vector bundle. 
Given such a $W \subset V$, its orthogonal is the subbundle of $\vV$ given by 
\begin{equation*}
W^{\perp} = \ker(V^{\svee} \to W^{\svee}). 
\end{equation*}
By abuse of notation, given a line bundle $\cL$ or a divisor class $D$ on a scheme~$T$, 
we denote still by $\cL$ or $D$ its pullback to any variety mapping to $T$. 
Similarly, if $X \to T$ is a morphism and $V$ is a vector bundle on $T$, we sometimes 
write $V \otimes \cO_X$ for the pullback of $V$ to $X$.

Given morphisms of schemes $X \to T$ and $Y \to T$, the symbol 
$X \times_T Y$ denotes their \emph{derived} fiber product (see \cite{lurie-SAG, gaitsgory-DAG}), 
which agrees with the usual fiber product of schemes whenever $X$ and $Y$ are 
$\Tor$-independent over $T$. 
We write fiber products over our fixed base $S$ as absolute fiber products, i.e. 
$X \times Y := X \times_S Y$.

We work with linear categories as reviewed in~\cite[\S1.6 and Appendix A]{categorical-joins}.
In particular, given a scheme $X$ over $T$, we denote by $\Perf(X)$ its category of perfect 
complexes and by~$\Db(X)$ its bounded derived category of coherent sheaves, regarded as $T$-linear categories. 

If $\cC$ is a~$T$-linear category and $T' \to T$ is a morphism of schemes, 
we denote by 
\begin{equation*}
\cC_{T'} = \cC \otimes_{\Perf(T)} \Perf(T')
\end{equation*}
the base change of $\cC$ along $T' \to T$.
If $Z \subset T$ is a closed subset, we say $\cC$ is \emph{supported over $Z$} if $\cC_U \simeq 0$, where $U = T \setminus Z$.
If $U \subset T$ is an open subset, we say $\cC$ is \emph{supported over $U$} 
if the restriction functor $\cC \to \cC_U$ is an equivalence.

All functors considered in this paper (pullback, pushforward, tensor product) 
will be taken in the derived sense. 
Recall that for a morphism of schemes $f \colon X \to Y$ the pushforward~$f_*$ is right adjoint to the pullback $f^*$. 
Sometimes, we need other adjoint functors as well.
Provided they exist, we denote by $f^!$ the right adjoint of $f_* \colon \Perf(X) \to \Perf(Y)$ 
and by~$f_!$ the left adjoint of~$f^* \colon \Perf(Y) \to \Perf(X)$, so that $(f_!,f^*,f_*,f^!)$ is an adjoint sequence. 

\begin{remark}
\label{remark-adjoints-exist}
The above adjoint functors all exist if $f \colon X \to Y$ is a morphism between schemes which 
are smooth and projective over $S$ 
(see~\cite[Remark 1.14]{categorical-joins}); 
this will be satisfied in all of the cases where we need $f^!$ and $f_!$ in the paper.
\end{remark} 

\subsection{Organization of the paper} 
In \S\ref{section:preliminaries} we review preliminaries on HPD.
In \S\ref{section-categorical-cones} we define categorical cones, study their 
basic properties, and relate them to categorical joins. 
In \S\ref{section-cones-HPD} we prove Theorem~\ref{cone-theorem-intro} on 
HPD for categorical cones. 
In \S\ref{section-HPD-for-quadrics} we introduce standard categorical resolutions of 
quadrics, and prove the HPD result Theorem~\ref{intro-hpd-quadrics} for them 
and the quadratic HPD theorem stated as Theorem~\ref{intro-quadratic-HPD} above. 
Finally, in~\S\ref{section:applications} we establish the applications 
discussed in~\S\ref{subsection-intro-applications} and~\S\ref{subsection-intro-other-applications}. 
In Appendix~\ref{appendix}, we prove some results in the context of HPD that are used in the paper. 

\subsection{Acknowledgements} 
We are grateful to Roland Abuaf and Johan de Jong for useful conversations.


\section{Preliminaries on HPD}
\label{section:preliminaries}

In this section, we discuss preliminary material on HPD that will be needed 
in the rest of the paper. 
We fix a vector bundle $V$ over our base scheme $S$. 
We denote by $N$ the rank of $V$ and by 
$H$ the relative hyperplane class on the projective bundle $\bP(V)$ 
such that~$\cO(H) = \cO_{\bP(V)}(1)$. 

\subsection{Lefschetz categories} 
\label{subsection-lef-cats}

The fundamental objects of HPD are Lefschetz categories. 
We summarize the basic definitions following \cite[\S6]{NCHPD}. 

\begin{definition}
\label{definition:lefschetz-category}
Let $T$ be a scheme over $S$ with a line bundle $\cL$.
Let $\cA$ be a $T$-linear category. 
An admissible
{$S$-linear} subcategory $\cA_0 \subset \cA$ is called a \emph{Lefschetz center} of $\cA$ with respect to~$\cL$ 
if the subcategories $\cA_i \subset \cA$, $i \in \bZ$, determined by 
\begin{align}
\label{Ai-igeq0}
\cA_i & = \cA_{i-1} \cap {}^\perp(\cA_0 \otimes \cL^{-i})\hphantom{{}^\perp}, \quad 
i \ge 1 \\ 
\label{Ai-ileq0}
\cA_i & = \cA_{i+1} \cap \hphantom{{}^\perp}(\cA_0 \otimes \cL^{-i})^\perp, \quad 
i \le -1 
\end{align} 
are right admissible in $\cA$ for $i \geq 1$, left admissible in $\cA$ for $i \leq -1$,  
vanish for all $i$ of sufficiently large absolute value, say for $|i| \geq m$, and 
provide {$S$-linear} semiorthogonal decompositions 
\begin{align}
\label{eq:right-decomposition}
\cA & = \langle \cA_0, \cA_1 \otimes \cL, \dots, \cA_{m-1} \otimes \cL^{m-1} \rangle, \\ 
\label{eq:left-decomposition}
\cA & = \langle \cA_{1-m} \otimes \cL^{1-m}, \dots, \cA_{-1} \otimes \cL^{-1}, \cA_0 \rangle. 
\end{align} 
The categories $\cA_i$, $i \in \bZ$, are called the \emph{Lefschetz components} of the Lefschetz center $\cA_0 \subset \cA$. 
The semiorthogonal decompositions~\eqref{eq:right-decomposition} 
and~\eqref{eq:left-decomposition} are called \emph{the right Lefschetz decomposition} and 
\emph{the left Lefschetz decomposition} of $\cA$. 
The minimal $m$ above is called the \emph{length} of the Lefschetz decompositions. 
\end{definition}

The Lefschetz components form two (different in general) chains of admissible subcategories
\begin{equation}
\label{eq:lefschetz-chain}
0 \subset \cA_{1-m} \subset \dots \subset \cA_{-1} \subset \cA_0 \supset \cA_1 \supset \dots \supset \cA_{m-1} \supset 0.
\end{equation} 
Note that the assumption of right or left admissibility of $\cA_i$ in $\cA$ 
is equivalent to the assumption of right or left admissibility in $\cA_0$. 

\begin{definition} 
\label{definition-lc} 
A \emph{Lefschetz category} $\cA$ over $\bP(V)$ is a 
$\bP(V)$-linear category equipped with a Lefschetz center $\cA_0 \subset \cA$ with respect to $\cO(H)$. 
The \emph{length} of $\cA$ is the length of its Lefschetz decompositions, and 
is denoted by $\length(\cA)$. 

Given Lefschetz categories $\cA$ and $\cB$ over $\bP(V)$, an \emph{equivalence of Lefschetz categories} or a \emph{Lefschetz equivalence}  
is a $\bP(V)$-linear equivalence $\cA \simeq \cB$ which induces an $S$-linear 
equivalence $\cA_0 \simeq \cB_0$ of centers.  
\end{definition}

\begin{remark}
By \cite[Lemma 6.3]{NCHPD}, if the subcategories $\cA_i \subset \cA$ are admissible 
for all $i \geq 0$ or all $i \leq 0$, then the length $m$ defined above satisfies 
\begin{equation*}
m = \min \set{ i \geq 0 \st \cA_{i} = 0 } = \min \set{ i \geq 0 \st \cA_{-i} = 0 } . 
\end{equation*}
\end{remark} 
 
\begin{remark}
\label{remark:lc-sp}
If $\cA$ is smooth and proper over $S$, then in order for a subcategory $\cA_0 \subset \cA$ to be a Lefschetz center, 
it is enough to give only one of the semiorthogonal decompositions~\eqref{eq:right-decomposition} or~\eqref{eq:left-decomposition}. 
This follows from \cite[Lemmas 4.15 and 6.3]{NCHPD}.
\end{remark}

For $i \ge 1$ the $i$-th \emph{right primitive component}~$\fa_i$ of a Lefschetz center is 
defined as the right orthogonal to $\cA_{i+1}$ in $\cA_i$, i.e.  
\begin{equation*}
\fa_i = \cA_{i+1}^\perp \cap \cA_{i},
\end{equation*}
so that 
\begin{equation}
\label{eq:ca-fa-ca-plus}
\cA_i = \llangle \fa_i, \cA_{i+1} \rrangle =  \llangle \fa_i, \fa_{i+1}, \dots, \fa_{m-1} \rrangle. 
\end{equation}
Similarly, for $i \le {-1}$ the $i$-th \emph{left primitive component} $\fa_i$ of a Lefschetz center is 
the left orthogonal to $\cA_{i-1}$ in $\cA_i$, i.e. 
\begin{equation*}
\fa_i = {}^\perp\cA_{i-1} \cap \cA_{i}, 
\end{equation*}
so that 
\begin{equation}
\label{eq:ca-fa-ca-minus}
\cA_i = \llangle \cA_{i-1}, \fa_{i} \rrangle = \llangle \fa_{1-m}, \dots, \fa_{i-1}, \fa_{i} \rrangle. 
\end{equation}
For $i = 0$, we have both right and left primitive components, defined by  
\begin{equation*}
\fa_{+0} = \cA_1^{\perp} \cap \cA_0 \quad \text{and} \quad 
\fa_{-0} = {}^\perp\cA_{-1} \cap \cA_0,
\end{equation*}
and then~\eqref{eq:ca-fa-ca-plus} and~\eqref{eq:ca-fa-ca-minus} hold true for $i = 0$ 
with $\fa_{+0}$ taking the place of $\fa_0$ for the first and~$\fa_{-0}$ for the second.

For HPD we will need to consider Lefschetz categories that satisfy 
certain ``strongness'' and ``moderateness'' conditions, defined below. 

\begin{definition}
\label{definition:strong}
A Lefschetz category $\cA$ is called \emph{right strong} if all of its right primitive components 
$\fa_{+0}, \fa_i$, $i \geq 1$, are admissible in $\cA$, 
\emph{left strong} if all of its left primitive components~$\fa_{-0}, \fa_{i}$, $i \leq -1$, are admissible in $\cA$, 
and \emph{strong} if all of its primitive components are admissible. 
\end{definition} 

\begin{remark}
\label{remark:smooth-strong}
If $\cA$ is smooth and proper over $S$, then any Lefschetz structure on $\cA$ is automatically strong, 
see \cite[Remark 6.7]{NCHPD}. 
\end{remark}

By \cite[Corollary 6.19(1)]{NCHPD}, the length of a Lefschetz category $\cA$ over $\bP(V)$ satisfies 
\begin{equation}
\label{length-leq-rank}
\length(\cA) \leq \rank(V). 
\end{equation}

\begin{definition}
\label{def:moderateness}
A Lefschetz category $\cA$ over $\bP(V)$ is called \emph{moderate} if its length satisfies the strict inequality
\begin{equation*}
\length(\cA) < \rank(V) . 
\end{equation*} 
\end{definition}

Moderateness of a Lefschetz category $\cA$ is a very mild condition, see~\cite[Remark~2.12]{categorical-joins}. 

There are many examples of interesting Lefschetz categories, see~\cite{kuznetsov2014semiorthogonal} 
for a survey; the most basic is the following. 

\begin{example} 
\label{example-projective-bundle-lc}
Let $0 \ne W \subset V$ be a subbundle of rank $m > 0$. 
The morphism $\bP(W) \to \bP(V)$ induces a $\bP(V)$-linear structure on $\Perf(\bP(W))$. 
Pullback along the projection $\bP(W) \to S$ gives an embedding $\Perf(S) \subset \Perf(\bP(W))$;
its image is a Lefschetz center in~$\Perf(\bP(W))$ and provides it with the structure of a strong Lefschetz category over $\bP(V)$.
The corresponding right and left Lefschetz decompositions are 
given by Orlov's projective bundle formulas: 
\begin{align*} 
\Perf(\bP(W)) & = \llangle \Perf(S), \Perf(S)(H), \dots, \Perf(S)((m-1)H)  \rrangle , \\
\Perf(\bP(W)) & = \llangle \Perf(S)((1-m)H), \dots, \Perf(S)(-H), \Perf(S) \rrangle . 
\end{align*} 
We call this the \emph{standard Lefschetz structure} on~$\bP(W)$. 
Note that the length of $\Perf(\bP(W))$ is $m$, so it is a moderate Lefschetz category as long as~$W \neq V$. 
\end{example}

\subsection{The HPD category} 
\label{subsection-HPD}
Let $H'$ denote the relative hyperplane class on $\bP(V^{\svee})$ 
such that~$\cO(H') = \cO_{\bP(\vV)}(1)$. 
Let 
\begin{equation*}
\delta \colon \bH(\bP(V)) \to \bP(V) \stimes \bP(\vV). 
\end{equation*} 
be the natural incidence divisor.
We think of $\bH(\bP(V))$ as the universal hyperplane in $\bP(V)$.
If $X$ is a scheme with a morphism $X \to \bP(V)$, 
then the universal hyperplane section of $X$ is defined by 
\begin{equation*}
\bH(X) = X \times_{\bP(V)} \bH(\bP(V)). 
\end{equation*} 
This definition extends directly to linear categories as follows.  
 
\begin{definition}
Let $\cA$ be a $\bP(V)$-linear category. 
The \emph{universal hyperplane section} of $\cA$ is defined by 
\begin{equation*}
\bH(\cA) = \cA \otimes_{\Perf(\bP(V))} \Perf(\bH(\bP(V))). 
\end{equation*} 
\end{definition} 

We sometimes use the more elaborate notation 
\begin{equation*}
\bH(X/\bP(V)) = \bH(X) 
\qquad\text{and}\qquad 
\bH(\cA/\bP(V)) = \bH(\cA) 
\end{equation*}
to emphasize the universal hyperplane section is being taken with respect to $\bP(V)$.

There is a commutative diagram 
\begin{equation}
\label{eq:h-diagram}
\vcenter{\xymatrix@C=6em{
& \bH(\bP(V)) \ar[dl]_\pi \ar[d]^\delta \ar[dr]^h 
\\
\bP(V) &
\bP(V) \times \bP(\vV) \ar[l]^-{\pr_1} \ar[r]_-{\pr_2} & 
\bP(\vV).
}}
\end{equation}
Here we follow the notation of~\cite[\S2.2]{categorical-joins} and
deviate slightly from the notation of \cite{NCHPD}, where the morphisms $\pi$, $\delta$, 
and $h$ are instead denoted $p$, $\iota$, and $f$. 
For a $\bP(V)$-linear category $\cA$
there are canonical identifications 
\begin{equation*} 
\cA \otimes_{\Perf(\bP(V))} \Perf(\bP(V) \times \bP(\vV)) \simeq 
\cA \sotimes \Perf(\bP(\vV)), 
\quad
\cA \otimes_{\Perf(\bP(V))} \Perf(\bP(V)) \simeq \cA, 
\end{equation*} 
by which we will regard the functors induced by morphisms in~\eqref{eq:h-diagram} as functors 
\begin{equation*}
\delta_* \colon \bH(\cA) \to \cA \sotimes \Perf(\bP(\vV)),
\quad 
\pi_* \colon \bH(\cA) \to \cA,
\end{equation*}
and so on.
The following definition differs from the original  
in~\cite{kuznetsov-hpd}, but is equivalent to it by~\cite[Lemma~2.22]{categorical-joins}. 

\begin{definition} 
\label{definition-HPD-category}
Let $\cA$ be a Lefschetz category over $\bP(V)$.
Then the \emph{HPD category} $\cAd$ of $\cA$ is the full $\bP(\vV)$-linear subcategory of 
$\bH(\cA)$ defined by 
\begin{equation} 
\label{eq:hpd-category}
\cAd = \set{ C \in \bH(\cA) \st \delta_*(C) \in \cA_0 \sotimes \Perf(\bP(\vV)) }.
\end{equation}
We sometimes use the notation 
\begin{equation*}
(\cA/\bP(V))^{\hpd}  = \cA^{\hpd}
\end{equation*}
to emphasize the dependence on the $\bP(V)$-linear structure. 
\end{definition}

\begin{remark}
The HPD category $\cAd$ depends on the choice of the Lefschetz center $\cA_0 \subset \cA$, 
although this is suppressed in the notation. 
For instance, for the ``stupid'' Lefschetz center~$\cA_0 = \cA$ we have $\cAd = \bH(\cA)$. 
\end{remark}

A less trivial example of HPD is the following.

\begin{example}
\label{ex:categorical-linear-hpd}
Consider the Lefschetz category $\Perf(\bP(W))$ of Example~\ref{example-projective-bundle-lc}
and assume \mbox{$0 \subsetneq W \subsetneq V$}.
Then by ~\cite[Corollary~8.3]{kuznetsov-hpd} 
there is a Lefschetz equivalence
\begin{equation*}
\Perf(\bP(W))^\hpd \simeq \Perf(\bP(W^\perp)).
\end{equation*}
This is usually referred to as \emph{linear} HPD.
\end{example}

If $\cA$ is a Lefschetz category over $\bP(V)$ of length $m$, 
there is a $\bP(\vV)$-linear semiorthogonal decomposition 
\begin{equation}
\label{HC-sod}
\bH(\cA) = \llangle \cAd, 
\delta^*(\cA_1(H) \sotimes \Perf(\bP(\vV))), 
\dots, 
\delta^*(\cA_{m-1}((m-1)H) \sotimes \Perf(\bP(\vV))) \rrangle . 
\end{equation} 
Moreover, $\cAd$ is an admissible subcategory in $\bH(\cA)$, i.e. 
its inclusion functor 
\begin{equation}
\label{def:gamma}
\gamma \colon \cAd \to \bH(\cA) 
\end{equation} 
has both left and right adjoints $\gamma^*,\gamma^! \colon \bH(\cA) \to \cAd$. 
Further, if $\cA$ is a right strong, moderate Lefschetz category, 
then $\cA^{\hpd}$ is equipped with a natural left strong, moderate Lefschetz structure over $\bP(\vV)$ 
with center~$\cAd_0 = \gamma^*\pi^*(\cA_0)$, see~\cite[Theorem~8.7]{NCHPD}.

\begin{remark}
\label{remark:left-hpd}
When $\cA$ is smooth and proper, the HPD operation is an involution;
in other words, the double dual category $\cA^{\hpd\hpd}$ is naturally Lefschetz equivalent to $\cA$.
In a more general situation, the inverse operation to HPD duality is called ``left HPD'', see~\cite[Definition~7.1]{NCHPD}.
The left HPD category ${}^\hpd\cA$ is defined analogously to Definition~\ref{definition-HPD-category},
one just needs to replace the right adjoint functor $\delta_*$ of $\delta^*$ 
by its left adjoint $\delta_!$ in~\eqref{eq:hpd-category}, see~\cite[(7.4)]{NCHPD}.
Alternatively, one can replace in~\eqref{HC-sod} the right orthogonal to the components coming from the ambient variety
by the left orthogonal, see~\cite[(7.2)]{NCHPD}.
Then there are natural Lefschetz equivalences
\begin{equation*}
{}^\hpd(\cA^\hpd) \simeq \cA \simeq ({}^\hpd\cA)^\hpd. 
\end{equation*}
See~\cite[Theorem~8.9]{NCHPD} for the first equivalence; the second is analogous.
In particular, these equivalences imply that showing a Lefshetz equivalence $\cA^\hpd \simeq \cB$ is equivalent 
to showing $\cA \simeq {}^\hpd\cB$.
We will use this observation several times in the paper.
\end{remark}

\subsection{Categorical joins}
\label{subsection:categorical-joins}

In this section, we summarize some of our results on categorical joins from \cite{categorical-joins}. 
Let $V_1$ and $V_2$ be vector bundles on $S$.
Denote by $H_i$ the relative hyperplane class of $\bP(V_i)$ such that $\cO(H_i) \cong \cO_{\bP(V_i)}(1)$.

The \emph{universal resolved join} is defined as the $\bP^1$-bundle 
\begin{equation}
\label{eq:universal-resolved-join}
\tJ(\bP(V_1), \bP(V_2)) = \bP_{\bP(V_1) \times \bP(V_2)}(\cO(-H_1) \oplus \cO(-H_2)). 
\end{equation}
The canonical embedding of vector bundles on $\tJ(\bP(V_1), \bP(V_2))$
\begin{equation*}
\cO(-H_1) \oplus \cO(-H_2) \hookrightarrow (V_1 \otimes \cO) \oplus (V_2 \otimes \cO) = 
(V_1 \oplus V_2) \otimes \cO 
\end{equation*} 
induces a morphism 
\begin{equation*}
f \colon \tJ(\bP(V_1), \bP(V_2)) \to \bP(V_1 \oplus V_2) 
\end{equation*}
which can be identified with a blowup along $\bP(V_1) \sqcup \bP(V_2) \subset \bP(V_1 \oplus V_2)$
with exceptional divisors
\begin{align*}
\bE_1  & = \bP_{\bP(V_1) \stimes \bP(V_2)}(\cO(-H_1)) \cong \bP(V_1) \stimes \bP(V_2) \stackrel{\eps_1} \hookrightarrow \tJ(\bP(V_1), \bP(V_2)), \\  
\bE_2  & = \bP_{\bP(V_1) \stimes \bP(V_2)}(\cO(-H_2)) \cong \bP(V_1) \stimes \bP(V_2) \stackrel{\eps_2} \hookrightarrow \tJ(\bP(V_1), \bP(V_2)). 
\end{align*} 
This situation is summarized in the following commutative diagram
\begin{equation}
\label{diagram-tJ-blowup}
\vcenter{\xymatrix{
& \bP(V_1) \stimes \bP(V_2)
\\
\bE_1 \ar[r]_-{\eps_1} \ar[d] \bijartop[ur] & 
\tJ(\bP(V_1), \bP(V_2)) \ar[d]_f \ar[u]_p & 
\bE_2 \ar[l]^-{\eps_2} \ar[d] \bijarbottom[ul] 
\\
\bP(V_1) \ar[r]^-{} & 
\bP(V_1 \oplus V_2) & 
\bP(V_2) \ar[l]_-{}
}}
\end{equation} 
where $p$ is the canonical projection morphism.

\begin{definition}
\label{definition-tJ}
Let $\cA^1$ be a $\bP(V_1)$-linear category and $\cA^2$ a $\bP(V_2)$-linear category. 
The \emph{resolved join} of $\cA^1$ and $\cA^2$ is the category 
\begin{equation*}
\tJ(\cA^1, \cA^2) = 
\left( \cA^1 \sotimes \cA^2 \right) \otimes_{\Perf(\bP(V_1) \stimes \bP(V_2))} \Perf(\tJ(\bP(V_1), \bP(V_2))). 
\end{equation*}
Further, for $k=1,2$, we define 
\begin{equation*}
\bE_k(\cA^1, \cA^2) = \left( \cA^1 \sotimes \cA^2 \right) \otimes_{\Perf(\bP(V_1) \stimes \bP(V_2))} \Perf(\bE_k) \cong \cA^1 \sotimes \cA^2 .
\end{equation*}
\end{definition} 

We define the categorical join of Lefschetz categories over $\bP(V_1)$ and $\bP(V_2)$ 
as a certain subcategory of the resolved join. 

\begin{definition}
\label{definition-cat-join} 
Let $\cA^1$ and $\cA^2$ be Lefschetz categories over $\bP(V_1)$ and $\bP(V_2)$ 
with Lefschetz centers $\cA^1_0$ and~$\cA^2_0$. 
The \emph{categorical join} $\cJ(\cA^1,\cA^2)$ of $\cA^1$ and $\cA^2$ is defined by
\begin{equation*}
\cJ(\cA^1,\cA^2) = \left\{ C \in \tJ(\cA^1, \cA^2) \ \left|\ 
\begin{aligned}
\eps_1^*(C) &\in \cA^1 \sotimes \cA^2_0 \subset \bE_1(\cA^1, \cA^2) , \\
\eps_2^*(C) &\in \cA^1_0 \sotimes \cA^2 \subset \bE_2(\cA^1, \cA^2)  
\end{aligned}
\right.\right\}.
\end{equation*} 
\end{definition} 

The categorical join is an admissible subcategory in the resolved join; 
its orthogonal complements are supported on the exceptional divisors~$\bE_k$
and can be explicitly described in terms of Lefschetz components of~$\cA^1$ and~$\cA^2$, see~\cite[Lemma~3.12]{categorical-joins}.
Furthermore, $\cJ(\cA^1,\cA^2)$ is smooth and proper as soon as both $\cA^1$ and $\cA^2$ are \cite[Lemma~3.14]{categorical-joins}.
Note also that the categorical join depends on the choice of Lefschetz centers for $\cA^1$ and $\cA^2$, 
although this is suppressed in the notation. 
Finally, by \cite[Theorem 3.21]{categorical-joins}, 
$\cJ(\cA^1,\cA^2)$ has a natural Lefschetz structure with center
\begin{equation}
\label{eq:cat-join-center}
\cJ(\cA^1, \cA^2)_0 = 
p^* {\left(\cA^1_0 \sotimes \cA^2_0 \right)} \subset \cJ(\cA_1,\cA_2). 
\end{equation} 
It is right or left strong if both $\cA^1$ and $\cA^2$ are, 
its length is equal to $\length(\cA^1) + \length(\cA^2)$,
and its Lefschetz and primitive components can be explicitly described, see~\cite[(3.14), (3.15), (3.16), and Lemma~3.24]{categorical-joins}.
The main property of categorical joins is that they commute with the HPD in the following sense.

\begin{theorem}[{\cite[Theorem~4.1]{categorical-joins}}]
\label{theorem-joins-HPD}
Let $\cA^1$ and $\cA^2$ be right strong, moderate Lefschetz categories over $\bP(V_1)$ and~$\bP(V_2)$.  
Then there is an equivalence 
\begin{equation*}
\cJ(\cA^1, \cA^2)^{\hpd} \simeq \cJ((\cA^1)^{\hpd}, (\cA^2)^{\hpd}) 
\end{equation*}
of Lefschetz categories over $\bP(\vV_1 \oplus \vV_2)$. 
\end{theorem}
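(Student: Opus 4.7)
The plan is to identify both $\cJ(\cA^1, \cA^2)^{\hpd}$ and $\cJ((\cA^1)^{\hpd}, (\cA^2)^{\hpd})$ as admissible subcategories of a common ambient $\bP(\vV_1 \oplus \vV_2)$-linear category built by base change over the incidence correspondence
$$\bH(\bP(V_1 \oplus V_2)) \subset \bP(V_1 \oplus V_2) \stimes \bP(\vV_1 \oplus \vV_2),$$
and to match them by a sequence of mutations compatible with the Lefschetz structures. The overarching symmetry to exploit is that on the primal side one forms a resolved join over $\bP(V_1 \oplus V_2)$ and then cuts by the universal hyperplane, while on the dual side one forms a resolved join over $\bP(\vV_1 \oplus \vV_2)$; both constructions should meet on a symmetric intermediate space.

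First I would construct this intermediate: base change $\bH(\cJ(\cA^1, \cA^2))$ from $\bP(\vV_1 \oplus \vV_2)$ along the blowup morphism $\tJ(\bP(\vV_1), \bP(\vV_2)) \to \bP(\vV_1 \oplus \vV_2)$ supplied by~\eqref{diagram-tJ-blowup} on the dual side. On this enlarged space the pulled-back incidence relation $\phi_1(v_1) + \phi_2(v_2) = 0$ becomes symmetric in the two summands, because the primal side has been blown up at $\bP(V_1) \sqcup \bP(V_2)$ and now the dual side is blown up at the analogous loci. The outcome should be a clean description of this base-changed universal hyperplane as a correspondence built out of the hyperplane sections $\bH(\cA^1)$ and $\bH(\cA^2)$ of the two factors, fibered via projective bundles arising from~\eqref{eq:universal-resolved-join} on both sides.

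Next, I would exhibit both candidate categories inside this common correspondence. For the left-hand side I would apply the universal HPD decomposition~\eqref{HC-sod} to $\cJ(\cA^1, \cA^2)$, using the explicit Lefschetz components computed in~\cite[Lemma~3.24]{categorical-joins} and the center formula~\eqref{eq:cat-join-center}. For the right-hand side I would use Definition~\ref{definition-cat-join} applied to the HPD pair $((\cA^1)^{\hpd}, (\cA^2)^{\hpd})$, together with the HPD semiorthogonal decomposition for each factor $\cA^i$ separately via~\eqref{HC-sod}. The matching of the two subcategories should then follow from a long sequence of mutations through the bigraded family of products $\cA^1_i \sotimes \cA^2_j$, each mutation implementing in a single factor the $\cA^i \rightsquigarrow (\cA^i)^{\hpd}$ passage.

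The main obstacle I anticipate is the combinatorial organization of these mutations and verifying they are compatible with the Lefschetz structures on both sides. The Lefschetz components of $\cJ(\cA^1, \cA^2)$ are indexed by a two-dimensional grid of pairs $(i,j)$, and implementing HPD on both factors amounts to a prescribed rotation of this grid; the delicate point is that the rotations must be compatible with the glueing conditions along the exceptional divisors $\bE_1, \bE_2$ from~\eqref{diagram-tJ-blowup} that cut out the categorical join inside the resolved join. The moderateness hypothesis is exactly what keeps the indices strictly bounded away from $\rank(V_i)$ in each direction so no Lefschetz component degenerates along the way, while right strongness provides the admissibility of primitive components needed to actually perform each mutation (cf. Remark~\ref{remark:smooth-strong}). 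Once the subcategories are identified, the equality of Lefschetz centers follows by comparing~\eqref{eq:cat-join-center} on the right with the general HPD-center formula $\gamma^*\pi^*(\cJ(\cA^1, \cA^2)_0)$ on the left.
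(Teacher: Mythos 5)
This statement is quoted verbatim from the companion paper and carries no proof here; the relevant comparison is to the proof of~\cite[Theorem~4.1]{categorical-joins}.

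Your first move --- base-changing the universal hyperplane section of $\cJ(\cA^1,\cA^2)$ along the blowup $\tJ(\bP(\vV_1),\bP(\vV_2))\to\bP(\vV_1\oplus\vV_2)$ so as to land on a space symmetric in primal and dual --- is indeed the correct geometric setup, and corresponds to the ``double resolved join'' of~\cite[\S4.1]{categorical-joins} (the join-analogue of the double resolved cone $\tCC_{V_0,V_\infty}$ that this paper builds in~\S\ref{subsection-double-resolved-cones}). The divergence is in how the two subcategories get matched. You propose a ``long sequence of mutations through the bigraded family of products $\cA^1_i\sotimes\cA^2_j$''; the actual proof instead constructs a single explicit $\bP(\vV_1\oplus\vV_2)$-linear functor $\gamma_\tJ$ out of the HPD inclusion functors $\gamma_1,\gamma_2$ of the two factors via the double resolved join (cf.\ the analogous $\gamma_\tC=\alpha_*\circ\tp^*\circ\tC_{V_\infty}(\gamma)$ of~\eqref{gammatC}), and verifies directly --- using the defining condition~\eqref{eq:hpd-category} of the HPD category and the conditions along $\bE_1,\bE_2$ defining the categorical join --- that it restricts to a Lefschetz equivalence. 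No global traversal of the $(i,j)$-grid is performed.

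This distinction is substantive, not cosmetic. The mutation-based route you sketch is essentially the strategy of the original HPD theorem~\cite{kuznetsov-hpd}, and you yourself identify ``the combinatorial organization of these mutations'' and their compatibility with the glueing along $\bE_1,\bE_2$ as ``the main obstacle''; that obstacle is real, you have not overcome it, and the proposal therefore has a genuine gap at precisely the step where the conclusion is supposed to be reached. One point of the functorial reformulation in~\cite{NCHPD} that~\cite{categorical-joins} relies on is to sidestep such mutation bookkeeping: the hard work is repackaged into the statement that $\gamma_\tJ$ lands in and generates the right subcategory, which is checked by adjunction and orthogonality rather than by moving components across one another. Your remarks on the roles of moderateness and right strongness, and on matching the centers via~\eqref{eq:cat-join-center} and $\gamma^*\pi^*(\cJ(\cA^1,\cA^2)_0)$, are consistent with the actual argument, but they describe sanity checks one would perform after the equivalence is in hand, not a way to produce it.
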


By~\cite[Proposition~3.17]{categorical-joins} the fiber product of~$\cJ(\cA^1,\cA^2)$ with any $\bP(V_1 \oplus V_2)$-linear category
supported over the complement of~$\bP(V_1) \sqcup \bP(V_2)$ is equivalent to the fiber product
of the resolved join with the same category.
On the other hand, if $\xi \colon V_1 \xrightarrow{\, \sim \,} V_2$ is an isomorphism, 
the graph of $\xi$ is contained in the complement of~$\bP(V_1) \sqcup \bP(V_2)$ and its
fiber product with~$\tJ(\cA_1,\cA_2)$ is equivalent to $\cA_1 \otimes_{\bP(V_1)} \cA_2$.
A combination of this observation with Theorem~\ref{theorem-joins-HPD} and the main theorem of HPD gives the following. 

\begin{theorem}[{\cite[Theorem 5.5]{categorical-joins}}]
\label{theorem-nonlinear-HPD} 
Let $\cA^1$ and $\cA^2$ be right strong, moderate Lefschetz categories over projective bundles $\bP(V_1)$ and~$\bP(V_2)$,
where $V_1$ and $V_2$ have the same rank 
\begin{equation*}
r = \rank(V_1) = \rank(V_2). 
\end{equation*} 
Let $W$ be a vector bundle on $S$ equipped with isomorphisms $\xi_{k} \colon W \xrightarrow{\, \sim \,} V_k$ for $k =1,2$,
and let $(\xi_{k}^{\svee})^{-1} \colon W^{\svee} \xrightarrow{\, \sim \,}  V_k^{\svee}$ be the inverse dual isomorphisms. 
Set 
\begin{equation*}
m = \length(\cA^1) + \length(\cA^2)
\qquad\text{and}\qquad 
m^\hpd = \length((\cA^1)^{\hpd}) + \length((\cA^2)^{\hpd}). 
\end{equation*}
For $i , j \in \bZ$ let $\cJ_i$ and $\cJd_j$ be the Lefschetz components 
of~$\cJ(\cA^1,\cA^2)$ and~$\cJ((\cA^1)^{\hpd},(\cA^2)^{\hpd})$ respectively. 
Denote by $H$ and $H'$ the relative hyperplane classes on~$\bP(W)$ and $\bP(W^{\svee})$.
Then there are semiorthogonal decompositions 
\begin{align}
\cA^1_{\bP(W)} \otimes_{\Perf(\bP(W))} \cA^2_{\bP(W)} & = \notag\\
\label{CQ} & \hspace{-6em} 
\llangle \cK_W(\cA^1, \cA^2), \cJ_r(H), \dots, \cJ_{m-1}((m-r)H) \rrangle ,    \\ 
(\cA^1)^{\hpd}_{\bP(W^{\svee})} {\otimes_{\Perf(\bP(W^{\svee}))}} (\cA^2)^{\hpd}_{\bP(W^{\svee})} & = \notag \\
\label{DQ} & \hspace{-6em} 
\llangle \cJd_{1-{m^\hpd}}((r-{m^\hpd})H'), \dots, \cJd_{-r}(-H'), \cK'_{W^{\svee}}((\cA^1)^{\hpd}, (\cA^2)^{\hpd}) \rrangle, 
\end{align} 
and an $S$-linear equivalence 
\begin{equation*}
\cK_W(\cA^1, \cA^2) \simeq  \cK'_{W^{\svee}}((\cA^1)^{\hpd}, (\cA^2)^{\hpd}) .  
\end{equation*}
\end{theorem}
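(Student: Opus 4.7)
The plan is to deduce this from Theorem~\ref{theorem-joins-HPD} together with the main theorem of HPD, applied to the Lefschetz category $\cJ(\cA^1, \cA^2)$ over $\bP(V_1 \oplus V_2)$ and a specific linear subspace cut out by the isomorphisms $\xi_1, \xi_2$.

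First, I would use $(\xi_1, \xi_2)$ to embed $W$ as a rank-$r$ subbundle of $V_1 \oplus V_2$ via $w \mapsto (\xi_1(w), -\xi_2(w))$. Its image meets neither $V_1 \oplus 0$ nor $0 \oplus V_2$ away from the zero section, so projectively this yields a linear embedding $\bP(W) \hookrightarrow \bP(V_1 \oplus V_2)$ avoiding $\bP(V_1) \sqcup \bP(V_2)$. A direct check shows the orthogonal $\bP(W)^\perp \subset \bP(\vV_1 \oplus \vV_2)$ is precisely the linear embedding induced by the inverse dual isomorphisms $(\xi_k^{\svee})^{-1}$, and it avoids $\bP(\vV_1) \sqcup \bP(\vV_2)$.

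Next, I would apply the observation stated just before the theorem: since $\bP(W)$ is disjoint from $\bP(V_1) \sqcup \bP(V_2)$, base-changing $\cJ(\cA^1,\cA^2)$ along $\bP(W) \to \bP(V_1 \oplus V_2)$ agrees with the corresponding base change of $\tJ(\cA^1,\cA^2)$. Moreover, the graph identifications realize $\bP(W)$ as a section of the $\bP^1$-bundle $\tJ(\bP(V_1), \bP(V_2)) \to \bP(V_1) \stimes \bP(V_2)$ (via the diagonal $\bP(V_1) \cong \bP(W) \cong \bP(V_2)$), so Definition~\ref{definition-tJ} gives
\begin{equation*}
\cJ(\cA^1, \cA^2)_{\bP(W)} \simeq \tJ(\cA^1, \cA^2)_{\bP(W)} \simeq \cA^1_{\bP(W)} \otimes_{\Perf(\bP(W))} \cA^2_{\bP(W)}.
\end{equation*}
The symmetric argument on the dual side identifies $(\cA^1)^\hpd_{\bP(W^{\svee})} \otimes_{\Perf(\bP(W^{\svee}))} (\cA^2)^\hpd_{\bP(W^{\svee})}$ with the $\bP(W^{\svee})$-linear section of $\cJ((\cA^1)^\hpd, (\cA^2)^\hpd)$.

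The final step is to invoke the main theorem of HPD. Using the length formula $\length \cJ(\cA^1,\cA^2) = \length(\cA^1) + \length(\cA^2) \leq (r-1) + (r-1) < \rank(V_1 \oplus V_2)$ together with the fact that categorical joins inherit right strongness, $\cJ(\cA^1,\cA^2)$ is a right strong, moderate Lefschetz category over $\bP(V_1 \oplus V_2)$; by Theorem~\ref{theorem-joins-HPD} its HPD is $\cJ((\cA^1)^\hpd, (\cA^2)^\hpd)$. The main HPD theorem applied to the pair $\bP(W) \subset \bP(V_1 \oplus V_2)$ and $\bP(W^{\svee}) \subset \bP(\vV_1 \oplus \vV_2)$ then produces the two semiorthogonal decompositions~\eqref{CQ} and~\eqref{DQ} together with the equivalence between their essential parts $\cK_W(\cA^1,\cA^2) \simeq \cK'_{W^{\svee}}((\cA^1)^\hpd, (\cA^2)^\hpd)$. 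The main obstacle will be book-keeping: matching the Lefschetz shifts $\cJ_r(H), \dots, \cJ_{m-1}((m-r)H)$ (and their duals) with what the main HPD theorem outputs for a codimension-$r$ linear subspace, which reduces to the explicit description of the Lefschetz components of $\cJ(\cA^1,\cA^2)$ recalled in the body of~\S\ref{subsection:categorical-joins}, so no new ingredient beyond careful index tracking is needed.
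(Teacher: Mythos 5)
Your proposal is correct and matches the strategy the paper itself sketches immediately before the theorem statement: embed $\bP(W)$ via the graph of $\xi$ into the complement of $\bP(V_1)\sqcup\bP(V_2)$, use Proposition~3.17 of \cite{categorical-joins} (recalled in the paragraph preceding the theorem) to identify the base change of the categorical join with $\cA^1_{\bP(W)}\otimes_{\Perf(\bP(W))}\cA^2_{\bP(W)}$, and then apply Theorem~\ref{theorem-joins-HPD} together with the main theorem of HPD for the orthogonal pair $\bP(W)\subset\bP(V_1\oplus V_2)$, $\bP(W^\svee)\subset\bP(\vV_1\oplus\vV_2)$. The only remaining work, as you note, is the index bookkeeping against the explicit Lefschetz components of $\cJ(\cA^1,\cA^2)$ from \cite[Lemma~3.24]{categorical-joins}, which is routine.
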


In the case where $\cA^2$ and $(\cA^2)^\hpd$ is an HPD pair from Example~\ref{ex:categorical-linear-hpd}, 
this recovers the main theorem of HPD.

\subsection{Categorical resolutions}
\label{subsection:categorical-resolutions}
Finally, we recall the notion of a categorical resolution of singularities developed in~\cite{kuznetsov2008lefschetz}, 
which we will need later. 

\begin{definition}
\label{def:categorical-resolution}
Given a projective variety $Y$ over a field~$\bk$, a $\bk$-linear 
category $\cC$ is called a \emph{categorical resolution} of $Y$ if $\cC$ is smooth and proper
and there exists a pair of functors 
\begin{equation*}
\pi_* \colon \cC \to \Db(Y) \qquad \text{and} \qquad \pi^* \colon \Perf(Y) \to \cC, 
\end{equation*} 
such that $\pi^*$ is left adjoint to $\pi_*$, 
and $\pi^*$ is fully faithful. 
If further $\pi^*$ is right adjoint to $\pi_*$ on $\Perf(Y)$, then the categorical resolution $\cC$ is called \emph{{weakly} crepant}. 
\end{definition}

If $\pi \colon X \to Y$ is a morphism from a smooth proper scheme such that $\pi_*\cO_X \cong \cO_Y$
(for instance, if $Y$ has rational singularities and $\pi$ is a resolution of singularities)
then the pullback and the pushforward functors provide $\Db(X) = \Perf(X)$ with a structure of a categorical resolution of $Y$
(and if the singularities of $Y$ are worse than rational, a categorical resolution of $Y$ was constructed in~\cite{kuznetsov-lunts}).
Such resolution is weakly crepant if and only if $K_{X/Y} = 0$, i.e. if and only if the morphism $\pi$ is crepant.

\begin{remark}
\label{remark-nc-cat-res}
The notion of a categorical resolution and weak crepancy extends to the noncommutative case 
where $Y$ is replaced with an admissible subcategory $\cA \subset \Db(Y)$; 
then~$\cA^{\perf} = \cA \cap \Perf(Y)$ plays the role of $\Perf(Y)$. 
In this case, we say that $\cC$ is a categorical resolution (or weakly crepant categorical resolution) of $\cA^{\perf}$.
\end{remark}


\section{Categorical cones}
\label{section-categorical-cones}

In this section, we introduce the operation of taking the categorical cone of a Lefschetz category. 
This operation is closely related to that of a categorical join reviewed in~\S\ref{subsection:categorical-joins};
in fact, in \S\ref{subsection-cJ-cC-comparison} we show that under a splitting assumption, 
categorical cones can be described in terms of categorical joins. 

We fix an exact sequence 
\begin{equation}
\label{extension-v0-v}
0 \to V_0 \to V \to \barV \to 0 
\end{equation}
of vector bundles on $S$. 
We write $H_0$, $H$, and $\barH$ for the relative hyperplane classes 
on the projective bundles $\bP(V_0)$, $\bP(V)$, and $\bP(\barV)$, and denote by $N_0$ the rank of $V_0$.

\subsection{Resolved cones}
\label{subsection-resolved-cones}

Let $\cV$ be the vector bundle on $\bP(\barV)$ defined as the preimage 
of the line subbundle~$\cO(-\barH) \subset \barV \otimes \cO_{\bP(\barV)}$
under the surjection $V \otimes \cO_{\bP(\barV)} \to \barV \otimes \cO_{\bP(\barV)}$, 
so that on~$\bP(\barV)$ we have a commutative diagram 
\begin{equation}
\label{extension-v0-cv}
\vcenter{
\xymatrix{
0 \ar[r] & 
V_0  \otimes \cO_{\bP(\barV)} \ar[r] \ar@{=}[d] &
\cV \ar[r] \ar[d] &
\cO(-\barH) \ar[r] \ar[d] & 
0
\\
0 \ar[r] & 
V_0 \otimes \cO_{\bP(\barV)} \ar[r] &
V  \otimes \cO_{\bP(\barV)} \ar[r] & 
\barV  \otimes \cO_{\bP(\barV)} \ar[r] & 
0
}}
\end{equation}
with exact rows. 
If~\eqref{extension-v0-v} is split then $\cV \cong V_0 \otimes \cO \oplus \cO(-\barH)$.

Now let $X \to \bP(\barV)$ be a morphism of schemes. 
Then the \emph{resolved cone} over $X$ with vertex~$\bP(V_0)$ is defined as 
the projective bundle
\begin{equation}
\label{resolved-cone-X}
\tC_{V_0}(X) = \bP_X(\cV_X),  
\end{equation} 
where $\cV_X$ denotes the pullback of $\cV$ to $X$. 
The embedding $\cV_X \hookrightarrow V \otimes \cO_X$
induced by the middle vertical arrow in~\eqref{extension-v0-cv} gives a morphism
\begin{equation*}
\tC_{V_0}(X) \to \bP(V). 
\end{equation*}
If $X \to \bP(\barV)$ is an embedding, then 
this morphism factors birationally through the classical cone $\bC_{V_0}(X) \subset \bP(V)$, 
and provides a resolution of singularities if $X$ is smooth.  

Note that there is an isomorphism 
\begin{equation}
\label{resolved-cone-X-base-change}
\tC_{V_0}(X) \cong X \times_{\bP(\barV)} \tC_{V_0}(\bP(\barV)). 
\end{equation}
Motivated by this, we call $\tC_{V_0}(\bP(\barV)) = \bP_{\bP(\barV)}(\cV)$ the \emph{universal resolved cone} 
with vertex~$\bP(V_0)$. 
Denote by 
\begin{equation*}
\barp \colon \tC_{V_0}(\bP(\barV)) \to \bP(\barV) 
\end{equation*}
the canonical projection morphism. 
Note that the rank of $\cV$ is $N_0+1$, so $\barp$ is a $\bP^{N_0}$-bundle. 
Further, denote by 
\begin{equation*}
f \colon \tC_{V_0}(\bP(\barV)) \to \bP(V)
\end{equation*}
the morphism induced by the canonical embedding $\cV \hookrightarrow V \otimes \cO_{\bP(\barV)}$ from~\eqref{extension-v0-cv}. 
Define 
\begin{equation*}
\bE = \bP_{\bP(\barV)}(V_0  \otimes \cO_{\bP(\barV)}) \cong \bP(V_0) \times \bP(\barV)
\end{equation*}
and let  
\begin{equation*}
\eps \colon \bE \to \tC_{V_0}(\bP(\barV))
\end{equation*}
be the canonical divisorial embedding induced by the first map in the top row of~\eqref{extension-v0-cv}.
We have a commutative diagram
\begin{equation}
\label{diagram-tC-blowup}
\vcenter{\xymatrix{
\bE \ar[r]^-\eps \ar[d] &
\tC_{V_0}(\bP(\barV)) \ar[d]^f \ar[r]^-{\barp} &
\bP(\barV) \\
\bP(V_0) \ar[r] & 
\bP(V)
}}
\end{equation}
The isomorphism $\bE \cong \bP(V_0) \times \bP(\barV)$ 
is induced by the product of the vertical arrow and $\barp \circ \eps$.

The next result follows easily from the definitions.  

\begin{lemma}
\label{lemma-tC-divisors}
The following hold: 
\begin{enumerate}
\item \label{tC-f-blowup}
The morphism $f \colon \tC_{V_0}(\bP(\barV)) \to \bP(V)$ is the blowup of 
$\bP(V)$ in $\bP(V_0)$, with exceptional divisor $\bE$.  

\item
The $\cO(1)$ line bundle for the projective bundle  
$\barp \colon \tC_{V_0}(\bP(\barV)) \to \bP(\barV)$ is $\cO(H)$. 

\item \label{E-Hk-H}
We have the following equality of divisors modulo linear equivalence: 
\begin{equation*}
\bE = H - \barH,  \qquad H|_{\bE} = H_0.
\end{equation*}

\item \label{omega-barp}
The relative dualizing complex of the morphism $\barp$ is given by 
\begin{equation*}
\omega_{\barp}  \cong \det(\barp^*\cV^{\svee})(-(N_0+1)H)[N_0].
\end{equation*}
\end{enumerate} 
\end{lemma}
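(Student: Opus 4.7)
My plan is to prove the four parts in the order (2), (3), (4), (1), since the first three are straightforward projective-bundle computations while (1) is the only genuinely birational statement. Throughout, I denote the relative tautological subbundle of the projective bundle $\barp$ by $\cO_\barp(-1) \hookrightarrow \barp^*\cV$.

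For (2), composing $\cO_\barp(-1) \hookrightarrow \barp^*\cV$ with the middle vertical arrow $\barp^*\cV \hookrightarrow V \otimes \cO$ of~\eqref{extension-v0-cv} produces a line subbundle of $V \otimes \cO$ which, by the very definition of $f$, coincides with $f^*\cO_{\bP(V)}(-1) = \cO(-H)$. This identifies $\cO_\barp(-1) \cong \cO(-H)$, which is~(2). For the first part of (3), composing $\cO(-H) = \cO_\barp(-1) \hookrightarrow \barp^*\cV$ further with the surjection $\barp^*\cV \twoheadrightarrow \cO(-\barH)$ from the top row of~\eqref{extension-v0-cv} produces a canonical section of $\cO(H-\barH)$, whose zero locus is the closed subscheme where $\cO_\barp(-1)$ factors through $\barp^*(V_0 \otimes \cO)$; by definition this locus is~$\bE$. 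Hence $\bE \sim H - \barH$. For the second identity, on $\bE \cong \bP(V_0) \stimes \bP(\barV)$ the tautological sub $\cO(-H)|_\bE$ lies inside $V_0 \otimes \cO$ and there agrees with the tautological sub of the first factor, so $\cO(-H)|_\bE \cong \cO(-H_0)$, i.e.\ $H|_\bE = H_0$.

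For (4), I would apply the relative Euler sequence on the projective bundle $\barp$,
\[
0 \to \cO \to \barp^*\cV \otimes \cO(H) \to T_\barp \to 0,
\]
where the twisting line bundle is identified via~(2). Taking determinants gives $\det T_\barp \cong \det(\barp^*\cV) \otimes \cO((N_0+1)H)$, whence $\omega_\barp^{0} \cong \det(\barp^*\cV^{\svee})(-(N_0+1)H)$. Since $\barp$ has relative dimension $N_0$, shifting by $N_0$ produces the claimed formula for $\omega_\barp$.

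The main obstacle is (1), which I would settle via the universal property of the blowup. The section of $\cO(H - \barH)$ constructed above realizes the pullback ideal $f^{-1}\cI_{\bP(V_0)}$ as the Cartier ideal $\cO(-\bE)$, so $f$ factors uniquely as $\tC_{V_0}(\bP(\barV)) \xrightarrow{g} \Bl_{\bP(V_0)}\bP(V) \to \bP(V)$. The morphism $g$ is proper and birational, and is an isomorphism over $\bP(V) \setminus \bP(V_0)$: a line $\ell \subset V$ with $\ell \not\subset V_0$ uniquely determines its image $\bar\ell = \pi(\ell) \in \bP(\barV)$. Over the center $\bP(V_0)$, the exceptional divisor of the blowup is the projectivized normal bundle $\bP_{\bP(V_0)}(N_{\bP(V_0)/\bP(V)}) \cong \bP(V_0) \stimes \bP(\barV)$ (since $N_{\bP(V_0)/\bP(V)} \cong \barV \otimes \cO(H_0)$), and $g$ restricts to a morphism of such projectivized normal bundles over $\bP(V_0)$ compatible with these identifications. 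Hence $g$ is quasi-finite in addition to being proper and birational between smooth varieties, so Zariski's main theorem gives that $g$ is an isomorphism, establishing~(1).
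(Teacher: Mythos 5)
The paper gives no proof of this lemma, stating only that it ``follows easily from the definitions,'' so there is no paper argument to compare against; your proof supplies the omitted details. Parts (2), (3), and (4) are exactly the natural computations: identifying $\cO_\barp(-1)$ with $f^*\cO_{\bP(V)}(-1)$ via the universal property of $\bP(V)$, reading off $\bE$ as the vanishing of the composite $\cO(-H) \hookrightarrow \barp^*\cV \twoheadrightarrow \cO(-\barH)$, and applying the relative Euler sequence. For part (1), your strategy (universal property of the blowup gives $g\colon \tC_{V_0}(\bP(\barV)) \to \Bl_{\bP(V_0)}\bP(V)$, then Zariski's main theorem) is sound, but the hinge point --- that $g|_{\bE}\colon \bE \to E'$ is an isomorphism, asserted as ``compatible with these identifications'' --- is left unjustified and is where the actual content lies. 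One way to close this: observe that $g$ also intertwines $\barp$ with the resolved projection $\bar q\colon \Bl_{\bP(V_0)}\bP(V) \to \bP(\barV)$, since both agree on the dense open complement of $\bE$, so $g|_\bE$ is a morphism over $\bP(V_0) \times \bP(\barV)$ and hence the identity. A cleaner and base-independent alternative (the paper works over an arbitrary scheme $S$, where ``birational'' and dense-open arguments need care) is to build the inverse map directly: the tautological line subbundle $\cO_{\Bl}(-1) \hookrightarrow V \otimes \cO$ factors through $\bar q^*\cV$ by the very definition of $\cV$ as the preimage of $\cO(-\barH)$, so the universal property of the projective bundle $\barp$ produces $h\colon \Bl_{\bP(V_0)}\bP(V) \to \tC_{V_0}(\bP(\barV))$, and $g,h$ are mutually inverse by uniqueness in the two universal properties. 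Either patch makes your argument complete.
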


Following \eqref{resolved-cone-X-base-change} we define the resolved cone of a category linear over $\bP(\barV)$ 
by base change from the universal resolved cone.  

\begin{definition}
\label{definition-tbC}
Let $\cA$ be a $\bP(\barV)$-linear category. 
The \emph{resolved cone} over $\cA$ with vertex $\bP(V_0)$ 
is the category 
\begin{equation*}
\tC_{V_0}(\cA) = 
\cA \otimes_{\Perf(\bP(\barV))} \Perf(\tC_{V_0}(\bP(\barV))). 
\end{equation*}
Further, we define 
\begin{equation*}
\bE(\cA) = \cA \otimes_{\Perf(\bP(\barV))} \Perf(\bE).
\end{equation*}
\end{definition}

\begin{remark} 
\label{remark-E}
The isomorphism 
$\bE \cong \bP(V_0) \times \bP(\barV)$ induces a canonical equivalence 
\begin{equation*}
\bE(\cA) \simeq \Perf(\bP(V_0)) \sotimes \cA.
\end{equation*} 
We identify these categories via this equivalence; 
in particular, below we will regard subcategories of the right side as subcategories of the left. 
{Furthermore, using this identification the morphism $\eps$ from~\eqref{diagram-tC-blowup}
induces functors between $\Perf(\bP(V_0)) \sotimes \cA$ and $\tC_{V_0}(\cA)$.}
\end{remark}

\begin{remark}
\label{remark-tbC-spaces}
If $X$ is a scheme over $\bP(\barV)$, 
then by the isomorphism~\eqref{resolved-cone-X-base-change} and \cite[Theorem~1.2]{bzfn} 
the resolved cone satisfies 
\begin{equation*}
\tC_{V_0}(\Perf(X)) \simeq \Perf(\tC_{V_0}(X)). 
\end{equation*} 
\end{remark}

\begin{remark}
\label{remark-tC-functorial}
Resolved cones are functorial. 
Namely, given a $\bP(\barV)$-linear functor $\gamma \colon \cA \to \cB$, 
its base change along $\tC_{V_0}(\bP(\barV)) \to \bP(\barV)$ gives a $\bP(V)$-linear functor 
\begin{equation*}
\tC_{V_0}(\gamma) \colon \tC_{V_0}(\cA) \to \tC_{V_0}(\cB). 
\end{equation*} 
Moreover, 
if $\gamma^* \colon \cB \to \cA$ is a left
adjoint functor to $\gamma$, then $\tC_{V_0}(\gamma^*)$ is left
adjoint to $\tC_{V_0}(\gamma)$, and similarly for right adjoints, see \cite[Lemma 2.12]{NCHPD}.
\end{remark}

\subsection{Categorical cones} 
\label{subsection-categorical-cones}

We define the categorical cone of a Lefschetz category over $\bP(\barV)$ as a certain subcategory of the resolved cone, 
similarly to Definition~\ref{definition-cat-join} of a categorical join.

\begin{definition}
\label{definition-cat-cone} 
Let $\cA$ be a Lefschetz category over $\bP(\barV)$ with Lefschetz center $\cA_0$. 
The \emph{categorical cone} $\cC_{V_0}(\cA)$ over~$\cA$ with vertex $\bP(V_0)$ is the subcategory of $\tC_{V_0}(\cA)$ defined by
\begin{equation*}
\cC_{V_0}(\cA) = \left\{ C \in \tC_{V_0}(\cA) \ \left|\ 
\eps^*(C) \in \Perf(\bP(V_0)) \sotimes \cA_0 \subset \bE(\cA)  
\right.\right\}.
\end{equation*}
Here, we have used the identification of Remark~\ref{remark-E}. 
If $\cA = \Perf(X)$ for a scheme $X$ over~$\bP(\barV)$, we abbreviate notation by writing 
\begin{equation*}
\cC_{V_0}(X) = \cC_{V_0}(\Perf(X)). 
\end{equation*} 
\end{definition} 

\begin{remark}
The categorical cone depends on the choice of a Lefschetz center for~$\cA$, although this is suppressed in the notation. 
For instance, for the ``stupid'' Lefschetz center~$\cA_0 = \cA$, the condition in the definition is void, 
so $\cC_{V_0}(\cA) = \tC_{V_0}(\cA)$. 
\end{remark} 

We note that if $V_0 = 0$, then taking the categorical cone does nothing:

\begin{lemma}
\label{lemma:cone-0}
Let $\cA$ be a Lefschetz category over $\bP(\barV)$. 
If $V_0 = 0$ then $\cC_{V_0}(\cA) \simeq \cA$.
\end{lemma}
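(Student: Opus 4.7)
The plan is to unwind the construction when $V_0 = 0$ and observe that every ingredient becomes trivial.

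First, I would examine the bundle $\cV$ on $\bP(\barV)$ defined by diagram \eqref{extension-v0-cv}. When $V_0 = 0$, the surjection $V \otimes \cO_{\bP(\barV)} \to \barV \otimes \cO_{\bP(\barV)}$ is an isomorphism, so $\cV$ is simply the line bundle $\cO(-\barH)$. Consequently, the universal resolved cone
\begin{equation*}
\tC_{V_0}(\bP(\barV)) = \bP_{\bP(\barV)}(\cO(-\barH))
\end{equation*}
is a $\bP^0$-bundle, so the canonical projection $\barp$ is an isomorphism onto $\bP(\barV)$.

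Second, I would feed this into Definition \ref{definition-tbC}: since $\tC_{V_0}(\bP(\barV)) \simeq \bP(\barV)$ over $\bP(\barV)$, base change yields
\begin{equation*}
\tC_{V_0}(\cA) = \cA \otimes_{\Perf(\bP(\barV))} \Perf(\tC_{V_0}(\bP(\barV))) \simeq \cA.
\end{equation*}

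Third, I would dispose of the cutting condition in Definition \ref{definition-cat-cone}. The exceptional locus $\bE = \bP_{\bP(\barV)}(V_0 \otimes \cO_{\bP(\barV)})$ is empty because $V_0 = 0$, and likewise $\bP(V_0) = \emptyset$. Hence $\bE(\cA) \simeq 0$ and the condition $\eps^*(C) \in \Perf(\bP(V_0)) \sotimes \cA_0$ is vacuously satisfied for every $C \in \tC_{V_0}(\cA)$. Combining with the previous step,
\begin{equation*}
\cC_{V_0}(\cA) = \tC_{V_0}(\cA) \simeq \cA.
\end{equation*}

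There is no real obstacle here: the entire lemma is a matter of tracking that a rank-$(N_0+1)$ vector bundle becomes a line bundle when $N_0 = 0$, so the projectivization collapses. The only mild care needed is to confirm that the equivalence respects the $\bP(V)$-linear structure, but since $V \cong \barV$ in this case and $f$ becomes the identity on $\bP(\barV) = \bP(V)$, this is automatic.
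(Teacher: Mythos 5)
Your proof is correct and follows essentially the same route as the paper: observe that $\cV \cong \cO(-\barH)$ when $V_0=0$ so the universal resolved cone collapses to $\bP(\barV)$, then note that $\bE = \varnothing$ makes the defining condition of $\cC_{V_0}(\cA)$ vacuous. Your closing remark about the $\bP(V)$-linear structure is a nice extra check that the paper leaves implicit.
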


\begin{proof}
If $V_0 = 0$ then $\cV \cong \cO_{\bP(\barV)}(-\barH)$ by~\eqref{extension-v0-cv}, 
hence $\tC_{V_0}(\bP(\barV)) = \bP_{\bP(\barV)}(\cV) \cong \bP(\barV)$ and~$\tC_{V_0}(\cA) \simeq \cA$. 
Furthermore, the divisor $\bE$ is empty in this case, hence the defining condition of $\cC_{V_0}(\cA) \subset \tC_{V_0}(\cA)$ is void and $\cC_{V_0}(\cA) = \tC_{V_0}(\cA)$.
\end{proof}

\begin{lemma}
\label{lemma-tC}
Let $\cA$ be a Lefschetz category over $\bP(\barV)$ of length $m$. 
Then the categorical cone $\cC_{V_0}(\cA)$ is an admissible $\bP(V)$-linear 
subcategory of $\tC_{V_0}(\cA)$, and there are $\bP(V)$-linear semiorthogonal decompositions 
\begin{align}
\label{sod-tC}
\tC_{V_0}(\cA) = \Big \langle & \notag
\cC_{V_0}(\cA),  \\
&
\eps_{!}{\left( \Perf(\bP(V_0)) \sotimes \cA_{1}(\barH) \right)}, \dots, 
\eps_{!}{\left( \Perf(\bP(V_0)) \sotimes \cA_{m-1}((m-1)\barH) \right)}  \Big\rangle,
\\
\label{sod-tC-other}
\tC_{V_0}(\cA) = \Big\langle & \notag
\eps_{*}\left( \Perf(\bP(V_0)) \sotimes \cA_{1-m}((1-m)\barH)   \right), \dots, 
\eps_{*}\left( \Perf(\bP(V_0)) \sotimes \cA_{-1}(-\barH) \right),   
\\
& \cC_{V_0}(\cA) 
\Big\rangle, 
\end{align} 
where $\eps_!$ denotes the left adjoint of $\eps_*$. 
\end{lemma}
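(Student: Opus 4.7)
The plan is to verify the three standard conditions for a semiorthogonal decomposition---admissibility of the summands, pairwise semiorthogonality, and joint generation---separately for~\eqref{sod-tC}. Decomposition~\eqref{sod-tC-other} will then follow by a completely parallel argument using the \emph{left} Lefschetz decomposition of $\cA$ together with the fully faithful pushforward $\eps_*$ in place of $\eps_!$; the full faithfulness of $\eps_*$ requires nothing since $\eps$ is a closed immersion. The principal technical input is Grothendieck duality for the divisorial embedding~$\eps$, giving $\eps_! \cong \eps_*(-) \otimes \cO(\bE)[-1]$. Combined with the identification $\cO(\bE)\vert_\bE \cong \cO(H_0 - \barH)$ from Lemma~\ref{lemma-tC-divisors}\eqref{E-Hk-H}, this yields the key computation
\begin{equation*}
\Hom_{\tC_{V_0}(\cA)}(\eps_! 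F, \eps_! G) = \Hom_{\bE(\cA)}(F, G) \oplus \Hom_{\bE(\cA)}(F, G \otimes \cO(H_0 - \barH)[-1])
\end{equation*}
for $F, G \in \bE(\cA) = \Perf(\bP(V_0)) \sotimes \cA$.

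Admissibility of each $\eps_!(\Perf(\bP(V_0)) \sotimes \cA_j(j\barH))$ for $1 \leq j \leq m-1$, and semiorthogonality between these components for distinct values of $j$, both reduce to Hom-vanishing of the same flavor: the twist by $\cO(-\barH)$ sends $\cA_j(j\barH)$ into $\cA_j((j-1)\barH) \subset \cA_{j-1}((j-1)\barH)$ by the chain $\cA_j \subset \cA_{j-1}$, and the right Lefschetz semiorthogonality~\eqref{eq:right-decomposition} of~$\cA$ then annihilates both Hom summands above. The semiorthogonality of $\cC_{V_0}(\cA)$ with each $\eps_!(\Perf(\bP(V_0)) \sotimes \cA_j(j\barH))$ for $j \geq 1$ is even simpler: using the adjunction $\eps_! \dashv \eps^*$, it reduces via the defining condition $\eps^*(C) \in \Perf(\bP(V_0)) \sotimes \cA_0$ of $\cC_{V_0}(\cA)$ to the Lefschetz orthogonality $\cA_0 \perp \cA_j(j\barH)$.

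The main obstacle is the generation step. I would begin from the projective bundle formula
\begin{equation*}
\tC_{V_0}(\cA) = \langle \barp^*\cA, \barp^*\cA(H), \dots, \barp^*\cA(N_0 H)\rangle,
\end{equation*}
coming from the fact that $\barp$ is a $\bP^{N_0}$-bundle (since $\rank \cV = N_0 + 1$), and substitute the right Lefschetz decomposition of $\cA$ into each summand. This produces an $(N_0+1) \times m$ grid of pieces of the form $\barp^*\cA_j(j\barH + kH)$. Using the relation $H = \bE + \barH$ together with the exact triangle $\cO(-\bE) \to \cO \to \eps_*\cO_\bE$, the pieces with $j \geq 1$ can be mutated across the $j = 0$ pieces, collecting for each fixed $j \geq 1$ the $N_0+1$ twists indexed by $k$ into a single admissible subcategory of the form $\eps_!(\Perf(\bP(V_0)) \sotimes \cA_j(j\barH))$---the $\Perf(\bP(V_0))$ factor materializing as the Beilinson-type resolution of the relative $\bP^{N_0-1}$-fibers of $\barp\vert_\bE$ together with the $H$-twists. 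The residual subcategory is spanned by the $j = 0$ pieces, which all lie in $\cC_{V_0}(\cA)$ (since $\eps^*\barp^*\cA_0(kH) = \cO_{\bP(V_0)}(kH_0) \otimes \cA_0 \subset \Perf(\bP(V_0)) \sotimes \cA_0$) and, by the semiorthogonality already proved, must exhaust it. The delicate part of this step is the mutation combinatorics, which is directly analogous to---but simpler than---the argument of~\cite[Lemma~3.12]{categorical-joins}, where the presence of two exceptional divisors required substantially more bookkeeping.
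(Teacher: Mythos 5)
Your proposal is correct in strategy and in all the key computational points, but it takes a genuinely different---and more laborious---route than the paper. The paper's proof is a one-liner: it invokes a general structural result, \cite[Proposition~3.11]{categorical-joins}, with $T = \bP(\barV)$, $Y = \tC_{V_0}(\bP(\barV))$, $E = \bE$, which produces exactly the claimed decompositions for any admissible subcategory of $\cA_E$. You are instead reproving that general proposition from scratch in this special case, which is entirely legitimate (and makes the argument self-contained at the cost of the mutation combinatorics you flag at the end). Your Grothendieck-duality formula $\eps_! \cong \eps_*(-) \otimes \cO(\bE)[-1]$ is correct, and the rest of the semiorthogonality and generation outline goes through; the $j$-shifted comparison $\cA_j((j-1)\barH) \subset \cA_{j-1}((j-1)\barH)$ is exactly the right point to invoke the chain~\eqref{eq:lefschetz-chain} and the right Lefschetz decomposition.

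Two small caveats. First, the ``key computation'' is an exact triangle (from $G \to \eps^!\eps_*G \to G \otimes \cO_\bE(\bE)[-1]$), not a direct sum, but since you only need the two outer terms to vanish this does not affect the argument. Second, and more substantively, the parenthetical ``the full faithfulness of $\eps_*$ requires nothing since $\eps$ is a closed immersion'' is wrong: derived pushforward along a closed immersion is \emph{not} fully faithful in general (exactly because $\eps^!\eps_* \not\simeq \id$, as your own triangle shows). In the $\eps_*$-version the full faithfulness on each piece $\Perf(\bP(V_0)) \sotimes \cA_j(j\barH)$ for $j < 0$ is still a nontrivial Hom-vanishing, established by the same mechanism as in the $\eps_!$-case but now using the \emph{left} Lefschetz decomposition~\eqref{eq:left-decomposition} and the chain inclusion $\cA_j \subset \cA_{j+1}$. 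You should phrase it as a parallel computation rather than as something for free.
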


\begin{proof}
Apply \cite[Proposition 3.11]{categorical-joins} with $T = \bP(\barV)$, $Y = \tC_{V_0}(\bP(\barV))$, and $E = \bE$. 
Then in the notation of that proposition, we have $\cA_Y = \tC_{V_0}(\cA)$ and $\cA_E = \bE(\cA) = \Perf(\bP(V_0)) \otimes \cA$, 
and the result follows. 
\end{proof}

\begin{example}
\label{example-cone-PbarV}
Let $\bar{W} \subset \barV$ be a subbundle, so that $\bP(\bar{W}) \subset \bP(\barV)$.
The classical cone over~$\bP(\bar{W})$ with vertex $\bP(V_0)$ is given by 
$\bC_{V_0}(\bP(\bar{W})) = \bP(W)$, where $W \subset V$ is the preimage of~$\bar{W}$ under the epimorphism $V \to \barV$.  
Consider the Lefschetz structure on $\bP(\bar{W})$ defined in Example~\ref{example-projective-bundle-lc}.
Then it follows from Lemma~\ref{lemma-tC-divisors} and Orlov's blowup formula that 
the pullback functor 
$f^* \colon \Perf(\bP(W)) \to \Perf(\tC_{V_0}(\bP(\bar{W})))$ 
induces an equivalence
$\Perf(\bP(W)) \simeq \cC_{V_0}(\bP(\bar{W}))$. 
Further, Theorem~\ref{theorem-cone-lef-cat} below equips $\cC_{V_0}(\bP(\bar{W}))$ with a canonical Lefschetz 
structure, with respect to which this equivalence is easily seen to be a Lefschetz equivalence.
\end{example}

\begin{lemma}
\label{lemma-cC-smooth-proper}
Let $\cA$ be a Lefschetz category over $\bP(\barV)$ which is smooth and proper over $S$. 
Then the categorical cone~$\cC_{V_0}(\cA)$ is smooth and proper over $S$. 
\end{lemma}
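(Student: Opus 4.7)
The plan is to factor the argument into two steps corresponding to the two-stage construction of $\cC_{V_0}(\cA)$: first establish smoothness and properness for the resolved cone $\tC_{V_0}(\cA)$, then transfer this to the categorical cone via its admissibility inside the resolved cone.

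\smallskip
\textbf{Step 1: $\tC_{V_0}(\cA)$ is smooth and proper over $S$.} By Definition~\ref{definition-tbC}, the resolved cone is a base change
\begin{equation*}
\tC_{V_0}(\cA) = \cA \otimes_{\Perf(\bP(\barV))} \Perf(\tC_{V_0}(\bP(\barV))).
\end{equation*}
The morphism $\barp \colon \tC_{V_0}(\bP(\barV)) = \bP_{\bP(\barV)}(\cV) \to \bP(\barV)$ is a projective bundle of relative dimension $N_0$, so in particular $\tC_{V_0}(\bP(\barV))$ is smooth and proper over $\bP(\barV)$. The plan is to invoke the standard principle that base change of a smooth and proper $\bP(\barV)$-linear category along a smooth and proper morphism of schemes remains smooth and proper over the ultimate base $S$ (this is the analogue of \cite[Lemma 3.14]{categorical-joins} for joins, and relies only on the well-known fact that $\Perf$ of a smooth proper morphism is smooth and proper as a linear category over the target). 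Since $\cA$ is smooth and proper over $S$ by hypothesis, this yields the claim.

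\smallskip
\textbf{Step 2: Pass to the admissible subcategory.} By Lemma~\ref{lemma-tC}, the categorical cone $\cC_{V_0}(\cA)$ is an admissible $\bP(V)$-linear subcategory of $\tC_{V_0}(\cA)$, fitting into an $S$-linear semiorthogonal decomposition whose other components are of the form $\eps_{!}(\Perf(\bP(V_0)) \sotimes \cA_{i}(i\barH))$ with $1 \le i \le m-1$. The plan is now to invoke the standard fact that an admissible subcategory of a smooth and proper $S$-linear category is itself smooth and proper (see e.g.\ \cite[Lemma 4.9]{NCHPD}); applying this to the inclusion $\cC_{V_0}(\cA) \hookrightarrow \tC_{V_0}(\cA)$ and using Step~1 concludes the proof.

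\smallskip
The only potentially delicate point is Step~1, where one must ensure that base changing a smooth proper linear category along a projective bundle morphism preserves smoothness and properness; however, this is a basic feature of the formalism and is exactly parallel to what is used to prove smooth properness of categorical joins in \cite[Lemma 3.14]{categorical-joins}, so no new difficulty arises.
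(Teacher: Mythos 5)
Your proof is correct and follows essentially the same two-step argument the paper uses: first that the resolved cone $\tC_{V_0}(\cA)$, being the base change of $\cA$ along the projective bundle $\barp$, is smooth and proper over $S$ (the paper cites \cite[Lemma 4.11]{NCHPD} for this), and then that the admissible subcategory $\cC_{V_0}(\cA) \subset \tC_{V_0}(\cA)$ given by Lemma~\ref{lemma-tC} inherits smoothness and properness (the paper cites \cite[Lemma 4.15]{NCHPD} here, not Lemma 4.9 as you wrote, so double-check that reference). Other than that minor citation slip, the argument matches the paper's.
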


\begin{proof}
Being the base change of $\cA$ along the projective bundle $\barp \colon \tC_{V_0}(\bP(\barV)) \to \bP(\barV)$, 
the resolved cone $\tC_{V_0}(\cA)$ is smooth and proper over $S$ by \cite[Lemma 4.11]{NCHPD}. 
Hence the result follows from Lemma~\ref{lemma-tC} and \cite[Lemma 4.15]{NCHPD}. 
\end{proof}

\begin{proposition}
\label{proposition-cC-tC-agree}
Let $\cA$ be a Lefschetz category over $\bP(\barV)$.  
Let $T \to \bP(V)$ be a morphism of schemes which factors through the complement of $\bP(V_0)$ in $\bP(V)$. 
Then there are $T$-linear equivalences
\begin{equation*}
\cC_{V_0}(\cA)_T \simeq \tC_{V_0}(\cA)_T \simeq \cA_T, 
\end{equation*}
where the base change of $\cA$ is taken along the morphism $T \to \bP(\barV)$ 
obtained by composing~$T \to \bP(V)$ with the linear projection from $\bP(V_0) \subset \bP(V)$. 
\end{proposition}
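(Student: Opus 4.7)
The plan is to establish the two equivalences in sequence. First I would prove the geometric equivalence $\tC_{V_0}(\cA)_T \simeq \cA_T$, and then use it together with the semiorthogonal decomposition from Lemma~\ref{lemma-tC} to obtain $\cC_{V_0}(\cA)_T \simeq \tC_{V_0}(\cA)_T$.

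For the second equivalence, the key observation is that by Lemma~\ref{lemma-tC-divisors}\eqref{tC-f-blowup}, the morphism $f \colon \tC_{V_0}(\bP(\barV)) \to \bP(V)$ is the blowup along $\bP(V_0)$, hence restricts to an isomorphism over the open complement $U = \bP(V) \setminus \bP(V_0)$. Moreover, the inverse of this isomorphism, viewed as a map $U \to \tC_{V_0}(\bP(\barV))$, composed with the projection $\barp$, is exactly the linear projection $U \to \bP(\barV)$ from $\bP(V_0)$. Since $T \to \bP(V)$ factors through $U$, the derived fiber product reduces to
\begin{equation*}
T \times_{\bP(V)} \tC_{V_0}(\bP(\barV)) \simeq T,
\end{equation*}
with the induced map to $\bP(\barV)$ being precisely the composition of $T \to \bP(V)$ with the linear projection from $\bP(V_0)$. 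Note that Tor-independence is automatic here because $T$ lies in the open locus where $f$ is an isomorphism. Tensoring this identification with $\cA$ over $\Perf(\bP(\barV))$ and using associativity of the tensor product of linear categories gives $\tC_{V_0}(\cA)_T \simeq \cA_T$, as desired.

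For the first equivalence, I would base change the $\bP(V)$-linear semiorthogonal decomposition~\eqref{sod-tC} along $T \to \bP(V)$ to produce a $T$-linear semiorthogonal decomposition of $\tC_{V_0}(\cA)_T$ with pieces $\cC_{V_0}(\cA)_T$ and the base changes of the components $\eps_!(\Perf(\bP(V_0)) \sotimes \cA_i(i\barH))$. Each such orthogonal component is supported on the exceptional divisor $\bE$, which maps into $\bP(V_0)$ via $f$. Since $T \to \bP(V)$ avoids $\bP(V_0)$, the base change of $\bE$ along $T \to \bP(V)$ is empty, and consequently each of these orthogonal components vanishes after base change. The decomposition therefore collapses to the single piece $\cC_{V_0}(\cA)_T \simeq \tC_{V_0}(\cA)_T$.

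The main technical point to check is that base change along $T \to \bP(V)$ commutes with the semiorthogonal decomposition of Lemma~\ref{lemma-tC} and that categories supported on $\bP(V_0)$ genuinely vanish after restriction to $T$; this is a standard feature of linear semiorthogonal decompositions, but some care is needed in spelling out that the components $\eps_!(\Perf(\bP(V_0)) \sotimes \cA_i(i\barH))$ are $\bP(V)$-linearly supported over $\bP(V_0)$, so their base change to $T$ is zero by the notion of support recalled in~\S\ref{subsection:conventions}. Once this is granted, both equivalences follow immediately.
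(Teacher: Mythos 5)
Your proof is correct and follows the same approach as the paper: first deduce $\tC_{V_0}(\cA)_T \simeq \cA_T$ from the fact that the blowup $f$ restricts to an isomorphism over $\bP(V) \setminus \bP(V_0)$ (with the composition to $\bP(\barV)$ being exactly the linear projection), and then observe that the orthogonal complements in the semiorthogonal decomposition \eqref{sod-tC} are supported over $\bP(V_0)$ and therefore vanish after base change to $T$. The only difference is that you spell out a few more of the intermediate checks (Tor-independence, compatibility of base change with the SOD) that the paper leaves implicit.
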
 

\begin{proof}
By Lemma~\ref{lemma-tC-divisors}, the morphism $f \colon \tC_{V_0}(\bP(\barV)) \to \bP(V)$ is an isomorphism over 
the complement of $\bP(V_0)$. 
Hence there is an isomorphism $\tC_{V_0}(\bP(\barV))_T \cong T$. 
The equivalence $\tC_{V_0}(\cA)_T \simeq \cA_T$ then follows from the definition of the 
resolved cone. 
Further, the components to the right of $\cC_{V_0}(\cA)$ in~\eqref{sod-tC} are supported 
over $\bP(V_0)$, hence their base change along $T \to \bP(\barV)$ vanish. 
This shows $\cC_{V_0}(\cA)_T \simeq \tC_{V_0}(\cA)_T$. 
\end{proof}

For future use we fix the following immediate corollary of the proposition.

\begin{corollary}
\label{corollary:cone-trivial}
Let $\cA$ be a Lefschetz category over $\bP(\barV)$.  
Let $T \to \bP(V)$ be a morphism of schemes which factors through the complement of $\bP(V_0)$ in $\bP(V)$, 
and such that the composition $T \to \bP(\barV)$ is an isomorphism.
Then there is an equivalence 
\begin{equation*}
\cC_{V_0}(\cA)_T \simeq \cA. 
\end{equation*}
\end{corollary}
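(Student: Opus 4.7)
The plan is to derive this directly from Proposition \ref{proposition-cC-tC-agree} with essentially no new work: the hypothesis on $T$ is tailor-made for that proposition, and the extra assumption (that $T \to \bP(\barV)$ is an isomorphism) exists only to make the base change $\cA_T$ collapse back to $\cA$.

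First, I would invoke Proposition \ref{proposition-cC-tC-agree} verbatim. Its hypothesis is that $T \to \bP(V)$ factors through the complement of $\bP(V_0)$, which is exactly our assumption. Its conclusion is an equivalence
\begin{equation*}
\cC_{V_0}(\cA)_T \simeq \cA_T,
\end{equation*}
where $\cA_T$ denotes the base change of $\cA$ along the composition $T \to \bP(\barV)$ with the linear projection from $\bP(V_0)$.

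The second and only remaining step is to observe that this composition is an isomorphism by assumption. Thus $\Perf(T) \simeq \Perf(\bP(\barV))$ as $\bP(\barV)$-linear categories, and from the definition of base change reviewed in \S\ref{subsection:conventions},
\begin{equation*}
\cA_T = \cA \otimes_{\Perf(\bP(\barV))} \Perf(T) \simeq \cA \otimes_{\Perf(\bP(\barV))} \Perf(\bP(\barV)) \simeq \cA.
\end{equation*}
Combining this with the equivalence from Proposition \ref{proposition-cC-tC-agree} yields the desired $\cC_{V_0}(\cA)_T \simeq \cA$.

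There is no genuine obstacle here; the corollary is formal once Proposition \ref{proposition-cC-tC-agree} is in hand. The only thing to double-check is that the phrase ``the composition $T \to \bP(\barV)$'' in the corollary refers to the same composition used in the proposition, namely the composite of $T \to \bP(V)$ with the linear projection $\bP(V) \dashrightarrow \bP(\barV)$ (which is defined on the image of $T$ since $T$ avoids $\bP(V_0)$); this is clear from the context.
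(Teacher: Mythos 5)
Your proof is correct and matches the paper exactly: the paper states this as an "immediate corollary" of Proposition~\ref{proposition-cC-tC-agree} without further argument, and the two steps you give (invoke the proposition, then note that $\cA_T \simeq \cA$ since $T \to \bP(\barV)$ is an isomorphism) are precisely the intended justification.
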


\subsection{Relation to categorical joins} 
\label{subsection-cJ-cC-comparison}
In this subsection, we assume $V_0 \neq 0$ and we are given a splitting of~\eqref{extension-v0-v}:
\begin{equation*}
V = V_0 \oplus \barV.
\end{equation*}
Under these assumptions, we relate the cone operations discussed above (classical, resolved, and categorical) 
to taking a join (in the corresponding senses) with $\bP(V_0)$.

The relation between the classical operations is easy: 
if $X \subset \bP(\barV)$ 
is a closed subscheme, then the classical join of $X$ with $\bP(V_0)$ 
coincides with the cone over $X$ with vertex $\bP(V_0)$, i.e. 
\begin{equation*}
\bJ(\bP(V_0), X) = \bC_{V_0}(X) \subset \bP(V). 
\end{equation*}
Note that the assumption $V_0 \neq 0$ is necessary for this equality; 
if $V_0 = 0$ then $\bP(V_0) = \varnothing$ and hence 
$\bJ(\bP(V_0), X) =\varnothing$, while $\bC_{V_0}(X) = X$. 

Next we compare the universal resolved join~\eqref{eq:universal-resolved-join} 
to the universal resolved cone 
\begin{equation*}
\tC_{V_0}(\bP(\barV))  = \bP_{\bP(\barV)}((V_0  \otimes \cO_{\bP(\barV)}) \oplus \cO(-\barH)) . 
\end{equation*}
The natural embedding $\cO(-H_0) \hookrightarrow V_0 \otimes \cO_{\bP(\barV)}$ induces a 
morphism 
\begin{equation*}
\beta \colon \tJ(\bP(V_0), \bP(\barV)) \to \tC_{V_0}(\bP(\barV)). 
\end{equation*}
Denoting $Z = \bP_{\bP(\barV)}(\cO(-\barH)) \cong \bP(\barV)  \subset \tC_{V_0}(\bP(\barV))$, 
the diagram~\eqref{diagram-tJ-blowup} 
(with~$V_1 = V_0$ and~$V_2 = \barV$) and 
the diagram~\eqref{diagram-tC-blowup} merge to a commutative diagram
\begin{equation*}
\xymatrix{
\bE_1 \ar[r] \bijarbottom[d] & \tJ(\bP(V_0), \bP(\barV)) \ar[d]_{\beta} & \ar[l] \bE_2 \ar[d] \\ 
\bE \ar[r] \ar[d] & \tC_{V_0}(\bP(\barV)) \ar[d] & \ar[l] Z \bijarbottom[d] \\ 
\bP(V_0) \ar[r] & \bP(V_0 \oplus \barV) & \ar[l] \bP(\barV)  
}
\end{equation*}
where under the isomorphisms $\bE_2 \cong \bP(V_0) \times \bP(\barV)$ and 
$Z \cong \bP(\barV)$, the map $\bE_2 \to Z$ is identified with the projection. 

\begin{lemma}
\label{lemma-tJ-tC} 
The morphism $\beta \colon \tJ(\bP(V_0),\bP(\barV) ) \to \tC_{V_0}(\bP(\barV))$ is the blowup of 
$\tC_{V_0}(\bP(\barV))$ in $Z$, with exceptional divisor $\bE_2$. 
\end{lemma}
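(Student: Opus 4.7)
My plan is to identify $\beta$ as the blowup morphism via the standard formula for blowing up a projective bundle along a section determined by a line subbundle. Specifically, given a base $B$ and a decomposition $\cE = \cL \oplus \cF$ in which $\cL$ is a line bundle, the section $\sigma \colon B \hookrightarrow \bP_B(\cE)$ determined by $\cL \subset \cE$ has normal bundle $\cHom(\cL,\cF) \cong \cL^{\svee} \otimes \cF$, and there is a canonical isomorphism
\begin{equation*}
\Bl_{\sigma(B)} \bP_B(\cE) \cong \bP_{\bP_B(\cF)}\bigl(\cL \oplus \cO_{\bP(\cF)}(-1)\bigr),
\end{equation*}
under which the blowup morphism is induced by the inclusion $\cL \oplus \cO_{\bP(\cF)}(-1) \hookrightarrow \cL \oplus \cF = \cE$ arising from the tautological inclusion on $\bP(\cF)$, and under which the exceptional divisor is the section $\bP_B(\cF) \hookrightarrow \bP_{\bP(\cF)}(\cL \oplus \cO(-1))$ determined by the summand~$\cL$. (The exceptional divisor is then canonically identified, fiberwise, with the projectivization of the normal bundle of $\sigma$ computed above, after tensoring by~$\cL^{\svee}$.)

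To prove the lemma, I would apply this construction with $B = \bP(\barV)$, $\cL = \cO(-\barH)$ and $\cF = V_0 \otimes \cO_{\bP(\barV)}$, so that the splitting~\eqref{extension-v0-v} identifies $\cE$ with $\cV$ and hence $\bP_B(\cE)$ with $\tC_{V_0}(\bP(\barV))$. Under this identification, the section $\sigma(B)$ determined by $\cL \subset \cE$ is exactly the subscheme $Z = \bP_{\bP(\barV)}(\cO(-\barH))$ from the statement. On the other hand, $\bP_B(\cF) = \bP(V_0) \stimes \bP(\barV)$ and $\cO_{\bP(\cF)}(-1)$ is the line bundle $\cO(-H_0)$, so the general formula becomes
\begin{equation*}
\Bl_Z \tC_{V_0}(\bP(\barV)) \cong \bP_{\bP(V_0) \stimes \bP(\barV)}\bigl(\cO(-H_0) \oplus \cO(-\barH)\bigr) = \tJ(\bP(V_0), \bP(\barV)),
\end{equation*}
and the blowup morphism is the one induced by the inclusion $\cO(-H_0) \oplus \cO(-\barH) \hookrightarrow (V_0 \otimes \cO) \oplus \cO(-\barH) = \cV$, which is exactly the definition of $\beta$ given just before the lemma.

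It remains to identify the exceptional divisor. By the general formula, it is the section of $\tJ(\bP(V_0),\bP(\barV)) \to \bP(V_0) \stimes \bP(\barV)$ determined by the summand $\cL = \cO(-\barH)$ of $\cO(-H_0) \oplus \cO(-\barH)$; by definition this section is $\bE_2 = \bP_{\bP(V_0) \stimes \bP(\barV)}(\cO(-\barH))$. As a sanity check, one can independently compute the normal bundle of $Z \cong \bP(\barV)$ in $\tC_{V_0}(\bP(\barV))$ to be $\cHom(\cO(-\barH), V_0 \otimes \cO) \cong V_0(\barH)$, whose projectivization is $\bP(V_0) \stimes \bP(\barV) = \bE_2$, consistent with the description of the exceptional divisor coming from the blowup formula.

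The main obstacle is purely bookkeeping: confirming that the canonical inclusion defining $\beta$ matches the one coming from the general blowup formula under the identifications above, and taking care with the convention $\bP(V) = \Proj(\Sym^\bullet V^{\svee})$ so that the tautological subbundles are placed correctly. The blowup formula itself is classical and can be verified either by the universal property of blowups (checking that the subscheme being blown up pulls back to a Cartier divisor and that the morphism is universal with this property) or by working fiberwise, where it recovers the familiar description of $\Bl_{\text{pt}} \bP^r$ as $\bP_{\bP^{r-1}}(\cO \oplus \cO(-1))$.
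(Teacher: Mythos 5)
Your proposal is correct, but it takes a genuinely different route from the paper. The paper's proof is a two-line citation: it combines Lemma~\ref{lemma-tC-divisors}\eqref{tC-f-blowup}, which says $\tC_{V_0}(\bP(\barV)) \to \bP(V)$ is the blowup of $\bP(V)$ along $\bP(V_0)$, with the earlier result from the categorical joins paper that $\tJ(\bP(V_0),\bP(\barV)) \to \bP(V_0 \oplus \barV)$ is the blowup along the disjoint union $\bP(V_0) \sqcup \bP(\barV)$ with exceptional divisors $\bE_1 \sqcup \bE_2$. Since the two centers are disjoint, the second blowup factors through the first, and the remaining morphism $\beta$ is therefore the blowup along the strict transform of $\bP(\barV)$ in $\tC_{V_0}(\bP(\barV))$, which one checks is exactly $Z$, with exceptional divisor $\bE_2$. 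You instead identify the blowup head-on, via the classical formula $\Bl_{\sigma(B)}\bP_B(\cL \oplus \cF) \cong \bP_{\bP_B(\cF)}\bigl(\cL \oplus \cO_{\bP(\cF)}(-1)\bigr)$ for blowing up the section determined by a line summand, and then verify that under the splitting this reproduces $\tJ(\bP(V_0),\bP(\barV))$ along with $\beta$ and $\bE_2$; your bookkeeping is correct. Your approach is more self-contained in that it never invokes the blowup descriptions of the maps to $\bP(V)$, but it does require proving (or taking as known) the general projective-bundle blowup formula. The paper's approach is shorter given the machinery already established, and it conceptually exhibits $\tJ \to \tC_{V_0} \to \bP(V)$ as an iterated blowup, which ties the lemma back to the blowup diagram~\eqref{diagram-tJ-blowup}. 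One small caveat on your write-up: the parenthetical remark about identifying the exceptional divisor with the projectivized normal bundle ``after tensoring by $\cL^{\svee}$'' is loosely phrased (the normal bundle is $\cL^{\svee}\otimes\cF$, whose projectivization is already $\bP_B(\cF)$ with no extra twist needed), but this is only a side remark and does not affect the argument.
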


\begin{proof}
Follows from Lemma~\ref{lemma-tC-divisors}\eqref{tC-f-blowup} and \cite[Lemma 3.1(1)]{categorical-joins}. 
\end{proof}

Using this, we can finally compare categorical joins and cones. 
We consider the categorical join $\cJ(\bP(V_0), \cA)$ of $\Perf(\bP(V_0))$ 
(with the standard Lefschetz structure from Example~\ref{example-projective-bundle-lc}) 
and a Lefschetz category $\cA$ over $\bP(\barV)$.

\begin{proposition}
\label{proposition-cJ-cC}
Let $\cA$ be a Lefschetz category over $\bP(\barV)$, and let $V_0$ be a nonzero vector bundle on $S$. 
Then there is an equivalence 
\begin{equation*}
\cC_{V_0}(\cA) \simeq \cJ(\bP(V_0), \cA)
\end{equation*}
of $\bP(V_0 \oplus \bar{V})$-linear categories. 
More precisely, pullback and pushforward along the blowup morphism 
$\beta \colon \tJ(\bP(V_0), \bP(\barV)) \to \tC_{V_0}(\bP(\barV))$ give functors 
\begin{align*}
\beta^* & \colon \tC_{V_0}(\cA) \to \tJ(\bP(V_0), \cA) , \\ 
\beta_* & \colon \tJ(\bP(V_0), \cA) \to \tC_{V_0}(\cA) , 
\end{align*}
which induce mutually inverse equivalences 
between the subcategories 
\begin{equation*}
\cC_{V_0}(\cA) \subset \tC_{V_0}(\cA) \quad \text{and} \quad 
\cJ(\bP(V_0), \cA) \subset \tJ(\bP(V_0), \cA) . 
\end{equation*}
Moreover, for any $\bP(\barV)$-linear functor $\gamma \colon \cA \to \cB$ there {are commutative diagrams}
\begin{equation}
\label{eq:tj-tc-diagram}
\vcenter{\xymatrix@C=4em{
\tJ(\bP(V_0),\cA) \ar[r]^-{\tJ(\id,\gamma)} \ar[d]_{\beta_*} &
\tJ(\bP(V_0),\cB) \ar[d]_{\beta_*}
\\
\tC_{V_0}(\cA) \ar[r]^-{{\tC_{V_0}(\gamma)}} 
&
\tC_{V_0}(\cB)
}}
\quad\text{and}\quad
\vcenter{\xymatrix@C=4em{
\tJ(\bP(V_0),\cA) \ar[r]^-{\tJ(\id,\gamma)} 
&
\tJ(\bP(V_0),\cB) 
\\
\tC_{V_0}(\cA) \ar[r]^-{{\tC_{V_0}(\gamma)}} \ar[u]_{\beta^*} &
\tC_{V_0}(\cB), \ar[u]_{\beta^*}
}}
\end{equation}
where the functor $\tJ(\id,\gamma)$ is defined for resolved joins in the same way as $\tC_{V_0}(\gamma)$ for cones, see~\cite[(3.4)]{categorical-joins}.
\end{proposition}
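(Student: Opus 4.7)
The strategy is to use the fact (Lemma~\ref{lemma-tJ-tC}) that $\beta$ is the blowup of $\tC_{V_0}(\bP(\barV))$ along $Z \cong \bP(\barV)$ with exceptional divisor $\bE_2$, together with the two geometric compatibilities: under the natural identifications $\bE_1 \cong \bE \cong \bP(V_0) \times \bP(\barV)$ one has $\beta \circ \eps_1 = \eps$, while $\beta \circ \eps_2$ factors as the projection $\bE_2 \cong \bP(V_0) \times \bP(\barV) \to \bP(\barV) \cong Z \subset \tC_{V_0}(\bP(\barV))$. Since $\beta$ is smooth and proper, base change along $\cA$ gives an adjoint pair $\beta^* \colon \tC_{V_0}(\cA) \to \tJ(\bP(V_0), \cA)$ and $\beta_* \colon \tJ(\bP(V_0), \cA) \to \tC_{V_0}(\cA)$.

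First I would verify that these functors respect the defining subcategories. For $C \in \cC_{V_0}(\cA)$, the identity $\eps_1^* \beta^* \simeq \eps^*$ (from $\beta \circ \eps_1 = \eps$) gives the first condition of Definition~\ref{definition-cat-join}, while $\eps_2^*(\beta^* C)$ is pulled back from $\bP(\barV)$, hence lies in $\Perf(S) \sotimes \cA$, giving the second. Conversely, since $\bE$ and $Z$ are disjoint in $\tC_{V_0}(\bP(\barV))$, the square formed by $\eps, \eps_1, \beta, \beta|_{\bE_1}$ is Tor-independent, so flat base change gives $\eps^* \beta_* \simeq \eps_1^*$ under the identification $\bE_1 \cong \bE$; this shows $\beta_*$ sends $\cJ(\bP(V_0), \cA)$ into $\cC_{V_0}(\cA)$. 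Then $\beta_* \beta^* \simeq \id$ on $\tC_{V_0}(\cA)$ follows from the projection formula together with $R\beta_* \cO_{\tJ} \cong \cO_{\tC_{V_0}}$, which holds for any blowup along a regularly embedded center. In particular $\beta^*$ is fully faithful on $\cC_{V_0}(\cA)$.

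The crux is to show $\beta^* \beta_* \simeq \id$ on $\cJ(\bP(V_0), \cA)$, equivalently that $\cJ(\bP(V_0), \cA) \subset \beta^* \tC_{V_0}(\cA)$. For this I would apply Orlov's blowup formula to $\beta$ (base-changed to $\cA$) to obtain a semiorthogonal decomposition
\[
\tJ(\bP(V_0), \cA) = \llangle \beta^* \tC_{V_0}(\cA), \cC_1, \dots, \cC_{N_0 - 1} \rrangle,
\]
where the extra components $\cC_k$ are supported on $\bE_2$ and, under the identification $\bE_2 \cong \bP(V_0) \times \bP(\barV)$, each is equivalent to $\cA$ twisted by $\cO(kH_0)$. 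The defining condition $\eps_2^*(D) \in \Perf(S) \sotimes \cA$ on $\cJ(\bP(V_0), \cA)$ places $\eps_2^* D$ in the Lefschetz center of $\Perf(\bP(V_0)) \sotimes \cA$, and the standard exceptional collection $\cO, \cO(H_0), \dots, \cO((N_0-1)H_0)$ on $\bP(V_0)$ makes the line bundles $\cO(kH_0)$ with $k \geq 1$ right orthogonal to $\Perf(S)$; combined with adjunction, this yields the orthogonality of $\cJ(\bP(V_0), \cA)$ to each $\cC_k$, and hence the desired inclusion into $\beta^* \tC_{V_0}(\cA) = \beta^* \cC_{V_0}(\cA)$. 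The commutativity of the diagrams~\eqref{eq:tj-tc-diagram} is then a formal consequence of functoriality: $\tC_{V_0}(\gamma)$ and $\tJ(\id, \gamma)$ are base changes of $\gamma$, and $\beta^*, \beta_*$ are base changes of $\beta^*, \beta_*$ on the geometric level, so the squares commute by associativity of base change for linear categories. The main obstacle will be the last orthogonality verification: matching Orlov's extra blowup components with the correct twists of $\cA$ so that the join's $\eps_2^*$-constraint precisely annihilates them.
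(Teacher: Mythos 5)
Your proof is essentially correct and proceeds by the same mechanism as the paper's: Lemma~\ref{lemma-tJ-tC} identifies $\beta$ as a blowup, and Orlov's blowup formula is the engine for identifying $\cJ(\bP(V_0),\cA)$ with $\beta^*(\cC_{V_0}(\cA))$. The paper's argument is slightly more economical: rather than separately verifying $\beta^*(\cC_{V_0}(\cA))\subset\cJ(\bP(V_0),\cA)$, $\beta_*(\cJ(\bP(V_0),\cA))\subset\cC_{V_0}(\cA)$, $\beta_*\beta^*\simeq\id$, and $\cJ(\bP(V_0),\cA)\subset\beta^*(\tC_{V_0}(\cA))$, it invokes at once the characterization
\[
\beta^*(\tC_{V_0}(\cA)) = \set{ C \in \tJ(\bP(V_0),\cA) \st \eps_2^*(C) \in \Perf(S) \otimes \cA },
\]
then uses the isomorphism $\beta|_{\bE_1}\colon\bE_1\xrightarrow{\sim}\bE$ to cut out $\beta^*(\cC_{V_0}(\cA))$ inside that as exactly the objects additionally satisfying the $\eps_1^*$-condition, which is the definition of $\cJ(\bP(V_0),\cA)$ since $\Perf(S)$ is the Lefschetz center of $\Perf(\bP(V_0))$.

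Two small slips in your write-up that are worth fixing. First, the orthogonality you need is that $\cO(kH_0)$ (in the relevant range) is \emph{left} orthogonal to $\cO$, i.e.\ $\Hom(\cO(kH_0),\cO)=0$, not right orthogonal; and since you argue via $\Hom(\eps_{2*}(-),D)\cong\Hom(-,\eps_2^!D)$, the object that needs to land in the right place is $\eps_2^!D = \eps_2^*D \otimes N_{\bE_2/\tJ}[-1]$, not $\eps_2^*D$ — one must check that the normal bundle twist shifts $\langle\cO_{\bP(V_0)}\rangle$ into the correct range for the Beilinson vanishing (it does, but you should say so). Second, the concluding phrase ``$\beta^* \tC_{V_0}(\cA) = \beta^* \cC_{V_0}(\cA)$'' is false, since $\beta^*$ is fully faithful and $\cC_{V_0}(\cA)\subsetneq\tC_{V_0}(\cA)$; what you actually want is that $\cJ(\bP(V_0),\cA)\subset\beta^*(\tC_{V_0}(\cA))$ together with your earlier three steps forces $\cJ(\bP(V_0),\cA)=\beta^*(\cC_{V_0}(\cA))$. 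Neither slip affects the overall correctness of the argument.
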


\begin{proof}
Diagrams~\eqref{eq:tj-tc-diagram} are obtained from the functor $\gamma$ by base change along the morphism~$\beta$.
Lemma~\ref{lemma-tJ-tC} together with 
Orlov's blowup formula implies  
$\beta^* \colon \tC_{V_0}(\cA) \to \tJ(\bP(V_0), \cA)$ is fully faithful 
and gives an equivalence onto the subcategory 
\begin{equation*}
\beta^* (\tC_{V_0}(\cA)) = 
\set{ C \in \tJ(\bP(V_0), \cA) \st \eps_2^*(C) \in \Perf(S) \otimes \cA \subset \bE_2(\Perf(\bP(V_0)), \cA) }
\end{equation*}
with the inverse functor given by $\beta_*$.
Since $\beta$ maps $\bE_1$ isomorphically onto $\bE$, it thus follows from  
Definition~\ref{definition-cat-cone} that $\beta^*$ induces an equivalence from 
$\cC_{V_0}(\cA)$ onto the subcategory 
\begin{equation*}
\beta^*(\cC_{V_0}(\cA)) = 
\left\{ C \in \tJ(\bP(V_0), \cA)   \ \left|\ 
\begin{aligned}
\eps_1^*(C) &\in \Perf(\bP(V_0)) \sotimes \cA_0 &&\subset \bE_1(\Perf(\bP(V_0)), \cA) , \\
\eps_2^*(C) &\in \Perf(S) \otimes \cA &&\subset \bE_2(\Perf(\bP(V_0)), \cA)
\end{aligned}
\right.\right\} , 
\end{equation*}
with the inverse equivalence induced by $\beta_*$. 
But by~Definition~\ref{definition-cat-join} 
this subcategory coincides with $\cJ(\bP(V_0), \cA)$ since $\Perf(S)$ is the Lefschetz center of $\Perf(\bP(V_0))$. 
\end{proof}

\begin{remark} 
Proposition~\ref{proposition-cJ-cC} does not apply if $V_0 = 0$. 
Indeed, if $V_0 = 0$ then $\bP(V_0) = \varnothing$ and hence $\cJ(\bP(V_0), \cA) = 0$, 
while $\cC_{V_0}(\cA) \simeq \cA$ by Lemma~\ref{lemma:cone-0}. 
\end{remark} 

\begin{remark}
In Theorem~\ref{theorem-cone-lef-cat} we will equip any categorical cone with a canonical Lefschetz structure
in such a way that the equivalence $\cC_{V_0}(\cA) \simeq \cJ(\bP(V_0), \cA)$ of Proposition~\ref{proposition-cJ-cC} 
is an equivalence of Lefschetz categories. 
\end{remark}

\begin{remark}
Let $\cA^1$ and $\cA^2$ be Lefschetz categories over $\bP(V_1)$ and~$\bP(V_2)$, where $V_1$ and~$V_2$ are nonzero. 
Then 
there is a $\bP(V_1 \oplus V_2)$-linear equivalence 
\begin{equation*}
\cJ(\cA^1,\cA^2) \simeq \cC_{V_1}(\cA^2) \otimes_{\Perf(\bP(V_1 \oplus V_2))} \cC_{V_2}(\cA^1). 
\end{equation*}
This can be proved either directly, or (in its dual form) by combining 
Corollary~\ref{corollary:cones-HPD-special} below and~\cite[Corollary~7.6]{categorical-joins}. 
The right side can be endowed with a semiorthogonal decomposition 
by an application of \cite[Corollary 5.3]{categorical-joins}, 
which can be shown to be a Lefschetz decomposition compatible with the Lefschetz structure of the left side. 
Note also that the equivalence of Proposition~\ref{proposition-cJ-cC} is a special case of this.
Indeed, take \mbox{$\cA^1 = \bP(V_1)$} and use the equivalence~$\cC_{V_2}(\bP(V_1)) \simeq \Perf(\bP(V_1 \oplus V_2))$ 
of Example~\ref{example-cone-PbarV}. 
We omit further details as we shall not need this.
\end{remark}

\subsection{The Lefschetz structure of a categorical cone} 
\label{subsection-ld-cC}
Our goal in this subsection is to equip any categorical cone with a canonical Lefschetz structure. 

\begin{lemma}
\label{lemma-cone-A0} 
Let $\cA$ be a Lefschetz category over $\bP(\barV)$. 
Then the image of $\cA_0$ under the functor $\barp^* \colon \cA \to \tC_{V_0}(\cA)$
is contained in the categorical cone $\cC_{V_0}(\cA)$. 
If $\cA_i$ are the Lefschetz components of $\cA$, then 
$\barp^*(\cA_i) \subset \cC_{V_0}(\cA)$ is left admissible for $i < 0$, admissible for $i = 0$, and right admissible for $i > 0$.
\end{lemma}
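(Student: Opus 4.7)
The plan is to check containment $\barp^*(\cA_0) \subset \cC_{V_0}(\cA)$ directly from the defining condition of the categorical cone, and then transport the admissibility properties of the Lefschetz components $\cA_i \subset \cA$ across the fully faithful pullback functor $\barp^*$.

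First I would unwind the definition of $\cC_{V_0}(\cA)$. By construction of the diagram~\eqref{diagram-tC-blowup}, the composition $\barp \circ \eps \colon \bE \to \bP(\barV)$ is identified with the projection $\bP(V_0) \times \bP(\barV) \to \bP(\barV)$ onto the second factor, so under the identification $\bE(\cA) \simeq \Perf(\bP(V_0)) \sotimes \cA$ of Remark~\ref{remark-E}, the composite $\eps^* \circ \barp^*$ sends $C$ to $\cO_{\bP(V_0)} \boxtimes C$. Hence if $C \in \cA_0$, then $\eps^*\barp^*(C) \in \Perf(\bP(V_0)) \sotimes \cA_0$, which is exactly the condition defining $\cC_{V_0}(\cA)$. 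An easy induction on $|i|$ using the recursions~\eqref{Ai-igeq0}--\eqref{Ai-ileq0} shows that every Lefschetz component $\cA_i$ is contained in $\cA_0$, and therefore the same argument gives $\barp^*(\cA_i) \subset \cC_{V_0}(\cA)$ for all $i \in \bZ$.

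For the admissibility claim, I would use the fact that $\barp$ is a projective bundle of relative dimension $N_0$. Orlov's projective bundle formula, base changed from $\bP(\barV)$ to $\cA$ (in the spirit of \cite[Lemma~4.11]{NCHPD}), yields a $\bP(V)$-linear semiorthogonal decomposition
\begin{equation*}
\tC_{V_0}(\cA) = \llangle \barp^*(\cA), \, \barp^*(\cA)(H), \, \dots, \, \barp^*(\cA)(N_0 H) \rrangle,
\end{equation*}
in which $\barp^*$ is fully faithful and its image is admissible. Consequently, the (right, full, or left) admissibility of $\cA_i \subset \cA$ transfers to the same admissibility of $\barp^*(\cA_i) \subset \barp^*(\cA)$, and then by transitivity through the admissible inclusion $\barp^*(\cA) \subset \tC_{V_0}(\cA)$ to the same admissibility of $\barp^*(\cA_i) \subset \tC_{V_0}(\cA)$.

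The final step passes this admissibility from $\tC_{V_0}(\cA)$ down to $\cC_{V_0}(\cA)$: since $\cC_{V_0}(\cA) \subset \tC_{V_0}(\cA)$ is admissible by Lemma~\ref{lemma-tC}, the inclusion $\cC_{V_0}(\cA) \hookrightarrow \tC_{V_0}(\cA)$ has both adjoints, and precomposing these with an adjoint of $\barp^*(\cA_i) \hookrightarrow \tC_{V_0}(\cA)$ produces the required adjoint for the factored inclusion $\barp^*(\cA_i) \hookrightarrow \cC_{V_0}(\cA)$. The main obstacle is purely bookkeeping: one must verify that the adjoint constructed by this restriction procedure is genuinely $\bP(V)$-linear and compatible with the Lefschetz decomposition; however, since every step reduces to base change along the projective bundle $\barp$ and to Lemma~\ref{lemma-tC}, no new technical input is needed.
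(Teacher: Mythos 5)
Your proposal is correct and follows the same route as the paper, which simply observes that $\barp^*$ is fully faithful with both adjoints (by base change from the projective-bundle pullback, rather than via Orlov's decomposition as you phrase it), that $\barp^*(\cA_0) \subset \cC_{V_0}(\cA)$ holds by Definition~\ref{definition-cat-cone}, and then concludes; your fleshing-out of the ``result follows'' step (using $\cA_i \subset \cA_0$ and transitivity of admissibility along $\barp^*(\cA_i) \subset \barp^*(\cA) \subset \tC_{V_0}(\cA)$) is accurate. One small streamlining: the final step does not actually need the admissibility of $\cC_{V_0}(\cA)$ in $\tC_{V_0}(\cA)$ --- whenever $\cB \subset \cC \subset \cD$ and the inclusion $\cB \hookrightarrow \cD$ admits a right (resp.\ left) adjoint, restricting that adjoint to $\cC$ already furnishes a right (resp.\ left) adjoint for $\cB \hookrightarrow \cC$.
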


\begin{proof}
Because the morphism $\barp \colon \tC_{V_0}(\bP(\barV)) \to \bP(\barV)$ is a projective bundle, 
the pullback functor 
$\barp^* \colon \Perf(\bP(\barV)) \to \Perf(\tC_{V_0}(\bP(\barV)))$ 
is fully faithful and admits left and right adjoints. 
Thus the same holds for its base change $\barp^* \colon \cA \to \tC_{V_0}(\cA)$ 
(see \cite[Lemma 2.12]{NCHPD}). 
Further, by Definition~\ref{definition-cat-cone} we see that the image of $\cA_0$ 
under $\barp^*$ is contained in $\cC_{V_0}(\cA)$. 
The result follows. 
\end{proof}

\begin{definition}
\label{definition-lef-center-cone}
Let $\cA$ be a Lefschetz category over $\bP(\barV)$. 
For $i \in \bZ$, we define a subcategory~$\cC_{V_0}(\cA)_i \subset \cC_{V_0}(\cA)$ by  
\begin{equation}
\label{ci-cone} 
\cC_{V_0}(\cA)_i  = 
\begin{cases}
\barp^*(\cA_{i+N_0}) & \text{if $i \le -N_0$,}\\
\barp^*(\cA_0) & \text{if $-N_0 \le i \le N_0$,}\\
\barp^*(\cA_{i-N_0}) & \text{if $i \ge N_0$,}
\end{cases}
\end{equation} 
where $N_0$ is the rank of $V_0$. 
\end{definition}

Note that the containment $\cC_{V_0}(\cA)_i  \subset \cC_{V_0}(\cA)$ holds by Lemma~\ref{lemma-cone-A0}. 

\begin{theorem}
\label{theorem-cone-lef-cat}
Let $\cA$ be a Lefschetz category over $\bP(\barV)$. 
Then the categorical cone $\cC_{V_0}(\cA)$ 
has the structure of a Lefschetz category over $\bP(V)$ with 
Lefschetz components $\cC_{V_0}(\cA)_i$ given by~\eqref{ci-cone}. 
If $\cA$ is either right or left strong, then so is $\cC_{V_0}(\cA)$.  
Moreover, we have 
\begin{equation*}
\length(\cC_{V_0}(\cA)) = \length(\cA) + N_0, 
\end{equation*} 
and $\cC_{V_0}(\cA)$ is moderate if and only if $\cA$ is moderate. 
\end{theorem}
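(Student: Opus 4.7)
The case $V_0=0$ is immediate from Lemma~\ref{lemma:cone-0}, since then $\cC_{V_0}(\cA)\simeq\cA$, $N_0=0$, and~\eqref{ci-cone} reduces to the Lefschetz decomposition of $\cA$ itself. Henceforth assume $V_0\neq 0$. The basic plan is to reduce to the analogous theorem for categorical joins, using Proposition~\ref{proposition-cJ-cC} to transport structure from $\cJ(\bP(V_0),\cA)$ to $\cC_{V_0}(\cA)$.

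The first step, and the cleanest part of the argument, is to assume that the extension~\eqref{extension-v0-v} splits, so $V\cong V_0\oplus\barV$. Proposition~\ref{proposition-cJ-cC} then gives a $\bP(V)$-linear equivalence $\cC_{V_0}(\cA)\simeq\cJ(\bP(V_0),\cA)$, where $\bP(V_0)$ carries the standard Lefschetz structure of Example~\ref{example-projective-bundle-lc} (smooth, proper, hence strong by Remark~\ref{remark:smooth-strong}, of length $N_0$, with center $\Perf(S)$ and all Lefschetz components equal to $\Perf(S)$). By the Lefschetz structure theorem for categorical joins (\cite[Theorem~3.21 and~(3.14)--(3.16)]{categorical-joins}), the right-hand side acquires a Lefschetz structure of length $N_0+\length(\cA)$, whose center is $p^*(\Perf(S)\sotimes\cA_0)$, which under the equivalence of Proposition~\ref{proposition-cJ-cC} corresponds to $\barp^*(\cA_0)$. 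Writing out the general Lefschetz component of the join using that the first factor has all components equal to $\Perf(S)$, one checks directly that the components match those of~\eqref{ci-cone}: the first $N_0+1$ components are all $\barp^*(\cA_0)$ and the remaining ones are $\barp^*(\cA_i)$ shifted by $N_0$. Strongness transfers by \cite[Lemma~3.24]{categorical-joins}, additivity of length in the join gives $\length(\cC_{V_0}(\cA))=\length(\cA)+N_0$, and since $\rank(V)=N_0+\rank(\barV)$, moderateness of $\cA$ is equivalent to moderateness of $\cC_{V_0}(\cA)$.

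The hard part, and the main obstacle, is removing the splitting assumption, since Proposition~\ref{proposition-cJ-cC} requires a splitting of~\eqref{extension-v0-v} that may not exist globally on $S$. The key observation is that the subcategories $\cC_{V_0}(\cA)_i$ of Definition~\ref{definition-lef-center-cone} are defined intrinsically (they only involve the projection $\barp$ and the components of $\cA$, with no reference to a splitting), and the required properties of a Lefschetz center (admissibility, semiorthogonality, generation of the decompositions~\eqref{eq:right-decomposition}--\eqref{eq:left-decomposition}) are local in the faithfully flat topology on $S$. Thus, after base change to a flat cover $S'\to S$ over which~\eqref{extension-v0-v} splits, the split case already established shows that the base changes of the $\cC_{V_0}(\cA)_i$ form a Lefschetz center for $\cC_{V_0}(\cA)_{S'}$, which descends to give the desired Lefschetz structure on $\cC_{V_0}(\cA)$. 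Alternatively, and somewhat more directly, one can avoid splittings entirely by combining Orlov's projective bundle formula applied to $\barp\colon\tC_{V_0}(\bP(\barV))\to\bP(\barV)$ with the semiorthogonal decompositions of Lemma~\ref{lemma-tC}: the former produces a decomposition of $\tC_{V_0}(\cA)$ into $N_0+1$ copies of $\barp^*(\cA)$, while the latter cuts out $\cC_{V_0}(\cA)$ by orthogonality to certain pushforwards from $\bE$, and a direct mutation argument reassembles these into the decomposition~\eqref{ci-cone}.
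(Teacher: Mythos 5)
Your main argument follows the paper's proof essentially verbatim: reduce to the categorical join after splitting $V = V_0 \oplus \barV$ (Proposition~\ref{proposition-cJ-cC}, then \cite[Theorem~3.21 and Lemma~3.24]{categorical-joins}), and handle the non-split case by a local-to-global argument over an fpqc cover of $S$. The one imprecision worth flagging is that you lump admissibility of the $\cC_{V_0}(\cA)_i$ among the properties you propose to check locally; admissibility of a subcategory does not obviously descend along an fpqc cover, and the paper instead establishes it unconditionally from Lemma~\ref{lemma-cone-A0} (which needs no splitting, since $\barp^*$ is fully faithful with both adjoints) and then invokes Lemma~\ref{lemma-sod-local}, whose hypotheses presuppose that admissibility, to check only generation and semiorthogonality of the SODs after base change. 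Your suggested ``direct'' alternative via Orlov's projective bundle formula, Lemma~\ref{lemma-tC}, and mutations is precisely the route the paper remarks is available (as an analogue of \cite[\S3.4]{categorical-joins}) but chooses not to pursue.
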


One could prove this directly by an analogue of the argument of~\cite[\S3.4]{categorical-joins}.
However, we prefer to reduce to the case of categorical joins using Proposition~\ref{proposition-cJ-cC}
and the local-to-global result of Lemma~\ref{lemma-sod-local}.

\begin{proof}
We may also assume $V_0 \neq 0$, otherwise the result is trivial. 
The key claim is that we have semiorthogonal decompositions 
\begin{align}
\label{sod-cC}
\cC_{V_0}(\cA) & = \llangle \cC_0 , \cC_1(H), \dots, \cC_{m+N_0-1}((m+N_0-1)H) \rrangle, \\ 
\label{sod-cC-left}
\cC_{V_0}(\cA) & = \llangle \cC_{1-m-N_0}((1-m-N_0)H), \dots, \cC_{-1}(-H), \cC_0 \rrangle ,
\end{align} 
where $\cC_i = \cC_{V_0}(\cA)_i$ and $m = \length(\cA)$. 
By Lemma~\ref{lemma-sod-local} --- whose hypotheses are satisfied by Lemma~\ref{lemma-cone-A0} --- 
it is enough to prove~\eqref{sod-cC} and~\eqref{sod-cC-left} after base change to any fpqc cover of $S$.
Therefore, we may assume that we have a splitting $V = V_0 \oplus \barV$ of~\eqref{extension-v0-v}. 
Then Proposition~\ref{proposition-cJ-cC} gives an 
equivalence
\begin{equation*}
\cC_{V_0}(\cA) \simeq \cJ(\bP(V_0), \cA). 
\end{equation*}
By~\cite[Theorem 3.21]{categorical-joins} the categorical join~$\cJ(\bP(V_0), \cA)$ 
has the structure of a Lefschetz category of length $\length(\cA) + N_0$. 
By Example~\ref{example-projective-bundle-lc} the nonzero primitive components $\mathfrak{p}_i$ of $\Perf(\bP(V_0))$ 
are $\mathfrak{p}_{\pm (N_0-1)} = \Perf(S)$, hence the second formula of~\cite[Lemma~3.24]{categorical-joins} shows that
the Lefschetz components $\cJ_i \subset \cJ(\bP(V_0), \cA)$ are equal to 
\begin{equation*}
\cJ_i = 
\begin{cases}
p^*\pr_2^*(\cA_{i+N_0}) & \text{if $i \le -N_0$,}\\
p^*\pr_2^*(\cA_0) & \text{if $-N_0 \le i \le N_0$,}\\
p^*\pr_2^*(\cA_{i-N_0}) & \text{if $i \ge N_0$,}
\end{cases}
\end{equation*}
Using 
the commutative diagram 
\begin{equation*}
\begin{gathered}[b]
\xymatrix{
\tJ(\bP(V_0), \bP(\barV)) \ar[r]^{p} \ar[d]_{\beta} & \bP(V_0) \times \bP(\barV) \ar[d]^{\pr_2} \\ 
\tC_{V_0}(\bP(\barV)) \ar[r]^{\barp} & \bP(\barV) 
} 
\\[-\dp\strutbox]
\end{gathered}
\end{equation*}
it is easy to see the equivalence $\cC_{V_0}(\cA) \simeq \cJ(\bP(V_0), \cA)$ 
identifies $\cC_i$ with $\cJ_i$; thus the decompositions~\eqref{sod-cC} and~\eqref{sod-cC-left} hold.

By~\cite[Lemma~2.4]{categorical-joins} and Lemma~\ref{lemma-cone-A0}, 
we thus deduce that~$\cC_0 \subset \cC_{V_0}(\cA)$
is a Lefschetz center with $\cC_i$, $i \in \bZ$, the corresponding Lefschetz components. 
The strongness claims follow from the definitions and Lemma~\ref{lemma-cone-A0}, 
and the claims about the length and moderateness of $\cC_{V_0}(\cA)$ follow 
from the definitions. 
\end{proof}


\section{HPD for categorical cones}
\label{section-cones-HPD} 

In this section we show that (under suitable hypotheses) 
the formation of categorical cones commutes with HPD. 
We formulate the theorem in a way that allows for extensions of 
the base projective bundle (in the sense of Definition~\ref{definition-base-extension}), 
because this extra generality is useful in applications. 

\begin{theorem}
\label{theorem-cones-HPD} 
Let $V$ be a vector bundle on $S$, let 
\begin{equation*}
V_0 \subset V 
\quad \text{and} \quad
V_\infty \subset \vV
\end{equation*} 
be subbundles such that the natural pairing $V \otimes \vV \to \cO_S$ is zero on $V_0 \otimes V_{\infty}$, 
so that we have a pair of filtrations 
\begin{equation}
\label{eq:filtrations}
0 \subset V_0 \subset V_\infty^\perp \subset V
\quad \text{and} \quad 
0 \subset V_\infty \subset V_0^\perp \subset \vV. 
\end{equation}
Set 
\begin{equation}
\label{eq:factors}
\barV = V_\infty^\perp /V_0, 
\qquad\text{so that}\quad
V_0^\perp/V_\infty \cong \barV^{\svee}.
\end{equation} 
Let $\cA$ be a right strong, moderate Lefschetz category over $\bP(\barV)$. 
Then there is an equivalence
\begin{equation*}
(\cC_{V_0}(\cA)/\bP(V))^{\hpd} \simeq \cC_{V_\infty}(\cA^{\hpd})/\bP(\vV)
\end{equation*}
of Lefschetz categories over~$\bP(\vV)$. 
\end{theorem}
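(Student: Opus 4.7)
The plan is to reduce Theorem~\ref{theorem-cones-HPD} to the categorical join HPD theorem (Theorem~\ref{theorem-joins-HPD}) by using Proposition~\ref{proposition-cJ-cC} to express the categorical cones on both sides as categorical joins, and then computing the HPD of the projective-bundle factor via linear HPD (Example~\ref{ex:categorical-linear-hpd}). The main subtlety is bookkeeping the base extensions from $\bP(V_\infty^\perp)$ to $\bP(V)$ and from $\bP(V_0^\perp)$ to $\bP(\vV)$ introduced by the inclusions $V_\infty^\perp \subset V$ and $V_0^\perp \subset \vV$.

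First, since the formations of categorical cones, joins, universal hyperplane sections, and HPD categories all commute with base change on $S$, and since semiorthogonal decompositions descend under fpqc morphisms (Lemma~\ref{lemma-sod-local}), the desired Lefschetz equivalence can be tested after an fpqc cover. I would use this to reduce to the split case: pick a complement $U$ to $V_\infty^\perp$ in $V$, giving $V \cong V_0 \oplus \barV \oplus U$ and, under the perfect pairing, $\vV \cong V_0^\vee \oplus \barV^\vee \oplus U^\vee$, with $V_\infty$ identified with $U^\vee$ and $V_0^\perp$ with $\barV^\vee \oplus U^\vee$.

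Next, I would establish a mild enhancement of Proposition~\ref{proposition-cJ-cC} that describes the categorical cone as a join even after the ambient projective bundle is enlarged to $\bP(V)$. Concretely, with $\Perf(\bP(V_0))$ carrying its standard (and, when $V_\infty \neq 0$, moderate) Lefschetz structure over $\bP(V_0 \oplus U)$ from Example~\ref{example-projective-bundle-lc}, the expected equivalence is
\[
\cC_{V_0}(\cA)/\bP(V) \;\simeq\; \cJ\bigl(\Perf(\bP(V_0))/\bP(V_0 \oplus U),\ \cA/\bP(\barV)\bigr)
\]
of Lefschetz categories over $\bP((V_0 \oplus U) \oplus \barV) = \bP(V)$, proven by the same blowup-pullback argument as Proposition~\ref{proposition-cJ-cC}, applied to the cone inside the enlarged projective bundle. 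A symmetric description holds on the dual side for $\cC_{V_\infty}(\cA^\hpd)/\bP(\vV)$. Applying Theorem~\ref{theorem-joins-HPD} to the display above then yields
\[
(\cC_{V_0}(\cA)/\bP(V))^\hpd \;\simeq\; \cJ\bigl((\Perf(\bP(V_0))/\bP(V_0 \oplus U))^\hpd,\ \cA^\hpd\bigr),
\]
and linear HPD (Example~\ref{ex:categorical-linear-hpd}) identifies the first factor with $\Perf(\bP(V_\infty))/\bP(V_\infty \oplus V_0^\vee)$, since the orthogonal of $V_0$ inside $V_0^\vee \oplus U^\vee$ is exactly $U^\vee = V_\infty$. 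This matches the dual join description of $\cC_{V_\infty}(\cA^\hpd)/\bP(\vV)$ factor for factor, proving the theorem in the generic case.

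The principal obstacle is the enhanced form of Proposition~\ref{proposition-cJ-cC} needed in the previous paragraph: although the formula is plausible, verifying that the pullback--pushforward along the enlarging blowup induces an equivalence of Lefschetz structures requires carefully tracking the Lefschetz and primitive components under enlargement of the ambient bundle, and in particular checking compatibility of the result with the Lefschetz center~\eqref{eq:cat-join-center} of a join. A secondary issue is the edge cases $V_0 = 0$ or $V_\infty = 0$, where the moderateness hypothesis of Theorem~\ref{theorem-joins-HPD} breaks for the projective-bundle factor. In the pure cone case $V_\infty = 0$ the statement reduces to $(\cC_{V_0}(\cA)/\bP(V))^\hpd \simeq \cA^\hpd/\bP(\vV)$, which can be verified directly from the HPD semiorthogonal decomposition~\eqref{HC-sod} applied to the resolved cone decomposition~\eqref{sod-tC}; the pure base extension case $V_0 = 0$ is dual, and the trivial case $V_0 = V_\infty = 0$ follows from Lemma~\ref{lemma:cone-0}.
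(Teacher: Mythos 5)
Your high-level plan — reduce to the split case, write $\cC_{V_0}(\cA)$ as $\cJ(\bP(V_0),\cA)$ via Proposition~\ref{proposition-cJ-cC}, pair this with linear HPD (Example~\ref{ex:categorical-linear-hpd}) for the projective-bundle factor, and then invoke Theorem~\ref{theorem-joins-HPD} — is exactly the paper's strategy for the case $V_0 \ne 0$, $V_\infty \ne 0$. The ``mild enhancement'' you worry about is not really an obstacle: the resolved join visibly commutes with base extension $\bP(V_0)\hookrightarrow\bP(V_0\oplus U)$, and the categorical join condition is unchanged, so $\cJ(\Perf(\bP(V_0))/\bP(V_0\oplus U),\,\cA)$ coincides with $\cJ(\bP(V_0),\cA)/\bP(V)$ with the same Lefschetz center; the paper uses this silently. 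Your treatment of the edge cases is also reasonable (the paper handles $V_\infty=0$ via an auxiliary enlargement of $V$ followed by Proposition~\ref{proposition-HPD-projection} rather than a direct SOD computation with~\eqref{HC-sod}, and $V_0=0$ by left HPD as you suggest).

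The genuine gap is in the descent step. You assert that because the relevant constructions commute with base change and semiorthogonal decompositions descend (Lemma~\ref{lemma-sod-local}), ``the desired Lefschetz equivalence can be tested after an fpqc cover.'' That is not enough: two $T$-linear categories that become equivalent after an fpqc cover need not be equivalent over $T$, and Lemma~\ref{lemma-sod-local} only lets you descend a \emph{decomposition}, not an \emph{equivalence}. What you actually need is Proposition~\ref{proposition-equivalence-local}, which requires a \emph{globally defined} $\bP(\vV)$-linear functor $\phi$ between the two sides (with an adjoint) whose base change to the cover is the equivalence you construct. The paper spends §\ref{subsection-double-resolved-cones} building exactly this functor $\gamma_\tC$ via the double resolved cone, and the bulk of §\ref{subsection-cone-HPD-split-case} (the diagram chase with $\beta_0$, $\beta_\infty$, $\beta_{0\infty}$) is devoted to checking that $\gamma_\tC$ is compatible with the join-theoretic equivalence, precisely because a splitting-dependent equivalence does not automatically glue. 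The paper even flags this explicitly in a remark after diagram~\eqref{eq:cc-cj-diagram}. Your proposal omits this construction entirely, so the reduction to the split case is not justified as written. If you want to keep your route, you must first define a candidate comparison functor over the original base $S$ — the double cone functor $\gamma_\tC$ of~\eqref{gammatC} is the natural choice — and then show it agrees over an fpqc cover with the composite of the equivalences you describe.
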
 

\begin{remark}
\label{remark-cones-HPD} 
Let us explain the structure of the categories appearing in Theorem~\ref{theorem-cones-HPD}. 
By Theorem~\ref{theorem-cone-lef-cat} the categorical join $\cC_{V_0}(\cA)$ is a right strong, moderate 
Lefschetz category over~$\bP(V_\infty^{\perp})$. 
By extending the base along the inclusion $\bP(V_\infty^\perp) \to \bP(V)$, we obtain by 
Remark~\ref{remark-base-extension} a right strong, moderate Lefschetz category $\cC_{V_0}(\cA)/\bP(V)$ over~$\bP(V)$. 
Hence by \cite[Theorem~8.7(1)]{NCHPD}, 
the HPD category $(\cC_{V_0}(\cA)/\bP(V))^{\hpd}$ has the 
structure of a Lefschetz category over $\bP(\vV)$. 
The structure of $\cC_{V_\infty}(\cA^{\hpd})/\bP(\vV)$ as a Lefschetz category over $\bP(\vV)$ is similarly 
obtained by a combination of  \cite[Theorem 8.7(1)]{NCHPD}, Theorem~\ref{theorem-cone-lef-cat}, 
and base extension. 
\end{remark}

The case $V_0 = 0$ gives the following (we take $V_\infty = W^\perp \subset \vV$). 

\begin{corollary}
\label{corollary:cones-HPD-special}
Let $W \subset V$ be an inclusion of vector bundles on $S$. 
Let $\cA$ be a right strong, moderate Lefschetz category over $\bP(W)$. 
Then there is an equivalence 
\begin{equation*}
(\cA/\bP(V))^\natural \simeq \cC_{W^\perp}(\cA^\natural)
\end{equation*} 
of Lefschetz categories over $\bP(\vV)$. 
\end{corollary}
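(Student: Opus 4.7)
The plan is to deduce this corollary from Theorem~\ref{theorem-cones-HPD} by specializing to the case $V_0 = 0$, with $V_\infty = W^\perp \subset \vV$. First I would verify the hypotheses of the theorem: the vanishing of the pairing on $V_0 \otimes V_\infty$ is automatic since $V_0 = 0$, and because $W \subset V$ is a subbundle we have $V_\infty^\perp = (W^\perp)^\perp = W$, so the ``middle'' quotient becomes $\barV = V_\infty^\perp/V_0 = W$. Dually $V_0^\perp/V_\infty = \vV/W^\perp \cong W^\svee = \barV^\svee$, in agreement with~\eqref{eq:factors}. Under these identifications the assumption that $\cA$ is a right strong, moderate Lefschetz category over $\bP(W)$ is exactly the input required by Theorem~\ref{theorem-cones-HPD}.

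Applying that theorem yields a Lefschetz equivalence
\begin{equation*}
(\cC_0(\cA)/\bP(V))^{\hpd} \simeq \cC_{W^\perp}(\cA^{\hpd})/\bP(\vV)
\end{equation*}
of Lefschetz categories over $\bP(\vV)$. Observe that on the right hand side no base extension is really in play: $V_0 = 0$ forces $V_0^\perp = \vV$, so $\cC_{W^\perp}(\cA^{\hpd})$ is already a Lefschetz category over $\bP(\vV)$ without any further manipulation.

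It then remains to identify $\cC_0(\cA)/\bP(V)$ with $\cA/\bP(V)$. At the level of plain $\bP(W)$-linear categories this is Lemma~\ref{lemma:cone-0}. To upgrade this to an equivalence of Lefschetz categories I would inspect the Lefschetz structure of Theorem~\ref{theorem-cone-lef-cat} in the degenerate case $N_0 = 0$: the formula~\eqref{ci-cone} collapses to $\cC_0(\cA)_i = \barp^*(\cA_i)$ for every $i$, and since $\cV \cong \cO(-\barH)$ when $V_0 = 0$ the projection $\barp \colon \tC_0(\bP(\barV)) \to \bP(\barV)$ is an isomorphism. Hence $\barp^*$ induces an equivalence $\cA \xrightarrow{\,\sim\,} \cC_0(\cA)$ of Lefschetz categories over $\bP(W) = \bP(V_\infty^\perp)$. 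Extending the base along the inclusion $\bP(W) \hookrightarrow \bP(V)$ (as in Remark~\ref{remark-cones-HPD}) then produces the desired Lefschetz equivalence $\cA/\bP(V) \simeq \cC_0(\cA)/\bP(V)$ over $\bP(V)$, and combining with the displayed equivalence above completes the argument.

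Because the corollary is essentially a direct specialization of Theorem~\ref{theorem-cones-HPD}, I do not anticipate a genuine obstacle; the only real task is the bookkeeping of the two filtrations in~\eqref{eq:filtrations} and the verification that the tautological equivalence of Lemma~\ref{lemma:cone-0} refines to a Lefschetz equivalence, which as explained is an immediate consequence of the $N_0 = 0$ case of Theorem~\ref{theorem-cone-lef-cat}.
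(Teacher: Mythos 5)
Your proposal is correct and follows exactly the route the paper intends: the paper simply remarks that Corollary~\ref{corollary:cones-HPD-special} is ``the case $V_0 = 0$'' of Theorem~\ref{theorem-cones-HPD} with $V_\infty = W^\perp$, leaving the bookkeeping implicit. You have supplied that bookkeeping carefully and correctly, including the observation that $\cC_0(\cA) \simeq \cA$ refines to a Lefschetz equivalence via the degenerate $N_0 = 0$ case of~\eqref{ci-cone} and the fact that $\barp$ is an isomorphism when $\cV \cong \cO(-\barH)$.
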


Our strategy for proving Theorem~\ref{theorem-cones-HPD} is the following.
First, we use the relation between categorical cones and categorical joins 
described in Proposition~\ref{proposition-cJ-cC} and linear HPD of Example~\ref{ex:categorical-linear-hpd}
to deduce the theorem in case when both $V_0$ and $V_\infty$ are nonzero and the filtrations~\eqref{eq:filtrations} are split
(which always holds locally over the base scheme~$S$).
Then we use a local-to-global argument analogous to the one used in the proof of Theorem~\ref{theorem-cone-lef-cat}
to deduce the theorem without the splitting assumption, 
and finally a relation between HPD and hyperplane sections (Proposition~\ref{proposition-HPD-projection}) 
and duality to deduce the theorem in full generality. 

For the local-to-global argument it is important to define a functor 
between the categories~$\cC_{V_\infty}(\cA^{\hpd})/\bP(\vV)$ and~$(\cC_{V_0}(\cA)/\bP(V))^{\hpd}$ in general.
This is what we start with in~\S\ref{subsection-double-resolved-cones}, where 
we define a functor 
\begin{equation*}
\gamma_{\tC} \colon \tC_{V_\infty}(\cA^\hpd)/\bP(\vV) \to \bH(\tC_{V_0}(\cA)/\bP(V)) 
\end{equation*} 
via a double cone construction, an analogue of the double join construction from \cite[\S4.1]{categorical-joins}. 
Next, in~\S\ref{subsection-cone-HPD-split-case} we check its compatibility with the analogous functor
between resolved joins, deduce the theorem in the split nonzero case, 
and then by the local-to-global argument remove the splitting assumption.
Finally, in~\S\ref{subsection-cone-HPD-general-case} we prove the general case. 

\subsection{Double resolved cones and the HPD functor for categorical cones} 
\label{subsection-double-resolved-cones}

Throughout this section we fix filtrations~\eqref{eq:filtrations}, 
and use~\eqref{eq:factors} to identify their quotients with $(V_0, \barV, \vV_{\infty})$ and~$(V_\infty, \barV^{\svee}, \vV_0)$ respectively.

Let $Y$ be a scheme equipped with a morphism $Y \to \bP(\barV) \times \bP(\barV^{\svee})$.
In this situation, we can form two resolved cones, $\tC_{V_0}(Y)$ and $\tC_{V_\infty}(Y)$, 
using the projection to $\bP(\barV)$ for the first and the projection to $\bP(\barV^{\svee})$ for the second. 
We define the 
\emph{double resolved cone} over~$Y$ as the fiber product 
\begin{equation}
\label{eq:double-cone}
\tCC_{V_0, V_{\infty}}(Y) = \tC_{V_0}(Y) \times_Y \tC_{V_\infty}(Y), 
\end{equation}
which is a $\bP^{N_0} \times \bP^{N_\infty}$-bundle over $Y$, where $N_0 = \dim V_0$ and $N_\infty = \dim V_\infty$.
In particular, we can consider the universal double resolved cone with its natural projection
\begin{equation}
\label{tCC-bc}
\tCC_{V_0,V_\infty}(\bP(\barV) \times \bP(\barV^{\svee})) \to \bP(\barV) \times \bP(\barV^{\svee}).
\end{equation}
Now, given a category $\cB$ which has a $\bP(\barV) \times \bP(\barV^{\svee})$-linear structure, 
we define the \emph{double resolved cone} $\tCC_{V_0, V_{\infty}}(\cB)$ over $\cB$ as 
\begin{equation*}
\tCC_{V_0, V_{\infty}}(\cB) = 
\cB 
\otimes_{\Perf(\bP(\barV) \times \bP(\barV^{\svee}))}
\Perf(\tCC_{V_0,V_\infty}(\bP(\barV) \times \bP(\barV^{\svee}))),
\end{equation*}
that is the base change of $\cB$ along~\eqref{tCC-bc}.

The key case for us is when 
$Y$ is the universal space of hyperplanes in $\bP(\barV)$, which we denote by 
\begin{equation}
\label{eq:barbh}
\barbH = \bH(\bP(\barV)) . 
\end{equation}
Note that $\barbH$ indeed naturally maps to $\bP(\barV) \stimes \bP(\barV^{\svee})$, 
hence we can form the double resolved cone over $\barbH$. 
We write $\bH(\tC_{V_0}(\bP(\barV))/\bP(V))$ for the universal hyperplane section 
of~$\tC_{V_0}(\bP(\barV))$ with respect to the morphism 
$\tC_{V_0}(\bP(\barV)) \to \bP(V_{\infty}^{\perp}) \to \bP(V)$. 
The second projection in~\eqref{eq:double-cone} defines a map~$\tilde{p}$ in~\eqref{tCC-functor-spaces},
and the first projection together with the map~$\tCC_{V_0, V_{\infty}}(\barbH) \to \barbH \to \bP(\vV)$ define the map~$\alpha$.
It is easy to show the following. 

\begin{lemma}
We have a diagram 
\begin{equation}
\label{tCC-functor-spaces}
\vcenter{
\xymatrix{
& \tCC_{V_0, V_{\infty}}(\barbH) \ar[dl]_{\tilde{p}} \ar[dr]^{\alpha}
\\
\tC_{V_\infty}(\barbH) && 
\bH(\tC_{V_0}(\bP(\barV))/\bP(V)) 
}
}
\end{equation}
of schemes over $\bP(\vV)$, where all schemes appearing are smooth and projective over $S$.
\end{lemma}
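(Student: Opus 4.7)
The plan is to construct the two morphisms, verify the resulting diagram commutes over $\bP(\vV)$, and then observe that all four schemes are iterated projective bundles over smooth projective bases.

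The morphism $\tilde p$ is simply the second projection from the fiber product $\tCC_{V_0,V_\infty}(\barbH) = \tC_{V_0}(\barbH) \times_\barbH \tC_{V_\infty}(\barbH)$. For $\alpha$, I will use that $\tC_{V_0}(\barbH) \cong \barbH \times_{\bP(\barV)} \tC_{V_0}(\bP(\barV))$ (with $\barbH \to \bP(\barV)$ the projection $\pi$ from~\eqref{eq:h-diagram}), so the first projection composes to a canonical morphism $\tCC_{V_0,V_\infty}(\barbH) \to \tC_{V_0}(\bP(\barV))$. Combining this with $\tCC_{V_0,V_\infty}(\barbH) \to \tC_{V_\infty}(\barbH) \to \bP(V_0^\perp) \hookrightarrow \bP(\vV)$ (the map induced by the $V_\infty$-cone projection) produces a candidate $\alpha$ into $\tC_{V_0}(\bP(\barV)) \times \bP(\vV)$. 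I then need to check that this factors through the universal hyperplane section $\bH(\tC_{V_0}(\bP(\barV))/\bP(V)) = \tC_{V_0}(\bP(\barV)) \times_{\bP(V)} \bH(\bP(V))$; equivalently, at a point lying over $(x,x') \in \barbH$ the lifts $\tilde x \in \bP(V_\infty^\perp) \subset \bP(V)$ and $\tilde x' \in \bP(V_0^\perp) \subset \bP(\vV)$ must satisfy $\langle \tilde x, \tilde x'\rangle = 0$.

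The only substantive point is this incidence condition, and I expect it to be the main (though mild) obstacle. My argument will be by descent of the pairing: the assumption $V_0 \subset V_\infty^\perp$ together with the tautological orthogonalities $V_0 \perp V_0^\perp$ and $V_\infty \perp V_\infty^\perp$ implies that the restriction of the natural pairing $V \otimes \vV \to \cO_S$ to $V_\infty^\perp \otimes V_0^\perp$ factors through the quotients, producing the canonical pairing $\barV \otimes \barV^\svee \to \cO_S$. Hence $\langle \tilde x, \tilde x'\rangle = \langle x, x'\rangle$, which vanishes by the defining condition of $\barbH \subset \bP(\barV) \times \bP(\barV^\svee)$. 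Commutativity over $\bP(\vV)$ is then automatic: both compositions $\tCC_{V_0,V_\infty}(\barbH) \to \bP(\vV)$ agree by construction with $\tCC_{V_0,V_\infty}(\barbH) \to \tC_{V_\infty}(\barbH) \to \bP(\vV)$.

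For smoothness and projectivity, I will simply identify the bundle structures: $\barbH \to \bP(\barV^\svee)$ is a $\bP^{\rank \barV - 2}$-bundle by~\eqref{eq:h-diagram}; the single cones $\tC_{V_0}(\bP(\barV))$ and $\tC_{V_\infty}(\barbH)$ are projective bundles of relative dimensions $N_0$ and $N_\infty$ over $\bP(\barV)$ and $\barbH$ respectively; the double cone $\tCC_{V_0,V_\infty}(\barbH)$ is a $\bP^{N_0} \times \bP^{N_\infty}$-bundle over $\barbH$; and $\bH(\tC_{V_0}(\bP(\barV))/\bP(V))$ is a $\bP^{\rank V - 2}$-bundle over $\tC_{V_0}(\bP(\barV))$ obtained by pullback of $\pi \colon \bH(\bP(V)) \to \bP(V)$. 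Since all bases and fibers are smooth and projective over $S$, so are all schemes in the diagram, completing the proof.
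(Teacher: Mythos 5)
The paper states this lemma without proof (``It is easy to show the following''), so there is no argument to compare against; your proof is the natural one and is correct. The identification of all four schemes as iterated projective bundles over smooth projective bases is precisely why the smoothness and projectivity are automatic, and the descent of the pairing $V^{\scriptscriptstyle\vee}_\infty{}^\perp \otimes V_0^\perp \to \cO_S$ to $\barV \otimes \barV^\svee \to \cO_S$ is the right mechanism for the incidence check. One small point worth tightening: the equation $\langle \tilde x, \tilde x' \rangle = \langle x, x' \rangle$ is literally valid only away from the exceptional divisors of the two cone factors, since over a point of $\bE \subset \tC_{V_0}(\barbH)$ the lift $\tilde x$ lies in $\bP(V_0)$ and maps to $0$ in $\barV$, not to $x$. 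In that case, however, the vanishing $\langle \tilde x, \tilde x' \rangle = 0$ is immediate from $V_0 \perp V_0^\perp$, so the conclusion holds everywhere. A cleaner scheme-theoretic formulation of your argument is that the bundle pairing $\cV \boxtimes \cV' \to \cO$ on $\bP(\barV) \times \bP(\barV^\svee)$ kills the subbundles $V_0 \otimes \cO$ and $V_\infty \otimes \cO$, hence factors through $\cO(-\barH) \boxtimes \cO(-\barH') \to \cO$, which vanishes identically on $\barbH$ by definition. Finally, note that the paper's description of $\alpha$ as built from ``the map $\tCC_{V_0,V_\infty}(\barbH) \to \barbH \to \bP(\vV)$'' is loose (there is no canonical morphism $\barbH \to \bP(\vV)$, only $\barbH \to \bP(\barV^\svee)$); your route through $\tC_{V_\infty}(\barbH) \to \bP(V_0^\perp) \hookrightarrow \bP(\vV)$ is the intended one.
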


Let $\cA$ be a $\bP(\barV)$-linear category. 
Then $\bH(\cA) = \bH(\cA/\bP(\barV)) = \Perf(\barbH) \otimes_{\Perf(\bP(\barV))} \cA$ according to the notation~\eqref{eq:barbh}.
Recall the canonical $\bP(\barV^{\svee})$-linear inclusion functor~\eqref{def:gamma}
\begin{equation*}
\gamma \colon \cAd = (\cA/\bP(\barV))^\hpd \to \bH(\cA/\bP(\barV)) = \Perf(\barbH) \otimes_{\Perf(\bP(\barV))} \cA.
\end{equation*}
This induces a $\bP(V_0^{\perp})$-linear functor (see Remark~\ref{remark-tC-functorial}) 
\begin{equation*}
\tC_{V_{\infty}}(\gamma) \colon \tC_{V_\infty}(\cA^\hpd) \to 
\tC_{V_\infty}(\bH(\cA)), 
\end{equation*}
which can be regarded as a $\bP(\vV)$-linear functor 
\begin{equation*}
\tC_{V_{\infty}}(\gamma) \colon \tC_{V_\infty}(\cA^\hpd)/\bP(\vV) \to 
\tC_{V_\infty}(\bH(\cA))/\bP(\vV) .  
\end{equation*}
Here, we have written $\tC_{V_\infty}(\cA^\hpd)/\bP(\vV)$ and $\tC_{V_\infty}(\bH(\cA))/\bP(\vV)$ to emphasize that 
we regard the resolved cones $\tC_{V_\infty}(\cA^\hpd)$ and~$\tC_{V_\infty}(\bH(\cA))$ as~$\bP(\vV)$-linear categories, 
via the inclusion~$\bP(V_0^{\perp}) \subset \bP(\vV)$. 

By base change from diagram~\eqref{tCC-functor-spaces}  
we obtain a diagram of~$\bP(\vV)$-linear functors 
\begin{equation*}
\xymatrix@C=1.6em{
&&& 
\tCC_{V_0,V_{\infty}}(\bH(\cA)) \ar[dr]^{\alpha_*}
\\
\tC_{V_\infty}(\cA^\hpd)/\bP(\vV) \ar[rr]^-{\tC_{V_{\infty}}(\gamma)} &&
\tC_{V_\infty}(\bH(\cA))/\bP(\vV) \ar[ur]^{\tilde{p}^*} && 
\bH(\tC_{V_0}(\cA)/\bP(V)) 
}
\end{equation*}
We define a $\bP(\vV)$-linear functor as the composition 
\begin{equation}
\label{gammatC}
\gamma_{\tC} = \alpha_* \circ \tp^* \circ \tC_{V_{\infty}}(\gamma) 
\colon \tC_{V_\infty}(\cA^\hpd)/\bP(\vV) \to \bH(\tC_{V_0}(\cA)/\bP(V)) . 
\end{equation}

The following fact will be needed later. 
\begin{lemma}
\label{lemma:gtc-adjoints}
The functor~$\gamma_{\tC}$ has both left and right adjoints.
\end{lemma}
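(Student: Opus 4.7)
The plan is to decompose $\gamma_{\tC} = \alpha_* \circ \tp^* \circ \tC_{V_{\infty}}(\gamma)$ as a composition of three $\bP(\vV)$-linear functors and to verify that each factor admits both a left and a right adjoint. The conclusion will then follow, because if each of $F_1, F_2, F_3$ has a left (resp.\ right) adjoint $F_i^*$ (resp.\ $F_i^!$), the composition $F_3 F_2 F_1$ has left adjoint $F_1^* F_2^* F_3^*$ and right adjoint $F_1^! F_2^! F_3^!$.

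For the first factor, I would use that $\gamma \colon \cAd \hookrightarrow \bH(\cA)$ is the embedding of an admissible subcategory, as recalled in \S\ref{subsection-HPD}; it therefore comes with both a left adjoint $\gamma^*$ and a right adjoint $\gamma^!$. Since $\tC_{V_\infty}(\gamma)$ is obtained from $\gamma$ by base change along the projective bundle $\tC_{V_\infty}(\bP(\barV^{\svee})) \to \bP(\barV^{\svee})$, Remark~\ref{remark-tC-functorial} applies to produce adjoints $\tC_{V_\infty}(\gamma^*)$ and $\tC_{V_\infty}(\gamma^!)$ to $\tC_{V_\infty}(\gamma)$.

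For the remaining two factors, I would invoke the lemma immediately preceding the statement, which says that the three schemes appearing in diagram~\eqref{tCC-functor-spaces} are smooth and projective over $S$, so that $\tp$ and $\alpha$ are morphisms between smooth projective $S$-schemes. By Remark~\ref{remark-adjoints-exist} each such morphism carries a full four-term adjoint sequence $(f_!, f^*, f_*, f^!)$ on $\Perf$. Base-changing these adjoint sequences along $\bH(\cA)$ (and the appropriate intermediate linear categories over $\bP(\vV)$) produces the desired left and right adjoints of $\tp^*$ and $\alpha_*$ as $\bP(\vV)$-linear functors.

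I do not expect any step to pose a serious obstacle, since every ingredient is already supplied by results stated earlier in the paper. The only care needed is bookkeeping: ensuring that each base change is performed so that the adjoints remain $\bP(\vV)$-linear and that the three factorwise adjoints compose in reverse order to give honest adjoints to $\gamma_{\tC}$.
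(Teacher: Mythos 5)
Your proof is correct and follows essentially the same route as the paper: factor $\gamma_{\tC}$ into the three functors $\tC_{V_\infty}(\gamma)$, $\tp^*$, $\alpha_*$, observe that $\gamma$ has both adjoints (you invoke admissibility of $\cAd$; the paper cites the same fact via \cite[Lemma~7.2]{NCHPD}) and that these pass through the cone construction by Remark~\ref{remark-tC-functorial}, and then use smoothness/projectivity of the schemes in~\eqref{tCC-functor-spaces} together with Remark~\ref{remark-adjoints-exist} for $\tp^*$ and $\alpha_*$. No gaps.
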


\begin{proof}
The functor $\gamma$ has both left and right adjoints by~\cite[Lemma~7.2]{NCHPD}, hence 
so does~$\tC_{V_{\infty}}(\gamma)$ (see Remark~\ref{remark-tC-functorial}). 
Further, $\alpha_*$ and $\tp^*$ have both left and right adjoints, 
by Lemma~\ref{tCC-functor-spaces} and Remark~\ref{remark-adjoints-exist}.  
\end{proof}

\begin{remark}
The functor $\gamma_\tC$ can also be described in terms of Fourier--Mukai kernels, similarly to 
\cite[Remark 4.8]{categorical-joins}. 
We leave this as an exercise.
\end{remark}

Note that by definition the HPD category $(\cC_{V_0}(\cA)/\bP(V))^{\hpd}$  
is a~$\bP(\vV)$-linear subcategory of $\bH(\cC_{V_0}(\cA)/\bP(V))$, and 
hence also of $\bH(\tC_{V_0}(\cA)/\bP(V))$ by \cite[Lemma 2.12]{NCHPD}. 

\subsection{The nonzero case} 
\label{subsection-cone-HPD-split-case} 

The goal of this subsection is to prove the following more precise version of 
Theorem~\ref{theorem-cones-HPD} when $V_0$ and $V_\infty$ are nonzero. 

\begin{proposition}
\label{proposition-cones-HPD-precise}
Let $\cA$ be a right strong, moderate Lefschetz category over $\bP(\barV)$. 
Assume~$V_0$ and $V_{\infty}$ are nonzero.  
Then the functor 
\begin{equation*}
\gamma_{\tC} \colon \tC_{V_\infty}(\cA^\hpd)/\bP(\vV) \to \bH(\tC_{V_0}(\cA)/\bP(V)) 
\end{equation*} 
defined in~\eqref{gammatC}
induces a Lefschetz equivalence between the subcategories 
\begin{equation*}
\cC_{V_\infty}(\cA^{\hpd})/\bP(\vV) \subset  \tC_{V_\infty}(\cA^\hpd)/\bP(\vV)  
 \quad \text{and} \quad 
(\cC_{V_0}(\cA)/\bP(V))^{\hpd} \subset \bH(\tC_{V_0}(\cA)/\bP(V)) . 
\end{equation*}
\end{proposition}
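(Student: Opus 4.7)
The plan is to first establish the equivalence in the case that the filtrations~\eqref{eq:filtrations} split, and then deduce the general case by a local-to-global descent argument. Given a splitting $V \cong V_0 \oplus \barV \oplus V_\infty^{\svee}$ (with its dual splitting of $\vV$), Proposition~\ref{proposition-cJ-cC} gives Lefschetz equivalences $\cC_{V_0}(\cA) \simeq \cJ(\bP(V_0), \cA)$ over $\bP(V_\infty^{\perp})$ and $\cC_{V_\infty}(\cA^{\hpd}) \simeq \cJ(\bP(V_\infty), \cA^{\hpd})$ over $\bP(V_0^{\perp})$. I would first show that base extension to~$\bP(V)$ is compatible with interpreting $\bP(V_0)$ as a Lefschetz variety over the enlarged base $\bP(V_0 \oplus V_\infty^{\svee})$, equipped with the standard Lefschetz structure from Example~\ref{example-projective-bundle-lc}, giving
\begin{equation*}
\cC_{V_0}(\cA)/\bP(V) \simeq \cJ\bigl(\bP(V_0)/\bP(V_0 \oplus V_\infty^{\svee}),\,\cA/\bP(\barV)\bigr)
\end{equation*}
as Lefschetz categories over $\bP(V)$. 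This should follow from Definition~\ref{definition-cat-join} together with the projective bundle formula underlying~\eqref{eq:universal-resolved-join}, since enlarging the ambient vector bundle of~$\bP(V_0)$ by~$V_\infty^{\svee}$ simply base-changes the universal resolved join along $\bP(V_0 \oplus \barV) \hookrightarrow \bP(V_0 \oplus V_\infty^{\svee} \oplus \barV)$.

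Applying Theorem~\ref{theorem-joins-HPD} to the right-hand side, together with linear HPD (Example~\ref{ex:categorical-linear-hpd}) identifying the HPD of $\bP(V_0)$ over $\bP(V_0 \oplus V_\infty^{\svee})$ with $\bP(V_\infty)$ over $\bP(V_0^{\svee} \oplus V_\infty)$, produces
\begin{equation*}
(\cC_{V_0}(\cA)/\bP(V))^{\hpd} \simeq \cJ\bigl(\bP(V_\infty)/\bP(V_0^{\svee} \oplus V_\infty),\,\cA^{\hpd}/\bP(\barV^{\svee})\bigr) \simeq \cC_{V_\infty}(\cA^{\hpd})/\bP(\vV),
\end{equation*}
yielding the Lefschetz equivalence in the split case, but only through an abstract chain of isomorphisms. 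The next step is to identify this chain with the explicit functor~$\gamma_{\tC}$ from~\eqref{gammatC}. The HPD equivalence in Theorem~\ref{theorem-joins-HPD} is itself realized by a ``double join'' functor constructed in~\cite[\S4.1]{categorical-joins} in complete analogy with~$\gamma_\tC$. I would compare the double cone~\eqref{eq:double-cone} and the diagram~\eqref{tCC-functor-spaces} with the corresponding double join diagram, and verify that in the split case they are obtained from one another by base change along the morphism induced by the splitting; together with the description of $\gamma_\tC$ as a composition built from the HPD inclusion $\gamma$, this identifies $\gamma_\tC$ with the abstract equivalence above.

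With the split case in hand, the general case follows by a local-to-global argument analogous to the proof of Theorem~\ref{theorem-cone-lef-cat}. Splittings of~\eqref{eq:filtrations} exist fpqc-locally on $S$, the functor $\gamma_\tC$ is defined and admits both adjoints globally by Lemma~\ref{lemma:gtc-adjoints}, and whether $\gamma_\tC$ restricts to an equivalence between admissible subcategories can be tested fpqc-locally via Lemma~\ref{lemma-sod-local} applied to the semiorthogonal decompositions~\eqref{sod-tC} for the cone side and~\eqref{HC-sod} for the HPD side. Compatibility of Lefschetz centers propagates from the split case by functoriality, since on both sides the centers are described by pullback functors which commute with base change (Theorem~\ref{theorem-cone-lef-cat} combined with the Lefschetz statement of Theorem~\ref{theorem-joins-HPD}).

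The main obstacle I expect is the identification of the abstract split-case equivalence with the concrete functor~$\gamma_\tC$: this requires careful bookkeeping of Fourier--Mukai kernels through the blowup relations of Lemma~\ref{lemma-tC-divisors} and Lemma~\ref{lemma-tJ-tC}, and a precise base change comparison between the double cone and double join diagrams. A secondary subtlety is ensuring that base extension of a Lefschetz category along $\bP(V_\infty^{\perp}) \hookrightarrow \bP(V)$ interacts naturally with the cone-to-join identification of Proposition~\ref{proposition-cJ-cC}, so that the whole chain of equivalences can be transported coherently through the local-to-global step without destroying the match with~$\gamma_\tC$.
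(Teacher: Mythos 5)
Your overall strategy agrees with the paper's: first prove the statement when the filtrations~\eqref{eq:filtrations} are split, by combining Proposition~\ref{proposition-cJ-cC}, linear HPD (Example~\ref{ex:categorical-linear-hpd} applied to $V_0 \subset V_1 := V_0 \oplus V_\infty^{\svee}$), and Theorem~\ref{theorem-joins-HPD}; then check the resulting abstract equivalence is induced by the concrete functor~$\gamma_\tC$ via a comparison of the double resolved cone and double resolved join; then descend. The paper's proof does exactly this, carrying out the double join/cone comparison through the commutative diagram built from the blowup morphisms $\beta_0$, $\beta_\infty$ and relations \eqref{eq:tc-tj-infinity}--\eqref{beta0infty}.

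One concrete misstep in your last paragraph: Lemma~\ref{lemma-sod-local} is not the right tool for descending the claim that $\gamma_\tC$ restricts to an equivalence between the two admissible subcategories. That lemma only tests fpqc-locally whether a collection of admissible subcategories generates a semiorthogonal decomposition; it says nothing about whether a given functor carries one admissible subcategory into another and does so equivalently. The appropriate tool (used in the paper) is Proposition~\ref{proposition-equivalence-local}, whose hypotheses are met because $\gamma_\tC$ has both adjoints (Lemma~\ref{lemma:gtc-adjoints}) and both $\cC_{V_\infty}(\cA^\hpd)$ and $(\cC_{V_0}(\cA)/\bP(V))^\hpd$ are admissible (Lemma~\ref{lemma-tC} and \eqref{HC-sod}). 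That proposition, together with Corollary~\ref{corollary-equivalence-lef-cat-local} for the compatibility of Lefschetz centers, is what makes the descent go through cleanly; it is only then that it matters that the split-case equivalence was identified with the globally defined $\gamma_\tC$ rather than being an abstract chain. With that substitution, your plan matches the paper's argument.
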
 

The proof takes the rest of the subsection.
Let us outline the strategy.

By Lemma~\ref{lemma:gtc-adjoints} the functor $\gamma_\tC$ has adjoints. 
Therefore, by Corollary~\ref{corollary-equivalence-lef-cat-local} 
the claim of Proposition~\ref{proposition-cones-HPD-precise} is fpqc-local,
so it is enough to prove it over a fpqc cover of the base scheme $S$.
Passing to such a cover we may assume that the filtrations~\eqref{eq:filtrations} split, 
so that 
\begin{equation*}
V = V_0 \oplus \barV \oplus \vV_{\infty}. 
\end{equation*}
For the rest of this subsection, we fix such a splitting. 
Using this, we will reduce Proposition~\ref{proposition-cones-HPD-precise} to 
\cite[Theorem 4.9]{categorical-joins}. 
We set 
\begin{equation*}
V_1 = V_0 \oplus \vV_\infty. 
\end{equation*} 
Then the orthogonal to $V_0 \subset V_1$ 
is $V_{\infty} \subset \vV_1$, 
so by Example~\ref{ex:categorical-linear-hpd}
there is an equivalence 
\begin{equation}
\label{PV0-PVinfty-hpd}
\Perf(\bP(V_0))^{\hpd} \simeq \Perf(\bP(V_\infty))
\end{equation} 
of Lefschetz categories over $\bP(\vV_1)$. 
Hence we have a commutative diagram of equivalences 
of Lefschetz categories over $\bP(\vV)$: 
\begin{equation}
\label{eq:cc-cj-diagram}
\vcenter{\xymatrix{
\cJ(\bP(V_\infty), \cA^{\hpd})/\bP(\vV) \bijartop[r] & 
(\cJ(\bP(V_0), \cA)/\bP(V))^{\hpd} \bijartop[d] 
\\ 
\cC_{V_\infty}(\cA^{\hpd})/\bP(\vV) \bijartop[u] \bijartop[r] & 
( \cC_{V_0}(\cA)/\bP(V) )^{\hpd}
}}
\end{equation}
where the vertical equivalences are consequences of Proposition~\ref{proposition-cJ-cC}, 
the top equivalence is given by~Theorem~\ref{theorem-joins-HPD} 
(note that $V = V_1 \oplus \barV$) combined with~\eqref{PV0-PVinfty-hpd}, and 
the bottom equivalence is the composition of the other three. 
To prove Proposition~\ref{proposition-cones-HPD-precise}, 
we check that the bottom equivalence is in fact induced by the functor $\gamma_{\tC}$. 

\begin{remark}
The above argument 
(even without checking the bottom arrow is induced by~$\gamma_{\tC}$)
already proves Theorem~\ref{theorem-cones-HPD} 
under the assumptions that $V_0$ and $V_\infty$ are nonzero and 
the filtrations \eqref{eq:filtrations} are split. 
However, for the local-to-global argument above by which we reduced 
to the split case, it is essential that we verify 
the equivalence is given by a globally defined functor. 
\end{remark}

To check that the bottom equivalence in~\eqref{eq:cc-cj-diagram} is induced by $\gamma_\tC$, 
we prove commutativity of the analogous diagram of resolved joins and cones. 
Below we use the notations $\tJ$ and $\tJv$ for resolved joins introduced in \cite[\S4.1]{categorical-joins}. 

\begin{proposition}
There is a commutative diagram 
\begin{equation}
\label{eq:tc-tj-diagram}
\vcenter{\xymatrix@C=6em{
{\tJv}(\bP(V_\infty), \cA^{\hpd})/\bP(\vV) \ar[r]^-{\gamma_{\tJ_{\infty}}} & 
\bH(\tJ(\bP(V_0), \cA)/\bP(V))
\ar[d]^{\beta_{0*}} 
\\ 
\tC_{V_\infty}(\cA^{\hpd})/\bP(\vV) \ar[u]^{\beta_\infty^*} \ar[r]^-{\gamma_\tC} & 
\bH(\tC_{V_0}(\cA)/\bP(V) ),
}}
\end{equation}
where $\beta_0$ and $\beta_\infty$ are the blowup morphisms from Lemma~\textup{\ref{lemma-tJ-tC}} 
of the cones with vertices~$\bP(V_0)$ and~$\bP(V_\infty)$, respectively, and~$\gamma_{\tJ_{\infty}}$ is the composition 
\begin{equation*}
\gamma_{\tJ_{\infty}} \colon {\tJv}(\bP(V_\infty), \cA^{\hpd})/\bP(\vV) \xrightarrow{\ \sim \ } 
{\tJv}(\Perf(\bP(V_0))^{\hpd}, \cA^{\hpd})/\bP(\vV)
\xrightarrow{ \ \gamma_{\tJ} \ } \bH(\tJ(\bP(V_0), \cA)/\bP(V)) 
\end{equation*}
where the equivalence is induced by~\eqref{PV0-PVinfty-hpd} and $\gamma_{\tJ}$ is 
the functor from \cite[Theorem 4.9]{categorical-joins} 
\textup(with~$V_2 = \barV$\textup). 
\end{proposition}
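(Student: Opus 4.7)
The plan is to realize both compositions in~\eqref{eq:tc-tj-diagram} as arising by base change from a single commutative diagram of smooth projective $S$-schemes, so that the commutativity of~\eqref{eq:tc-tj-diagram} reduces to commutativity of the underlying geometric diagram.

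First, I would unpack the definition of $\gamma_{\tJ}$ from~\cite[Theorem~4.9]{categorical-joins}: it is constructed as a composition
\begin{equation*}
\gamma_{\tJ} = \alpha^{\tJ}_* \circ (\tilde{p}^{\tJ})^{*} \circ \tJ(\id, \gamma)
\end{equation*}
via a \emph{double resolved join} $\tJJ$, in complete parallel with the double resolved cone construction~\eqref{gammatC}. Consequently $\gamma_{\tJ_\infty}$ is the base change along $\cA$ of a diagram of schemes analogous to~\eqref{tCC-functor-spaces}, but with $\bP(V_\infty)$ in place of the HPD variety of $\bP(V_0)$ and with resolved joins replacing resolved cones throughout.

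Second, I would construct a commutative diagram of smooth projective $S$-schemes
\begin{equation*}
\vcenter{\xymatrix@C=2em@R=1.5em{
\tJv(\bP(V_\infty), \barbH) \ar[d]_{\beta_\infty} & \tJJ \ar[l]_-{\tilde{p}^{\tJ}} \ar[r]^-{\alpha^{\tJ}} \ar[d]^{\tilde{\beta}} & \bH(\tJ(\bP(V_0), \bP(\barV))/\bP(V)) \ar[d]^{\bH(\beta_0)} \\
\tC_{V_\infty}(\barbH) & \tCC_{V_0, V_\infty}(\barbH) \ar[l]^-{\tilde{p}} \ar[r]_-{\alpha} & \bH(\tC_{V_0}(\bP(\barV))/\bP(V))
}}
\end{equation*}
where $\tilde\beta$ is a suitable intermediate blowup morphism. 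The existence of $\tilde\beta$ and the commutativity of both squares follows by combining Lemma~\ref{lemma-tJ-tC} (which describes $\beta_0, \beta_\infty$ as blowups in $\bP(\barV)$) with the iterated projective bundle descriptions of $\tCC$ and $\tJJ$ over~$\barbH$, using the fixed splitting $V = V_0 \oplus \barV \oplus \vV_\infty$.

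Third, I would apply base change along $\cA$ (respectively along $\cA^\hpd$, interacting with $\gamma$): pullback/pushforward along the horizontal maps then yield $\gamma_{\tC}$ on the bottom and $\gamma_{\tJ_\infty}$ on the top, while the vertical maps yield $\beta_\infty^{*}$ and $\beta_{0*}$. Using base change and the projection formula, valid thanks to the $\Tor$-independence provided by the projective bundle structures, together with the HPD identification~\eqref{PV0-PVinfty-hpd} (which on the join side turns $\gamma_\tJ$ into $\gamma_{\tJ_\infty}$), commutativity of~\eqref{eq:tc-tj-diagram} follows from commutativity of the geometric diagram.

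The main obstacle will be the careful construction of~$\tilde\beta$ and the verification that the arising squares are $\Tor$-independent fiber products, so that base change formulas may be freely applied. This amounts to a direct but somewhat delicate comparison of the iterated projective bundles defining $\tCC_{V_0, V_\infty}(\barbH)$ and $\tJJ$ over $\barbH$, and checking that under the duality $\bP(V_0)\leftrightarrow \bP(V_\infty)$ built into~\eqref{PV0-PVinfty-hpd}, the blowup morphisms $\beta_0$ and $\beta_\infty$ sit compatibly in a single geometric framework.
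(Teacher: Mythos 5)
Your high-level strategy — realize both compositions as base changes of a single commutative diagram of smooth projective $S$-schemes, then appeal to base-change compatibilities — is the right one, and it is essentially what the paper does. But the specific geometric diagram you propose does not exist, and this is exactly where the work lies.

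The difficulty is with the middle vertical arrow $\tilde\beta$. The double resolved join $\tJJ$ appearing in the definition of $\gamma_\tJ$ is built over $\bH(\bP(V_0)) \times \barbH$: it is a $(\bP^1 \times \bP^1)$-bundle over the universal hyperplane of $\bP(V_0)/\bP(V_1)$ crossed with $\barbH$. The double resolved cone $\tCC_{V_0,V_\infty}(\barbH)$, on the other hand, is a $(\bP^{N_0} \times \bP^{N_\infty})$-bundle over $\barbH$ alone. These sit over genuinely different bases (and $\bH(\bP(V_0))$ is not a product), so there is no blowup morphism from the former to the latter, and no way to fill in your diagram with a single arrow $\tilde\beta$. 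The point the paper has to navigate is that the HPD equivalence~\eqref{PV0-PVinfty-hpd} is realized concretely by the functor $\iota_*\circ\pr_2^*\colon\Perf(\bP(V_\infty))\to\Perf(\bH(\bP(V_0)))$, where $\iota\colon\bP(V_0)\times\bP(V_\infty)\hookrightarrow\bH(\bP(V_0))$ is a \emph{proper} closed embedding. Accordingly, $\gamma_{\tJ_\infty}$ is unpacked as $\alpha_{0*}\circ\tp_0^*\circ\tJ(\iota_*\circ\pr_2^*,\gamma)$, and one must factor this through the intermediate space $\tJJ(\bP(V_0)\times\bP(V_\infty),\barbH)$, which does admit a natural morphism $\beta_{0\infty}$ to $\tCC_{V_0,V_\infty}(\barbH)$ (a product of the two blowups), as well as a morphism $\tJJ(\iota,\id)$ to $\tJJ(\bH(\bP(V_0)),\barbH)$. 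Your right-hand ``square'' thus becomes a pentagon (as in the paper's diagram~\eqref{eq:abi}), and the comparison of $\tp_0^*\circ\tJ(\iota_*,\id)$ with $\tJJ(\iota_*,\id)\circ\tp_{0\infty}^*$ requires base change along the $\Tor$-independent square~\eqref{eq:iota-tp}.

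A second, related omission: even once the correct diagrams are in place, commutativity of the functor diagram is not automatic. The functor on each side pushes forward along $\alpha$ (resp.\ $\alpha_0$) but pulls back along $\tp$ (resp.\ $\tp_0$), so chasing the diagrams leaves you with a factor $\beta_{0\infty*}\circ\beta_{0\infty}^*$ that must cancel. This cancellation is exactly the full faithfulness of $\beta_{0\infty}^*$ — true because $\beta_{0\infty}$ is a product of blowups — and it is the crucial last step~\eqref{beta0infty} of the paper's chain of isomorphisms. Your proposal does not identify this as the point where the blowup structure is used; without it, the base-change argument stalls.

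In short: keep the idea of reducing to commuting diagrams of schemes and base change, but (i) insert the intermediate double resolved join $\tJJ(\bP(V_0)\times\bP(V_\infty),\barbH)$ connecting the join side to the cone side; (ii) carry the concrete description of the linear HPD for $\bP(V_0)$ via $\iota_*\circ\pr_2^*$ through the whole computation; and (iii) isolate and use $\beta_{0\infty*}\circ\beta_{0\infty}^*\simeq\id$ to close the argument.
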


\begin{proof}
First, we unwind the definition of the functor $\gamma_{\tJ_{\infty}}$. 
By \cite[Corollary~8.3]{kuznetsov-hpd} the functor 
\begin{equation*}
\iota_* \circ \pr_2^* \colon \Perf(\bP(V_\infty)) \to \Perf(\bH(\bP(V_0))) 
\end{equation*} 
induces the HPD between $\bP(V_0)$ and $\bP(V_\infty)$, where 
$\bH(\bP(V_0)) = \bH(\bP(V_0)/\bP(V_1))$ is the universal hyperplane section of the morphism $\bP(V_0) \to \bP(V_1)$, 
$\iota \colon \bP(V_0) \times \bP(V_\infty) \to \bH(\bP(V_0))$ is the 
embedding, and $\pr_2 \colon \bP(V_0) \times \bP(V_\infty) \to \bP(V_\infty)$ is 
the projection. 
It follows from the definitions that we have 
\begin{equation*}
\gamma_{\tJ_{\infty}}  \simeq \alpha_{0*} \circ \tp_0^* \circ \tJ(\iota_* \circ \pr_2^*, \gamma), 
\end{equation*} 
where the morphisms 
\begin{equation*}
\tJv(\bH(\bP(V_0)), \barbH)  \xleftarrow{\ \tp_0 \ } 
\tJJ(\bH(\bP(V_0)), \barbH) \xrightarrow{ \ \alpha_0 \ } \bH(\tJ(\bP(V_0), \bP(\barV))/\bP(V)) 
\end{equation*} 
are the base change along $\bP(V_0) \to \bP(V_1)$ 
of~\cite[diagram (4.4)]{categorical-joins} (with $V_2 = \barV$), and 
$\tJ(\iota_* \circ \pr_2^*, \gamma)$ is the join of the functors $\iota_* \circ \pr_2^*$ 
and $\gamma$, where $\gamma$ is the inclusion~\eqref{def:gamma}.

Further, note that we can write $\tJ(\iota_* \circ \pr_2^*, \gamma)$ as a composition 
\begin{align*}
{\tJv}(\bP(V_\infty), \cA^\hpd) \xrightarrow{\tJ(\id, \gamma)}   
{\tJv}(\bP(V_\infty), \bH(\cA)) 
& \xrightarrow{\tJ( \pr_2^*, \id)}  \tJv(\bP(V_0) \times \bP(V_\infty), \bH(\cA))  \\ 
& \xrightarrow{{\tJ}( \iota_*, \id)} 
\tJv(\bH(\bP(V_0)), \bH(\cA)), 
\end{align*} 
and hence 
\begin{equation}
\label{tJinfty}
\gamma_{\tJ_{\infty}} \simeq \alpha_{0*} \circ \tp_0^* \circ {\tJ}(\iota_*, \id) \circ \tJ(\pr_2^*, \id) \circ \tJ(\id, \gamma) . 
\end{equation}
By definition $\gamma_\tC$ is a composition~\eqref{gammatC} of three functors 
analogous to $\tJ(\id, \gamma)$, $\tp_0^*$, and $\alpha_{0*}$ in~\eqref{tJinfty}.  
To prove the proposition, we will relate the analogous functors appearing in these compositions, 
using the blowup morphisms $\beta_\infty$ and $\beta_0$ and the 
morphisms $\iota$ and $\pr_2$. 

The relation between $\tJ(\id, \gamma)$ and $\tC_{V_\infty}({\gamma})$ is 
provided by the commutative diagram~\eqref{eq:tj-tc-diagram}, that in our case takes the form
\begin{equation}
\label{eq:tc-tj-infinity}
\vcenter{\xymatrix@C=6em{
{\tJv}(\bP(V_\infty), \cA^{\hpd}) \ar[r]^-{\tJ(\id,{\gamma})} & 
{\tJv}(\bP(V_\infty), \bH(\cA))
\\ 
\tC_{V_\infty}(\cA^{\hpd}) \ar[u]^{\beta_\infty^*} \ar[r]^-{\tC_{V_\infty}({\gamma})} & 
\tC_{V_\infty}(\bH(\cA)). \ar[u]_{\beta_\infty^*} 
}}
\end{equation}

To relate the other functors, 
we write down diagrams of schemes that induce diagrams of functors by base change. 
First note that we have a fiber square  
\begin{equation}
\label{eq:iota-tp}
\vcenter{\xymatrix@C=6em{
\tJv(\bP(V_0) \times \bP(V_\infty), \barbH) \ar[d]^{{\tJ}(\iota,\id)}  & \ar[l]_{\tp_{0\infty}}
\tJJ(\bP(V_0) \times \bP(V_\infty), \barbH) \ar[d]^{\tJJ(\iota,\id)} \\ 
\tJv(\bH(\bP(V_0)), \barbH)  & \ar[l]_{\tp_0}
\tJJ(\bH(\bP(V_0)), \barbH) 
}}
\end{equation} 
where $\tJJ(\iota,\id)$ denotes the morphism between the double resolved joins 
induced by the morphisms $\iota \colon \bP(V_0) \times \bP(V_\infty) \to \bH(\bP(V_0))$ 
and $\id \colon \barbH \to \barbH$.
Next observe that  
\begin{align*}
\tJJ(\bP(V_0) \times \bP(V_\infty), \barbH) & = 
\tJ(\bP(V_0) \times \bP(V_\infty), \barbH) \times_{(\bP(V_0) \times \bP(V_\infty)) \times \barbH} 
\tJv(\bP(V_0) \times \bP(V_\infty), \barbH) \\ 
& \cong \tJ(\bP(V_0), \barbH) \times_{\barbH} {\tJv}(\bP(V_{\infty}), \barbH), 
\end{align*}
where the first equality holds by definition. 
We also have by definition 
\begin{equation*}
\tCC_{V_0, V_{\infty}}(\barbH) = \tC_{V_0}(\barbH) \times_{\barbH} \tC_{V_\infty}(\barbH) . 
\end{equation*}
The blowup morphisms $\beta_{0} \colon \tJ(\bP(V_0), \barbH) \to \tC_{V_0}(\barbH)$ and 
$\beta_{\infty} \colon {\tJv}(\bP(V_\infty), \barbH) \to \tC_{V_\infty}(\barbH)$ from 
Lemma~\ref{lemma-tJ-tC} 
thus combine to give a morphism 
\begin{equation*}
\beta_{0\infty} \colon \tJJ(\bP(V_0) \times \bP(V_\infty), \barbH) \to \tCC_{V_0, V_{\infty}}(\barbH). 
\end{equation*}
It is easy see that the morphism $\beta_{0\infty}$ makes the diagrams 
\begin{equation}
\label{eq:8-08}
\vcenter{\xymatrix@C=4.25em{
{\tJv}(\bP(V_\infty), \barbH)  & \ar[l]_-{\tJ(\pr_2,\id)}
\tJv(\bP(V_0) \times \bP(V_\infty), \barbH)  & \ar[l]_-{\tp_{0\infty}}
\tJJ(\bP(V_0) \times \bP(V_\infty), \barbH)
\\
\tC_{V_\infty}(\barbH) \ar[u];[]_{\beta_\infty}  && \ar[ll]_{\tp}
\tCC_{V_0,V_\infty}(\barbH) \ar[u];[]^{\beta_{0\infty}} 
}}
\end{equation} 
and 
\begin{equation}
\label{eq:abi}
\vcenter{\xymatrix@C=3em{
\tJJ(\bP(V_0) \times \bP(V_\infty), \barbH) \ar[r]^-{\tJJ(\iota,\id)} \ar[d]_{\beta_{0\infty}} &
\tJJ(\bH(\bP(V_0)), \barbH) \ar[r]^-{\alpha_0} &
\bH(\tJ(\bP(V_0), \bP(\barV))/\bP(V)) \ar[d]^{\beta_{0}}
\\
\tCC_{V_0,V_\infty}(\barbH)  \ar[rr]^-{\alpha} &&
\bH(\tC_{V_0}(\bP(\barV))/\bP(V)) 
}}
\end{equation}
commutative, where in~\eqref{eq:abi} we abusively write $\beta_0$ for the 
morphism induced by the blowup $\beta_0 \colon \tJ(\bP(V_0), \bP(\barV)) \to \tC_{V_0}(\bP(\barV))$.  
Note also that since $\beta_{0\infty}$ is a product of two blowup morphisms, 
the functor $\beta_{0\infty}^*$ is fully faithful, so
we have an isomorphism of functors 
\begin{equation}
\label{beta0infty}
\beta_{0\infty*} \circ \beta_{0\infty}^* \simeq \id. 
\end{equation}

Finally, combining the above ingredients 
and taking into account that $\tJ(\iota_*, \id) \cong \tJ(\iota, \id)_*$ 
and $\tJ(\pr_2^*, \id) \cong \tJ(\pr_2, \id)^*$,
we can rewrite the composition of the three upper arrows in~\eqref{eq:tc-tj-diagram} as 
\begin{align*}
\beta_{0*} \circ \gamma_{\tJ_\infty} \circ \beta_\infty^* 
& \simeq \beta_{0*}  \circ \alpha_{0*} \circ \tp_0^* \circ {\tJ}(\iota_*, \id) \circ \tJ(\pr_2^*, \id) \circ \tJ(\id, \gamma) \circ \beta_\infty^*  
&& \eqref{tJinfty}\\
& \simeq  \beta_{0*}  \circ \alpha_{0*} \circ \tp_0^* \circ {\tJ}(\iota_*, \id) \circ \tJ(\pr_2^*, \id) \circ \beta_\infty^* \circ \tC_{V_\infty}(\gamma)  && \eqref{eq:tc-tj-infinity} \\ 
& \simeq \beta_{0*} \circ \alpha_{0*} \circ \tJJ(\iota_*,\id) \circ \tp_{0\infty}^* \circ \tJ(\pr_2^*,\id) \circ \beta_\infty^* \circ \tC_{V_\infty}(\gamma) 
&& \eqref{eq:iota-tp} \\
& \simeq \beta_{0*} \circ \alpha_{0*} \circ \tJJ(\iota_*,\id) \circ \beta_{0\infty}^* \circ \tp^* \circ \tC_{V_\infty}(\gamma) 
&& \eqref{eq:8-08} \\
& \simeq \alpha_* \circ \beta_{0\infty*} \circ \beta_{0\infty}^* \circ \tp^* \circ \tC_{V_\infty}(\gamma) 
&& \eqref{eq:abi} \\
& \simeq \alpha_* \circ \tp^* \circ \tC_{V_\infty}(\gamma) && \eqref{beta0infty} \\ 
& = \gamma_{\tC}
&& \eqref{gammatC}  
\end{align*}
which completes the proof. 
\end{proof}

\begin{proof}[Proof of Proposition~\textup{\ref{proposition-cones-HPD-precise}}]
As explained above, we first consider the case where the filtrations~\eqref{eq:filtrations} split. 
Then we have a commutative diagram~\eqref{eq:tc-tj-diagram}, whose vertical arrows and 
top horizontal arrow induce the corresponding arrows of~\eqref{eq:cc-cj-diagram}. 
Hence by commutativity of these diagrams, the functor~$\gamma_\tC$ 
induces the Lefschetz equivalence given by the bottom horizontal arrow of~\eqref{eq:cc-cj-diagram}. 

In the nonsplit case we use Proposition~\ref{proposition-equivalence-local} 
with $\cC = \tC_{V_\infty}(\cA^\hpd)$, $\cD = \bH(\tC_{V_0}(\cA)/\bP(V))$, \mbox{$\phi = \gamma_\tC$},
$\cA = \cC_{V_\infty}(\cA^\hpd)$, and $\cB = (\cC_{V_0}(\cA)/\bP(V))^{\hpd}$.
We note that the assumptions of the proposition are satisfied by Lemmas~\ref{lemma-tC} and~\ref{lemma:gtc-adjoints}.
We take an fpqc cover of our base scheme~$S$ over which the filtrations~\eqref{eq:filtrations} split.
By the argument above the functor $\gamma_\tC$ induces the desired Lefschetz equivalence after base change to this cover.
Hence by Proposition~\ref{proposition-equivalence-local} the functor~$\gamma_{\tC}$ induces an equivalence between 
$\cC_{V_\infty}(\cA^\hpd)/\bP(\vV)$ and $(\cC_{V_0}(\cA)/\bP(V))^{\hpd}$, which is in fact a Lefschetz
equivalence by Corollary~\ref{corollary-equivalence-lef-cat-local}. 
\end{proof}

\subsection{The general case} 
\label{subsection-cone-HPD-general-case}

In this subsection we bootstrap from Proposition~\ref{proposition-cones-HPD-precise} 
to the general case of Theorem~\ref{theorem-cones-HPD}. 

\begin{lemma}
\label{lemma-cones-HPD-V0}
The claim of Theorem~\textup{\ref{theorem-cones-HPD}} holds if $V_0$ is nonzero. 
\end{lemma}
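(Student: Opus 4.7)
The case $V_\infty\neq 0$ is exactly Proposition~\ref{proposition-cones-HPD-precise}, so the remaining task is to treat the case $V_\infty=0$. Here $V_\infty^\perp=V$, $\barV=V/V_0$, and $\barV^{\svee}=V_0^\perp\subset\vV$; using $\cC_0(\cA^\hpd)\simeq\cA^\hpd$ from Lemma~\ref{lemma:cone-0}, the target statement is
\begin{equation*}
(\cC_{V_0}(\cA)/\bP(V))^\hpd \simeq \cA^\hpd/\bP(\vV)
\end{equation*}
of Lefschetz categories over $\bP(\vV)$, where $\cA^\hpd$ is regarded as a Lefschetz category over $\bP(\vV)$ via the inclusion $\bP(\barV^{\svee})\hookrightarrow\bP(\vV)$.

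My plan is to bootstrap to the already-proved case by adjoining an auxiliary trivial line. Set $V'=V\oplus\cO_S$, so $(V')^\vee=\vV\oplus\cO_S$, and let $V_\infty'\subset(V')^\vee$ be the second summand. The triple $(V_0,V_\infty',V')$ satisfies the standing hypotheses of the theorem with $(V_\infty')^\perp/V_0=V/V_0=\barV$ unchanged, and both $V_0$ and $V_\infty'$ are non-zero. Proposition~\ref{proposition-cones-HPD-precise} then supplies a Lefschetz equivalence
\begin{equation*}
(\cC_{V_0}(\cA)/\bP(V'))^\hpd \simeq \cC_{V_\infty'}(\cA^\hpd)/\bP((V')^\vee)
\end{equation*}
over $\bP((V')^\vee)$. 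I will restrict both sides along the hyperplane embedding $\bP(\vV)\hookrightarrow\bP((V')^\vee)$ coming from the direct-sum decomposition; this hyperplane is complementary to the vertex point $\bP(V_\infty')$ and parametrizes exactly the hyperplanes in $V'$ containing the auxiliary line.

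For the right-hand side, $\bP(\vV)$ avoids $\bP(V_\infty')$ and meets $\bP(V_0^\perp)\subset\bP((V')^\vee)$ in $\bP(\barV^{\svee})$, so Corollary~\ref{corollary:cone-trivial} (taking $T=\bP(\barV^{\svee})$) identifies the restriction with $\cA^\hpd/\bP(\vV)$. For the left-hand side, I invoke Proposition~\ref{proposition-HPD-projection} to relate HPD over the larger $\bP((V')^\vee)$ to HPD over $\bP(\vV)$: the bijection between hyperplanes of $V'$ containing the auxiliary line and hyperplanes of $V=V'/\cO_S$ identifies the universal hyperplane section of $\cC_{V_0}(\cA)/\bP(V')$ along $\bP(\vV)$ with that of $\cC_{V_0}(\cA)/\bP(V)$, and tracking the semiorthogonal decomposition defining HPD yields
\begin{equation*}
(\cC_{V_0}(\cA)/\bP(V'))^\hpd_{\bP(\vV)} \simeq (\cC_{V_0}(\cA)/\bP(V))^\hpd.
\end{equation*}
Combining these two restrictions with the Lefschetz equivalence above produces the desired identification.

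The principal obstacle is the left-hand restriction: one must verify that the hypotheses of Proposition~\ref{proposition-HPD-projection} apply to the base extension of $\cC_{V_0}(\cA)$ from $\bP(V)$ to $\bP(V')$, and that the restriction procedure identifies Lefschetz centers and not merely the underlying linear categories. The right-hand side restriction via Corollary~\ref{corollary:cone-trivial} and the reduction to Proposition~\ref{proposition-cones-HPD-precise} are comparatively routine.
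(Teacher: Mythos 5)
Your proposal is correct and follows essentially the same route as the paper: adjoin a nonzero auxiliary complement (the paper uses a general nonzero $\tV_\infty$, you take $\tV_\infty = \cO_S$), apply Proposition~\ref{proposition-cones-HPD-precise}, then restrict along $\bP(\vV) \hookrightarrow \bP(\tV^{\svee})$ using Corollary~\ref{corollary:cone-trivial} on the cone side and Proposition~\ref{proposition-HPD-projection} on the HPD side. The "principal obstacle" you flag is handled by Remark~\ref{remark-HPD-projection}, which pins down the Lefschetz center on the restricted HPD category, and the support hypothesis for Proposition~\ref{proposition-HPD-projection} holds automatically since $\cC_{V_0}(\cA)/\bP(V')$ has its linear structure induced from $\bP(V)$, which is disjoint from the auxiliary vertex point in $\bP(V')$.
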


\begin{proof}
By Proposition~\ref{proposition-cones-HPD-precise} we only need to consider the 
case where $V_\infty = 0$. 
Take an auxiliary nonzero vector bundle $\tV_{\infty}$ on $S$, and set $\tV = V \oplus \tV_{\infty}^{\svee}$. 
Then $V_0 \subset \tV$ and~$\tV_{\infty} \subset \tV^{\svee}$ are such 
that the pairing $\tV \otimes \tV^{\svee} \to \cO_S$ is zero on $V_0 \otimes \tV_{\infty}$. 
Hence Proposition~\ref{proposition-cones-HPD-precise}  
gives an equivalence 
\begin{equation}
\label{Vinfty01}
\cC_{\tV_\infty}(\cAd)/\bP(\tV^{\svee}) \simeq  
(\cC_{V_0}(\cA)/\bP(\tV))^{\hpd} 
\end{equation}
of Lefschetz categories over $\bP(\tV^{\svee})$. 
By base change along the embedding $\bP(\vV) \to \bP(\tV^{\svee})$ 
we obtain a $\bP(\vV)$-linear equivalence
\begin{equation*}
\Big( \cC_{\tV_\infty}(\cAd)/\bP(\tV^{\svee}) \Big) \otimes_{\Perf(\bP(\tV^{\svee}))} \Perf(\bP(\vV)) \simeq  
(\cC_{V_0}(\cA)/\bP(\tV))^{\hpd} \otimes_{\Perf(\bP(\tV^{\svee}))} \Perf(\bP(\vV)) ,
\end{equation*}
On the one hand, we have $\bP(\vV)$-linear equivalences  
\begin{equation}
\label{Vinfty02}
\left( \cC_{\tV_\infty}(\cAd)/\bP(\tV^{\svee}) \right) \otimes_{\Perf(\bP(\tV^{\svee}))} \Perf(\bP(\vV)) 
\simeq 
\cAd/\bP(\vV) \simeq 
\cC_{V_\infty}(\cAd) / \bP(\vV),  
\end{equation}
where the first holds by Corollary~\ref{corollary:cone-trivial} and the second by Lemma~\ref{lemma:cone-0} since $V_{\infty} = 0$.
On the other hand, note that $\cC_{V_0}(\cA)/\bP(\tV)$ is supported over the open $\bP(\tV) \setminus \bP(\tV_{\infty}^{\svee})$ 
since this category's $\bP(\tV)$-linear structure is induced from a $\bP(V)$-linear structure via the morphism~$\bP(V) \to \bP(\tV)$
and~$\bP(V) \cap \bP(V_\infty^\vee) = \varnothing$. 
Hence by Proposition~\ref{proposition-HPD-projection} we have a $\bP(\vV)$-linear equivalence 
\begin{equation}
\label{Vinfty03}
(\cC_{V_0}(\cA)/\bP(\tV))^{\hpd} \otimes_{\Perf(\bP(\tV^{\svee}))} \Perf(\bP(\vV)) 
\simeq 
(\cC_{V_0}(\cA)/\bP(V))^{\hpd}.
\end{equation}  
Combining the above equivalences,  
we thus obtain a $\bP(\vV)$-linear equivalence 
\begin{equation}
\label{Vinfty04} 
\cC_{V_\infty}(\cAd) / \bP(\vV) \simeq (\cC_{V_0}(\cA)/\bP(V))^{\hpd}. 
\end{equation}
Finally, tracing through the equivalences~\eqref{Vinfty02} and~\eqref{Vinfty03} 
and using Remark~\ref{remark-HPD-projection} and the fact that~\eqref{Vinfty01} is a Lefschetz equivalence, 
one verifies that \eqref{Vinfty04} identifies the Lefschetz centers on each side. 
\end{proof} 

Now we can handle the general case. 

\begin{proof}[Proof of Theorem~\textup{\ref{theorem-cones-HPD}}] 
By Lemma~\ref{lemma-cones-HPD-V0} it remains to consider the case 
where $V_0 = 0$. We may assume $V_\infty \neq 0$, otherwise there is nothing to prove. 
Using Remark~\ref{remark:left-hpd} we reduce 
the claim of Theorem~\ref{theorem-cones-HPD} 
to the existence of a Lefschetz equivalence 
\begin{equation*}
\cC_{V_0}(\cA)/\bP(V) \simeq {^\hpd} \left( \cC_{V_\infty}(\cA^{\hpd})/\bP(\vV) \right). 
\end{equation*} 
Since $V_\infty \neq 0$ we can apply (the left version of) Lemma~\ref{lemma-cones-HPD-V0} to obtain a 
Lefschetz equivalence   
\begin{equation*}
{^\hpd} \left( \cC_{V_\infty}(\cA^{\hpd})/\bP(\vV) \right) \simeq 
\cC_{V_0}({^\hpd}(\cA^{\hpd}))/\bP(V). 
\end{equation*} 
We conclude by noting that ${^\hpd}(\cA^{\hpd}) \simeq \cA$, again by Remark~\ref{remark:left-hpd}.
\end{proof} 


\section{HPD for quadrics} 
\label{section-HPD-for-quadrics}

In this section, we use categorical cones to describe HPD for quadrics. 
We assume the base scheme $S$ is the spectrum of an algebraically closed field $\bk$ of characteristic not equal to $2$. 
The main reason for this assumption is that our results depend on our work~\cite{kuznetsov-perry-HPD-quadrics}, 
reviewed in~\S\ref{subsection-HPD-smooth-quadrics} below, 
where we described HPD for certain \emph{smooth} quadrics over such a field $\bk$. 
In fact, it is possible to generalize the results of \cite{kuznetsov-perry-HPD-quadrics}  
to the case of families of quadrics with simple degenerations, and then the arguments of this 
section go through in a suitable relative setting, where $S$ is not assumed to be a point; 
however, we will not address this here.

We study the following class of morphisms from a quadric to a projective space.   

\begin{definition}
\label{definition-standard-morphism} 
Let $Q$ be a quadric, i.e. an integral 
scheme over $\bk$ which admits a closed immersion into a projective space as a quadric hypersurface. 
We denote by $\cO_Q(1)$ the restriction of the line bundle $\cO(1)$ from this ambient space.
A morphism $f \colon Q \to \bP(V)$ is \emph{standard} if there is an isomorphism
\begin{equation*}
f^*\cO_{\bP(V)}(1) \cong \cO_Q(1).
\end{equation*}
In other words, $f$ is either an embedding as a quadric hypersurface into a linear subspace of~$\bP(V)$,
or a double covering of a linear subspace of $\bP(V)$ branched along a quadric hypersurface.
{We call $f$ \emph{non-degenerate} if its image is not contained in a hyperplane of~$\bP(V)$.}
\end{definition}

Note that $Q$ is not required to be smooth,
but is required to be integral.
In the preliminary~\S\ref{subsection-HPD-smooth-quadrics}, we recall that if~$Q$ \emph{is} 
smooth then it has a natural Lefschetz structure, and if~\mbox{$f \colon Q \to \bP(V)$} is a non-degenerate standard morphism
the HPD category can be described in terms of classical projective duality. 
In~\S\ref{ssubsection-quadric-resolution} we use categorical cones to construct 
for a general standard morphism \mbox{$f \colon Q \to \bP(V)$} 
a Lefschetz category $\cQ$ over $\bP(V)$ --- called the \emph{standard categorical resolution} of $Q$ --- 
which is smooth and proper over~$\bk$ and agrees with~$\Perf(Q)$ over the complement 
of \mbox{$f(\Sing(Q)) \subset \bP(V)$}. 
In~\S\ref{ssubsection-generalized-quadric-duality} we introduce a ``generalized duality'' operation 
that associates to a standard morphism \mbox{$f \colon Q \to \bP(V)$} of a quadric another such 
morphism~\mbox{$f^{\hpd} \colon Q^\hpd \to \bP(\vV)$}. 
We use HPD for categorical cones to prove that this notation is compatible with the notation for the HPD category, i.e. 
that the HPD of the standard categorical resolution of $Q$ is Lefschetz equivalent 
to the standard categorical resolution of the generalized dual $Q^{\hpd}$ 
(Theorem \ref{theorem:hpd-quadrics-general}). 
By combining this with the nonlinear HPD theorem, we prove in~\S\ref{subsection-quadratic-HPD} a quadratic 
HPD theorem (Theorem~\ref{theorem-quadric-intersection}). 

\subsection{HPD for smooth quadrics} 
\label{subsection-HPD-smooth-quadrics} 

In this subsection, we briefly review HPD for smooth quadrics following \cite{kuznetsov-perry-HPD-quadrics}. 
Given a smooth quadric $Q$, we will denote by $\cS$ a chosen spinor bundle on it. 
Note that there is either one or two choices for $\cS$ depending on whether $\dim(Q)$ is odd or even. 

\begin{lemma}[{\cite[Lemma~1.1]{kuznetsov-perry-HPD-quadrics}}]
\label{lemma-smooth-Q-lc}
Let $f \colon Q \to \bP(V)$ be a standard morphism of a smooth quadric $Q$. 
Let $\cS$ denote a spinor bundle on $Q$. 
Then $\Perf(Q)$ is smooth and proper over $\bk$, and has the structure of a strong,  
moderate Lefschetz category over~$\bP(V)$ with Lefschetz center 
\begin{equation*}
\cQ_0 = \langle \cS , \cO \rangle 
\subset \Perf(Q) 
\end{equation*}
and length $\dim(Q)$. 
Further, if $p \in \set{0,1}$ is the parity of $\dim(Q)$, i.e. $p = \dim(Q) \pmod 2$, 
then the nonzero Lefschetz components of $\Perf(Q)$ are given by 
\begin{equation*}
\cQ_i = \begin{cases}
\langle \cS, \cO \rangle & \text{for $|i| \leq 1-p$} , \\ 
\langle \cO \rangle & \text{for $1-p < |i| \leq \dim(Q)-1$}.
\end{cases}
\end{equation*}
\end{lemma}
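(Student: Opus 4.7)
The plan is to deduce the statement from Kapranov's full exceptional collections for smooth quadrics. Since $Q$ is a smooth projective variety over $\bk$, the category $\Perf(Q) \simeq \Db(Q)$ is automatically smooth and proper over $\bk$, and it inherits its $\bP(V)$-linear structure from $f$. The hypothesis that $f$ is standard guarantees $f^{\ast}\cO_{\bP(V)}(1) \cong \cO_Q(1) = \cO(H)$, so the Lefschetz decompositions are to be constructed with respect to this line bundle; crucially, this construction is insensitive to whether $f$ is an embedding as a quadric hypersurface or a double cover, since both cases induce the same pullback of $\cO(1)$.

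The core step is to construct the right Lefschetz decomposition
$$\Db(Q) = \llangle \cQ_0, \cQ_1(H), \dots, \cQ_{\dim(Q)-1}((\dim(Q)-1)H) \rrangle$$
with the stated components. For an odd-dimensional quadric $Q_{2m-1}$ Kapranov's collection
$$\Db(Q_{2m-1}) = \langle \cS, \cO, \cO(H), \dots, \cO((2m-2)H) \rangle$$
already has the right shape and yields the decomposition with $\cQ_0 = \langle \cS, \cO \rangle$ and $\cQ_i = \langle \cO \rangle$ for $1 \le i \le 2m-2$. For an even-dimensional quadric $Q_{2m}$, Kapranov's collection
$$\Db(Q_{2m}) = \langle \cS_+, \cS_-, \cO, \cO(H), \dots, \cO((2m-1)H) \rangle$$
must first be reorganized by mutation: choosing $\cS := \cS_+$, one shows that the right mutation of $\cS_-$ past $\cO$ is isomorphic up to shift to $\cS(H)$, producing the rearranged collection
$$\Db(Q_{2m}) = \langle \cS, \cO, \cS(H), \cO(H), \cO(2H), \dots, \cO((2m-1)H) \rangle,$$
which has the required Lefschetz form with $\cQ_0 = \cQ_1 = \langle \cS, \cO \rangle$ and $\cQ_i = \langle \cO \rangle$ for $2 \le i \le 2m-1$. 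By Remark~\ref{remark:lc-sp}, in both cases the left Lefschetz decomposition is then determined automatically from smoothness and properness together with admissibility of the components, so it does not need to be verified separately.

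Admissibility of each $\cQ_i$ and of each primitive component follows at once since they are all generated by one or two exceptional objects, and strongness is in any case automatic by Remark~\ref{remark:smooth-strong}. A direct count of the components gives length exactly $\dim(Q)$, and moderateness $\dim(Q) < \rank(V)$ holds because $f$ factors through a linear subspace $\bP(W) \subseteq \bP(V)$ with $\rank(W) \ge \dim(Q) + 1$ (in the hypersurface case even $\rank(W) \ge \dim(Q) + 2$). The main obstacle is the mutation identification in the even-dimensional case, which requires explicit control of $\mathrm{Ext}^{\bullet}(\cS_-,\cO)$ in order to match $R_{\cO}(\cS_-)$ with $\cS(H)$ up to shift; once that computation is in hand, the remainder of the argument is a routine semiorthogonality check.
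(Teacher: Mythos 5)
The paper does not prove this lemma; it is imported verbatim from the companion paper \cite[Lemma~1.1]{kuznetsov-perry-HPD-quadrics}, so there is no internal proof to compare against. That said, your argument via Kapranov's collections is the expected one, and it is essentially correct: $\Perf(Q)$ is smooth and proper since $Q$ is smooth projective; for odd-dimensional $Q$ the Kapranov collection is already in right-Lefschetz form; for even-dimensional $Q$ a single right mutation of the inner spinor through $\cO$ reorganizes it into the form $\langle \cS,\cO,\cS(H),\cO(H),\ldots,\cO((\dim Q-1)H)\rangle$; smoothness and properness (via Remark~\ref{remark:lc-sp}) give the left decomposition for free; strongness is Remark~\ref{remark:smooth-strong}; and moderateness follows from $\rank V \geq \rank W \geq \dim Q + 1$ exactly as you say. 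The one point that deserves more than a hand-wave is the mutation identification in the even case: one must check (i) that $\Ext^{\bullet}(\cS_{-},\cO)$ is concentrated in degree $0$ of the correct rank, so no shift appears in $R_{\cO}(\cS_{-})$, and (ii) that the resulting cokernel is a twist of the \emph{same} spinor bundle one placed in $\cQ_0$ (i.e., $R_{\cO}(\cS_{-})\cong\cS_{+}(H)$ and not $\cS_{-}(H)$), since the statement requires $\cQ_0=\cQ_1=\langle \cS,\cO\rangle$ with a single $\cS$. This is true and follows from the Ottaviani-type exact sequence $0\to\cS_{-}\to\cO^{\oplus 2^m}\to\cS_{+}(H)\to 0$ on $Q_{2m}$, but the sign bookkeeping is the real content here, as reflected in Remark~\ref{remark:other-spinor-center}, where the distinction between $\cS$ and $\cS'$ depends on $\dim Q\bmod 4$. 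You have flagged this as the crux, which is the right instinct; filling it in makes the proof complete.
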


\begin{remark}
\label{remark:other-spinor-center}
If $\dim Q$ is even there are two choices of $\cS$, but
up to equivalence, the Lefschetz structure on $\Perf(Q)$ does not depend on this choice, 
see~\cite[Remark 1.2]{kuznetsov-perry-HPD-quadrics}. 
Further, the Lefschetz center $\cQ_0$ of $\Perf(Q)$ can be also written as 
\begin{equation*}
\cQ_0 = \langle \cO, {\cS'}^{\svee}  \rangle
\end{equation*}
where $\cS' = \cS$ if $\dim(Q)$ is not divisible by 4, and the other spinor bundle otherwise. 
The nonzero primitive Lefschetz components (as defined in~\S\ref{subsection-lef-cats}) of $\Perf(Q)$  are given by 
\begin{equation*}
\fq_i = 
\begin{cases}
\langle \cO \rangle & \text{if $i = -(\dim(Q) - 1)$} , \\ 
\langle {\cS'}^{\svee} \rangle & \text{if $i = -(1-p)$} , \\ 
\langle \cS \rangle & \text{if $i = 1-p$} , \\ 
\langle \cO \rangle & \text{if $i = \dim(Q)-1$}. 
\end{cases}
\end{equation*}
\end{remark}

The next result describes HPD for {non-degenerate} standard morphisms of smooth quadrics. 
This will be generalized to arbitrary standard morphisms of quadrics in 
Theorem~\ref{theorem:hpd-quadrics-general}. 
Recall that the classical projective dual of a smooth quadric hypersurface $Q \subset \bP(V)$ is itself 
a smooth quadric hypersurface $Q^{\svee} \subset \bP(\vV)$. 

\begin{theorem}[{\cite[Theorem~1.4]{kuznetsov-perry-HPD-quadrics}}]
\label{theorem-HPD-quadrics} 
Let $f \colon Q \to \bP(V)$ be a {non-degenerate} standard morphism of a smooth quadric $Q$.
Then there is an equivalence 
\begin{equation*}
\Perf(Q)^{\hpd} \simeq \Perf(Q^{\natural}) 
\end{equation*}
of Lefschetz categories over $\bP(V^{\svee})$, where: 
\begin{enumerate}
\item \label{HPD-quadrics-1}
If $f$ is a divisorial embedding and $\dim(Q)$ is even, then $Q^{\hpd} = Q^{\svee}$ is the projective 
dual of~$Q$ and $Q^{\natural} \to \bP(\vV)$ is its natural embedding. 
\item \label{HPD-quadrics-d-odd} 
If $f$ is a divisorial embedding and $\dim(Q)$ is odd, then $Q^{\natural} \to \bP(\vV)$ is the double cover 
branched along the projective dual of~$Q$. 
\item \label{HPD-quadrics-dc-even}
If $f$ is a double covering and~$\dim(Q)$ is even, then $Q^{\natural} \to \bP(\vV)$ is the projective 
dual of the branch locus of~$f$. 
\item \label{HPD-quadrics-dc-odd} 
If $f$ is a double covering and $\dim(Q)$ is odd, then $Q^{\natural} \to \bP(\vV)$ is the double cover 
branched along the projective dual of the branch locus of~$f$. 
\end{enumerate} 
\end{theorem}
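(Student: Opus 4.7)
The plan is to compute $\Perf(Q)^\hpd$ directly in each of the four cases by realizing the universal hyperplane section $\bH(Q) \to \bP(\vV)$ as a flat family of quadrics and applying the derived category formula of Kuznetsov for smooth quadric fibrations in terms of sheaves of even Clifford algebras.

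First, I would exhibit $\bH(Q) \to \bP(\vV)$ explicitly as a quadric fibration in each case. In the divisorial embedding cases (1)--(2), the fiber over $[H] \in \bP(\vV)$ is the hyperplane section $Q \cap H$, a quadric of dimension $\dim(Q)-1$ whose degenerations occur precisely along the classical projective dual $Q^\svee \subset \bP(\vV)$. In the double covering cases (3)--(4), the fiber over $[H]$ is $f^{-1}(H) \to H$, which is itself a quadric of dimension $\dim(Q)$ (a double cover of the linear subspace $H$ branched along $B \cap H$, where $B \subset \bP(V)$ is the branch quadric of $f$), degenerating precisely along the projective dual $B^\svee$ of the branch locus. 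Crucially, the parity of the relative fiber dimension --- which governs the structure of the associated even Clifford algebra --- is opposite to $\dim(Q)$ in the divisorial cases and equal to it in the double covering cases.

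Applying the Kuznetsov semiorthogonal decomposition for a smooth quadric fibration then yields a $\bP(\vV)$-linear decomposition of $\Perf(\bH(Q))$ with one ``Clifford'' component $\Perf(\bP(\vV), \Cl_0)$ and a sequence of $\Perf(\bP(\vV))$ pieces twisted by powers of the relative hyperplane class. Comparing this term-by-term with the HPD decomposition~\eqref{HC-sod} of $\bH(\Perf(Q))$ coming from the Lefschetz structure of Lemma~\ref{lemma-smooth-Q-lc} --- whose outer components $\cQ_i = \langle \cO \rangle$ for $|i|$ large contribute exactly the same $\Perf(\bP(\vV))$ pieces and whose inner components involving the spinor bundle $\cS$ match a residual spinor summand of the quadric-fibration decomposition --- one identifies
\[
\Perf(Q)^\hpd \simeq \Perf(\bP(\vV), \Cl_0)
\]
as $\bP(\vV)$-linear categories, with compatible Lefschetz centers.

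The final and most delicate step, and the main obstacle, is to identify $\Perf(\bP(\vV), \Cl_0)$ with $\Perf(Q^\natural)$ as a Lefschetz category in each of the four cases. For fibrations of odd relative fiber dimension (cases~(1) and~(4)) the center of $\Cl_0$ is $\cO_{\bP(\vV)}$ and $\Cl_0$ is Azumaya on the smooth locus; in case~(1) an explicit Morita trivialization constructed from the restriction of the spinor bundle $\cS$ to smooth hyperplane sections extends across the discriminant to yield $\Perf(Q^\svee)$, while in case~(4) the additional two-sheeted structure coming from $f$ twists this trivialization and produces a double cover of $\bP(\vV)$ branched along $B^\svee$. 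For even relative fiber dimension (cases~(2) and~(3)) the center of $\Cl_0$ defines a double cover $\tilde{\bP} \to \bP(\vV)$ branched along the discriminant on which $\Cl_0$ becomes Azumaya; in case~(2) this double cover already realizes the desired $Q^\natural$, while in case~(3) a Morita trivialization induced by the spinor bundles of $Q$ collapses it back down to the embedded dual $B^\svee$. The technical heart is to check these Brauer-theoretic identifications extend across the discriminant loci in a way that respects the Lefschetz centers on both sides; this reduces to a careful analysis of how spinor bundles degenerate along the discriminant of a smooth quadric fibration, which is the source of the parity-dependent four-case dichotomy.
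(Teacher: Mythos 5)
This statement is cited by the paper from \cite{kuznetsov-perry-HPD-quadrics} and not proved here, so your proposal is being compared against a proof you had no direct access to; to the best of my recollection, that reference constructs the equivalence directly via an explicit Fourier--Mukai kernel (a generalized spinor sheaf on the universal hyperplane section), whereas you take the older route of \cite{kuznetsov08quadrics} through sheaves of even Clifford algebras. The latter route can in principle work, but your execution has a concrete error that scrambles the four-case dichotomy.

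The gap is the computation of the fiber dimension of $\bH(Q) \to \bP(\vV)$ in the covering case. You claim the fiber over $[H]$ is ``a quadric of dimension $\dim(Q)$,'' but it is $f^{-1}(H)$, a double cover of $H \cong \bP^{N-2}$ branched along $B \cap H$, hence a quadric of dimension $N-2 = \dim(Q) - 1$; the fiber dimension is $\dim(Q)-1$ in all four cases. The relevant invariant is the parity of the \emph{rank} of the fiber quadric, which (for smooth fibers) is $\dim(Q)+1$ in the embedding cases and $N = \dim(Q)+1$ in the covering cases as well, i.e.\ always of parity opposite to $\dim(Q)$. Consequently the fiber rank is odd in cases~\eqref{HPD-quadrics-1} and~\eqref{HPD-quadrics-dc-even} and even in cases~\eqref{HPD-quadrics-d-odd} and~\eqref{HPD-quadrics-dc-odd}, which is \emph{not} the grouping $\{(1),(4)\}$ versus $\{(2),(3)\}$ you use. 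Once corrected, the Clifford-theoretic mechanism is cleaner than you describe: cases~\eqref{HPD-quadrics-1},~\eqref{HPD-quadrics-dc-even} have $\Cl_0$ Azumaya away from the discriminant and yield the embedded quadrics $Q^{\svee}$, $\br{Q}^{\svee}$; cases~\eqref{HPD-quadrics-d-odd},~\eqref{HPD-quadrics-dc-odd} have $\Cl_0$ with quadratic center yielding the double covers. The ad hoc devices you invoke to reconcile your incorrect grouping with the stated conclusions --- ``the additional two-sheeted structure coming from $f$ twists this trivialization'' in case~(4), and ``a Morita trivialization\ldots collapses it back down'' in case~(3) --- are artifacts of the miscount and do not describe real phenomena. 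Beyond this, the two remaining difficulties you flag as ``delicate'' are genuinely the hard part: extending the Morita identification across the discriminant of the quadric fibration (where $\Cl_0$ ceases to be Azumaya) and verifying that the resulting equivalence respects the Lefschetz centers of Lemma~\ref{lemma-smooth-Q-lc} each require a substantial argument that the proposal leaves entirely open.
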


\subsection{Standard categorical resolutions of quadrics} 
\label{ssubsection-quadric-resolution}
In this subsection, we will obtain a categorical resolution of a singular quadric 
by expressing it as a cone over a smooth quadric, and then taking a categorical cone. 
To start with, we analyze the general structure of a standard morphism of quadrics.
The following result is clear.

\begin{lemma}
\label{lemma:standard-morphism}
Let $Q \subset \bP(\tW)$ be a quadric hypersurface.
Then there are a unique subspace~\mbox{$K \subset \tW$} and a smooth quadric $\barQ \subset \bP(\tW/K)$ such that
\begin{equation*}
Q \cong \bC_K(\barQ).
\end{equation*}
Moreover, if $f \colon Q \to \bP(V)$ is a standard morphism of~$Q$,
there is a unique commutative diagram of vector spaces
\begin{equation*}
\xymatrix@C=4em{
0 \ar[r] &
K \ar@{^{(}->}[r] \ar@{=}[d] & 
\tW \ar@{->>}[r] \ar@{->>}[d]_{\bC_K(\bar{f})} &
\tW/K \ar[r] \ar@{->>}[d]_{\bar{f}} &
0
\\
0 \ar[r] &
K \ar@{^{(}->}[r] & 
W \ar@{->>}[r] \ar@{^{(}->}[d] &
W/K \ar[r] &
0
\\
&& V
}
\end{equation*}
with surjective morphism $\bar{f}$ such that $f$ is 
the composition $Q \xrightarrow{\ \bC_K(\bar{f})\vert_Q \ } \bP(W) \hookrightarrow \bP(V)$. 
Moreover, one of the following two possibilities hold: 
\begin{enumerate}
\item 
\label{type:embedding}
The map $\bar{f}$ is an isomorphism. In this case, $f$ is an embedding. 
\item 
\label{type:covering}
The spaces $\ker(\bar{f})$ and $\ker(\bC_K(\bar{f}))$ are $1$-dimensional, 
and the corresponding points of the projective spaces~$\bP(\tW)$ and~$\bP(\tW/K)$ 
do not lie on the quadrics $Q$ and $\barQ$ respectively. 
In this case, $f$ is a double covering onto~$\bP(W) \subset \bP(V)$. 
\end{enumerate}
\end{lemma}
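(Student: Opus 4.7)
First I would address existence and uniqueness of the pair $(K, \barQ)$ for $Q \subset \bP(\tW)$. Write $Q$ as the vanishing locus of a nonzero $q \in \Sym^2 \tW^{\svee}$; since $\mathrm{char}(\bk) \ne 2$, the form $q$ is determined by its associated symmetric bilinear form $b_q$, whose kernel $K := \ker(b_q) \subset \tW$ is intrinsically defined. The singular locus of $Q$ is precisely $\bP(K)$, so $K$ is determined by $Q$; the form $q$ descends to a non-degenerate quadratic form $\bar q$ on $\tW/K$, whose zero locus $\barQ \subset \bP(\tW/K)$ is a smooth quadric, and a direct comparison with the definition of the classical cone yields $Q = \bC_K(\barQ)$. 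For uniqueness, any other presentation $Q = \bC_{K'}(\barQ')$ with $\barQ'$ smooth forces $\bP(K')$ to coincide with $\Sing(Q) = \bP(K)$, and $\barQ'$ is then the image of $Q$ under the projection $\bP(\tW) \dashrightarrow \bP(\tW/K)$.

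Next I would analyze the standard morphism $f \colon Q \to \bP(V)$. Using the Koszul sequence $0 \to \cO_{\bP(\tW)}(-1) \to \cO_{\bP(\tW)}(1) \to \cO_Q(1) \to 0$ to identify $H^0(Q, \cO_Q(1)) = \tW^{\svee}$, the morphism $f$ corresponds to a linear map $\phi \colon V^{\svee} \to \tW^{\svee}$ whose sections have no common zero on $Q$. Setting $W \subset V$ to be the annihilator of $\ker(\phi)$ and dualizing produces a surjection $\bC_K(\bar f) \colon \tW \twoheadrightarrow W$ together with a factorization $Q \to \bP(W) \hookrightarrow \bP(V)$ of $f$. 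The geometric input is that $\ker(\bC_K(\bar f)) \cap K = 0$: any nonzero $v$ in this intersection would give a point $[v] \in \bP(K) \subset Q$ at which all the sections defining $f$ vanish, contradicting that $f$ is a morphism. Thus $K$ embeds in $W$, and $\bC_K(\bar f)$ descends to a surjection $\bar f \colon \tW/K \twoheadrightarrow W/K$, producing the diagram.

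Finally I would distinguish the two cases by invoking the standardness hypothesis. In case~\eqref{type:embedding}, $f$ is a closed embedding of $Q$ as a hypersurface in $\bP(W)$, so $\dim W = \dim \tW$ and $\bC_K(\bar f)$ is forced to be an isomorphism. In case~\eqref{type:covering}, $f$ is a double covering of $\bP(W)$, which I would analyze via the linear projection $\bP(\tW) \dashrightarrow \bP(W)$ with center $\bP(K')$ for $K' := \ker(\bC_K(\bar f))$. The regularity of $f$ on $Q$ gives $\bP(K') \cap Q = \varnothing$, and a fiber-dimension count (a general fiber of the projection, intersected with $Q$, is a quadric of dimension $\dim K' - 1$, which must consist of two points) forces $\dim K' = 1$. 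Since $\bP(K) \subset Q$ is disjoint from $\bP(K')$, we get $K' \cap K = 0$, so $\ker(\bar f)$ is the image of $K'$ in $\tW/K$, is $1$-dimensional, and $\bar q$ does not vanish on it by the same disjointness. The main subtlety I expect is keeping track of the two parallel identifications of $K$ (as a subspace of $\tW$ and of $W$) so that the stated diagram truly commutes with the claimed arrows.
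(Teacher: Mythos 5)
Your proof is correct and follows essentially the same approach as the paper's (which defines $K$ as the kernel of the quadratic form and $W$ as the dual of $\operatorname{im}(f^*) \subset \tW^\vee$, then declares ``the rest is clear''). You have just filled in the details that the paper omits, including the cohomological identification $H^0(Q,\cO_Q(1)) = \tW^\vee$, the disjointness argument $\ker(\bC_K(\bar f)) \cap K = 0$, and the fiber-dimension count forcing $\dim K' = 1$ in the covering case.
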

\begin{proof}
We define $K$ to be the kernel of the quadratic form on~$\tW$ corresponding to~$Q$ 
and~$\barQ$ to be the quadric corresponding to the induced quadratic form on $\tW/K$.
We set 
\begin{equation*}
W = \operatorname{im}\left(V^\vee = H^0(\bP(V),\cO_{\bP(V)}(1)) \xrightarrow{\ f^*\ } H^0(Q,\cO_Q(1)) = \tW^\vee\right)^\vee.
\end{equation*}
This gives a factorization of $f^*$ as a composition $V^\vee \to W^\vee \to \tW^\vee$
and we define the maps in the middle column of the diagram as the dual maps.
The rest is clear.
\end{proof}

We call the quadric $\barQ$ above the \emph{base quadric} of $Q$.
Moreover, if~\eqref{type:embedding} holds we say $f$ is of \emph{embedding type},
and if~\eqref{type:covering} holds we say $f$ is of \emph{covering type}. 

Next we define some useful numerical invariants of a standard morphism of a quadric. 
In the definition below we use the notation introduced in Lemma~\ref{lemma:standard-morphism}.

\begin{definition}
\label{def:numerical-invariants}
Let $f \colon Q \to \bP(V)$ be a standard morphism of a quadric. Then: 
\begin{itemize} 
\item $r(Q) = \dim\tW - \dim K$ denotes the rank of $Q$, i.e. the rank of the quadratic form on~$\tW$ corresponding to $Q$.  
\item $p(Q) \in \{0,1\}$ denotes the \emph{parity} of $r(Q)$, 
i.e. $p(Q) = r(Q) \pmod 2$. 
\item $k(Q) = \dim K$.  
\item $c(f) = \dim V - \dim W$ denotes the codimension of the linear span
$\langle {f(Q)} \rangle \subset \bP(V)$. 
\item $t(f) = \dim\tW - \dim W \in \{0,1\}$ denotes the \emph{type} of $Q$, 
defined by 
\begin{equation*}
t(f) = 
\begin{cases}
0 & \text{ if $f$ is of embedding type}, \\ 
1 & \text{ if $f$ is of covering type}. 
\end{cases}
\end{equation*}
\end{itemize}
\end{definition}

Note that our convention that $Q$ is integral is equivalent to $r(Q) \ge 3$.

\begin{remark}
\label{remark-Q-invariants}
As indicated by the notation, $r(Q)$, $p(Q)$, and $k(Q)$ depend only on~$Q$, 
while~$c(f)$ and $t(f)$ are invariants of the morphism~$f$. 
We note the relations
\begin{align}
\label{dimQ-quadric}
\dim(Q) & = r(Q) + k(Q) - 2 ,  \\
\label{dimV-quadric}
\dim(V) & = r(Q) + k(Q) + c(f) - t(f) .
\end{align} 
Moreover, if $\barQ$ is the base quadric of $Q$, we have $r(Q) = r(\barQ)$ and $p(Q) = p(\barQ)$.
\end{remark}

Using the identification of Lemma~\ref{lemma:standard-morphism} of a quadric $Q$ with the cone over a smooth quadric, 
we see that the corresponding resolved cone gives a resolution of~$Q$.
We call the induced map
\begin{equation}
\label{equation-tQ}
\pi \colon \tC_K(\barQ) \to \bC_{K}(\barQ) = Q
\end{equation} 
the \emph{standard geometric resolution} of $Q$. 
Note that this map is nothing but the blowup of $Q$ in 
its singular locus $\Sing(Q) = \bP(K)$. 

\begin{definition}
\label{definition-standard-cat-res-Q}
Let $f \colon Q \to \bP(V)$ be a standard morphism of a quadric.   
Using the above notation, the \emph{standard categorical resolution} of $Q$ over~$\bP(V)$ 
is the Lefschetz category $\cQ$ over~$\bP(V)$ defined as the categorical cone over the base quadric $\barQ$: 
\begin{equation*}
\cQ = \cC_K(\barQ)/\bP(V) . 
\end{equation*} 
\end{definition}

In Lemma~\ref{lemma-cQ-lc} we explicitly describe the Lefschetz components of $\cQ$, 
and in Lemma~\ref{lemma-cQ-categorical-resolution} we justify calling $\cQ$ a categorical resolution. 

\begin{lemma}
\label{lemma-cQ-lc}
Let $f \colon Q \to \bP(V)$ be a standard morphism of a quadric. 
Let $\cS$ be a spinor bundle on the base quadric of~$Q$. 
Then the standard categorical resolution $\cQ$ of $Q$ over $\bP(V)$ 
is smooth and proper over~$\bk$, with a strong, moderate Lefschetz structure of 
length $\dim(Q)$.
If~$k = k(Q)$ and $p = p(Q)$, then its nonzero Lefschetz components are given by  
\begin{equation*}
\cQ_i = \begin{cases}
\langle \cS, \cO \rangle & \text{for $|i| \leq k+1-p$} , \\ 
\langle \cO \rangle &  \text{for $k+1-p < |i| \leq \dim(Q) - 1$} ,  
\end{cases}
\end{equation*}
and its nonzero primitive Lefschetz components are given by 
\begin{equation*}
\fq_i = 
\begin{cases}
\langle \cO \rangle & \text{if $i = -(\dim(Q) - 1)$} , \\ 
\langle {\cS'}^{\svee} \rangle & \text{if $i = -(k+1-p)$} , \\ 
\langle \cS \rangle & \text{if $i = k+1-p$} , \\ 
\langle \cO \rangle & \text{if $i = \dim(Q) - 1$}, 
\end{cases}
\end{equation*}
where $\cS'$ is described in Remark~\textup{\ref{remark:other-spinor-center}}.
\end{lemma}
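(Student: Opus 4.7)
The plan is to deduce the lemma from the Lefschetz structure on $\Perf(\barQ)$ given by Lemma~\ref{lemma-smooth-Q-lc} via the cone formula of Theorem~\ref{theorem-cone-lef-cat}, followed by a base extension from $\bP(\tW)$ to $\bP(V)$. First, I will use Lemma~\ref{lemma:standard-morphism} to identify $Q$ with the classical cone $\bC_K(\barQ)$, where $\barQ \subset \bP(\tW/K)$ is a smooth quadric of dimension $r(Q)-2$ and parity $p$. By Lemma~\ref{lemma-smooth-Q-lc}, $\Perf(\barQ)$ is a strong, moderate Lefschetz category over $\bP(\tW/K)$ of length $r(Q)-2$ with center $\langle \cS, \cO \rangle$. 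Applying Theorem~\ref{theorem-cone-lef-cat} to the exact sequence $0 \to K \to \tW \to \tW/K \to 0$ with $N_0 = k$ then produces a Lefschetz structure on $\cC_K(\barQ)$ over $\bP(\tW)$ of length $(r(Q)-2)+k = \dim(Q)$. Smoothness and properness over $\bk$ come from Lemma~\ref{lemma-cC-smooth-proper} applied to $\barQ$, and strongness is automatic by Remark~\ref{remark:smooth-strong}.

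Next, I will read off the Lefschetz components from~\eqref{ci-cone}: for $|i| \leq k$ they equal $\barp^*\langle \cS, \cO \rangle$, while for $k < |i| \leq \dim(Q)-1$ they equal $\barp^*\bar{\cQ}_{|i|-k}$, which is $\langle \cS, \cO \rangle$ when $|i|-k \leq 1-p$ (i.e.\ $|i| \leq k+1-p$) and $\langle \cO \rangle$ otherwise; since $\barp^*$ is fully faithful, this reproduces the stated formulas for $\cQ_i$. Base extension from $\bP(\tW)$ to $\bP(V)$ --- a closed embedding in the embedding-type case and the natural factorization through $\bP(W) \hookrightarrow \bP(V)$ in the covering-type case --- transports the Lefschetz components, strongness, and smoothness/properness unchanged, yielding $\cQ = \cC_K(\barQ)/\bP(V)$. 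Moderateness is a direct check: combining~\eqref{dimQ-quadric} and~\eqref{dimV-quadric} gives $\dim(V) - \dim(Q) = c(f) + 2 - t(f) \geq 1$, so $\length(\cQ) = \dim(Q) < \dim(V) = \rank(V)$.

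Finally, the primitive components are obtained by inspecting $\fq_i = \cQ_i \cap \cQ_{i+1}^\perp$ for $i \geq 1$ and $\fq_i = \cQ_i \cap {}^\perp\cQ_{i-1}$ for $i \leq -1$. These vanish wherever the Lefschetz component does not change, so only two boundary indices contribute: at $|i| = \dim(Q)-1$, where the component drops from $\langle \cO \rangle$ to zero and hence $\fq_i = \langle \cO \rangle$; and at $|i| = k+1-p$, where the component drops from $\langle \cS, \cO \rangle$ to $\langle \cO \rangle$. At the latter indices the primitive component is the right (respectively left) orthogonal to $\langle \cO \rangle$ inside $\langle \cS, \cO \rangle$; invoking the two equivalent descriptions of the Lefschetz center recorded in Remark~\ref{remark:other-spinor-center}, these orthogonals are $\langle \cS \rangle$ for $i = k+1-p$ and $\langle {\cS'}^{\svee} \rangle$ for $i = -(k+1-p)$, matching the statement. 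The only delicate point in the argument is ensuring that in the covering-type case, where $\tW \to V$ factors through the surjection $\tW \twoheadrightarrow W$, the base extension from $\bP(\tW)$ to $\bP(V)$ transports the Lefschetz structure correctly; this reduces to general properties of base change for linear categories along the relevant morphism of projective bundles and can be verified directly from the definitions.
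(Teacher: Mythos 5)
Your proof is correct and takes essentially the same route the paper does, which simply says ``Combine Theorem~\ref{theorem-cone-lef-cat}, Lemma~\ref{lemma-cC-smooth-proper}, Lemma~\ref{lemma-smooth-Q-lc}, and~\eqref{dimQ-quadric}.'' You fill in exactly the right details: reading off $\cQ_i$ from~\eqref{ci-cone}, locating the primitive components at the two boundary indices via Remark~\ref{remark:other-spinor-center}, getting strongness from Remark~\ref{remark:smooth-strong}, and checking moderateness from~\eqref{dimQ-quadric}--\eqref{dimV-quadric}.

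One point worth noting on your ``delicate point'' at the end: the wrinkle you flag in the covering case arises because you form the cone with respect to $0 \to K \to \tW \to \tW/K \to 0$, landing you over $\bP(\tW)$, and in the covering case $\tW$ does not embed in $V$. What you are really invoking there is the fact that the cone is supported away from the center of the projection $\bP(\tW) \dashrightarrow \bP(W)$, so its Lefschetz structure descends to the $\bP(W)$-linear (hence $\bP(V)$-linear) structure with the same center and components --- this is precisely the first part of Proposition~\ref{proposition-HPD-projection}, which you could cite to close that gap instead of leaving it as a verification from definitions. Alternatively, and perhaps closer to what Definition~\ref{definition-standard-cat-res-Q} intends, you can avoid the issue entirely by using the sequence $0 \to K \to W \to W/K \to 0$ and taking $\barQ$ to be a Lefschetz category over $\bP(W/K)$ via the standard morphism $\bar{f}$ of Lemma~\ref{lemma:standard-morphism} (which is of embedding or covering type, both covered by Lemma~\ref{lemma-smooth-Q-lc}). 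Then $\cC_K(\barQ)$ is directly linear over $\bP(W) \subset \bP(V)$ and the base extension is of the unproblematic kind handled by Remark~\ref{remark-base-extension}.
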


\begin{proof}
Combine Theorem~\ref{theorem-cone-lef-cat}, Lemma~\ref{lemma-cC-smooth-proper}, Lemma~\ref{lemma-smooth-Q-lc}, 
and the formula~\eqref{dimQ-quadric}.
\end{proof}

\begin{remark}
In Lemma~\ref{lemma-cQ-lc} and below, we tacitly identify the objects~$\cO$, $\cS$, and~$\cS'$ 
on the base quadric~{$\barQ$} of~$Q$ with their pullbacks to~{$\cQ \subset \tC_K(\barQ)$}. 
\end{remark}

The next lemma justifies our terminology by showing that $\cQ$ is 
a weakly crepant categorical resolution in the sense of Definition~\ref{def:categorical-resolution}.
Recall that by definition the standard categorical resolution of a quadric is a subcategory 
of the derived category of the standard geometric resolution~\eqref{equation-tQ}. 

\begin{lemma}
\label{lemma-cQ-categorical-resolution}
Let $f \colon Q \to \bP(V)$ be a standard morphism of a quadric, 
with standard geometric resolution $\pi \colon \tQ  = \tC_K(\barQ) \to Q$ and 
standard categorical resolution $\cQ$ over $\bP(V)$. 
Then~$\pi_*$ and~$\pi^*$ restrict to functors 
\begin{equation*}
\pi_* \colon \cQ \to \Db(Q) \qquad \text{and} \qquad \pi^* \colon \Perf(Q) \to \cQ, 
\end{equation*} 
which give $\cQ$ the structure of a {weakly} crepant categorical resolution of $Q$. 

Furthermore, the object $\cR = \pi_* \cEnd( \cO \oplus \cS) \in \Db(Q)$ 
is a coherent sheaf of~$\cO_Q$-algebras on~$Q$ of finite homological dimension, 
and there is an equivalence 
\begin{equation*} 
\cQ \simeq \Db(Q, \cR) , 
\end{equation*} 
where $\Db(Q, \cR)$ denotes the bounded derived category of coherent $\cR$-modules on $Q$. 
\end{lemma}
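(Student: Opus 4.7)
The plan is to prove the four assertions of the lemma in sequence.

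First I would handle the restriction of the adjoint pair and full faithfulness. Properness of $\pi$ gives $\pi_*\colon\Perf(\tQ) \to \Db(Q)$, which restricts to $\cQ$. For $\pi^*$, given $F \in \Perf(Q)$ the object $\eps^*\pi^*F$ is pulled back from $\bP(K)$ along the first projection $\bE \cong \bP(K) \times \barQ \to \bP(K)$, because the composition $\pi\circ\eps$ factors through the vertex $\bP(K) \hookrightarrow Q$; hence $\eps^*\pi^*F \in \Perf(\bP(K)) \otimes \langle \cO_{\barQ}\rangle \subset \Perf(\bP(K)) \otimes \cQ_0$, as required. The adjunction descends from the ambient categories, and full faithfulness of $\pi^*$ reduces by the projection formula to $\pi_*\cO_{\tQ} \cong \cO_Q$, which holds because $Q$ has rational singularities (as a projective cone over a smooth Fano quadric).

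The central step is weak crepancy. Combining Lemma~\ref{lemma-tC-divisors}, the projective bundle formula for $\omega_{\bar p}$ applied to the rank-$(k+1)$ bundle $\cV|_{\barQ}$ (with $\det\cV|_{\barQ} \cong \cO_{\barQ}(-\bar H)$), the adjunction formula $\omega_Q \cong \cO_Q(-\dim Q)$, and the relation $\bE = H - \bar H$, one computes
\begin{equation*}
\omega_\pi \cong \cO_{\tQ}((r(Q)-3)\bE).
\end{equation*}
Weak crepancy then amounts to showing the natural map $\pi^*F \to \pi^*F\otimes\omega_\pi \cong \pi^!F$ becomes an isomorphism after applying $\Hom_{\tQ}(G,-)$ for every $G \in \cQ$. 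Filtering $\cO \hookrightarrow \cO((r-3)\bE)$ by the chain $\cO \subset \cO(\bE) \subset \cdots \subset \cO((r-3)\bE)$ with successive quotients $\cO_{\bE}(i\bE)$ for $i = 1, \ldots, r-3$, this reduces to proving
\begin{equation*}
\Hom_{\tQ}(G, \pi^*F \otimes \cO_{\bE}(i\bE)) = 0 \quad\text{for } G \in \cQ,\ F \in \Perf(Q),\ 1 \leq i \leq r-3.
\end{equation*}
Using the identity $\cO(\bE)|_{\bE} \cong \cO_{\bP(K)}(H_0) \boxtimes \cO_{\barQ}(-\bar H)$ from Lemma~\ref{lemma-tC-divisors}, the fact that $\eps^*\pi^*F$ is pulled back from $\bP(K)$, the adjunction $\eps^* \dashv \eps_*$, and K\"unneth, this reduces further to the vanishings
\begin{equation*}
\Hom_{\barQ}(\cS, \cO(-i\bar H)) = 0 \quad \text{and} \quad \Hom_{\barQ}(\cO, \cO(-i\bar H)) = 0 \quad\text{for } 1 \leq i \leq r-3,
\end{equation*}
which follow immediately from the left Lefschetz decomposition of $\Perf(\barQ)$: the center $\cQ_0 = \langle \cS, \cO\rangle$ is right-orthogonal to $\cO(-i\bar H)$ for $1 \leq i \leq \dim\barQ - 1 = r-3$.

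For the algebra description, the idea is to view $T = \cO \oplus \cS$ (pulled back to $\tQ$) as a relative tilting generator of $\cQ$ over $Q$. The Lefschetz decomposition of $\cQ$ from Lemma~\ref{lemma-cQ-lc}, combined with the projective bundle decomposition of $\Perf(\tQ)$ over $\barQ$, shows that $T$ classically generates $\cQ$ as a $Q$-linear category. To see that $\cR = \pi_*\cEnd(T)$ is concentrated in degree zero, I would decompose $\cEnd(T) \cong \cO \oplus \cS \oplus \cS^{\svee} \oplus (\cS \otimes \cS^{\svee})$ and verify vanishing of the higher direct images using the $\bP^k$-bundle structure of $\bar p \colon \tQ \to \barQ$ and standard cohomology computations for spinor bundles on $\barQ$. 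Finite homological dimension of $\cR$ then follows from smoothness of $\cQ$ over $\bk$ (Lemma~\ref{lemma-cQ-lc}). The equivalence $\cQ \simeq \Db(Q, \cR)$ is realized by the functor $G \mapsto \pi_*\cHom_{\tQ}(T, G)$; its fully faithfulness and essential surjectivity follow from the vanishings established in the weak-crepancy step together with the generating property of $T$. The principal obstacle is the weak-crepancy computation: the explicit formula for $\omega_\pi$ must be coupled delicately with the Lefschetz semiorthogonality on $\barQ$; once this is in place, the algebra description is organized by standard relative tilting techniques.
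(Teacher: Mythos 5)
Your argument is correct and follows the same conceptual skeleton as the paper's proof, but where the paper outsources the hard steps to citations of \cite{kuznetsov2008lefschetz} (Theorem~4.4 for the categorical resolution structure, Proposition~4.5 for weak crepancy, Proposition~7.1 and Theorem~5.2 for the tilting description), you unwind those citations into direct arguments adapted to this special case. Your weak-crepancy computation --- the formula $\omega_\pi \cong \cO_{\tQ}((r-3)\bE)$, the filtration of $\cO \hookrightarrow \cO((r-3)\bE)$ by powers of $\bE$, the reduction via $\eps^*\dashv\eps_*$, K\"unneth, and the observation that $\eps^*\pi^*F$ is pulled back from $\bP(K)$, and finally the semiorthogonality $\Hom_{\barQ}(\cQ_0, \cO(-i\barH)) = 0$ for $1 \le i \le \dim\barQ - 1$ --- is essentially the proof of \cite[Proposition~4.5]{kuznetsov2008lefschetz} specialized to this setting, which is valuable to see spelled out. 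One small imprecision in the last paragraph: to check that $\cR = \pi_*\cEnd(T)$ is a pure sheaf, the relevant pushforward is along the blowup $\pi \colon \tQ \to Q$ (not along the $\bP^k$-bundle $\barp \colon \tQ \to \barQ$); the higher direct images of $\pi$ are supported on $\bP(K)$, over which the fibers of $\pi$ are copies of $\barQ$, and the criterion (as in \cite[Proposition~7.1]{kuznetsov2008lefschetz}) reduces to the vanishing $\rH^{>0}(\barQ, \cEnd(\cO\oplus\cS)(t\barH)) = 0$ for all $t \ge 0$. Since you do end up invoking cohomology computations on $\barQ$, this is a matter of bookkeeping rather than a genuine gap, but one should be careful not to confuse $\barp$ with $\pi$: the former has $\bP^k$-fibers and is totally unrelated to the tilting question.
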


\begin{proof}
Recall that the morphism $\pi$ is the blowup with center at $\bP(K)$.
Consider the blowup diagram 
\begin{equation*}
\vcenter{\xymatrix{
E \ar[r]  \ar[d]_{\pi_E} & \tQ \ar[d]^\pi \\ 
\bP(K) \ar[r] & Q
}}
\end{equation*} 
where $E$ is the exceptional divisor; note moreover that $E \cong \bP(K) \times \barQ$, where $\barQ$ is the base quadric of $Q$. 
It is easy to see that $\pi$ is a resolution of rational singularities (recall that the rank of $\barQ$ is assumed to be greater than~2). 
Moreover, $\pi_E^*(\Perf(\bP(K)))$ is contained in the Lefschetz center 
\begin{equation*}
\Perf(\bP(K)) \otimes \langle \cS, \cO \rangle \subset \Perf(\bP(K) \times \barQ) \simeq \Perf(E).
\end{equation*} 
Hence by \cite[Theorem 4.4]{kuznetsov2008lefschetz} the functors $\pi^*$ and $\pi_*$ 
indeed give $\cQ$ the structure of a categorical resolution of $Q$. 
Moreover, $Q$ is Gorenstein and a direct computation shows that 
\begin{equation*}
K_{\tQ} = \pi^*(K_Q) + (\dim(\barQ) - 1)E .
\end{equation*} 
Note that $\dim(\barQ)$ is the length of the Lefschetz decomposition of $\Perf(E)$ above. 
Hence \cite[Proposition 4.5]{kuznetsov2008lefschetz} shows that $\cQ$ is a {weakly} crepant categorical resolution of $Q$. 
By an argument similar to \cite[Proposition 7.1]{kuznetsov2008lefschetz}, 
the bundle $\cO \oplus \cS$ on $\tQ$ is tilting over $Q$ (i.e. the 
derived pushforward~$\pi_* \cEnd(\cO \oplus \cS)$ is a pure sheaf) if and only if for all $t \geq 0$ we have 
\begin{equation*}
\rH^{>0}(\barQ, \cEnd( \cO \oplus \cS )(t\barH)) = 0.  
\end{equation*}
A computation shows that this vanishing holds, and then the rest of the lemma 
follows from~\cite[Theorem 5.2]{kuznetsov2008lefschetz}. 
\end{proof}

\begin{remark}
The last statement of Lemma~\ref{lemma-cQ-categorical-resolution} shows that 
$\cQ$ can also be considered as a noncommutative resolution in the sense of 
Van den Bergh~\cite{vdb-flops, vdb-crepant}. 
\end{remark}

The following lemma relates standard categorical resolutions of quadrics to geometry
and shows that~$\cQ$ is ``birational'' to~$Q$ over~$\bP(V)$.

\begin{lemma}
\label{lemma-cQ-resolution}
Let $f \colon Q \to \bP(V)$ be a standard morphism of a quadric. 
Let $\cQ$ be the standard categorical resolution of 
$Q$ over $\bP(V)$. 
Let $U = \bP(V) \setminus f(\Sing(Q))$. 
\begin{enumerate}
\item \label{cQ-resolution} 
There is a $\bP(V)$-linear functor $\Perf(Q) \to \cQ$ whose 
base change to $U$ gives an equivalence
\begin{equation*}
\cQ_U \simeq \Perf(Q_U) . 
\end{equation*}

\item \label{cQ-fiber-product} 
Let $\cA$ be a $\bP(V)$-linear category supported over $U$. 
Then there is an equivalence 
\begin{equation*}
\cA \otimes_{\Perf(\bP(V))} \cQ \simeq \cA \otimes_{\Perf(\bP(V))} \Perf(Q). 
\end{equation*}
In particular, if $\cA = \Perf(X)$ for a scheme $X$ over $\bP(V)$ supported over $U$, then 
\begin{equation*}
\cA \otimes_{\Perf(\bP(V))} \cQ \simeq \Perf{\left(X \times_{\bP(V)} Q \right)}. 
\end{equation*}
\end{enumerate}
\end{lemma}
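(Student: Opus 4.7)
The natural candidate for the functor in~(1) is the pullback $\pi^* \colon \Perf(Q) \to \cQ$ along the standard geometric resolution $\pi \colon \tQ = \tC_K(\barQ) \to Q$ constructed in Lemma~\ref{lemma-cQ-categorical-resolution}; this is $\bP(V)$-linear because the composition $f \circ \pi \colon \tQ \to \bP(V)$ gives both $\Perf(\tQ)$ and its subcategory~$\cQ$ a compatible $\bP(V)$-linear structure. In both the embedding and covering types, an analysis based on Lemma~\ref{lemma:standard-morphism} shows that $f(\Sing(Q))$ coincides with $\bP(K) \subset \bP(V)$, so $U = \bP(V) \setminus \bP(K)$ and the open immersion $U \hookrightarrow \bP(V)$ factors through the complement of $\bP(K)$; moreover the same analysis shows $f^{-1}(U) = Q \setminus \Sing(Q)$, so $\pi$ restricts to an isomorphism $\tQ_U \cong Q_U$.

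To prove~(1), we apply Proposition~\ref{proposition-cC-tC-agree} with $V_0 = K$ and $\cA = \Perf(\barQ)$, which produces a $U$-linear equivalence $\cQ_U \simeq \Perf(\barQ)_U$, where the base change of $\Perf(\barQ)$ is along the composition of $U \hookrightarrow \bP(V)$ with the linear projection from $\bP(K)$. The key geometric identity to verify is that the fiber product $\barQ \times_{\bP(\barV)} U$ is naturally isomorphic to $Q_U$; this follows from the description of~$Q$ as a cone over $\barQ$ with vertex $\bP(K)$, because off this vertex the linear projection identifies the smooth part of~$Q$ with the preimage of $\barQ$. Hence $\Perf(\barQ)_U \simeq \Perf(Q_U)$, and composing with the equivalence furnished by Proposition~\ref{proposition-cC-tC-agree} yields $\cQ_U \simeq \Perf(Q_U)$. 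One then checks this equivalence is induced by the restriction of the globally defined functor~$\pi^*$ by tracing through the construction in Proposition~\ref{proposition-cC-tC-agree}.

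Part~(2) follows formally from~(1). Since $\cA$ is supported over $U$, for any $\bP(V)$-linear category~$\cB$ there is a canonical equivalence $\cA \otimes_{\Perf(\bP(V))} \cB \simeq \cA_U \otimes_{\Perf(U)} \cB_U$. Applying this with $\cB = \cQ$ and $\cB = \Perf(Q)$, and using $\cQ_U \simeq \Perf(Q)_U = \Perf(Q_U)$ from part~(1), yields the desired equivalence. For the final assertion about fiber products, when $\cA = \Perf(X)$ with $X$ supported over $U$, the category $\Perf(X) \otimes_{\Perf(\bP(V))} \Perf(Q)$ is identified with $\Perf(X \times_{\bP(V)} Q)$ by \cite[Theorem~1.2]{bzfn}, and the derived fiber product agrees with the classical one since $Q \to \bP(V)$ is smooth over~$U$. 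The main obstacle is verifying that the equivalence $\cQ_U \simeq \Perf(Q_U)$ is actually induced by the globally defined pullback~$\pi^*$, rather than merely existing abstractly; this requires unpacking the proof of Proposition~\ref{proposition-cC-tC-agree} and checking naturality, and in the covering type one must additionally track how the $\bP(V)$-linear structure on~$\cQ$ interacts with the rational map $\bP(\tW) \dashrightarrow \bP(V)$ that is only a morphism on $\tQ$.
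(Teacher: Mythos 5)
Your proof follows the paper's approach in its essentials: both invoke Proposition~\ref{proposition-cC-tC-agree} together with the factorization of the resolved-cone morphism $\tC_K(\barQ) \to \bP(V)$ through $Q$. However, you take the slightly longer route to the equivalence, passing through $\Perf(\barQ)_U$ (using $\cC_{V_0}(\cA)_T \simeq \cA_T$ from the proposition) rather than through $\Perf(\tQ_U)$ (using $\cC_{V_0}(\cA)_T \simeq \tC_{V_0}(\cA)_T$). This creates the ``main obstacle'' you flag of matching the abstract equivalence with the globally defined functor, and you leave it unresolved. The shorter route makes this painless: by Lemma~\ref{lemma-cQ-categorical-resolution}, $\pi^* \colon \Perf(Q) \to \Perf(\tQ)$ lands in $\cQ$; because the morphism $\tQ \to \bP(V)$ factors through $Q$ (so that the $\bP(V)$-linear structure on $\cQ \subset \Perf(\tQ)$ is $\pi$-compatible) and $\pi$ restricts to an isomorphism $\tQ_U \cong Q_U$, the base change $\pi^*_U \colon \Perf(Q_U) \to \Perf(\tQ_U)$ is an equivalence; and Proposition~\ref{proposition-cC-tC-agree} gives precisely that $\cQ_U \hookrightarrow \tC_K(\barQ)_U = \Perf(\tQ_U)$ is an equivalence. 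Chaining these identifies $\cQ_U \simeq \Perf(Q_U)$ as the base change of $\pi^*$, with no naturality to chase.

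Two smaller points. First, when you apply Proposition~\ref{proposition-cC-tC-agree} you should be working with $U' = U \cap \bP(W)$ (the complement of $\bP(K)$ in $\bP(W)$, where $\cC_K(\barQ)$ is natively defined), not $U \subset \bP(V)$ directly; of course $\cQ_U = \cC_K(\barQ)_{U'}$ since $\cQ$ is supported over $\bP(W)$, so this is a matter of exposition rather than a real error. Second, the assertion that $Q \to \bP(V)$ is smooth over $U$ is false: the standard morphism restricted over $U$ is either a locally closed embedding of a divisor in a linear subspace or a branched double cover of a linear subspace, neither of which is smooth. This does not damage the proof because, per the conventions of~\S\ref{subsection:conventions}, the fiber product appearing in part (2) is the derived one, so the identification with $\Perf(X \times_{\bP(V)} Q)$ via \cite[Theorem~1.2]{bzfn} is automatic without any Tor-independence input.
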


\begin{proof}
Part \eqref{cQ-resolution} follows from Proposition~\ref{proposition-cC-tC-agree}
because the morphism~$\tC_{K}(\barQ) \to \bP(V)$ factorizes as 
\begin{equation*}
\tC_K(\barQ) \to \bC_{K}(\barQ) = Q \xrightarrow{\, f \,} \bP(V), 
\end{equation*}
where the first map is the blowup in $\bP(K) = \Sing(Q)$. 
Part \eqref{cQ-fiber-product} follows from Lemma~\ref{lemma-C-support-fiber} 
and part~\eqref{cQ-resolution}. 
\end{proof}

\subsection{Generalized quadratic duality and HPD} 
\label{ssubsection-generalized-quadric-duality}  

Our goal in this subsection is to define a geometric duality operation on 
standard morphisms of quadrics, which after passing to  
standard categorical resolutions corresponds to the 
operation of taking the HPD category. 

The desired duality operation will be defined using a 
combination of the following three operations. 

\begin{definition}
\label{def:generalized-duality}
Let $f \colon Q \to \bP(V)$ be a standard morphism of a quadric.  
\begin{itemize}

\item If $f \colon Q \to \bP(V)$ is of embedding type, we denote by 
\begin{equation*}
f^{\svee} : Q^{\svee} \to \bP(V^{\svee})
\end{equation*} 
the embedding of the classical projective dual of $Q \subset \bP(V)$. 

\item If {$f \colon Q \to \bP(V)$} is of embedding type, we define 
\begin{equation*}
\cov{f} \colon \cov{Q} \to \bP(V)
\end{equation*} 
as the composition of 
the double cover $\cov{Q} \to \langle Q \rangle$ branched along $Q \subset \langle Q \rangle$ 
with the embedding $\langle Q \rangle \hookrightarrow \bP(V)$. 

\item If {$f \colon Q \to \bP(V)$} is of covering type, we define 
\begin{equation*}
\br{f} \colon \br{Q} \to \bP(V)
\end{equation*}
as the composition of the inclusion $\br{Q} \hookrightarrow \langle {f(Q)} \rangle$ 
of the branch divisor of the double cover~\mbox{$Q \to \langle {f(Q)} \rangle$} 
with the embedding~$\langle {f(Q)} \rangle \hookrightarrow \bP(V)$.  
\end{itemize}
\end{definition}

\begin{remark}
\label{remark:duality-spaces}
Let $f \colon Q \to \bP(V)$ be a standard morphism of a quadric, and recall the 
canonical diagram of vector spaces associated to $f$ in Lemma~\ref{lemma:standard-morphism}. 
The operations of Definition~\ref{def:generalized-duality} affect this diagram as follows. 
\begin{itemize}
\item If $f \colon Q \to \bP(V)$ is of embedding type, then its classical projective dual 
can be described as follows. 
The filtration $0 \subset K \subset W \subset V$ 
gives a filtration $0 \subset W^{\perp} \subset K^{\perp} \subset \vV$ by taking orthogonals. 
The pairing between $V$ and $\vV$ induces a nondegenerate pairing between~$W/K$ and $K^{\perp}/W^{\perp}$, 
and hence an isomorphism $K^{\perp}/W^{\perp} \cong (W/K)^{\svee}$. 
Via this isomorphism, the base quadric $\barQ \subset \bP(W/K)$ of $Q$ 
corresponds to a quadric in \mbox{$\barQ^{\svee} \subset \bP(K^{\perp}/W^{\perp})$} (its projective dual), 
and $Q^{\svee} = \bC_{W^{\perp}}(\barQ^{\svee}) \subset \bP(\vV)$.  
Thus, the operation $f \mapsto f^{\svee}$ replaces $W$ by $K^{\perp}$ and $K$ by 
$W^{\perp}$. 

\item If $f \colon Q \to \bP(V)$ is of embedding type, then note that $\tW = W$. 
The operation $f \mapsto \cov{f}$ replaces $\tW$ by $W \oplus \bk$, and 
keeps $W$ and $K$ fixed. 

\item Similarly, if $f \colon Q \to \bP(V)$ is of covering type, then 
the operation $f \mapsto \br{f}$ replaces $\tW$ by~$W$, and keeps $W$ and $K$ fixed.  
\end{itemize}
\end{remark}

\begin{remark}
All of the above operations preserve the integrality of $Q$, 
except for the branch divisor operation in case $r(Q) = 3$ and $f$ is a morphism of covering type. 
Indeed, this follows from the formulas: 
\begin{equation*}
r(Q^{\svee}) = r(Q), \quad r(\cov{Q}) = r(Q) + 1, \quad r(\br{Q}) = r(Q) -1. 
\end{equation*} 
Note, however, that the above operations are defined even for non-integral quadrics.
\end{remark} 

The next definition is modeled on the cases considered in Theorem~\ref{theorem-HPD-quadrics}. 

\begin{definition}
\label{definition:generalized-duality}
Let $f \colon Q \to \bP(V)$ be a standard morphism of a quadric. 
The \emph{generalized dual} of $f$ is the 
standard morphism 
\begin{equation*}
f^{\natural} \colon Q^\natural \to \bP(V^{\svee})
\end{equation*} 
of the quadric $Q^{\natural}$ defined as follows:  
\begin{itemize}
\item If $f \colon Q \to \bP(V)$ is of embedding type, then: 
\begin{itemize}
\item[$\diamond$] If $r(Q)$ is even, we set $Q^\natural = Q^{\svee}$ and 
$f^{\natural} = f^{\svee} \colon Q^{\natural} \to \bP(\vV)$. 
\item[$\diamond$] If $r(Q)$ is odd, we set $Q^\natural = \cov{(Q^{\svee})}$ 
and $f^{\natural} = \cov{(f^{\svee})} \colon Q^{\natural} \to \bP(\vV)$. 
\end{itemize}
\item If $f \colon Q \to \bP(V)$ is of covering type, then: 
\begin{itemize}
\item[$\diamond$] If $r(Q)$ is even, we set $Q^\natural = (\br{Q})^{\svee}$ 
and $f^{\natural} = (\br{f})^{\svee} \colon Q^{\natural} \to \bP(\vV)$. 
\item[$\diamond$] If $r(Q)$ is odd, we set $Q^\natural = \cov{((\br{Q})^{\svee})}$ 
and $f^{\natural} = \cov{((\br{f})^{\svee})} \colon Q^{\natural} \to \bP(\vV)$. 
\end{itemize}
\end{itemize}
In other words, we first pass to a morphism of the embedding type (by taking the branch divisor if necessary), 
then apply projective duality, and then if necessary go to the double covering.
\end{definition}

\begin{remark} 
\label{remark-natural-invariants}
Using the description of Remark~\ref{remark:duality-spaces} it is easy to check that
generalized duality affects the numerical invariants of $f$ described in Definition~\ref{def:numerical-invariants} as follows: 
\begin{equation*} 
r(Q^{\natural})  = r(Q) + p(Q) - t(f), \quad 
p(Q^{\natural})  = t(f) , \quad 
k(Q^{\natural})  = c(f), \quad c(f^{\sharp})  = k(Q), \quad  t(f^{\natural}) = p(Q). 
\end{equation*}
In particular, note that generalized duality preserves the integrality of $Q$. 
Note also that by~\eqref{dimQ-quadric} we have  
\begin{equation}
\label{dimQd-quadric}
\dim(Q^{\natural}) = r(Q) + p(Q) + c(f) - t(f) - 2. 
\end{equation} 
\end{remark}

\begin{remark}
\label{remark:parity-dimension}
By~\eqref{dimQ-quadric}, \eqref{dimQd-quadric}, and \eqref{dimV-quadric} we have 
\begin{align*}
\dim(Q^\natural) + \dim(Q)
& = (r(Q) + k(Q) + c(f) - t(f)) + (r(Q) + p(Q) - 4) \\
& = \dim(V) + (r(Q) + p(Q) - 4) , 
\end{align*}
which is congruent to $\dim(V) \bmod 2$ since by definition $p(Q) \equiv r(Q) \bmod 2$.
This means that if~$\dim(V)$ is even, then the parities of the dimensions of $Q^\natural$ and $Q$ are the same,
and if~$\dim(V)$ is odd, then the parities are opposite.
\end{remark}

Now we can bootstrap from Theorem~\ref{theorem-HPD-quadrics} to  
a result for arbitrary standard morphisms. 

\begin{theorem}
\label{theorem:hpd-quadrics-general}
Let $f \colon Q \to \bP(V)$ be a standard morphism of a quadric.  
Let $\cQ$ be the standard categorical resolution of $Q$ over $\bP(V)$.
Then the HPD category $\cQ^{\hpd}$ is Lefschetz equivalent to the 
standard categorical resolution of $Q^{\natural}$ over $\bP(V^{\svee})$. 
\end{theorem}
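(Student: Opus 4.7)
The plan is to reduce to the smooth-quadric HPD statement of Theorem~\ref{theorem-HPD-quadrics} by expressing $\cQ$ as a categorical cone and invoking the HPD theorem for categorical cones, Theorem~\ref{theorem-cones-HPD}. Let $K$, $W$, and $\tW$ be the vector spaces attached to $f$ by Lemma~\ref{lemma:standard-morphism}, so that $Q = \bC_K(\barQ)$ for the smooth base quadric $\barQ \subset \bP(\tW/K)$ equipped with an induced non-degenerate standard morphism $\bar{f}\colon \barQ \to \bP(W/K)$ of the same type as $f$. Set $V_0 = K$ and $V_\infty = W^\perp$. Then $V_0 \subset V_\infty^\perp = W$, so $\barV := V_\infty^\perp/V_0 = W/K$, and by Lemma~\ref{lemma-smooth-Q-lc} the category $\Perf(\barQ)$ is a right strong, moderate Lefschetz category over $\bP(\barV)$. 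By Definition~\ref{definition-standard-cat-res-Q}, $\cQ = \cC_{V_0}(\Perf(\barQ))/\bP(V)$.

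Applying Theorem~\ref{theorem-cones-HPD} with these choices yields a Lefschetz equivalence
\[
\cQ^\hpd \simeq \cC_{W^\perp}\bigl(\Perf(\barQ)^\hpd\bigr)\bigl/\bP(\vV)
\]
over $\bP(\vV)$, where $\Perf(\barQ)^\hpd$ is HPD taken with respect to $\bar{f}$. Since $\bar{f}$ is a non-degenerate standard morphism of the smooth quadric $\barQ$, Theorem~\ref{theorem-HPD-quadrics} provides a Lefschetz equivalence $\Perf(\barQ)^\hpd \simeq \Perf(\barQ^\natural)$ over $\bP((W/K)^\svee) \cong \bP(K^\perp/W^\perp)$, where $\barQ^\natural$ with its induced standard morphism is the generalized dual of $\barQ$ in the sense of Definition~\ref{definition:generalized-duality}. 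Composing gives
\[
\cQ^\hpd \simeq \cC_{W^\perp}(\barQ^\natural)\bigl/\bP(\vV).
\]

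The remaining step is to match the right-hand side with the standard categorical resolution of $Q^\natural$. By Definition~\ref{definition-standard-cat-res-Q}, this amounts to verifying, in each of the four cases of Definition~\ref{definition:generalized-duality}, that $\barQ^\natural$ is the base quadric of $Q^\natural$ with its natural standard morphism to $\bP(K^\perp/W^\perp)$ and that the kernel of the quadratic form of $Q^\natural$ equals $W^\perp$. This extracts directly from Remark~\ref{remark:duality-spaces}: the cone, projective-dual, double-cover, and branch-divisor operations all interact linearly with the kernel/span filtration, so that, for example, $\bC_{W^\perp}(\barQ^\svee) = Q^\svee$ in the embedding case, while the double-cover construction adds one coordinate without changing the kernel, giving $\cov{(Q^\svee)} = \bC_{W^\perp}(\cov{(\barQ^\svee)})$ in the odd-rank case; the two covering cases are analogous. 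Going through all four cases yields the identification $Q^\natural = \bC_{W^\perp}(\barQ^\natural)$, so that the standard categorical resolution of $Q^\natural$ is exactly $\cC_{W^\perp}(\barQ^\natural)/\bP(\vV)$, completing the proof.

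The main technical obstacle is the final verification: besides matching the underlying quadrics and vertices, one must check that the Lefschetz structures on both sides agree --- namely that the spinor bundles on $\barQ^\natural$ produced by Theorem~\ref{theorem-HPD-quadrics} correspond under the above identification to those used in Lemma~\ref{lemma-cQ-lc} to build the Lefschetz structure on the standard categorical resolution of $Q^\natural$. Since both Theorem~\ref{theorem-HPD-quadrics} and Theorem~\ref{theorem-cones-HPD} produce Lefschetz equivalences, the composite $\cQ^\hpd \simeq \cC_{W^\perp}(\barQ^\natural)/\bP(\vV)$ is automatically a Lefschetz equivalence; aligning it with the Lefschetz structure on the standard categorical resolution of $Q^\natural$ then requires careful tracking of spinor bundles and their twists across the four cases distinguished by the type of $f$ and the parity of $r(Q)$.
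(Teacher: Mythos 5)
Your proposal is correct and is the same approach as the paper's (whose proof is only the one-liner ``Follows from Theorem~\ref{theorem-HPD-quadrics}, Theorem~\ref{theorem-cones-HPD}, and the definitions''); you have simply spelled out the ``and the definitions'' part. One small comment: the final worry about tracking spinor bundles is not really an obstacle, because by Remark~\ref{remark:other-spinor-center} the Lefschetz structure of Lemma~\ref{lemma-smooth-Q-lc} on a smooth quadric is independent of the choice of spinor bundle, and the Lefschetz structure on the cone $\cC_{W^\perp}(\barQ^\natural)$ is, by Theorem~\ref{theorem-cone-lef-cat} and Lemma~\ref{lemma-cQ-lc}, precisely the one used in Definition~\ref{definition-standard-cat-res-Q}; so once you have matched the vertex $W^\perp$ and the base quadric $\barQ^\natural$ of $Q^\natural$ via Remark~\ref{remark:duality-spaces}, the Lefschetz structures coincide by construction and no further tracking is needed.
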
 

\begin{proof}
Follows from Theorem~\ref{theorem-HPD-quadrics}, Theorem~\ref{theorem-cones-HPD}, and the definitions. 
\end{proof}


\subsection{The quadratic HPD theorem}
\label{subsection-quadratic-HPD} 

Now we can prove our quadratic HPD theorem, by combining the above results with 
the nonlinear HPD Theorem~\ref{theorem-nonlinear-HPD}.

\begin{theorem}
\label{theorem-quadric-intersection}
Let $\cA$ be a right strong, moderate Lefschetz category over $\bP(V)$. 
Let 
\begin{equation*}
f \colon Q \to \bP(V) \qquad \text{and} \qquad f^{\natural} \colon Q^{\natural} \to \bP(V^{\svee}) 
\end{equation*} 
be a standard map of a quadric and its generalized dual. 
Let $\cQ$ be the standard categorical resolution of $Q$ over $\bP(V)$, 
and let $\cQ^{\natural}$ be the standard categorical resolution of $Q^{\natural}$ over $\bP(V^{\svee})$. 
Let $\cS \in \cQ$ and $\cS^{\natural} \in \cQ^{\natural}$ be the pullbacks of 
spinor bundles on the base quadrics of~$Q$ and $Q^{\hpd}$. 
Let $H$ and $H'$ denote the hyperplane classes on $\bP(V)$ and $\bP(\vV)$. 
Let 
\begin{equation*}
N = \dim(V), \quad m = \length(\cA), \quad m^\hpd
= \length(\cAd), \quad 
d = \dim(Q), \quad d^\natural = \dim(Q^{\natural}). 
\end{equation*} 
Then there are semiorthogonal decompositions 
\begin{align*}
\begin{split}
\cA \otimes_{\Perf(\bP(V))} \cQ = \Big \langle  &  \cK_Q(\cA) ,  \\
& \cA_{d^\natural}(H) {\otimes} \langle \cS \rangle,  \dots,  \cA_{m-1}((m-d^\natural)H) {\otimes} \langle \cS \rangle, \\
& \cA_{N-d}(H) {\otimes} \langle \cO \rangle,  \dots,    \cA_{m-1}((m+d-N)H) {\otimes} \langle \cO \rangle  \Big\rangle.
\end{split}\\
\begin{split}
\cAd  \otimes_{\Perf(\bP(V^{\svee}))} \cQ^{\natural} = \Big\langle  
& \cAd_{1-m^\hpd}((N-d^\natural-m^\hpd)H') {\otimes} \langle \cO \rangle, \dots,   \cAd_{d^\natural-N}(-H') {\otimes} \langle \cO \rangle ,   \\
& \cAd_{1-m^\hpd}((d-m^\hpd)H') {\otimes} \langle (\cS^{\natural})^{\svee} \rangle, \dots,  
\cAd_{-d}(-H') {\otimes} \langle (\cS^{\natural})^{\svee} \rangle,  \\ 
& \cK'_{Q^{\natural}}(\cAd)   \Big\rangle , 
\end{split}
\end{align*}
and an equivalence of categories $\cK_Q(\cA) \simeq \cK'_{Q^{\natural}}(\cAd)$.
\end{theorem}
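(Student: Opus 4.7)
The plan is to apply the nonlinear HPD Theorem~\ref{theorem-nonlinear-HPD} to the pair $(\cA^1, \cA^2) = (\cA, \cQ)$ with $V_1 = V_2 = W = V$ and $\xi_1 = \xi_2 = \id_V$, and to combine it with Theorem~\ref{theorem:hpd-quadrics-general} which identifies $\cQ^\hpd \simeq \cQ^\natural$. The hypotheses of Theorem~\ref{theorem-nonlinear-HPD} are satisfied: $\cA$ is right strong and moderate by assumption, while $\cQ$ is smooth and proper by Lemma~\ref{lemma-cQ-lc}, hence strong by Remark~\ref{remark:smooth-strong}, and moderate because $\length(\cQ) = d < N$, which follows from $N - d = c(f) - t(f) + 2 \geq 1$ via~\eqref{dimQ-quadric} and~\eqref{dimV-quadric}. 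Applying Theorem~\ref{theorem-nonlinear-HPD} with $r = N$ yields semiorthogonal decompositions of $\cA \otimes_{\Perf(\bP(V))} \cQ$ and $\cAd \otimes_{\Perf(\bP(\vV))} \cQ^\natural$ with components indexed by the Lefschetz components of $\cJ(\cA, \cQ)$ and $\cJ(\cAd, \cQ^\natural)$ respectively, together with an equivalence of the two distinguished components, which I take as the definition of $\cK_Q(\cA) \simeq \cK'_{Q^\natural}(\cAd)$.

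What remains is to rewrite the appearing join components $\cJ_i$ in the explicit form stated in the theorem. For this I will use the formula for the Lefschetz components of a categorical join from \cite[Lemma~3.24]{categorical-joins} in terms of the primitive components of the two factors, combined with the description of the primitive components of $\cQ$ from Lemma~\ref{lemma-cQ-lc}. The key point is that the four nonzero primitive components of $\cQ$ are supported at positions $\pm(d-1)$ (generated by $\cO$) and $\pm(k+1-p)$ (generated by $\cS$ and ${\cS'}^\svee$), where $k = k(Q)$ and $p = p(Q)$. Substituting into the join formula, each $\cJ_i$ splits canonically as a direct sum of a ``structure sheaf part'' of the form $\cA_j((\cdot)H) \otimes \langle \cO \rangle$ and a ``spinor part'' $\cA_j((\cdot)H) \otimes \langle \cS \rangle$. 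Collecting these contributions across $i \in [N, m+d-1]$ produces the two series in the stated decomposition: the $\cO$-series runs over $j \in [N-d, m-1]$ since $\cO \in \cQ_k$ for all $|k| \leq d-1$, whereas the $\cS$-series has the tighter range $j \in [d^\natural, m-1]$ reflecting that $\cS \in \cQ_k$ only for $|k| \leq k+1-p$. That the starting index of the $\cS$-series is exactly $d^\natural$ follows from the identities $r(Q^\natural) = r(Q) + p(Q) - t(f)$ and $k(Q^\natural) = c(f)$ of Remark~\ref{remark-natural-invariants}. The parallel computation on the dual side, applying Lemma~\ref{lemma-cQ-lc} to $\cQ^\natural$ and interchanging the roles of $d, \cS$ with $d^\natural, (\cS^\natural)^\svee$, yields the second stated decomposition.

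The main obstacle is the combinatorial bookkeeping of index ranges and twists: verifying that the $\cO$- and $\cS$-series come out precisely as asserted requires careful tracking of how the Lefschetz shifts in the join formula interact with the position-dependent content of $\cQ_k$, and invokes the numerical identities of Remarks~\ref{remark-Q-invariants} and~\ref{remark-natural-invariants}. Once this is settled, the equivalence $\cK_Q(\cA) \simeq \cK'_{Q^\natural}(\cAd)$ is provided directly by the corresponding clause of Theorem~\ref{theorem-nonlinear-HPD}.
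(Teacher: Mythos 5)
Your proposal is correct and follows essentially the same route as the paper: apply the nonlinear HPD Theorem with $\cA^1 = \cA$, $\cA^2 = \cQ$, identify $\cQ^\hpd \simeq \cQ^\natural$ via Theorem~\ref{theorem:hpd-quadrics-general}, and then unwind the join-component formula of \cite[Lemma~3.24]{categorical-joins} using the description of the primitive components of $\cQ$ from Lemma~\ref{lemma-cQ-lc} together with the numerical identities of Remarks~\ref{remark-Q-invariants} and~\ref{remark-natural-invariants} (in particular $k - p + 2 = N - d^\natural$). The only redundancy is re-deriving strongness and moderateness of $\cQ$, which Lemma~\ref{lemma-cQ-lc} already states.
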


\begin{proof}
We apply the nonlinear HPD Theorem~\ref{theorem-nonlinear-HPD} in case ${\cA^1} = \cA$, ${\cA^2} = \cQ$;
it gives semiorthogonal decompositions~\eqref{CQ} and~\eqref{DQ} and the equivalence; 
so we only have to check that the components $\cJ_i = \cJ(\cA, \cQ)_i$ of~\eqref{CQ} 
and $\cJd_j = \cJ(\cAd, \cQ^{\hpd})_j$ of~\eqref{DQ} have the prescribed form.

By \cite[Lemma 3.24]{categorical-joins} and 
Lemma~\ref{lemma-cQ-lc}, for $i \geq N$ we have  
\begin{equation*}
\cJ_i = \big \langle \cA_{i-k+p-2} \otimes \langle \cS \rangle, \cA_{i-d} \otimes \langle \cO \rangle \big \rangle 
\subset \cA \otimes_{\Perf(\bP(V))} \cQ . 
\end{equation*} 
Combined with the observation that $k-p+2 = N - d^\natural$ by~\eqref{dimV-quadric} and~\eqref{dimQd-quadric},  
it follows that the semiorthogonal decomposition~\eqref{CQ} takes the claimed form.
Using the expression for the numerical invariants of $Q^{\natural}$ in terms of
those of $Q$ (Remark~\ref{remark-natural-invariants}), 
it follows similarly that the semiorthogonal decomposition~\eqref{DQ} takes the 
claimed form. 
\end{proof}

It is natural to combine Theorem~\ref{theorem-quadric-intersection} 
with the result of Lemma~\ref{lemma-cQ-resolution}\eqref{cQ-fiber-product} 
that provides the left hand sides of the semiorthogonal decompositions with a clear geometric meaning. 
In the next section we use this to derive the applications promised in~\S\ref{subsection-intro-applications}. 


\section{Applications}
\label{section:applications}

In this section we collect some applications of the quadratic HPD Theorem~\ref{theorem-quadric-intersection} obtained above.
In~\S\ref{subsection:duality-gm} we prove the duality conjecture for Gushel--Mukai varieties and in~\S\ref{subsection-sGM}
we discuss and prove its spin analogue.
We continue to assume the base scheme~$S$ is the spectrum of an algebraically closed field $\bk$ of characteristic not equal to $2$.

\subsection{Duality of Gushel--Mukai varieties}
\label{subsection:duality-gm}

We will prove \cite[Conjecture 3.7]{kuznetsov2016perry} on the duality of Gushel--Mukai varieties. 
For context and consequences of this conjecture, see the discussion in \S\ref{subsection-intro-applications}. 
The definition of this class of varieties from~\cite{debarre2015kuznetsov} can be rephrased as follows; 
note that unlike~\cite{debarre2015kuznetsov}, by convention we require GM varieties to have dimension at least $2$.

\begin{definition}
\label{def:gm}
A \emph{Gushel--Mukai \textup(GM\textup) variety} is a 
dimensionally transverse fiber product
\begin{equation*}
X = \Gr(2,V_5) \times_{\bP(\wedge^2V_5)} Q , 
\end{equation*}
where $V_5$ is a $5$-dimensional vector space, 
$\Gr(2,V_5) \to \bP(\wedge^2V_5)$ is the Pl\"{u}cker embedding of the Grassmannian 
of $2$-dimensional subspaces of $V_5$, and 
$Q \to \bP(\wedge^2V_5)$ is a standard morphism of a quadric with $\dim Q \geq 5$. 
\end{definition}

In~\cite{kuznetsov2016perry} a semiorthogonal decomposition of $\Db(X) = \Perf(X)$ 
(appearing as~\eqref{GMX} below) for any smooth GM variety was constructed, and in particular, an interesting subcategory
\begin{equation*}
\cK(X) \subset \Perf(X),
\end{equation*}
(called the \emph{GM category} of $X$) was defined.
In~\cite{kuznetsov2016perry} GM~categories were thoroughly studied, and in particular, it was shown that a GM category~$\cK(X)$
is either a K3 category or an Enriques type category, depending on whether~$\dim(X)$ is even or odd.

On the other hand, in~\cite{debarre2015kuznetsov} GM varieties were classified.
In particular, in~\cite[Theorem~3.6]{debarre2015kuznetsov} to every GM variety
there was associated its \emph{Lagrangian data set}, which consists of a triple of vector spaces~$(V_6(X),V_5(X),A(X))$,
where: 
\begin{itemize}
\item 
$V_6(X)$ is a 6-dimensional vector space;
\item 
$V_5(X) \subset V_6(X)$ is a hyperplane; and
\item 
$A(X) \subset \wedge^3V_6(X)$ is a Lagrangian subspace.
\end{itemize}
Here we endow the 20-dimensional space $\wedge^3V_6(X)$ with a symplectic form 
given by wedge product $\wedge^3V_6(X) \otimes \wedge^3V_6(X) \to \wedge^6V_6(X) \cong \bk$.
The form depends on a choice of isomorphism above, but the property of being Lagrangian does not.

Conversely, to every triple $(V_6,V_5,A)$ as above two \emph{GM intersections} $X_{A,V_5}^{\mathrm{ord}}$ and $X_{A,V_5}^{\mathrm{spe}}$
were associated. 
Both $X_{A,V_5}^{\mathrm{ord}}$ and $X_{A,V_5}^{\mathrm{spe}}$ have the form
\begin{equation*}
X_{A,V_5} = \Gr(2,V_5) \times_{\bP(\wedge^2V_5)} Q,
\end{equation*}
and their type (\emph{ordinary} or \emph{special}) corresponds 
to the type of the morphism $Q \to \bP(\wedge^2V_5)$ (embedding or covering).
Note that a GM intersection $X_{A,V_5}$ is not necessarily dimensionally transverse (so it is not necessarily a GM variety).

Furthermore, in~\cite[Theorem~3.16]{debarre2015kuznetsov} it was shown that if~$X$ is a smooth GM variety of dimension $d_X \ge 3$ 
then the Lagrangian $A(X) \subset \wedge^3V_6(X)$ \emph{contains no decomposable vectors},
i.e., $\bP(A(X))$ does not intersect $\Gr(3, V_6(X)) \subset \bP(\wedge^3 V_6(X))$.
Conversely, if $A \subset \wedge^3V_6$ contains no decomposable vectors,
then for any $V_5 \subset V_6$ both GM intersections $X_{A,V_5}$ are smooth GM varieties.
Note that the dimension of the two types of $X_{A,V_5}$ differs by 1 and depends on $V_5$.

This already shows that the Lagrangian $A(X)$ controls many important properties of a GM variety $X$.
Motivated by this and a birationality result~\cite[Corollary~4.16 and Theorem~4.20]{debarre2015kuznetsov},  
we introduced in~\cite[Definition~3.5]{kuznetsov2016perry} the notions of generalized duality and partnership of GM varieties 
(generalizing~\cite[Definition~3.22 and~3.26]{debarre2015kuznetsov}). 

\begin{definition}
\label{def:gm-duality}
Let $X_1$ and $X_2$ be GM varieties such that $\dim(X_1) \equiv \dim(X_2) \pmod 2$.
\begin{itemize}
\item 
$X_1$ and $X_2$ are \emph{generalized partners} if there exists an isomorphism $V_6(X_1) \cong V_6(X_2)$
identifying $A(X_1) \subset \wedge^3V_6(X_1)$ with $A(X_2) \subset \wedge^3V_6(X_2)$.
\item 
$X_1$ and $X_2$ are \emph{generalized dual} if there exists an isomorphism $V_6(X_1) \cong V_6(X_2)^{\svee}$
identifying~$A(X_1) \subset \wedge^3V_6(X_1)$ with $A(X_2)^\perp \subset \wedge^3V_6(X_2)^{\svee}$.
\end{itemize}
\end{definition}

The duality conjecture~\cite[Conjecture~3.7]{kuznetsov2016perry} predicted 
that for {(smooth)} GM varieties whose associated Lagrangians contain no decomposable vectors,
being generalized partners or duals implies an equivalence of GM categories.
A special case was proved in~\cite[Theorem~4.1]{kuznetsov2016perry}; 
below we prove the conjecture in full generality. 

The idea of our proof is as follows. 
First, we note that the $\Gr(2,V_5)$ factor in the fiber product defining a GM variety in Definition~\ref{def:gm}
is homologically projectively self-dual, so one can use the quadratic HPD theorem
to relate the derived categories of two GM varieties.
Second, we note that generalized duality of the quadric factors corresponds to generalized duality of the corresponding GM intersections.
Finally, we iterate equivalences of GM categories obtained in this way to prove the conjecture.

Now we start implementing this approach.
First, recall the homological projective self-duality of $\Gr(2,V_5)$: 

\begin{theorem}[{\cite[Section~6.1 and Theorem~1.2]{kuznetsov2006hyperplane}}]
\label{theorem-HPD-Gr} 
Let $\cU$ and $\cU'$ be the tautological rank~$2$ subbundles on $\Gr(2,V_5)$ and $\Gr(2,\vV_5)$. 
Then $\Perf(\Gr(2,V_5))$ and $\Perf(\Gr(2,\vV_5))$ have the structure of 
strong, moderate Lefschetz categories over $\bP(\wedge^2V_5)$ and $\bP(\wedge^2\vV_5)$, respectively, 
of length $5$, with Lefschetz components given by 
\begin{equation*}
\cA_i  = \langle \cO , \cUv \rangle \quad \text{and} \quad 
\cA'_i  = \langle \cU', \cO \rangle   
\end{equation*} 
for $|i| \leq 4$. Moreover, there is an equivalence 
\begin{equation*}
\Perf(\Gr(2,V_5))^{\hpd} \simeq \Perf(\Gr(2,\vV_5))
\end{equation*}
of Lefschetz categories over $\bP(\wedge^2\vV_5)$. 
\end{theorem}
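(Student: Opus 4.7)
The plan splits into two parts: establishing the Lefschetz structure on each side, and then proving the HPD equivalence.

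For the Lefschetz structure, I would establish the rectangular decomposition
\begin{equation*}
\Perf(\Gr(2,V_5)) = \langle \cA_0, \cA_0(H), \cA_0(2H), \cA_0(3H), \cA_0(4H) \rangle,
\qquad \cA_0 = \langle \cO, \cUv \rangle,
\end{equation*}
by exhibiting a full exceptional collection of length $10 = \rank K_0(\Gr(2,V_5))$ in this displayed form, obtained from Kapranov's Schur functor collection by suitable mutations. Semiorthogonality reduces to Borel--Weil--Bott vanishings for tautological bundles, and fullness follows from the numerical match with the rank of $K_0$. Moderateness is automatic since $5 < 10 = \rank(\wedge^2 V_5)$, and strongness follows from smoothness and properness via Remark~\ref{remark:smooth-strong}. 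The analogous structure on $\Perf(\Gr(2,\vV_5))$ with center $\cA'_0 = \langle \cU', \cO \rangle$ is obtained in exactly the same fashion; the superficial asymmetry between $\cUv$ and $\cU'$ reflects a choice of mutation direction, which dictates whether the center has the form $\langle \cO, \cUv \rangle$ or $\langle \cU', \cO \rangle$.

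For the HPD equivalence, the strategy rests on the classical Pfaffian projective self-duality of the Grassmannian: the projective dual of $\Gr(2,V_5) \subset \bP(\wedge^2 V_5)$ is precisely $\Gr(2,\vV_5) \subset \bP(\wedge^2 \vV_5)$, since both are cut out by the Pfaffians of the universal skew form. I would construct a $\bP(\wedge^2 \vV_5)$-linear Fourier--Mukai functor
\begin{equation*}
\Phi \colon \Perf(\Gr(2,\vV_5)) \to \bH(\Gr(2,V_5))
\end{equation*}
whose kernel is supported on an incidence variety $I \subset \Gr(2,V_5) \times \Gr(2,\vV_5) \times \bP(\wedge^2 \vV_5)$ parametrizing triples $(U, U', \omega)$ where $U$ is $\omega$-isotropic and $U'$ is compatible with the kernel of $\omega$. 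By the choice of kernel, $\delta_* \Phi$ factors through $\cA_0 \sotimes \Perf(\bP(\wedge^2 \vV_5))$, so $\Phi$ lands in the HPD subcategory $\cAd$ as defined by~\eqref{eq:hpd-category}. Full faithfulness of $\Phi$ reduces to cohomology computations on $I$ via Borel--Weil--Bott for the product of Grassmannians.

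The main obstacle is to verify that the image of $\Phi$ coincides with the \emph{entire} HPD subcategory $\cAd$. By the decomposition~\eqref{HC-sod} this amounts to identifying the right orthogonal to the components $\delta^*(\cA_i(iH) \sotimes \Perf(\bP(\wedge^2 \vV_5)))$ for $i = 1, \dots, 4$ inside $\bH(\Gr(2,V_5))$ with $\Phi(\Perf(\Gr(2,\vV_5)))$. The key geometric input is the stratification of $\bP(\wedge^2 \vV_5)$ by the rank of the skew form $\omega$: over the dense open locus of rank~$4$ forms the universal hyperplane $\bH$ is a smooth quadric fibration, so a local quadric-bundle version of HPD controls the relevant semiorthogonal component; over the Pfaffian hypersurface $\Gr(2,\vV_5) \subset \bP(\wedge^2 \vV_5)$ of rank~$2$ forms, the kernel of $\omega$ jumps from dimension $1$ to dimension $3$, and this jump is exactly what produces the geometric bridge to $\Gr(2,\vV_5)$. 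Compatibility of Lefschetz centers is built into the construction of $\Phi$, so once the image identification is complete the Lefschetz equivalence $\cAd \simeq \Perf(\Gr(2,\vV_5))$ follows.
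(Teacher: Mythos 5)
The paper does not prove this theorem: it is imported wholesale from Kuznetsov's earlier work \cite{kuznetsov2006hyperplane}, so there is no internal proof to compare your sketch against, and I will evaluate it against the argument in that reference, which your outline superficially resembles (rectangular Lefschetz decomposition with center $\langle\cO,\cU^{\svee}\rangle$, projective self-duality of $\Gr(2,5) \subset \bP^9$, a Fourier--Mukai kernel on an incidence correspondence). One minor flaw: the clause ``fullness follows from the numerical match with the rank of $K_0$'' is not a valid inference on its own --- exceptional collections of full length need not generate, as the phantom phenomenon shows. The mutation argument you state in the same sentence already gives fullness, so the numerical-match justification should be dropped or demoted to a sanity check.

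The more serious problem is the claim that ``over the dense open locus of rank $4$ forms the universal hyperplane $\bH$ is a smooth quadric fibration.'' This is false. For a rank-$4$ skew form $\omega$ on the $5$-dimensional $V_5$, the fiber of $\bH(\Gr(2,V_5)) \to \bP(\wedge^2\vV_5)$ over $[\omega]$ is the odd symplectic isotropic Grassmannian $\mathrm{IGr}_\omega(2,V_5)$: a smooth $5$-fold of degree $5$ in its Pl\"{u}cker embedding, hence not a quadric, and there is no evident quadric-bundle structure on $\bH$ over this locus via any other natural projection. (Quadrics do occur in this geometry, but over the \emph{opposite} stratum: for rank-$2$ $\omega = u_1'\wedge u_2'$ the fiber is swept out by a $\bP^1$-family of copies of $\Gr(2,4)$, each a quadric fourfold.) Consequently, routing essential surjectivity of $\Phi$ through a ``local quadric-bundle HPD'' over the rank-$4$ locus cannot get off the ground. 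The argument in the cited reference instead identifies the image directly by working with the semiorthogonal decomposition \eqref{HC-sod} of $\bH(\Gr(2,V_5))$ and Borel--Weil--Bott computations on the incidence variety, without invoking any quadric fibration; that step of your sketch would have to be replaced. I would also flag that the description of the kernel locus (``$U'$ is compatible with the kernel of $\omega$'') and the assertion that ``compatibility of Lefschetz centers is built into the construction of $\Phi$'' are too vague to verify as stated and would need to be made precise.
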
 

Now we apply Theorem~\ref{theorem-quadric-intersection} to GM varieties. 

\begin{theorem} 
\label{theorem-GM-duality}
Let 
\begin{equation}
\label{eq:gm-varieties}
X = \Gr(2, V_5) \times_{\bP(\wedge^2V_5)} Q \quad \text{and} \quad 
Y = \Gr(2, \vV_5) \times_{\bP(\wedge^2\vV_5)} Q^\hpd 
\end{equation}
be smooth GM varieties of dimensions $d_X \geq 2$ and $d_Y \geq 2$, 
where $Q \to \bP(\wedge^2V_5)$ is a standard morphism of a quadric 
and $Q^{\hpd} \to \bP(\wedge^2\vV_5)$ is its generalized dual. 
Let $\cU_X$ and $\cU_Y$ denote the pullbacks of $\cU$ and $\cU'$ 
to $X$ and $Y$, and let $\cO_X(1)$ and $\cO_Y(1)$ denote the pullbacks 
of the $\cO(1)$ line bundles on $\bP(\wedge^2V_5)$ and $\bP(\wedge^2\vV_5)$. 
Then there are semiorthogonal decompositions 
\begin{align}
\label{GMX}
\Perf(X) & = \langle \cK(X), \cO_X(1), \cU_X^{\svee}(1), \dots, \cO_X(d_X-2), \cU_X^{\svee}(d_X-2) \rangle , \\ 
\label{GMY} 
\Perf(Y) & = \langle \cU_Y(2-d_Y), \cO_Y(2-d_Y), \dots, \cU_Y(-1), \cO_Y(-1), 
\cK'(Y) \rangle , 
\end{align} 
and an equivalence $\cK(X) \simeq \cK'(Y)$. 
\end{theorem}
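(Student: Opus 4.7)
The approach is to apply the quadratic HPD theorem (Theorem~\ref{theorem-quadric-intersection}) with $\cA = \Perf(\Gr(2, V_5))$ equipped with the self-dual Lefschetz structure of Theorem~\ref{theorem-HPD-Gr}. Since $\cA$ is smooth, proper, strong, and moderate of length $m = 5$, and its HPD is $\cA^{\hpd} \simeq \Perf(\Gr(2,\vV_5))$ also of length $m^{\hpd} = 5$, the hypotheses of Theorem~\ref{theorem-quadric-intersection} are satisfied for the generalized dual pair $(Q, Q^{\hpd})$. This produces semiorthogonal decompositions of
\begin{equation*}
\cA \otimes_{\Perf(\bP(\wedge^2 V_5))} \cQ
\qquad \text{and} \qquad
\cA^{\hpd} \otimes_{\Perf(\bP(\wedge^2 \vV_5))} \cQ^{\hpd},
\end{equation*}
together with an equivalence of their distinguished components $\cK_Q(\cA) \simeq \cK'_{Q^{\hpd}}(\cA^{\hpd})$.

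Next, I would identify these tensor products with $\Perf(X)$ and $\Perf(Y)$ using Lemma~\ref{lemma-cQ-resolution}, which reduces to showing that the Pl\"ucker image of $\Gr(2, V_5)$ is disjoint from $f(\Sing Q) \subset \bP(\wedge^2 V_5)$, and symmetrically for $\Gr(2, \vV_5)$ relative to $Q^{\hpd}$. I would derive this from the smoothness of $X$ combined with dimensional transversality: in the embedding-type case, if a point $p \in \Gr(2, V_5)$ mapped to a singular point of $Q$, then the differential of the quadratic form cutting out $Q$ would vanish on $T_p \Gr(2, V_5)$, forcing $\dim T_p X = 6$ and contradicting smoothness together with the fiber-product dimension count. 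The covering-type case requires tracking the double-cover structure from Lemma~\ref{lemma:standard-morphism} to relate $\Sing Q$ to the singular locus of the branch quadric, and then applying an analogous derivative argument to the branch divisor.

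The final step is to match the abstract decomposition from Theorem~\ref{theorem-quadric-intersection} with the stated form. Dimensional transversality gives $\dim X = 6 + d - 9$, whence $d = d_X + 3$ and similarly $d^{\hpd} = d_Y + 3$; with $N = 10$ and $m = m^{\hpd} = 5$, the assumption $d_X, d_Y \geq 2$ forces $d^{\hpd} \geq m$ and $d \geq m^{\hpd}$, making the two spinor-bundle index ranges in Theorem~\ref{theorem-quadric-intersection} empty. The surviving components have the form $\cA_i(jH) \otimes \langle \cO \rangle$ for $N - d \leq i \leq m - 1$ and $j = 1, \dots, d_X - 2$, and restricting $\cA_i = \langle \cO, \cU^{\svee} \rangle$ to $X$ yields exactly the exceptional pairs $(\cO_X(j), \cU_X^{\svee}(j))$ appearing in \eqref{GMX}; the dual computation gives \eqref{GMY}, and the abstract equivalence of distinguished components becomes the desired equivalence $\cK(X) \simeq \cK'(Y)$.

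The principal obstacle is the verification in the middle step that the Pl\"ucker image of $\Gr(2, V_5)$ avoids the image of the singular locus of~$Q$: while in the embedding-type case this is a one-line derivative argument, the covering-type case is less transparent and requires a careful analysis of how the branch divisor of the double cover sits inside $\bP(W)$ relative to $\Gr(2, V_5)$. Everything else is essentially a bookkeeping exercise to convert the output of Theorem~\ref{theorem-quadric-intersection} into the explicit exceptional decomposition of a Gushel--Mukai variety.
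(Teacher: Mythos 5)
Your proposal is correct and follows essentially the same route as the paper: apply Theorem~\ref{theorem-quadric-intersection} with $\cA = \Perf(\Gr(2,V_5))$ and the self-HPD of Theorem~\ref{theorem-HPD-Gr}, identify the tensor products with $\Perf(X)$ and $\Perf(Y)$ via Lemma~\ref{lemma-cQ-resolution}\eqref{cQ-fiber-product}, then do the dimension bookkeeping ($d = d_X + 3$, $d^\hpd = d_Y + 3$, $N = 10$, $m = m^\hpd = 5$) to see the spinor components drop out and the exceptional pairs $\langle \cO, \cU^\svee \rangle$ appear the correct number of times. The paper simply asserts that smoothness of $X$ and $Y$ forces the Grassmannians to miss the singular loci of the quadrics; you devote more space to justifying this, and your argument is correct in spirit, though the tangent-space bound in the embedding case should read $\dim T_p X \geq \dim(\Gr(2,V_5) \cap \bP(W)) = d_X+1$ rather than exactly $6$ (since $\bP(W)$ still imposes linear conditions), and the covering case follows by the same argument applied to the branch quadric $\br{Q}$, so the two cases are not materially different in difficulty.
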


Before giving a proof note that if $d_X \le 1$ then
$Y$ is necessarily singular.
Indeed, in this case we have $\dim(Q) \le 4$, hence $c(Q) \ge 4$, hence $k(Q^\hpd) \ge 4$,
hence $\Gr(2,\vV_5)$ intersects the image of the singular locus of $Q^\hpd$, hence $Y$ is singular. 

\begin{proof}
This is a combination of Theorem~\ref{theorem-quadric-intersection}, Theorem~\ref{theorem-HPD-Gr}, 
and Lemma~\ref{lemma-cQ-resolution}\eqref{cQ-fiber-product}. 
Indeed, 
the smoothness of $X$ and $Y$ implies that the Grassmannians in~\eqref{eq:gm-varieties} do not intersect 
the singular loci of the quadrics, so by Lemma~\ref{lemma-cQ-resolution}\eqref{cQ-fiber-product} we have
\begin{align*}
\Perf(\Gr(2, V_5^{\hphantom{\svee}})) \otimes_{\Perf(\bP(\wedge^2V_5^{\hphantom{\svee}}))} \cQ^{\hphantom{\natural}} & \simeq \Perf(X), \\ 
\Perf(\Gr(2, \vV_5)) \otimes_{\Perf(\bP(\wedge^2\vV_5))} \cQ^{\natural} & \simeq \Perf(Y). 
\end{align*}
We just need to show the semiorthogonal decompositions of Theorem~\ref{theorem-quadric-intersection} take the prescribed form.

The length of the Lefschetz decompositions of $\Perf(\Gr(2,V_5))$ is $m = 5$,
and the codimension of $\Gr(2,V_5)$ in $\bP(\wedge^2V_5)$ is~3,
so by dimensional transversality
\begin{equation*}
d^\hpd = \dim (Q^\natural) = d_Y + 3 \ge 5.
\end{equation*}
Thus~$m - d^\hpd \le 0$, hence $\cS$ does not show up in the semiorthogonal decomposition of $\Perf(X)$.
The same argument shows that $(\cS^{\natural})^{\svee}$ does not show up in the decomposition of~$\Perf(Y)$.
Similarly, $N = \dim \wedge^2V_5 = 10$ and $d = \dim(Q) = d_X + 3$, hence
\begin{equation*}
m + d - N = 5 + (d_X + 3) - 10 = d_X - 2,
\end{equation*}
and so the Lefschetz components $\cA_i = \langle \cO,\cU^{\svee} \rangle$ of $\Perf(\Gr(2,\vV_5))$ appear
$d_X - 2$ times in the decomposition of $\Perf(X)$. 
The same argument shows that $\langle \cU_Y, \cO_Y \rangle$ appears $d_Y - 2$ times in the decomposition of $\Perf(Y)$.
Hence the semiorthogonal decompositions of Theorem~\ref{theorem-quadric-intersection} take the prescribed form. 
\end{proof}

Now we are ready to prove the duality conjecture.

\begin{corollary}[{\cite[Conjecture~3.7]{kuznetsov2016perry}}] 
\label{corollary-duality-GM} 
Let $X$ and $Y$ be {smooth} GM varieties
whose associated Lagrangian subspaces $A(X)$ and $A(Y)$ 
do not contain decomposable vectors.
If $X$ and $Y$ are generalized partners or duals, then there is an equivalence $\cK(X) \simeq \cK(Y)$.
\end{corollary}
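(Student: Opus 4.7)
The plan is to derive the corollary from Theorem~\ref{theorem-GM-duality} by matching the Lagrangian-data description of generalized partnership and duality (Definition~\ref{def:gm-duality}) with the generalized duality of quadrics (Definition~\ref{definition:generalized-duality}) that appears in the hypothesis of Theorem~\ref{theorem-GM-duality}. Recall from~\cite{debarre2015kuznetsov} that a smooth GM variety $X$ has a canonical presentation $X = \Gr(2, V_5(X)) \times_{\bP(\wedge^2 V_5(X))} Q_X$, where the standard morphism $Q_X \to \bP(\wedge^2 V_5(X))$ is determined by the Lagrangian data, and its linear-algebraic invariants (the base quadric, the vertex, and the type of the morphism in the sense of Lemma~\ref{lemma:standard-morphism}) can be read off explicitly from $(V_6(X), V_5(X), A(X))$ using the decomposition $\wedge^3 V_6(X) = \wedge^3 V_5(X) \oplus \wedge^2 V_5(X)$ induced by the hyperplane $V_5(X) \subset V_6(X)$.

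The main technical step is then to verify that generalized duality of Lagrangian data corresponds to generalized duality of the associated quadrics: if $X$ and $Y$ are generalized dual in the sense of Definition~\ref{def:gm-duality}, then under the identification $\wedge^2 V_5(Y) \cong (\wedge^2 V_5(X))^\vee$ induced by the duality $V_6(Y) \cong V_6(X)^\vee$, the quadric $Q_Y$ coincides with $Q_X^\hpd$ as defined in Definition~\ref{definition:generalized-duality}. This is a direct calculation using Remark~\ref{remark:duality-spaces} and the explicit form of $Q_X$; all four cases of Definition~\ref{definition:generalized-duality} (embedding versus covering type, even versus odd rank) need to be considered, and the formulas of Remark~\ref{remark-natural-invariants} should match the expected changes in $V_6$, $V_5$, and the type of the morphism. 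Once established, Theorem~\ref{theorem-GM-duality} immediately yields $\cK(X) \simeq \cK(Y)$ for any pair of generalized dual smooth GM varieties whose Lagrangians contain no decomposable vectors.

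For the partnership case --- where $X$ and $Y$ share the same $V_6$ and $A$ up to isomorphism but possibly differ in the hyperplanes $V_5$ --- I plan to reduce to the duality case by constructing a common generalized dual. Explicitly, I would choose a smooth GM variety $Z$ whose Lagrangian data has the form $(V_6^\vee, V_5(Z), A^\perp)$ for a suitable hyperplane $V_5(Z) \subset V_6^\vee$: the duality case then gives $\cK(X) \simeq \cK(Z)$, and by varying $V_5(Z)$ within $V_6^\vee$ we can arrange that $Z$ is also generalized dual to $Y$, giving $\cK(Y) \simeq \cK(Z)$ and hence $\cK(X) \simeq \cK(Y)$. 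The assumption that $A$ (and hence $A^\perp$) contains no decomposable vectors guarantees by~\cite[Theorem~3.16]{debarre2015kuznetsov} that any GM intersection $X_{A^\perp, V_5(Z)}$ entering the construction is a smooth GM variety, so the choice of $Z$ is unobstructed. If a single common dual does not suffice, a short chain of dualities will close the argument.

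The main obstacle in the whole proof is the compatibility in the second paragraph --- that the generalized dual quadric of Definition~\ref{definition:generalized-duality} coincides, through the Pl\"ucker geometry of $\Gr(2, V_5)$, with the quadric associated to the orthogonal Lagrangian $A^\perp$ for an appropriate dual hyperplane. The subtle part is that the type (embedding versus covering) and rank of $Q_X$ depend on how $V_5(X)$ sits relative to $A(X)$ inside $V_6(X)$, and all resulting cases must be tracked uniformly. Once this compatibility is verified, everything else is bookkeeping with generalized duality of Lagrangian data.
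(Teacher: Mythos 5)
Your overall strategy is the right one and matches the paper's: derive the corollary from Theorem~\ref{theorem-GM-duality} by identifying generalized duality of Lagrangian data with generalized duality of quadrics, and handle the partnership case by chaining through a common generalized dual. However, there is a genuine gap in the duality case, and it propagates into your treatment of the partnership case.

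The issue is that the identification you call the ``main technical step'' --- that for a dual pair $X,Y$ the quadric $Q_Y$ equals $Q_X^\hpd$ --- is not true unconditionally. To present both $X$ and $Y$ as fiber products with $\Gr(2,V_5)$ and $\Gr(2,\vV_5)$ over the \emph{same} five-dimensional space $V_5$, you need the point $\mathbf{q}_Y \in \bP(V_6(X))$ corresponding to the hyperplane $V_5(Y) \subset V_6(Y) \cong V_6(X)^\vee$ to lie outside $\bP(V_5(X))$; only then is there a canonical identification $V_5(Y) \cong V_5(X)^\vee$ making the two Pl\"ucker spaces dual. When $\mathbf{q}_Y \in \bP(V_5(X))$ the quadrics $Q_X$ and $Q_Y$ do not live over dual projective spaces, and Theorem~\ref{theorem-GM-duality} simply does not apply. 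The paper explicitly flags this as condition~\eqref{eq:bq-v5-condition} and builds a three-case argument around it: (i) dual pairs in general position, via Theorem~\ref{theorem-GM-duality} and~\cite[Proposition~3.28]{debarre2015kuznetsov}; (ii) partnership, by constructing a common dual $Z$ whose defining point lies off \emph{both} hyperplanes $\bP(V_5(X))$ and $\bP(V_5(Y))$, so that both $X$ and $Y$ are in general position relative to $Z$; (iii) degenerate dual pairs, by inserting an auxiliary $Y'$ dual to $X$ in general position, which is then a \emph{partner} of $Y$, reducing to (ii). Your proposal claims the duality case ``immediately'' for arbitrary dual pairs, which is too strong, and your remark about a ``short chain of dualities'' gestures at case (iii) but does not identify the actual obstruction or how the chain is built. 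To close the gap you should state the genericity condition, prove the duality case only under that condition, and then carry out the reduction of the remaining cases --- otherwise the proof as written would fail whenever $\mathbf{q}_Y$ lies on $\bP(V_5(X))$.
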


By~\cite[Theorem~3.16]{debarre2015kuznetsov} the assumption 
that the Lagrangian subspace~$A(X)$ does not contain decomposable vectors
holds automatically unless~$X$ is a special GM surface or an ordinary GM surface with singular Grassmannian hull.

\begin{proof}
First assume $X$ and $Y$ are generalized duals. 
Under the isomorphism $V_6(X) \cong V_6(Y)^{\svee}$, the hyperplane $V_5(Y) \subset V_6(Y)$ 
corresponds to a point ${\mathbf{q}_Y} \in \bP(V_6(X))$. 
Further assume that
\begin{equation}
\label{eq:bq-v5-condition}
{\mathbf{q}_Y} \notin \bP(V_5(X)). 
\end{equation} 
Then $X$ and $Y$ can be obtained as in Theorem~\ref{theorem-GM-duality} 
from an appropriate pair of generalized dual quadrics, 
cf.~\cite[Proposition~3.28]{debarre2015kuznetsov}. 
Twisting the decomposition~\eqref{GMX} by~$\cO_X(-1)$ 
shows that~$\cK(X)$ is equivalent to the GM category of~$X$, 
as defined in~\cite[Definition 2.5]{kuznetsov2016perry}. 
On the other hand, 
twisting the decomposition~\eqref{GMY} by~$\cO_Y(1)$ and 
using~\cite[(2.20) and~(2.21)]{kuznetsov2016perry} shows that~$\cK'(Y)$ is equivalent to the GM category of~$Y$. 
Thus Theorem~\ref{theorem-GM-duality} gives the result under our above assumptions. 

Next assume $X$ and $Y$ are generalized partners. 
Choose a point $\mathbf{p} \in \bP(V_6(X)) = \bP(V_6(Y))$ away from the hyperplanes $\bP(V_5(X))$ and $\bP(V_5(Y))$.
Let $V_5 \subset V_6 := V_6(X)^\vee$ be the corresponding hyperplane,
and let $A = A(X)^{\perp} \subset \wedge^3 V_6$. 
Then by \cite[Theorem 3.10]{debarre2015kuznetsov} there is a GM variety $Z$ 
such that $(V_6(Z), V_5(Z), A(Z)) = (V_6, V_5, A)$.
Since $A$ does not contain decomposable vectors, $Z$ is smooth by \cite[Theorem~3.16]{debarre2015kuznetsov}.
By construction, $X$ and $Y$ are both generalized duals of $Z$ satisfying
the extra assumption considered in the previous paragraph. 
Thus~$X$ and~$Y$ have equivalent GM categories. 

The final case to consider is when $X$ and $Y$ are generalized duals, 
and under the isomorphism $V_6(X) \cong V_6(Y)^{\svee}$, the hyperplane 
$V_5(Y) \subset V_6(Y)$ corresponds to a point $\mathbf{q} \in \bP(V_6(X))$ which 
lies in $\bP(V_5(X))$. 
In this case, arguing as in the previous paragraph
we can construct a GM variety $Y'$ generalized dual to $X$, such that under the isomorphism 
$V_6(X) \cong V_6(Y')^{\svee}$, the hyperplane 
$V_5(Y') \subset V_6(Y')$ corresponds to a point $\mathbf{q}' \in \bP(V_6(X)) \setminus \bP(V_5(X))$. 
Then the GM category of $X$ is equivalent to that of $Y'$ by the first paragraph. 
On the other hand, by construction $Y'$ and $Y$ are generalized partners, so their GM categories 
are equivalent by the previous paragraph. 
\end{proof}

\begin{remark}
{As we already mentioned, the duality conjecture does not cover the case of smooth GM varieties $X$ whose Lagrangians contain decomposable vectors,
i.e. all special GM surfaces and some ordinary GM surfaces.
Note that such $X$ have neither generalized partners nor duals of dimension greater than~2 and moreover $\cK(X) = \Perf(X)$.
Thus, extending~\cite[Conjecture~3.7]{kuznetsov2016perry} it is natural to ask: 
If $X$ and $Y$ are smooth GM surfaces which are generalized partners or duals 
and whose Lagrangians contain decomposable vectors, 
then is there an equivalence $\Perf(X) \simeq \Perf(Y)$?

We expect that the answer is positive, although the argument of Corollary~\ref{corollary-duality-GM} does not work
as the crucial assumption~\eqref{eq:bq-v5-condition} never holds for smooth generalized partners
when the corresponding Lagrangian has decomposable vectors.
There are two possible strategies to work around this. 

First, one can also consider (mildly) singular GM varieties and prove that if~$X$ and~$Y$ are generalized dual 
with~$X$ singular and~$Y$ smooth, then~$\cK(Y)$ is a categorical resolution of~$\cK(X)$
(cf.\ the proof of Theorem~\ref{theorem-singular-sGM} below).
Then, however, one  will have to check that the resolutions~$\cK(Y_1)$ and~$\cK(Y_2)$ 
obtained from two smooth generalized duals~$Y_1$ and~$Y_2$ of the same~$X$ are equivalent.
This seems possible, but does not fit into the scope of this paper.

Another possibility is to extend the arguments of~\cite[Theorem~4.7 and Propositions~4.13 and~4.19]{debarre2015kuznetsov}
to show that if~$X$ and~$Y$ are two smooth generalized partners or duals, then~$X$ is birational to~$Y$, and hence~$X \cong Y$.}
\end{remark}

\subsection{Duality of spin GM varieties} 
\label{subsection-sGM} 

It is well known that the Grassmannian $\Gr(2,V_5)$ shares many properties with its elder brother, the 
spinor tenfold $\OGrp(5, V_{10})$. 
The content of this subsection provides yet another confirmation of this principle.

Let $V_{10}$ be $10$-dimensional vector space. 
Recall the orthogonal Grassmannian $\OGr({5},V_{10})$ of ${5}$-dimensional isotropic subspaces 
for a nondegenerate quadratic form on $V_{10}$ has two connected components, 
$\OGrp({5}, V_{10})$ and $\OGrm({5}, V_{10})$, which are abstractly isomorphic. 
The Pl\"{u}cker embedding $\OGrp({5},V_{10}) \to {\Gr(5,V_{10}) \to {}} \bP(\wedge^{{5}}V)$ is 
given by the square of the generator of $\Pic(\OGrp({5},V_{10}))$; 
the generator itself gives an embedding $\OGrp(5, V_{10}) \subset \bP(\sSs)$, 
where~$\sSs$ is the $16$-dimensional half-spinor representation of $\Spin(V_{10})$. 
Note that
\begin{equation*}
\dim\OGrp(5, V_{10}) = 10 
\qquad \text{and} \qquad 
\operatorname{codim}_{\bP(\sSs)} \OGrp(5, V_{10}) = 5.
\end{equation*}
The spinor tenfold $\OGrp(5, V_{10}) \subset \bP(\sSs)$ shares a very 
special property with the Grassmannian~\mbox{$\Gr(2, V_5) \subset \bP(\wedge^2V_5)$}: 
both are projectively self-dual, and even homologically projectively self-dual.
More precisely, the classical projective dual variety of $\OGrp(5, V_{10}) \subset \bP(\sSs)$ 
is given by the spinor embedding~$\OGrm(5, V_{10}) \subset \bP(\sSs^{\svee})$. 
This lifts to the homological level as follows. 

\begin{theorem}[{\cite[Section~6.2 and Theorem~1.2]{kuznetsov2006hyperplane}}]
\label{theorem-HPD-OGr} 
Let $\cU$ and $\cU'$ be the tautological rank $5$ subbundles on $\OGrp(5,V_{10})$ 
and $\OGrm(5,V_{10})$. 
Then $\Perf(\OGrp(5,V_{10}))$ and $\Perf(\OGrm(5,V_{10}))$ have the structure of 
strong, moderate Lefschetz categories over the spinor spaces~$\bP(\sSs)$ and~$\bP(\sSs^{\svee})$, respectively, 
of length $8$, with Lefschetz components given by 
\begin{equation*}
\cA_i  = \langle \cO , \cUv \rangle \quad \text{and} \quad 
\cA'_i  = \langle \cU', \cO \rangle   
\end{equation*} 
for $|i| \leq 7$. Moreover, there is an equivalence 
\begin{equation*}
\Perf(\OGrp(5,V_{10}))^{\hpd} \simeq \Perf(\OGrm(5,V_{10}))
\end{equation*}
of Lefschetz categories over $\bP(\sSs^{\svee})$.  
\end{theorem}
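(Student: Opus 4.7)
The plan is to proceed in two phases: first to establish the rectangular Lefschetz decomposition of $\Perf(\OGrp(5, V_{10}))$ with center $\langle \cO, \cUv \rangle$ of length $8$, and second to identify the HPD category with $\Perf(\OGrm(5, V_{10}))$ via a flag-variety correspondence.

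First I would verify that the collection $\{\cO(iH),\, \cUv(iH) : 0 \le i \le 7\}$ is a full exceptional collection on $\OGrp(5, V_{10})$. Semiorthogonality and exceptionality reduce, via Borel--Weil--Bott on the homogeneous space $\Spin_{10}/P_5$, to the vanishing of cohomology of explicit equivariant vector bundles whose highest weights are readily computed. Fullness would then follow from a Kapranov-type resolution of the diagonal argument, supported by the numerical check that the topological Euler characteristic $\chi(\OGrp(5, V_{10}))$ equals $|W(D_5)|/|W(A_4)| = 16$, which is precisely the number of objects in the collection. Once fullness is known, the block structure of the collection exhibits $\langle \cO, \cUv \rangle$ as the Lefschetz center of a rectangular decomposition of length $8$; strongness is automatic by Remark~\ref{remark:smooth-strong} since $\OGrp(5, V_{10})$ is smooth and proper, and moderateness follows from $8 < 16 = \rank \sSs$.

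For the HPD equivalence I would exploit the isotropic flag correspondence. Let
\begin{equation*}
Z = \{(P_+, P_-) \in \OGrp(5, V_{10}) \times \OGrm(5, V_{10}) : \dim(P_+ \cap P_-) = 4\}.
\end{equation*}
A pair of maximal isotropic subspaces from opposite spinor components with $4$-dimensional intersection is uniquely recovered from that intersection, so $Z$ is canonically the isotropic Grassmannian $\OGr(4, V_{10})$, and both projections $Z \to \OGrp$ and $Z \to \OGrm$ are $\bP^4$-bundles. The $\Spin_{10}$-invariant pairing $\sSs \otimes \sSs^\svee \to \bk$ vanishes on spinor pairs corresponding to points of $Z$, so there is a natural morphism
\begin{equation*}
\varphi \colon Z \to \bH(\OGrp(5, V_{10})) \subset \OGrp(5, V_{10}) \times \bP(\sSs^\svee).
\end{equation*}
I would then construct a $\bP(\sSs^\svee)$-linear Fourier--Mukai functor $\Phi \colon \Perf(\OGrm(5, V_{10})) \to \bH(\OGrp(5, V_{10}))$ whose kernel is supported on $Z$ (pulled back along $Z \to \OGrm$, suitably twisted by a line bundle chosen to intertwine the spinor bundles, and pushed forward along $\varphi$); check via the defining condition of Definition~\ref{definition-HPD-category} that it lands in the HPD category $\cAd$; and prove full faithfulness by analyzing $\varphi_* \cO_{Z \times_{\OGrm} Z}$ with the help of the $\bP^4$-bundle structures. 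The image of $\Phi$ will then be matched against $\cAd$ by comparing the resulting semiorthogonal decomposition of $\Perf(\bH(\OGrp))$ with that coming from the Lefschetz formula~\eqref{HC-sod}; the swap of Lefschetz centers $\cO \leftrightarrow \cU'$ and $\cUv \leftrightarrow \cO$ will emerge from the explicit identification of $\Phi$ on generators.

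The main obstacle will be the geometric analysis of $\varphi$: since $\dim Z = 14$ while $\dim \bH(\OGrp) = 24$, the morphism is very far from being birational, so naive blowup formulas are unavailable, and all computations must be pushed through equivariantly using $\Spin_{10}$-representation theory. The most delicate point is to pin down the correct twist of the Fourier--Mukai kernel so that $\Phi(\cU')$ is precisely $\cUv$ (pulled back to $\bH$) rather than some other isotypic summand of the equivariant bundle on $Z$; this is the step that actually produces the spinor swap in the Lefschetz centers.
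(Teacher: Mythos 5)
Note first that the paper does not prove this theorem: it is stated as a citation to \cite[Section~6.2 and Theorem~1.2]{kuznetsov2006hyperplane}, so there is no argument in the paper to measure your proposal against, and the reference to check it against is the original.

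On its own terms the outline has the right geometry. The Euler characteristic count, the identification of the deepest isotropic incidence with $Z \cong \OGr(4,V_{10})$, and its two $\bP^4$-bundle structures are all correct, and the Lefschetz half of the statement (full exceptional collection via Borel--Weil--Bott plus a resolution-of-the-diagonal argument, strongness automatic by smoothness and properness, moderateness since $8<16$) is uncontroversial. But the HPD half of the sketch is imprecise at exactly the points where the real work lives. First, to prove full faithfulness of $\Phi$ the composition to analyze is $\Phi^!\Phi$ (equivalently $\Phi^*\Phi$), whose kernel is governed by the derived fiber product $Z \times_{\bH(\OGrp)} Z$ over the \emph{target} of $\varphi$; the object $Z \times_{\OGrm} Z$ that you propose instead controls the other composition $\Phi\Phi^!$, the putative projector onto the image, and by itself says nothing about the unit $\id \to \Phi^!\Phi$ being an isomorphism. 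Second, expecting the Lefschetz-center swap to reduce to a choice of line-bundle twist forcing $\Phi(\cU') \simeq \pi^*\cUv$ is too optimistic: by \cite[Theorem~8.7]{NCHPD} the dual center is $\cAd_0 = \gamma^*\pi^*(\cA_0)$, and $\gamma^*$ is a genuinely nontrivial projection onto $\cAd$, so the generators of $\cAd_0$ are in general complexes rather than pullbacks, and no single twist of the kernel makes $\Phi(\cU')$ literally a pullback of $\cUv$. What must actually be shown is an equality of subcategories of $\bH(\OGrp)$ after projection, and that is where the bulk of the $\Spin_{10}$-equivariant computation in \cite[\S6.2]{kuznetsov2006hyperplane} is spent.
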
 

This parallel between $\Gr(2,V_5)$ and $\OGrp(5,V_{10})$ motivates the following definition. 
\begin{definition}
A smooth \emph{spin GM variety} is a smooth dimensionally transverse fiber product 
\begin{equation*}
X = \OGrp(5, V_{10}) \times_{\bP(\sS_{16})} Q , 
\end{equation*}
where $Q \to \bP(\sSs)$ is a standard morphism of a quadric. 
\end{definition} 

We note that if $X$ is a smooth spin GM variety of dimension $d$, 
then $X$ is a Fano variety of Picard number~$1$, coindex~$4$, and degree~$24$
for $d \geq 4$, and $X$ is a polarized Calabi--Yau threefold of Picard number~$1$ and degree~$24$ for $d = 3$. 
Using Theorem~\ref{theorem-HPD-OGr} in place of Theorem~\ref{theorem-HPD-Gr}, 
the argument of Theorem~\ref{theorem-sGM-duality} proves the following spin analogue. 

\begin{theorem} 
\label{theorem-sGM-duality}
Let 
\begin{equation*}
X = \OGrp(5,V_{10}) \times_{\bP(\sSs)} Q \quad \text{and} \quad 
Y = \OGrm(5, V_{10}) \times_{\bP(\sSs^{\svee})} Q^\hpd 
\end{equation*}
be smooth spin GM varieties of dimensions $d_X \geq 4$ and $d_Y \geq 4$, 
where $Q \to \bP(\sSs)$ is a standard morphism of a quadric and $Q^{\hpd} \to \bP(\sSs^{\svee})$ is its generalized dual. 
Let $\cU_X$ and $\cU_Y$ denote the pullbacks of $\cU$ and $\cU'$ 
to $X$ and $Y$, and let $\cO_X(1)$ and $\cO_Y(1)$ denote the pullbacks 
of the $\cO(1)$ line bundles on $\bP(\sSs)$ and $\bP(\sSs^{\svee})$. 
Then there are semiorthogonal decompositions 
\begin{align}
\label{sGMX}
\Perf(X) & = \langle \cK(X), \cO_X(1), \cU_X^{\svee}(1), \dots, \cO_X(d_X-3), \cU_X^{\svee}(d_X-3) \rangle , \\ 
\label{sGMY} 
\Perf(Y) & = \langle \cU_Y(3-d_Y), \cO_Y(3-d_Y), \dots, \cU_Y(-1), \cO_Y(-1), 
\cK'(Y) \rangle , 
\end{align} 
and an equivalence $\cK(X) \simeq \cK'(Y)$. 
\end{theorem}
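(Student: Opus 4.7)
The plan is to mimic the proof of Theorem~\ref{theorem-GM-duality}, substituting the homological projective self-duality of $\OGrp(5,V_{10})$ (Theorem~\ref{theorem-HPD-OGr}) for that of $\Gr(2,V_5)$ (Theorem~\ref{theorem-HPD-Gr}). Specifically, I would apply the quadratic HPD Theorem~\ref{theorem-quadric-intersection} with $\cA = \Perf(\OGrp(5,V_{10}))$ over $\bP(\sSs)$, so that by Theorem~\ref{theorem-HPD-OGr} we have $\cAd \simeq \Perf(\OGrm(5,V_{10}))$ as Lefschetz categories over $\bP(\sSs^{\svee})$. Both $\cA$ and $\cAd$ are right strong and moderate (they are smooth and proper, see Remarks~\ref{remark:smooth-strong} and~\ref{remark-adjoints-exist}, and their length $8$ is strictly less than $\rank(\sSs) = 16$), so the hypotheses of Theorem~\ref{theorem-quadric-intersection} are satisfied.

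Next I would identify the tensor products appearing on the left-hand sides of the semiorthogonal decompositions from Theorem~\ref{theorem-quadric-intersection} with $\Perf(X)$ and $\Perf(Y)$. Since $X$ and $Y$ are smooth, the image of $\OGrp(5,V_{10})$ in $\bP(\sSs)$ does not meet $f(\Sing(Q))$ and the image of $\OGrm(5,V_{10})$ in $\bP(\sSs^{\svee})$ does not meet $f^{\natural}(\Sing(Q^{\natural}))$; hence Lemma~\ref{lemma-cQ-resolution}\eqref{cQ-fiber-product} gives
\begin{align*}
\Perf(\OGrp(5,V_{10})) \otimes_{\Perf(\bP(\sSs))} \cQ & \simeq \Perf(X), \\
\Perf(\OGrm(5,V_{10})) \otimes_{\Perf(\bP(\sSs^{\svee}))} \cQ^{\natural} & \simeq \Perf(Y).
\end{align*}

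It then remains to check that the general semiorthogonal decompositions of Theorem~\ref{theorem-quadric-intersection} specialize to~\eqref{sGMX} and~\eqref{sGMY}. With $m = m^{\hpd} = 8$, $N = 16$, $d = d_X + 5$, and $d^\natural = d_Y + 5$ (the codimensions of the spinor tenfolds are $5$, and $X$, $Y$ are dimensionally transverse), one has $m - d^\natural = 3 - d_Y \leq -1$ and $m - d = 3 - d_X \leq -1$, so neither spinor bundle component contributes. On the other hand $m + d - N = d_X - 3$ and $N - d^\natural - m^\natural + 1 = 3 - d_Y$, so the $\cO$-components contribute exactly $d_X - 3$ pairs $\langle \cO_X(i), \cU_X^{\svee}(i)\rangle$ on the $X$-side and $d_Y - 3$ pairs $\langle \cU_Y(-i), \cO_Y(-i)\rangle$ on the $Y$-side, reproducing~\eqref{sGMX} and~\eqref{sGMY}. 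The equivalence $\cK(X) \simeq \cK'(Y)$ of the residual components is then precisely the equivalence $\cK_Q(\cA) \simeq \cK'_{Q^{\natural}}(\cAd)$ supplied by Theorem~\ref{theorem-quadric-intersection}.

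There is essentially no hard step: the entire argument is parallel to the GM case, and the only points requiring care are the numerical bookkeeping to verify that the spinor bundle contributions drop out (using $d_X, d_Y \geq 4 > m - d^\natural$-style bounds) and the verification that the smoothness hypothesis allows the use of Lemma~\ref{lemma-cQ-resolution}\eqref{cQ-fiber-product}. Neither of these requires any new ingredients beyond the statement of Theorem~\ref{theorem-HPD-OGr} and the quadratic HPD framework established earlier.
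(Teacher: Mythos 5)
Your proposal is correct and follows exactly the route the paper has in mind: the paper itself only remarks that one substitutes Theorem~\ref{theorem-HPD-OGr} for Theorem~\ref{theorem-HPD-Gr} in the argument of Theorem~\ref{theorem-GM-duality}, which is precisely what you do, with the numerics $m = m^{\hpd} = 8$, $N = 16$, $d = d_X + 5$, $d^{\natural} = d_Y + 5$ correctly producing $d_X - 3$ (resp.\ $d_Y - 3$) pairs of $\cO$-components and eliminating the spinor-bundle components since $m - d^{\natural} = 3 - d_Y \leq -1$. (Minor slip: the leftmost twist on the $Y$-side is $N - d^{\natural} - m^{\hpd} = 3 - d_Y$, not $N - d^{\natural} - m^{\hpd} + 1$, but this does not affect the count or the conclusion.)
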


We call the category $\cK(X)$ occurring in \eqref{sGMX} a \emph{spin GM category}. 
Spin GM categories should be thought of as $3$-dimensional counterparts of GM categories. 
Indeed, whereas a GM category is always (fractional) Calabi--Yau of dimension~$2$, 
a spin GM category is (fractional) Calabi--Yau of dimension~$3$ by~\cite[Remark~4.9]{kuznetsov2015calabi}. 
More precisely, if~$X$ is odd-dimensional then~$\cK(X)$ is $3$-Calabi--Yau, while if $X$ is 
even-dimensional then the Serre functor of $\cK(X)$ is given by $\rS_{\cK(X)} = \sigma \circ [3]$ 
where $\sigma$ is an involutive autoequivalence of $\cK(X)$. 

Furthermore, one can develop the notion of a Lagrangian data set for spin GM varieties: 
this should consist of triples $(V_{12},V_{10},A)$, 
where $V_{12}$ is a 12-dimensional space endowed with a non-degenerate quadratic form,
$V_{10} \subset V_{12}$ a 10-dimensional subspace to which the quadratic from restricts non-degenerately,
and $A \subset {\sS_{32}}(V_{12})$ is a Lagrangian subspace in the 32-dimensional half-spinor representation of $\Spin(V_{12})$
(note that ${\sS_{32}}(V_{12})$ has a natural $\Spin(V_{12})$-invariant symplectic form). 
Then the notion of generalized spin partnership and duality for spin GM varieties can 
be defined analogously to Definition~\ref{def:gm-duality}, and 
the argument of Corollary~\ref{corollary-duality-GM} would prove that spin GM categories of 
generalized spin partners or duals are equivalent.

It would be interesting to investigate the rationality question for spin GM varieties 
in relation to Theorem~\ref{theorem-sGM-duality}, following the GM case 
discussed in \cite[\S3]{kuznetsov2016perry}. 
The critical case is when~$X$ has dimension $5$; then $\cK(X)$ is a $3$-Calabi--Yau category, 
which is conjecturally equivalent to the derived category of a Calabi--Yau threefold if and only if $X$ is rational. 
Theorem~\ref{theorem-sGM-duality}, however, 
does \emph{not} give examples of this sort. 
Indeed, if $Y = \OGrm(5, V_{10}) \times_{\bP(\sSs^{\svee})} Q^\hpd$ is a smooth 
GM variety of dimension $3$, then it is easy to see that 
$X = \OGrp(5,V_{10}) \times_{\bP(\sSs)} Q$ cannot be smooth because 
$\OGrp(5,V_{10})$ must meet the singular locus of $Q$.  
More generally, we have the following result. 

\begin{lemma}
\label{lemma-sGM-not-geometric}
Let $X$ be a smooth spin GM variety whose dimension is odd and at least $5$. 
\begin{enumerate}
\item \label{HH-cKX}
The $0$-th Hochschild homology of $\cK(X)$ is given by $\HH_0(\cK(X)) \cong \bk^2$. 
\item \label{cKX-not-geometric}
$\cK(X)$ is not equivalent to the derived category of a projective variety. 
\end{enumerate} 
\end{lemma}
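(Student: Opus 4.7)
The plan is to prove (1) by combining additivity of Hochschild homology along the semiorthogonal decomposition \eqref{sGMX} with a Hodge-theoretic computation of $\HH_0(X)$, and then to deduce (2) from (1) using the $3$-Calabi--Yau property of $\cK(X)$.

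For (1), I would start from the decomposition \eqref{sGMX}, whose right-hand side consists of $\cK(X)$ together with $2(d_X-3)$ exceptional objects $\cO_X(i), \cU_X^{\svee}(i)$ for $i=1,\ldots,d_X-3$. By Kuznetsov's additivity of Hochschild homology along semiorthogonal decompositions of smooth proper categories,
\[
\HH_0(\cK(X)) \;=\; \HH_0(\Perf(X)) \,-\, 2(d_X-3),
\]
so it suffices to show $\HH_0(\Perf(X)) \cong \bk^{2d_X-4}$. By the HKR isomorphism (we are in characteristic $0$), $\HH_0(\Perf(X)) \cong \bigoplus_p H^{p,p}(X)$, so the problem reduces to computing $\sum_p h^{p,p}(X)$.

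Since $X$ is a dimensionally transverse fiber product of $\OGr_+(5,V_{10})$ with a standard morphism of a quadric, it is either (embedding type) an ample quadric divisor in a smooth linear section $Y \subset \OGr_+(5,V_{10})$ of dimension $d_X+1$, or (covering type) a double cover of a smooth linear section of dimension $d_X$ branched along a quadric subsection. In the embedding case, iterated Lefschetz hyperplane applied to the chain $X \subset Y \subset \OGr_+(5,V_{10})$ yields $H^k(X) \cong H^k(\OGr_+(5,V_{10}))$ for $k < d_X$, and in the covering case a similar Lefschetz-type identification holds in low degree after analyzing the invariant part of $H^*(X)$ under the deck involution. Because $d_X$ is odd, the middle cohomology $H^{d_X}(X)$ is pure of Hodge type $(p, d_X-p)$ with $p \neq d_X - p$, hence contributes no $(p,p)$-classes; Poincaré duality $h^{p,p}(X) = h^{d_X-p, d_X-p}(X)$ then determines the diagonal Hodge numbers in high degree from those in low degree. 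Combined with the known Hodge--Tate structure of $\OGr_+(5,V_{10})$ (which has $h^{p,p} = 1,1,1,2,2,\ldots$ for $p = 0,1,2,3,4,\ldots$ by the Schubert cell decomposition), this gives
\[
\sum_p h^{p,p}(X) \;=\; 2\sum_{p=0}^{(d_X-1)/2} h^{p,p}(\OGr_+(5,V_{10})) \;=\; 2d_X - 4
\]
for every odd $d_X \in \{5,7,9\}$, proving $\HH_0(\cK(X)) \cong \bk^2$.

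For (2), I would argue by contradiction: suppose $\cK(X) \simeq \Db(Y)$ for a projective variety $Y$. Since $\cK(X)$ is smooth and proper as an admissible subcategory of $\Perf(X)$, the same holds for $\Db(Y)$, so $Y$ must be smooth and projective. By \textup{\cite[Remark~4.9]{kuznetsov2015calabi}}, $\cK(X)$ is $3$-Calabi--Yau (since $d_X$ is odd), so the Serre functor of $\Db(Y)$, namely $- \otimes \omega_Y[\dim Y]$, must equal $[3]$; this forces $\omega_Y \cong \cO_Y$ and $\dim Y = 3$. Hence $Y$ is a smooth projective Calabi--Yau threefold, and HKR combined with Serre duality $h^{p,q}(Y) = h^{3-p,3-q}(Y)$ gives
\[
\dim_{\bk}\HH_0(Y) \;=\; h^{0,0} + h^{1,1} + h^{2,2} + h^{3,3} \;=\; 2 + 2h^{1,1}(Y) \;\geq\; 4,
\]
where the final inequality uses that any smooth projective variety has $h^{1,1} \geq 1$ (the class of an ample divisor). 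This contradicts $\HH_0(\cK(X)) \cong \bk^2$ proved in~(1).

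The main obstacle is the covering-type case of the Hodge computation in (1): there $X$ is not a subvariety of $\OGr_+(5,V_{10})$, so Lefschetz hyperplane does not apply directly, and one must analyze the $\bZ/2$-equivariant cohomology of the double cover or argue by specialization from the embedding type (using invariance of Hochschild homology in smooth proper families). Everything else, in particular the deduction of (2) from (1), is essentially formal.
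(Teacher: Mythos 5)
Your proposal is correct and takes essentially the same approach as the paper: additivity of Hochschild homology over the decomposition \eqref{sGMX}, the Poincar\'e polynomial of $\OGrp(5,V_{10})$ plus Lefschetz hyperplane and HKR to compute $\HH_0(X)$, oddness of $d_X$ to kill middle-dimensional contributions to $\bigoplus_p h^{p,p}$, and for part (2) the $3$-Calabi--Yau property forcing a hypothetical geometric model to be a smooth Calabi--Yau threefold with $\HH_0 \geq 4$. You fill in more computational detail (and correctly flag that a Lefschetz-type argument for the covering-type double cover is needed, a point the paper also passes over silently); the paper additionally cites a lemma to justify that $\Db(Y)\simeq\cK(X)$ smooth forces $Y$ smooth, which is the fact you invoke implicitly.
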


\begin{proof}
We first note that $\OGrp(5,V_{10})$ has cohomology of Tate type, 
and Poincar\'{e} polynomial given by
\begin{equation*}
1 + t^2 + t^4 + 2 t^6 + 2 t^8 + 2 t^{10} + 2 t^{12} + 2 t^{14} + t^{16} + t^{18} + t^{20},  
\end{equation*} 
see~\cite[\S2.2]{double-spinor} or \cite[Corollary 3.8]{kuznetsov2018spinor}. 
The Lefschetz hyperplane theorem combined with the HKR theorem 
then determines $\HH_0(X)$, and the claimed formula for $\HH_0(\cK(X))$ follows from the 
additivity of Hochschild cohomology \cite[Theorem 7.3]{kuznetsov2009hochschild}. 

If $\cK(X) \simeq \Db(M)$ for a projective variety $M$, then $M$ is smooth by \cite[Lemma D.22]{kuznetsov2006hyperplane}. 
Moreover, $M$ must have dimension $3$ since $\cK(X)$ is $3$-Calabi--Yau. 
The HKR theorem then implies $\dim \HH_0(M) \geq 4$, contradicting part~\eqref{HH-cKX}. 
\end{proof}

Nonetheless, by considering a mild degeneration of the situation of Theorem~\ref{theorem-sGM-duality}, 
we can find spin GM fivefolds whose category $\cK(X)$ admits a geometric resolution of singularities. 
Recall from Definition~\ref{def:categorical-resolution} and Remark~\ref{remark-nc-cat-res} 
the notion of a weakly crepant categorical resolution. 

\begin{theorem}
\label{theorem-singular-sGM}
Let $K \subset W \subset \sS_{16}$ be generic subspaces with $\dim(K) = 6$ and~$ \dim(W) = 12$,
and let $\barQ \subset \bP(W/K)$ be a general smooth quadric.
Set $Q = \bC_K(\barQ)$ and let 
\begin{equation*}
f \colon Q \to \bP(W) \to \bP(\sS_{16})
\end{equation*}
be the induced morphism.
Let 
\begin{equation*}
X = \OGrp(5,V_{10}) \times_{\bP(\sSs)} Q \quad \text{and} \quad 
Y = \OGrm(5, V_{10}) \times_{\bP(\sSs^{\svee})} Q^\hpd . 
\end{equation*}
Then $X$ is a spin GM fivefold with $12$ nodal singularities and $Y$ is a smooth spin GM threefold. 
Moreover, there is a semiorthogonal decomposition 
\begin{equation}
\label{cKX-singular}
\Db(X) = \langle \cKb(X), \cO_X(1), \cU_X^{\svee}(1), \cO_X(2), \cU_X^{\svee}(2)  \rangle , 
\end{equation} 
and $\Db(Y)$ is a weakly crepant categorical resolution of $\cK(X) = \cKb(X) \cap \Perf(X)$. 
\end{theorem}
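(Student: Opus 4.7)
My plan is first to establish the geometric claims about $X$ and $Y$, then apply the quadratic HPD Theorem~\ref{theorem-quadric-intersection} to $\cA = \Perf(\OGrp(5,V_{10}))$, and finally descend the resulting decomposition from the categorical resolution $\tcX$ of $X$ to $\Db(X)$ itself.

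The singular locus of $Q = \bC_K(\barQ)$ is $\bP(K) \cong \bP^5$. Since $\OGrp(5,V_{10}) \subset \bP(\sSs) = \bP^{15}$ has codimension $5$ and degree $12$, for generic $K$ the intersection $\OGrp \cap \bP(K)$ consists of $12$ reduced points. Near each such point $x$, the transversality of $T_x(\OGrp \cap \bP(W))$ to $T_x\bP(K)$ (which is a generic condition) implies that the restriction of the rank-$6$ quadratic form defining $Q$ to this tangent space is non-degenerate, so $X$ has a node at $x$; away from these $12$ points, Bertini gives smoothness. For $Y$, the singular locus of $Q^\natural$ is $\bP(K^\natural) = \bP(W^\perp) \cong \bP^3$, and dimension count $10 + 3 < 15$ shows $\OGrm$ avoids it generically, so $Y$ lies in the smooth locus of $Q^\natural$ and is smooth of dimension $3$ by Bertini; an adjunction computation $K_Y = (-8 + 6 + 2)H|_Y = 0$ shows $Y$ is Calabi--Yau.

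Applying Theorem~\ref{theorem-quadric-intersection} with the Lefschetz structure on $\Perf(\OGrp)$ from Theorem~\ref{theorem-HPD-OGr}, I record $N = 16$, $m = m^\natural = 8$, $d = 10$, $d^\natural = 8$. On the $X$-side, $d^\natural = m$ makes the $\cS$-block empty, while the $\cO$-block consists of $\cA_6(H)$ and $\cA_7(2H)$, each equal to $\langle \cO, \cU^\vee \rangle$; their images in
\[
\tcX := \Perf(\OGrp) \otimes_{\Perf(\bP(\sSs))} \cQ
\]
are the four objects $\cO_X(1), \cU_X^\vee(1), \cO_X(2), \cU_X^\vee(2)$. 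On the $Y$-side, since $\OGrm$ is disjoint from $\Sing(Q^\natural)$, Lemma~\ref{lemma-cQ-resolution}\eqref{cQ-fiber-product} identifies $\Perf(\OGrm) \otimes \cQ^\natural \simeq \Db(Y)$. Because $Y$ is Calabi--Yau threefold, $\Db(Y)$ admits no nontrivial semiorthogonal decomposition, so the remaining components on the $Y$-side collapse into the distinguished component $\cK'_{Q^\natural}(\cAd)$, yielding an equivalence $\cK'_{Q^\natural}(\cAd) \simeq \Db(Y)$.

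To obtain \eqref{cKX-singular} and the cat resolution statement, I need to transport the decomposition of $\tcX$ to one of $\Db(X)$. Since $\OGrp$ and $Q$ are Tor-independent over $\bP(\sSs)$ (the local complete intersection $Q \subset \bP(\sSs)$ meets $\OGrp$ in the expected dimension), base changing the weakly crepant categorical resolution functors $\pi^*_\cQ \colon \Perf(Q) \to \cQ$ and $\pi_{\cQ*} \colon \cQ \to \Db(Q)$ from Lemma~\ref{lemma-cQ-categorical-resolution} along $\Perf(\OGrp) \otimes_{\Perf(\bP(\sSs))} -$ produces adjoint functors $\pi^* \colon \Perf(X) \to \tcX$ and $\pi_* \colon \tcX \to \Db(X)$ exhibiting $\tcX$ as a weakly crepant categorical resolution of $\Db(X)$. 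The four exceptional objects of the decomposition of $\tcX$ are pullbacks of perfect complexes and hence correspond to their images in $\Perf(X) \subset \Db(X)$; the orthogonal complement of the resulting exceptional collection in $\Db(X)$ defines $\cKb(X)$, giving \eqref{cKX-singular}, and $\pi_*$ restricts to a weakly crepant categorical resolution $\cK_Q(\cA) \to \cKb(X)$ of $\cK(X) = \cKb(X) \cap \Perf(X)$. Combining with the equivalence $\cK_Q(\cA) \simeq \cK'_{Q^\natural}(\cAd) \simeq \Db(Y)$ from Theorem~\ref{theorem-quadric-intersection} concludes the proof.

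\textbf{The main obstacle} is the descent step: transferring the semiorthogonal decomposition of the categorical resolution $\tcX$ to one on the singular $\Db(X)$, and verifying that the restriction of the cat resolution functors to the distinguished component remains weakly crepant. This requires careful tracking of how $\Perf$ and $\Db$ interact in the presence of nodal singularities, together with the compatibility of base change with the cat resolution structure of $\cQ$.
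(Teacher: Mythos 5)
Your overall strategy is sound and correctly applies Theorem~\ref{theorem-quadric-intersection} with the right invariants ($N=16$, $m=m^\hpd=8$, $d=10$, $d^\hpd=8$), but there are two issues worth flagging, one minor and one that constitutes a real gap.

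First, a minor point: your invocation of the fact that a Calabi--Yau threefold admits no nontrivial semiorthogonal decomposition is correct but unnecessary. With $d_Y = 3$ the numerics already make the $Y$-side decomposition in Theorem~\ref{theorem-quadric-intersection} consist only of the distinguished component $\cK'_{Q^\hpd}(\cAd)$; both the $\cO$-block and the $\cS^\hpd$-block have length $0$. There are no components to ``collapse.'' The paper simply reads off $\Perf(Y) \simeq \wtilde{\cK}(X)$ directly. Relying on the nontrivial structural result about CY categories introduces an unneeded dependency and suggests a miscount of the components.

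Second, and more substantially: the descent step you flag as ``the main obstacle'' is a genuine gap, and the paper avoids it by taking the opposite direction. You want to push the decomposition of $\tcX := \Perf(\OGrp) \otimes_{\Perf(\bP(\sSs))} \cQ$ down to $\Db(X)$ via $\pi_*$. The problem is that even after observing that $\pi^*$ is fully faithful and carries the four objects $\cO_X(1), \cU_X^\svee(1), \cO_X(2), \cU_X^\svee(2)$ to the exceptional objects of $\tcX$, one still needs to prove that these objects, together with their right orthogonal in $\Db(X)$, \emph{generate} $\Db(X)$ and form an admissible decomposition --- $X$ is singular, so this is not automatic from what happens upstairs. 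The paper sidesteps this entirely by first establishing \eqref{cKX-singular} as a semiorthogonal decomposition of $\Db(X)$ directly, citing \cite[Lemma~5.5]{cyclic-covers} (the Lefschetz decomposition of the ambient $\OGrp$ restricts to a decomposition of the Gorenstein section $X$), and separately deriving the induced decomposition of $\Perf(X)$ via \cite[Proposition~4.1]{kuznetsov-base-change} together with Serre duality. Only \emph{after} having \eqref{cKX-singular} in hand does the paper check that $\pi^*$ and $\pi_*$ restrict to functors between $\cK(X)$, $\cKb(X)$, and $\wtilde{\cK}(X)$, giving the weakly crepant categorical resolution. Your argument needs either to reproduce the argument of \cite[Lemma~5.5]{cyclic-covers} or to supply a separate generation argument for the pushforward of the decomposition of $\tcX$; as written, the claim ``the orthogonal complement\ldots defines $\cKb(X)$, giving \eqref{cKX-singular}'' is not justified.

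On the geometric side, your sketch of the nodality argument is plausible but imprecise: the statement that $X$ has a node at $z_i$ should follow from analyzing the intersection of the embedded tangent space $T_{z_i}\OGrp$ with $K$ and $W$ (the paper computes $\dim(T_i \cap K)=1$ and $\dim(T_i \cap W)=7$, then identifies the normal cone with a nondegenerate quadric cone), rather than from an appeal to transversality of $T_x(\OGrp \cap \bP(W))$ and $T_x\bP(K)$, which by itself does not control the rank of the restricted quadratic form.
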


\begin{proof}
The spinor embedding $\OGrp(5,V_{10}) \subset \bP(\sSs)$ has degree $12$ and codimension $5$. 
Thus for general $K$ the intersection $Z = \OGrp(5, V_{10}) \cap \bP(K)$ consists of $12$ reduced points, 
say~$z_1,\dots,z_{12}$,
and the dual intersection $\OGrm(5, V_{10}) \cap \bP(K^\perp)$ is a smooth fourfold.
Furthermore, for general $W$ containing $K$ the intersection $\OGrm(5, V_{10}) \cap \bP(W^\perp)$ is empty,
and the intersection $\OGrp(5, V_{10}) \cap \bP(W)$ is a smooth sixfold containing $Z$.

The embedded tangent space to $\OGrp(5, V_{10})$ at the point $z_i$ corresponds
to an 11-di\-men\-sional subspace $T_i \subset \sSs$ such that $\dim(T_i \cap K) = 1$.
The intersection $T_i \cap W$ corresponds to the embedded tangent space to $\OGrp(5, V_{10}) \cap \bP(W)$ at $z_i$,
hence $\dim(T_i \cap W) = 7$
and the natural map $T_i \cap W \to W/K$ is surjective with kernel $T_i \cap K$.
For any smooth quadric~$\barQ \subset \bP(W/K)$ its strict preimage in $\bP(T_i \cap W)$ 
is the cone over $\barQ$ with vertex $z_i = \bP(T_i \cap K)$
and it is identified with the normal cone to $X$ at $z_i$, hence $z_i$ is a node.
This proves that for $K$ and~$W$ chosen as above and any smooth~$\barQ$ the intersection $X$ has nodes at points of $Z$.
Also, for general $\barQ$ by Bertini's theorem $X$ is smooth away from $Z$ and $Y$ is smooth.
Thus~$Y$ is a smooth spin GM threefold. 

The semiorthogonal decomposition~\eqref{cKX-singular} is induced by the Lefschetz 
decomposition of the spinor tenfold~$\OGrp(5,V_{10})$, cf.~\cite[Lemma 5.5]{cyclic-covers}. 

Let $\cQ$ denote the standard categorical resolution of $Q$ over $\bP(\sSs)$. 
Then arguing as in Theorem~\ref{theorem-sGM-duality}, we see that Theorem~\ref{theorem-quadric-intersection} 
gives a semiorthogonal decomposition 
\begin{equation}
\label{OGrp-cQ}
\Perf(\OGrp(5,V_{10})) \otimes_{\Perf(\bP(\sSs))} \cQ = 
\llangle 
\wtilde{\cK}(X), 
\langle \cO(1) , \cU^{\svee}(1) \rangle \otimes  \langle \cO \rangle, \langle \cO(2), \cU^{\svee}(2) \rangle \otimes \langle \cO \rangle 
\rrangle
\end{equation} 
and an equivalence $\Perf(Y) \simeq \wtilde{\cK}(X)$. 
Note that $\Perf(Y) = \Db(Y)$ since $Y$ is smooth. 
Thus to finish it suffices to show that $\wtilde{\cK}(X)$ is a weakly crepant categorical resolution of $\cKb(X)$. 

The functors $\pi_* \colon \cQ \to \Db(Q)$ and $\pi^* \colon \Perf(Q) \to \cQ$ of Lemma~\ref{lemma-cQ-categorical-resolution} 
induce by base change along $\OGrp(5,V_{10}) \to \bP(\sSs)$ functors
\begin{align*}
\pi_* &\colon \Perf(\OGrp(5,V_{10})) \otimes_{\Perf(\bP(\sSs))} \cQ \to \Db(X),\\
\pi^* &\colon \Perf(X) \to \Perf(\OGrp(5,V_{10})) \otimes_{\Perf(\bP(\sSs))} \cQ,
\end{align*}
such that $\pi^*$ is left and right adjoint to $\pi_*$ and $\pi_* \circ \pi^* \cong \id$.
Thus, these functors provide the category $\Perf(\OGrp(5,V_{10})) \otimes_{\Perf(\bP(\sSs))} \cQ$ 
with the structure of a weakly crepant categorical resolution of $X$.

Furthermore, \eqref{cKX-singular} also induces a semiorthogonal decomposition
\begin{equation}
\label{eq:sod-perf-x}
\Perf(X) = \langle \cK(X), \cO_X(1), \cU_X^{\svee}(1), \cO_X(2), \cU_X^{\svee}(2)  \rangle , 
\end{equation}
where $\cK(X) = \cKb(X) \cap \Perf(X)$. 
Indeed, by \cite[Proposition 4.1]{kuznetsov-base-change} it is enough to show that the components of~\eqref{cKX-singular} are admissible; 
this is clear for the exceptional objects that appear, and then follows for $\cKb(X)$ by Serre duality and 
the fact that $X$ is Gorenstein.
Clearly, $\pi^*$ takes the four exceptional objects from~\eqref{eq:sod-perf-x} 
to the four exceptional objects in~\eqref{OGrp-cQ}.
Therefore, from full faithfulness it follows that $\pi^*$ takes the right orthogonal $\cK(X)$ of the former 
to the right orthogonal $\wtilde{\cK}(X)$ of the latter, and thus defines a functor
\begin{equation*}
\pi^* \colon \cK(X) \to \wtilde{\cK}(X).
\end{equation*}
Similarly, by adjunction it follows that the right adjoint functor $\pi_*$ takes 
$\wtilde{\cK}(X)$ to $\cKb(X)$, and hence defines a functor 
\begin{equation*}
\pi_* \colon \wtilde{\cK}(X) \to \cKb(X).
\end{equation*}
Since we have already shown that $\pi_*$ and $\pi^*$ provide 
$\Perf(\OGrp(5,V_{10})) \otimes_{\Perf(\bP(\sSs))} \cQ$ with the structure of a weakly crepant 
categorical resolution of $X$, it follows that $\wtilde{\cK}(X)$ is a weakly crepant 
categorical resolution of $\cK(X)$ via these functors.
\end{proof} 

The proof of the theorem shows that the resolution $\tX \to X$ given by blowing 
up the singular points of $X$ has a semiorthogonal decomposition consisting of 
exceptional objects and the derived category of the Calabi--Yau threefold $Y$. 
Thus, the philosophy of~\cite{kuznetsov2010derived, kuznetsov2015rationality} suggests 
that $\tX$ (and therefore $X$) should be rational. 
We will prove this as a consequence of the following.

\begin{lemma}
\label{lemma-singular-sGM-fibration}
If $X$ is as in Theorem~\textup{\ref{theorem-singular-sGM}}, 
then there is a resolution of singularities $X' \to X$ and a morphism $X' \to \bP^2$ 
whose general fiber is a smooth Fano threefold of Picard number $1$, degree $12$, and index $1$. 
Moreover, the morphism $X' \to \bP^2$ has~$12$ sections.
\end{lemma}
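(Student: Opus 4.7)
The plan is to realize $X' \to \bP^2$ explicitly as a linear projection from a maximal isotropic $\bP^8 \subset Q$ containing the singular locus. Since the kernel $K \subset W$ of $q$ has dimension $6$, the quadric $\barQ \subset \bP(W/K)$ is a smooth $4$-dimensional quadric whose maximal isotropic planes form two $\bP^3$-families. Fix a generic plane $\bar\Pi_0$ in one family, let $V_9^0 \subset W$ be its preimage under $W \twoheadrightarrow W/K$, and set $\bP^8_0 := \bP(V_9^0)$. Then $\bP^8_0$ is a maximal isotropic projective subspace of $Q$ containing $\Sing(Q) = \bP(K)$, and for generic $\bar\Pi_0$ the variety $Y := \bP^8_0 \cap \OGrp(5, V_{10})$ is a smooth codimension-$7$ linear section of the spinor tenfold, i.e., a Mukai Fano threefold of genus~$7$. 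Moreover $Y \supset \bP(K) \cap \OGrp = \{z_1, \dots, z_{12}\}$, so $Y$ contains all the nodes of $X$.

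Next, consider the linear projection $\bP(W) \dashrightarrow \bP^2 := \bP(W/V_9^0)$ from $\bP^8_0$, resolved by the $\bP^9$-bundle $\Bl_{\bP^8_0}\bP(W) \to \bP^2$ whose fiber over $\bar t$ is $\bP^9_{\bar t} := \bP(V_9^0 + \bk t)$. Because $\bP^8_0$ is maximal isotropic, for each $\bar t$ the hyperplane $\bP^8_0 \subset \bP^9_{\bar t}$ is one of the two components of the reducible quadric $\bP^9_{\bar t} \cap Q$, and the residual component $\bP^8_{\bar t}$ is another maximal isotropic $\bP^8$ belonging to the opposite family and varying linearly with $\bar t$. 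Define $X' := \Bl_Y X$ as the strict transform of $X$ inside $\Bl_{\bP^8_0}\bP(W)$. The projection extends to a morphism $X' \to \bP^2$, and its fiber over general $\bar t$ is the strict transform of $X \cap \bP^9_{\bar t} = Y \cup (\OGrp \cap \bP^8_{\bar t})$; since the $Y$-component sits inside the blown-up center, this strict transform is $\OGrp \cap \bP^8_{\bar t}$, which for general $\bar t$ is a smooth Mukai Fano threefold of Picard number~$1$, index~$1$, and degree~$12$.

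The remaining point is to verify smoothness of $X'$ at the preimages of the $12$ nodes and to identify the sections. Away from the nodes, $Y$ is a smooth codimension-$2$ subvariety of smooth $X$, so $X'$ is smooth with exceptional divisor a $\bP^1$-bundle. At each node $z_i$, using $K \subset V_9^0$ and the isomorphism $(T_i \cap W)/\langle z_i\rangle \xrightarrow{\sim} W/K$ from the proof of Theorem~\ref{theorem-singular-sGM}, one checks that $T_{z_i} Y$ corresponds to the Lagrangian plane $V_9^0/K$ in the tangent quadric of the node. This forces the local analytic model $X \cong \{x_1 y_1 + x_2 y_2 + x_3 y_3 = 0\} \subset \bA^6$ with $Y \cong \{y_1 = y_2 = y_3 = 0\}$; realizing the blowup as the closure of the graph of the rational map $[y_1:y_2:y_3]\colon X \dashrightarrow \bP^2$ yields a smooth variety whose exceptional fiber over $z_i$ is a $\bP^2$. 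This $\bP^2$ is canonically identified with the fiber of $\Bl_{\bP^8_0}\bP(W) \to \bP^2$ over $z_i \in \bP^8_0$ and maps isomorphically to the base $\bP^2 = \bP(W/V_9^0)$, yielding a section. The $12$ sections of $X' \to \bP^2$ are thus the exceptional $\bP^2$'s over the $12$ nodes. The main obstacle is precisely this local verification: pinning down that the tangent planes to $Y$ at the nodes are Lagrangian in the tangent quadrics, and confirming that the resulting blowup is smooth with the desired fibration structure.
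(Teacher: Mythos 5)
Your proof takes essentially the same route as the paper's: project from a maximal isotropic $\bP^8 \supset \bP(K)$ (choosing $I$ maximal isotropic in $Q$ is the same as choosing a maximal isotropic plane $\bar\Pi_0$ of $\barQ$, since all maximal isotropics of $Q$ contain $K$), set $X' = \Bl_{Y}X$ with $Y = \bP^8 \cap \OGrp(5,V_{10})$, and identify the fibers as residual linear sections $\OGrp(5,V_{10}) \cap \bP(I_b)$. Where the paper is terse --- it asserts smoothness of $X'$ ``by genericity'' and says each $z_i$ ``gives a section'' --- you supply the local model $\{x_1y_1 + x_2y_2 + x_3y_3 = 0\}$, $Y = \{y_1=y_2=y_3=0\}$ at the nodes (with the correct verification that $\tilde T_{z_i}Y/\langle z_i\rangle \cong V_9^0/K$ is Lagrangian) and note that the exceptional $\bP^2$ over each node maps isomorphically to the base $\bP^2$; this is a welcome clarification, since the $z_i$ themselves are blown up in $X'$ and the naive ``constant sections'' need to be interpreted through the exceptional locus exactly as you do.
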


\begin{proof}
The following argument is inspired by~\cite[Lemma~4.1]{debarre2015kuznetsov}. 

Recall that the kernel space $K$ of the quadric $Q$ defining~$X$ is 6-dimensional and its span~$W$ is 12-dimensional.
Therefore, the maximal isotropic spaces for $Q$ are $9$-dimensional. 
Let $I \supset K$ be a generic such space. 
Then linear projection from $\bP^8 = \bP(I) \subset \bP(W)$ induces a morphism 
\begin{equation*}
q \colon X' \to \bP(W/I) = \bP^2, 
\end{equation*}
where 
$X'$ is the blowup of $X$ along 
\begin{equation*}
X \cap \bP(I) = \OGrp(5,V_{10}) \cap \bP(I). 
\end{equation*} 
The genericity of $I$ guarantees that $X'$ is smooth.

The fibers of $q$ can be described as follows: 
a point $b \in \bP^2$ corresponds to a $\bP^9_b \subset \bP(W)$ containing $\bP(I)$; 
we have $Q \cap \bP^9_b = \bP(I) \cup \bP(I_b)$ where $I_b$ is the residual isotropic 
space for $Q$; and the fiber over $b$ is $q^{-1}(b) = \OGrp(5,V_{10}) \cap \bP(I_b)$. 
Thus the general fiber of $q$ is a smooth threefold given as a codimension $7$ 
linear section of $\OGrp(5,V_{10}) \subset \bP(\sSs)$, i.e. a threefold of the claimed type. 

Furthermore, since any maximal isotropic subspace in $Q$ contains $K$, we have $K \subset I_b$, hence
\begin{equation*}
\OGrp(5,V_{10}) \cap \bP(K) \subset \OGrp(5,V_{10}) \cap \bP(I_b).
\end{equation*}
It remains to note that the left side is a set of~12 reduced points;
each of these points gives a section of the morphism $X' \to \bP^2$.
\end{proof}

\begin{corollary}
\label{corollary-singular-sGM-rational}
If $X$ is as in Theorem~\textup{\ref{theorem-singular-sGM}} and the base field~$\bk$ is algebraically closed of characteristic~$0$, 
then $X$ is rational. 
\end{corollary}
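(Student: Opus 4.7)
The plan is to apply Lemma~\ref{lemma-singular-sGM-fibration} to reduce the rationality of $X$ to that of the generic fiber of a fibration over $\bP^2$, and then to invoke the classical rationality of genus~$7$ Fano threefolds, descended to the function field $k(\bP^2)$ using the twelve sections the lemma provides.

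Concretely, Lemma~\ref{lemma-singular-sGM-fibration} produces a resolution $X' \to X$ and a morphism $q \colon X' \to \bP^2$ whose general fiber is a smooth Fano threefold of Picard number~$1$, degree~$12$, and index~$1$ (a Mukai $V_{12}$ of genus~$7$), and which admits $12$ sections. Since $X$ is birational to $X'$, and $X'$ is birational over $\bP^2$ to the generic fiber $F$ of $q$ regarded as a smooth projective threefold over $\kappa := k(\bP^2)$, it suffices to show that $F$ is rational over $\kappa$. Once this is known, the birational equivalence $F \dashrightarrow \bP^3_\kappa$ spreads out to a birational equivalence $X' \dashrightarrow \bP^3 \times \bP^2 \cong \bP^5$ over $\bk$, and so $X$ is rational.

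Over an algebraically closed field of characteristic zero, any smooth genus~$7$ Fano threefold is rational by Iskovskikh's classical theorem, the standard proof producing a birational map $V_{12} \dashrightarrow \bP^3$ via projection from (or via the linear system associated with) a distinguished low-degree rational curve on $V_{12}$, typically a line or a conic. To implement this construction over the non-closed field $\kappa$, we exploit the $12$ $\kappa$-rational points of $F$ supplied by the sections of $q$: the genericity hypotheses on the data $K \subset W \subset \sSs$ and $\barQ$ defining $X$ force these points to lie in sufficiently general position on $F$ that a $\kappa$-rational auxiliary curve of the required type may be constructed through a suitable subset of them---for instance, a conic on $F$ passing through two of the $\kappa$-rational points, which exists because the Hilbert scheme of conics on $V_{12}$ is two-dimensional and sweeps out the entire threefold. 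Projection from this $\kappa$-curve then yields the desired birational equivalence $F \dashrightarrow \bP^3_\kappa$ over~$\kappa$.

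The main obstacle is this descent step: one must verify that the classical rationality construction for $V_{12}$ survives passage to the function field $\kappa = k(\bP^2)$ using only the rational points coming from the sections of $q$, i.e., that an auxiliary rational curve of the required type can be exhibited over $\kappa$ and that the resulting projection is birational over~$\kappa$ (and not merely over its algebraic closure). Both statements reduce to verifying that the configuration of the twelve sections is sufficiently general on the generic fiber of $q$, which in turn follows from the genericity assumptions built into the hypotheses of Theorem~\ref{theorem-singular-sGM}.
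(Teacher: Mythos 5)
Your reduction via Lemma~\ref{lemma-singular-sGM-fibration} to the rationality of the generic fiber $F$ of $q\colon X'\to\bP^2$ over $\kappa = \bk(\bP^2)$ is exactly the paper's reduction, and the observation that the sections of $q$ give $\kappa$-rational points of $F$ is also what the paper uses. But you then try to \emph{prove} the key statement (rationality of a smooth prime Fano threefold of genus~$7$ over a non-closed field with a rational point), whereas the paper's proof is a one-line citation of~\cite{kuznetsov-prokhorov}, which is precisely a paper devoted to proving such rationality results over non-closed fields. Your sketch of that hard step does not hold up: the standard Iskovskikh rationality construction for $V_{12}$ is a double projection from a \emph{line}, not a projection from a conic, and projecting the anticanonically embedded $V_{12}\subset\bP^8$ from a conic does not land in $\bP^3$ nor obviously produce a birational map to a rational variety; moreover, the existence of a line (or of a $\kappa$-rational conic of the right kind) defined over $\kappa$ is not granted merely by the presence of several $\kappa$-rational points, and ``the twelve sections are in sufficiently general position'' is asserted but not established. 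The content of~\cite{kuznetsov-prokhorov} is exactly to carry out a rationality construction over a non-closed field using only \emph{one} rational point, and that is what is needed and what the paper invokes; your argument leaves that gap unfilled.
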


\begin{proof}
By Lemma~\ref{lemma-singular-sGM-fibration}, it suffices to show that a smooth Fano threefold 
of Picard number~$1$, degree~$12$, and index~$1$ is rational if it has a rational point. 
This holds by \cite{kuznetsov-prokhorov}. 
\end{proof}

We note that 
Theorem~\ref{theorem-singular-sGM} can be thought of as giving a 
conifold transition from the noncommutative Calabi--Yau threefold $\cK(X)$
to the Calabi--Yau threefold $Y$. 
In the spirit of Reid's fantasy \cite{reid-fantasy}, we pose the following (loosely formulated) 
question: 

\begin{question}
Can any noncommutative Calabi--Yau 
threefold be connected to a geometric Calabi--Yau threefold via a 
sequence of degenerations and crepant resolutions? 
\end{question} 

Theorem~\ref{theorem-singular-sGM} gives a positive answer to this question for spin GM categories of spin GM fivefolds, 
and similar arguments also give a positive answer for spin GM varieties of dimension~$7$ or~$9$. 
The results of~\cite{favero-kelly} give a positive answer for noncommutative Calabi--Yau threefolds 
associated to cubic sevenfolds (using, however, degenerations with 
worse-than-nodal singularities). 
It would be interesting to investigate more examples, in particular 
the list of noncommutative Calabi--Yau threefolds given in \cite[\S4.5]{kuznetsov2015calabi}. 

\begin{remark}
If $Y$ is a smooth \emph{strict} Calabi--Yau threefold in the sense that $\omega_Y \cong \cO_Y$ 
and~\mbox{$\rH^j(Y, \cO_Y) = 0$} for $j = 1,2$, then the HKR theorem shows 
that~$\HH^2(Y) \cong \rH^1(\rT_Y)$, so~$\Db(Y)$ has no noncommutative infinitesimal deformations. 
Thus to have a hope of connecting a noncommutative Calabi--Yau threefold to a geometric Calabi--Yau 
threefold, we should indeed allow more operations than deformations. 
\end{remark}


\appendix

\section{HPD results} 
\label{appendix} 

In this appendix we provide some material on semiorthogonal decompositions and HPD that is used in the body of the paper.
In~\S\ref{subsection:local-criterion} we establish a local criterion for an equivalence of $T$-linear categories.
In~\S\ref{subsection-HPD-projection} we describe the behavior of HPD under linear projections.

\subsection{A local criterion for an equivalence}
\label{subsection:local-criterion}

The main result of this subsection is the following proposition. 

\begin{proposition}
\label{proposition-equivalence-local} 
Let $\phi \colon \cC \to \cD$ be a $T$-linear functor and let~$\cA \subset \cC$ be a $T$-linear subcategory.
Assume that either $\phi$ has a left adjoint and $\cA$ is left admissible, or $\phi$ has a right adjoint and $\cA$ is right admissible.
Let also~$\cB \subset \cD$ be a $T$-linear subcategory which is either right or left admissible. 
Let ${U \to T}$ 
be an fpqc cover, and let~$\phi_{U} \colon \cC_{U} \to \cD_{U}$ 
denote the induced functor.  
Then $\phi$ induces an equivalence~$\cA \simeq \cB$ if and only if $\phi_{U}$ induces 
an equivalence $\cA_{U} \simeq \cB_{U}$. 
\end{proposition}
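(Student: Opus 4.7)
The plan is to prove the \emph{only if} direction by direct base change: tensoring a $T$-linear equivalence $\cA \simeq \cB$ induced by $\phi$ with $\Perf(U)$ over $\Perf(T)$ yields an equivalence $\cA_U \simeq \cB_U$ induced by $\phi_U$. The substantive content is the \emph{if} direction. The fundamental tool will be fpqc descent for $T$-linear categories: for any $T$-linear category $\cE$ one has $\cE \simeq \lim_{[n] \in \Delta} \cE_{U^{[n]}}$, where $U^{[n]}$ denotes the $(n+1)$-fold fiber product of $U$ over $T$. In particular, any mapping complex $\Map_\cE(E, E')$ is recovered from its base changes to the \v{C}ech nerve, so vanishing, orthogonality, and isomorphism of morphisms can all be tested fpqc-locally on $T$.

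First I would show that $\phi(\cA) \subset \cB$. Admissibility of $\cB \subset \cD$ means that $\cB$ is the right or left orthogonal of some subcategory $\cB^c \subset \cD$, so membership $\phi(A) \in \cB$ is equivalent to vanishing of all mapping complexes $\Map_\cD(E, \phi(A))$ (or $\Map_\cD(\phi(A), E)$) for $E \in \cB^c$. Each such complex is $T$-linear; its base change along $T' \to T$ computes the corresponding mapping complex in $\cD_{T'}$. Over $U$, the functor $\phi_U$ lands in $\cB_U$ by hypothesis and $E_U \in (\cB^c)_U$ is orthogonal to $\cB_U$, so the relevant mapping complexes vanish on every term of the \v{C}ech nerve. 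Descent then upgrades the vanishing to $T$.

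With Step~1 in hand, $\phi$ restricts to a $T$-linear functor $\psi \colon \cA \to \cB$. The adjoint assumption on $\phi$ combined with the admissibility of $\cA$ and $\cB$ furnishes an adjoint to $\psi$ (for instance, if $\cA$ is left admissible with projection $\alpha^*$, $\cB$ is left admissible with projection $\beta^*$, and $\phi^* \dashv \phi$, then $\psi = \beta^* \phi \alpha$ with left adjoint $\alpha^* \phi^* \beta$, and similarly in the other case). It remains to show that the unit and counit of this adjunction are isomorphisms. Both are natural transformations of $T$-linear functors, and pointwise being an isomorphism reduces to isomorphism of objects in $\cA$ or $\cB$, which in turn is tested on mapping complexes. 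These satisfy fpqc descent, so we can verify the unit and counit after base change to $U$, where $\psi_U$ is an equivalence by hypothesis.

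The main obstacle is bookkeeping: one must verify that in every combination of left/right admissibility flavors for $\cA$ and $\cB$ and every choice of adjoint for $\phi$, the resulting composition yields a correctly adjoined functor $\psi \colon \cA \to \cB$ to which the descent arguments apply, and that all adjunctions in play commute with base change along $U \to T$. Once the adjoints are identified, both steps reduce to the formal principle that membership in an admissible subcategory and being an isomorphism in a $T$-linear category are fpqc-local conditions on $T$.
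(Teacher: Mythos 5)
Your proposal follows essentially the same route as the paper's proof: first show that factoring $\phi$ through $\cB$ can be tested after base change to the cover (this is exactly the content of Lemma~\ref{lemma-factor-locally}), then reduce to checking that the unit and counit of the resulting adjunction between $\cA$ and $\cB$ are isomorphisms, which again descends from the cover (Lemma~\ref{lemma-object-vanish-locally} and Corollary~\ref{corollary-functor-vanish-locally}). The one point worth flagging is foundational: you invoke full fpqc descent for $T$-linear categories, $\cE \simeq \lim_{[n]} \cE_{U^{[n]}}$, as the basic tool, whereas the paper deliberately uses only the lighter statement that a mapping object $\cHom_T(C,C') \in \QCoh(T)$ vanishes if its pullback to an fpqc cover does (the K\"unneth formula plus conservativity of pullback on $\QCoh$). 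That weaker conservativity statement is all that is actually needed for your Steps 1 and 2, so if you wish to avoid establishing full categorical descent in this generality you can replace each appeal to ``descent upgrades the vanishing to $T$'' by the mapping-object argument of Lemma~\ref{lemma-object-vanish-locally}.
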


As we observe in Corollary~\ref{corollary-equivalence-lef-cat-local}, 
the proposition also implies a local criterion for a functor between Lefschetz categories to be a Lefschetz equivalence.

We build up some preliminary results before giving the proof. 
If $\cC$ is a $T$-linear category and $T' \to T$ is a morphism, 
we write $C \vert_{T'}$ for the image of $C \in \cC$ under the canonical functor~$\cC \to \cC_{T'}$ induced by pullback. 

\begin{lemma}
\label{lemma-object-vanish-locally}
Let $\cC$ be a $T$-linear category, and let $C \in \cC$. 
Let ${U \to T}$ be an fpqc cover. 
Then~$C \simeq 0$ if and only if $C \vert_{U} \simeq 0$. 
\end{lemma}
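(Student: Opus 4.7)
The plan is to prove the two directions separately; only one of them requires real content. The forward implication is immediate: the canonical functor $\cC \to \cC_U$ obtained by base change is $T$-linear, hence in particular exact, and therefore sends the zero object to the zero object. So if $C \simeq 0$ in $\cC$, then $C\vert_U \simeq 0$ in $\cC_U$.

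For the converse, the key input is fpqc descent for $T$-linear categories in the enhanced setting fixed in~\S\ref{subsection:conventions} (see \cite{bzfn, lurie-SAG, gaitsgory-DAG}), which says that the assignment $T' \mapsto \cC_{T'}$ is a sheaf of categories for the fpqc topology on schemes over $T$. Concretely, if $U^\bullet$ denotes the Čech nerve of the cover $U \to T$, then the canonical functor
\begin{equation*}
\cC \;\xrightarrow{\;\sim\;}\; \lim \bigl(\cC_{U} \rightrightarrows \cC_{U \times_T U} \Rrightarrow \cdots\bigr)
\end{equation*}
is an equivalence. In particular, an object of $\cC$ is zero if and only if its pullback to each $\cC_{U^n}$ is zero.

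First I would apply this descent equivalence to the given object $C$. Assuming $C\vert_U \simeq 0$, the restriction $C\vert_{U^n}$ along any $U^n \to U$ also vanishes, since the restriction functor $\cC_U \to \cC_{U^n}$ is $T$-linear and so preserves the zero object. It follows that the image of $C$ in the limit is the zero descent datum, and hence $C \simeq 0$.

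The only potential obstacle is invoking fpqc descent in the appropriate enhanced setting, but this is built into the framework of $T$-linear categories used throughout the paper and requires no additional argument. An entirely analogous (and slightly lower-technology) alternative is to use fpqc descent just for the mapping complex: $\Hom_\cC(C,C)$ is the limit of $\Hom_{\cC_{U^n}}(C\vert_{U^n}, C\vert_{U^n})$, and $\id_C$ maps to zero in each term since $C\vert_U \simeq 0$, whence $\id_C = 0$ and $C \simeq 0$.
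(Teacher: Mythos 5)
Your forward direction matches the paper, but your converse takes a heavier route than the paper does. You appeal to full fpqc descent for the sheaf of categories $T' \mapsto \cC_{T'}$; this is plausible but is a substantially stronger ingredient than what the paper's framework directly provides, and it is not obvious that the base change $\cC_{T'} = \cC \otimes_{\Perf(T)} \Perf(T')$ computes the sections of the associated sheaf of categories for an arbitrary $T$-linear $\cC$ (tensoring is a left adjoint and need not commute with the totalization of the \v{C}ech nerve). The paper instead uses a much lighter tool: the K\"unneth/base-change formula \cite[Lemma~2.10]{NCHPD} for the $\QCoh(T)$-valued mapping object, giving $\cHom_T(C,C)\vert_U \simeq \cHom_U(C\vert_U,C\vert_U) \simeq 0$, and then fpqc-local detection of vanishing for objects of $\QCoh(T)$, so $\cHom_T(C,C) \simeq 0$ and hence $C \simeq 0$. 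Your closing ``lower-technology alternative'' is close in spirit to this, but note the paper works with the relative mapping object in $\QCoh(T)$ rather than with the absolute $\Hom$-complex, which is precisely what lets it use a base-change isomorphism (not a descent/limit statement) plus the elementary fact that pullback along an fpqc cover is conservative on $\QCoh(T)$. If you want your argument to be self-contained in the paper's setting, I'd recommend replacing the categorical-descent appeal by the K\"unneth formula for $\cHom_T$, which is exactly what the paper cites.
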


\begin{proof}
The forward implication is obvious. 
Conversely, by the K\"{u}nneth formula in the form of \cite[Lemma~2.10]{NCHPD}, we have 
\begin{equation*}
\cHom_T(C,C)\vert_{U} \simeq \cHom_{U}(C\vert_{U}, C\vert_{U}) , 
\end{equation*} 
where $\cHom_T(C,C) \in \QCoh(T)$ is the mapping object defined in \cite[\S2.3.1]{NCHPD}. 
Hence if $C \vert_{U} \simeq 0$, we have $\cHom_T(C,C)\vert_{U} \simeq 0$. 
Then $\cHom_T(C,C) \simeq 0$ since the vanishing of an object in $\QCoh(T)$ can 
be checked fpqc locally, and therefore $C \simeq 0$. 
\end{proof} 

\begin{corollary}
\label{corollary-functor-vanish-locally} 
Let $\phi \colon \cC \to \cD$ be a $T$-linear functor. 
Let ${U \to T}$ be an fpqc cover. 
Then~$\phi \simeq 0$ if and only if $\phi_{U} \simeq 0$. 
\end{corollary}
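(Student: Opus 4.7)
The plan is to deduce this corollary directly from Lemma~\ref{lemma-object-vanish-locally} by applying it pointwise to the functor $\phi$.

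The forward implication is immediate: if $\phi \simeq 0$, then after base change along $U \to T$ we obtain $\phi_{U} \simeq 0$, since base change is a functorial operation on $T$-linear functors.

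For the reverse implication, I would use that the functor $\phi_{U} \colon \cC_{U} \to \cD_{U}$ is obtained from $\phi$ by base change, so for any object $C \in \cC$ there is a canonical identification
\begin{equation*}
\phi(C)\vert_{U} \simeq \phi_{U}(C\vert_{U})
\end{equation*}
in $\cD_{U}$. Assuming $\phi_{U} \simeq 0$, the right-hand side vanishes, so $\phi(C)\vert_{U} \simeq 0$ for every $C \in \cC$. Applying Lemma~\ref{lemma-object-vanish-locally} to the object $\phi(C) \in \cD$, we conclude $\phi(C) \simeq 0$ for every $C \in \cC$, and hence $\phi \simeq 0$ as a $T$-linear functor (in the enhanced setting of \cite[Appendix A]{categorical-joins}, a functor being zero is detected on objects, as the space of natural transformations between two functors has the structure of a mapping object whose vanishing reduces to pointwise vanishing).

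The only subtlety is making precise the last reduction from ``zero on all objects'' to ``zero as a functor''; this is straightforward in the framework the paper uses but is the one ingredient that is not an immediate application of Lemma~\ref{lemma-object-vanish-locally}. Alternatively, one can bypass this by treating $\phi$ itself as an object in a suitable functor category (with its natural quasi-coherent structure on $T$), and applying the same fpqc descent for vanishing of quasi-coherent sheaves that drives the proof of Lemma~\ref{lemma-object-vanish-locally}; this gives the corollary directly in one step without passing through objects of $\cC$.
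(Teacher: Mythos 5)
Your proof is correct and follows exactly the same route as the paper's: forward implication is immediate from functoriality of base change, and the converse uses the identification $\phi(C)\vert_{U} \simeq \phi_{U}(C\vert_{U})$ together with Lemma~\ref{lemma-object-vanish-locally} applied to $\phi(C)$. The "subtlety" you flag (passing from $\phi(C)\simeq 0$ for all $C$ to $\phi\simeq 0$) is not really one in the stable setting the paper works in — an exact functor vanishing on every object admits an essentially unique natural equivalence to the zero functor — and the paper simply treats this reduction as its goal without comment.
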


\begin{proof}
The forward implication is obvious. 
Conversely, we must show that $\phi(C) \simeq 0$ for all~$C \in \cC$ if $\phi_{U} \simeq 0$.  
For this, just note that $\phi(C)\vert_{U} \simeq \phi_{U}(C\vert_{U})$ and 
apply Lemma~\ref{lemma-object-vanish-locally}. 
\end{proof} 

\begin{lemma} 
\label{lemma-factor-locally} 
Let $\phi \colon \cC \to \cD$ be a $T$-linear functor. 
Let $\cB \subset \cD$ be a $T$-linear subcategory which is left or right admissible. 
Let ${U \to T}$ be an fpqc cover. 
Then $\phi$ factors through the inclusion $\cB \subset \cD$ if and only if $\phi_{U}$ 
factors through the inclusion $\cB_{U} \subset \cD_{U}$.  
\end{lemma}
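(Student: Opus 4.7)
The plan is to reduce the statement to Corollary~\ref{corollary-functor-vanish-locally}. The forward implication is automatic, so the content lies in the converse. The key observation is that membership of an object in an admissible subcategory is detected by the vanishing of a canonical projection to the orthogonal complement, and this construction is compatible with base change.

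First I would treat the right admissible case, the left admissible case being entirely analogous by duality. Since $\cB \subset \cD$ is right admissible, the inclusion $i \colon \cB \hookrightarrow \cD$ admits a right adjoint $i^!$, and there is a semiorthogonal decomposition $\cD = \langle \cB^\perp, \cB \rangle$ with $\cB^\perp = \ker(i^!)$. For any $D \in \cD$, the counit $i i^! D \to D$ fits into an exact triangle whose cone lies in $\cB^\perp$, and $D$ lies in $\cB$ if and only if this cone vanishes. Applying this pointwise, I would define the $T$-linear functor
\begin{equation*}
\psi = \mathrm{cone}(i \circ i^! \circ \phi \to \phi) \colon \cC \to \cD,
\end{equation*}
which takes values in $\cB^\perp$, and note that $\phi$ factors through $\cB$ if and only if $\psi \simeq 0$.

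Next I would invoke the compatibility of admissibility with base change: since $\cB \subset \cD$ is right admissible, so is $\cB_U \subset \cD_U$, and the formation of the right adjoint $i^!$ commutes with base change along $U \to T$. Consequently $\psi_U$ is naturally identified with $\mathrm{cone}(i_U \circ (i_U)^! \circ \phi_U \to \phi_U)$, so that $\phi_U$ factors through $\cB_U$ if and only if $\psi_U \simeq 0$. By Corollary~\ref{corollary-functor-vanish-locally}, $\psi \simeq 0$ if and only if $\psi_U \simeq 0$, which yields the desired equivalence.

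The main potential obstacle is ensuring that admissibility and the construction of $i^!$ behave well under fpqc base change in this categorical framework; this is a standard feature of the $T$-linear setup used throughout the paper, following from the general base change results for adjoints recalled in \cite{NCHPD}.
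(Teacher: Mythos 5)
Your proof is correct and follows essentially the same strategy as the paper: reduce the factorization question to the vanishing of an auxiliary functor landing in the orthogonal complement, then apply Corollary~\ref{corollary-functor-vanish-locally}. The only cosmetic difference is that you build the projection to $\cB^\perp$ as the cone of the counit $i\,i^! \phi \to \phi$, whereas the paper composes $\phi$ directly with the adjoint of the inclusion of the orthogonal complement (in the left admissible case, the right adjoint $j^!$ of $j\colon {}^\perp\cB\hookrightarrow\cD$, whose kernel is exactly $\cB$); since $\mathrm{cone}(i\,i^! \to \mathrm{id}) \simeq j\,j^*$ for the complementary inclusion $j$, the two constructions compute the same thing, and the base-change compatibility you invoke is the same one the paper cites from \cite[Lemma~3.15]{NCHPD}.
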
 

\begin{proof}
We consider the case where $\cB$ is left admissible; the right admissible case is similar. 
Since $\cB$ is left admissible, its left orthogonal ${}^\perp\cB$ is right admissible,
hence its inclusion functor~$j \colon {^\perp}\cB \to \cD$ has a right adjoint $j^!$ whose kernel is $\cB$.
Therefore $\phi$ factors through $\cB \subset \cD$ 
if and only if the composition $j^! \circ \phi$ vanishes. 
By Corollary~\ref{corollary-functor-vanish-locally}, this composition vanishes if and only if its base 
change to $U$ vanishes. 
But this base change identifies with $j_{U}^! \circ \phi_{U}$
where~$j^!_{U}$ is the right adjoint to the inclusion ${^{\perp}}\cB_{U} \subset \cD_{U}$ (see~\cite[Lemma~3.15]{NCHPD}), 
and hence vanishes if and only if $\phi_{U}$ factors through $\cB_{U} \subset \cD_{U}$.
\end{proof}

\begin{proof}[Proof of Proposition~\textup{\ref{proposition-equivalence-local}}]
We consider the left adjoints case of the proposition; the right adjoints case is similar. 
First assume $\cA = \cC$ and $\cB = \cD$. 
Note that a functor with a left adjoint is an equivalence if and only if 
the cones of the unit and counit of the adjunction vanish. 
If~$\psi$ denotes the cone of the unit or counit for the adjoint pair $(\phi, \phi^*)$, 
then $\psi_{U}$ is the cone 
of the unit or counit for the adjoint pair $(\phi_{U}, \phi^*_{U})$ 
(cf.~\cite[Lemma 2.12]{NCHPD} or~\cite[\S2.6]{kuznetsov2006hyperplane}).  
Hence applying Corollary~\ref{corollary-functor-vanish-locally} proves the lemma in this case. 

Now consider the case of general $\cA$ and $\cB$. 
Denote by $\alpha \colon \cA \to \cC$ and $\beta \colon \cB \to \cD$ the inclusions. 
If $\phi_{U}$ induces an equivalence $\cA_{U} \simeq \cB_{U}$, then by Lemma~\ref{lemma-factor-locally} 
the composition of functors $\phi \circ \alpha \colon \cA \to \cD$ factors through $\cB \subset \cD$, i.e. 
there is a functor $\phi_{\cA} \colon \cA \to \cB$ 
such that~$\phi \circ \alpha = \beta \circ \phi_{\cA}$. 
We want to show $\phi_{\cA}$ is an equivalence. 
But $\phi_{\cA}$ admits a left adjoint, namely $\alpha^* \circ \phi^* \circ \beta$, 
and $(\phi_{\cA})_{U} \colon \cA_{U} \to \cB_{U}$ is an equivalence, 
so we conclude by the case handled above. 
\end{proof}

Let $S' \to S$ be a morphism of schemes, and let~$V_{S'}$ denote the pullback 
of a vector bundle~$V$ on~$S$ to~$S'$. 
Then if $\cA$ is a Lefschetz category over $\bP(V)$, the base change~$\cA_{S'}$ 
is naturally a Lefschetz category over $\bP(V_{S'})$ with 
Lefschetz center given by the base change~\mbox{$(\cA_0)_{S'} \subset \cA_{S'}$}. 
This follows from a combination of \cite[Lemma 2.4]{categorical-joins} and 
\cite[Lemmas 3.15 and 3.17]{NCHPD}. 
Proposition~\ref{proposition-equivalence-local} then implies the following.

\begin{corollary}
\label{corollary-equivalence-lef-cat-local} 
Let $\cA$ and $\cB$ be Lefschetz categories over $\bP(V)$. 
Let $\phi \colon \cA \to \cB$ be a $\bP(V)$-linear functor which admits a left or right adjoint. 
Let ${U \to S}$ be an fpqc cover of $S$, and let~$\phi_{U} \colon \cA_{U} \to \cB_{U}$ denote 
the induced functor.  
Then $\phi$ is an equivalence of Lefschetz categories over $\bP(V)$ if and only if $\phi_{U}$ is 
an equivalence of Lefschetz categories over $\bP(V_{U})$. 
\end{corollary}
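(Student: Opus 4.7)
The plan is to reduce the corollary to two applications of Proposition~\ref{proposition-equivalence-local}, one for each of the two defining properties of a Lefschetz equivalence (Definition~\ref{definition-lc}): $\phi$ should be a $\bP(V)$-linear equivalence $\cA \simeq \cB$, and it should restrict to an $S$-linear equivalence $\cA_0 \simeq \cB_0$ of the Lefschetz centers. The ``only if'' direction is immediate from base change of $T$-linear categories, so only the ``if'' direction requires argument, and throughout the argument the assumption that $\phi$ admits a left or right adjoint is what lets us invoke Proposition~\ref{proposition-equivalence-local}.

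For the first property, I would apply Proposition~\ref{proposition-equivalence-local} with $T = \bP(V)$, $\cC = \cA$, $\cD = \cB$, and the subcategories taken to be the full ambient categories (trivially admissible). The relevant fpqc cover is $\bP(V_{U}) \to \bP(V)$, which is fpqc as a base change of $U \to S$. The base change of $\cA$ along this cover agrees with $\cA_{U}$ via the canonical identification
\begin{equation*}
\cA \otimes_{\Perf(\bP(V))} \Perf(\bP(V_{U})) \simeq \cA \otimes_{\Perf(S)} \Perf(U),
\end{equation*}
and similarly for $\cB$. The assumption that $\phi_{U}$ is a $\bP(V_{U})$-linear equivalence thus feeds directly into Proposition~\ref{proposition-equivalence-local}, yielding that $\phi$ is a $\bP(V)$-linear equivalence.

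For the second property, I would apply Proposition~\ref{proposition-equivalence-local} with $T = S$, viewing $\cA$, $\cB$, and $\phi$ as $S$-linear via restriction of their $\bP(V)$-linear structures along the structure morphism $\bP(V) \to S$. The subcategories are the Lefschetz centers $\cA_0 \subset \cA$ and $\cB_0 \subset \cB$, which are $S$-linear and admissible by definition, and the fpqc cover is $U \to S$ itself. Since $\phi_{U}$ is assumed to be a Lefschetz equivalence, it induces an equivalence $(\cA_0)_{U} \simeq (\cB_0)_{U}$; Proposition~\ref{proposition-equivalence-local} then yields that $\phi$ induces an equivalence $\cA_0 \simeq \cB_0$.

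The main technical point I expect will be checking that the hypotheses of Proposition~\ref{proposition-equivalence-local} — in particular, the admissibility of the chosen subcategories after base change and the existence of adjoints — are preserved under the two different base-change operations. These compatibilities should follow from standard facts on base change of linear categories such as \cite[Lemmas 2.12, 3.15, and 3.17]{NCHPD}, together with the given hypothesis that $\phi$ admits an adjoint; no deeper obstacle appears.
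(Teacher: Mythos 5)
Your proof is correct and takes essentially the same approach as the paper, which simply states that the corollary follows from Proposition~\ref{proposition-equivalence-local} together with the observation (made just before the corollary) that base change of a Lefschetz category is again a Lefschetz category with center the base change of the center. Your decomposition into two applications of the proposition — one with the full categories as subcategories to get the ambient equivalence, and one with the Lefschetz centers to get the equivalence of centers — is exactly the intended unwinding of that one-line reduction.
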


The following related result is useful for establishing the existence of a semiorthogonal 
decomposition, by reduction to a local situation. 
\begin{lemma}
\label{lemma-sod-local}
Let $\cC$ be a $T$-linear category, and let $\cA_1, \dots, \cA_n \subset \cC$ be a 
sequence of right or left admissible $T$-linear subcategories. 
Let $U \to T$ be an fpqc cover. 
Then $\cC = \langle \cA_1, \dots, \cA_n \rangle$ 
if and only if $\cC_{U} = \langle \cA_{1U}, \dots, \cA_{nU} \rangle$. 
\end{lemma}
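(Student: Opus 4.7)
The forward implication is the standard base-change compatibility of semiorthogonal decompositions: admissibility is preserved by base change by \cite[Lemma~3.15]{NCHPD}, and semiorthogonality follows from the K\"unneth formula \cite[Lemma~2.10]{NCHPD}.

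For the reverse direction, I would first establish semiorthogonality of $\cA_1, \dots, \cA_n$ in $\cC$. Given $i > j$ and objects $A_i \in \cA_i$, $A_j \in \cA_j$, the K\"unneth formula yields $\cHom_T(A_i, A_j)\vert_U \simeq \cHom_U(A_i\vert_U, A_j\vert_U)$, and the right side vanishes by semiorthogonality of the base-changed decomposition. Since fpqc-local vanishing of a quasi-coherent sheaf on $T$ implies its global vanishing, $\cHom_T(A_i, A_j) = 0$, hence $\Hom(A_i, A_j) = 0$.

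With semiorthogonality in hand, and using the assumed admissibility of each $\cA_i$, the subcategory $\cA := \langle \cA_1, \dots, \cA_n \rangle \subset \cC$ is itself admissible (right or left, matching the hypothesis), so $\cC$ admits a semiorthogonal decomposition with $\cA$ and its orthogonal complement as the pieces. By the already-established forward direction applied to $\cA$, we have $\cA_U = \langle \cA_{1U}, \dots, \cA_{nU} \rangle$, which coincides with $\cC_U$ by assumption. Consequently the orthogonal complement $\cA^\perp \subset \cC$ satisfies $(\cA^\perp)_U = (\cA_U)^\perp = 0$ (again by \cite[Lemma~3.15]{NCHPD}), and any $C \in \cA^\perp$ has $C\vert_U = 0$, so $C = 0$ by Lemma~\ref{lemma-object-vanish-locally}. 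Thus $\cA^\perp = 0$ and $\cC = \cA$. One could alternatively package this generation step by applying Proposition~\ref{proposition-equivalence-local} to the inclusion $\iota \colon \cA \to \cC$ (which admits adjoints by admissibility of $\cA$), whose base change to $U$ is an equivalence by hypothesis.

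The main potential obstacle is ensuring that base changes and orthogonals interact properly: formation of orthogonals and admissibility must commute with fpqc base change, which is exactly the content of \cite[Lemma~3.15]{NCHPD} combined with the K\"unneth formula. Given these compatibilities, both semiorthogonality and generation reduce cleanly to fpqc-local vanishing statements, ultimately resting on Lemma~\ref{lemma-object-vanish-locally}.
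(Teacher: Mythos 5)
Your proof is correct and follows essentially the same route as the paper: establish semiorthogonality via fpqc-local vanishing of mapping objects (the argument of Lemma~\ref{lemma-object-vanish-locally}), use admissibility to form the orthogonal complement $\cD$, and conclude $\cD = 0$ because $\cD_U = 0$. The alternative packaging you mention via Proposition~\ref{proposition-equivalence-local} is a nice observation but not needed.
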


\begin{proof}
The forward implication holds by \cite[Lemma 3.15]{NCHPD}. 
Conversely, assume we have a semiorthogonal decomposition 
$\cC_{U} = \langle \cA_{1U}, \dots, \cA_{nU} \rangle$. 
Then the argument of Lemma~\ref{lemma-object-vanish-locally} shows that the 
categories $\cA_i \subset \cC$ are semiorthogonal. 
Assume the categories $\cA_i \subset \cC$ are right admissible (a similar argument works in the 
left admissible case). 
Then setting  
$\cD = \langle \cA_1, \dots, \cA_n \rangle {^\perp}$ we have a 
semiorthogonal decomposition $\cC = \langle \cD, \cA_1, \dots, \cA_n \rangle$. 
But $\cD = 0$ by Lemma~\ref{lemma-object-vanish-locally}. 
\end{proof}

\subsection{HPD over quotients and subbundles} 
\label{subsection-HPD-projection} 

Given a surjective morphism $\tV \to V$ of vector bundles with kernel $K$, 
we consider the corresponding rational map $\bP(\tV) \dashrightarrow \bP(V)$
and denote by~$U = \bP(\tV) \setminus \bP(K) \subset \bP(\tV)$ the open subset on which it is regular.
If $\cA$ is a~$\bP(\tV)$-linear category supported over $U$ (i.e. if the restriction functor $\cA \to \cA_U$ is an equivalence), 
then it inherits a natural $\bP(V)$-linear structure via the linear projection map. 
In this situation, we can ask for a relation between HPD with respect to the two linear structures on $\cA$. 
Before answering this, we make some preliminary observations. 

\begin{definition}
\label{definition-base-extension} 
Let $\cC$ be a $T$-linear category, and let $T \to T'$ be a morphism of schemes.  
We write $\cC/T'$ for $\cC$ regarded as a $T'$-linear category via the pullback functor 
$\Perf(T') \to \Perf(T)$, and say $\cC/T'$ is obtained from $\cC$ by \emph{extending the base scheme} 
along $T \to T'$. 
\end{definition}

\begin{remark}
\label{remark-base-extension}
If $\cA$ is a Lefschetz category over $\bP(V)$ and $V \to V'$ is an embedding of vector bundles, 
then the category $\cA/\bP(V')$ is naturally a Lefschetz category over $\bP(V')$, with the same 
center. Moreover, this operation preserves (right or left) strongness and moderateness of Lefschetz 
categories. 
\end{remark}  

\begin{lemma}
\label{lemma-C-support-fiber} 
Let $T$ be a scheme and let $U \subset T$ be an open subscheme. 
Let $\cC$ be a $T$-linear category which is supported over $U$. 
Then for any $T$-linear category $\cD$, there is a canonical $T$-linear 
equivalence 
\begin{equation*}
\cC \otimes_{\Perf(T)} \cD \simeq \cC \otimes_{\Perf(T)} \cD_U. 
\end{equation*}
\end{lemma}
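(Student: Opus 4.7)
The plan is to deduce the equivalence essentially formally from the definitions by unwinding what it means for $\cC$ to be supported over $U$ and then applying associativity of the tensor product of $T$-linear categories. The main (quite mild) work is to verify that the associativity isomorphism behaves well with respect to the restriction functor.

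First, by the assumption that $\cC$ is supported over $U$, the restriction functor
\begin{equation*}
\cC \to \cC_U = \cC \otimes_{\Perf(T)} \Perf(U)
\end{equation*}
induced by pullback along $U \hookrightarrow T$ is a $T$-linear equivalence. Tensoring both sides by $\cD$ over $\Perf(T)$ thus gives a $T$-linear equivalence
\begin{equation*}
\cC \otimes_{\Perf(T)} \cD \simeq \bigl( \cC \otimes_{\Perf(T)} \Perf(U) \bigr) \otimes_{\Perf(T)} \cD.
\end{equation*}

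Next, I would apply associativity of tensor products of linear categories to rewrite the right-hand side as
\begin{equation*}
\cC \otimes_{\Perf(T)} \bigl( \Perf(U) \otimes_{\Perf(T)} \cD \bigr) \simeq \cC \otimes_{\Perf(T)} \cD_U,
\end{equation*}
where the final identification is just the definition of $\cD_U$. Composing the two equivalences yields the claim. The resulting functor is $T$-linear because all of the intermediate equivalences are, and its naturality is automatic from the functoriality of base change and of the tensor product.

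The only point that requires a small check (rather than being purely formal) is associativity of $\otimes_{\Perf(T)}$; this is a standard property of the symmetric monoidal structure on $T$-linear categories (see e.g.\ the review in~\cite[\S1.6 and Appendix~A]{categorical-joins}), and in the present setting it holds because $\Perf(U)$ is a commutative algebra object in $T$-linear categories. No subtler input is needed, and in particular no hypothesis on $\cD$ beyond $T$-linearity is required; the asymmetry of the statement comes entirely from the support hypothesis imposed on $\cC$.
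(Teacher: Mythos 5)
Your proof is correct and follows essentially the same route as the paper's: both exploit the equivalence $\cC \simeq \cC_U$ coming from the support hypothesis, then rebracket the triple tensor product $\cC \otimes_{\Perf(T)} \Perf(U) \otimes_{\Perf(T)} \cD$ by associativity to identify it with $\cC \otimes_{\Perf(T)} \cD_U$. The paper presents this as a bare chain of equivalences; you supply the same steps with slightly more commentary on why each is valid, but the content is identical.
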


\begin{proof}
We have equivalences 
\begin{equation*}
\cC \otimes_{\Perf(T)} \cD \simeq 
\cC_U \otimes_{\Perf(T)} \cD \simeq 
\cC \otimes_{\Perf(T)} \Perf(U) \otimes_{\Perf(T)} \cD  \simeq 
\cC \otimes_{\Perf(T)} \cD_U.  \qedhere
\end{equation*}
\end{proof}

Now we can answer the question posed above about HPD under linear projection. 
Note that the surjection $\tV \to V$ induces an embedding of bundles $\vV \to \tV^{\svee}$, so that $\bP(\vV) \subset \bP(\tV^{\svee})$.

\begin{proposition}
\label{proposition-HPD-projection}
Let $\cA$ be a Lefschetz category over $\bP(\tV)$ with center $\cA_0$.
Assume $\tV \to V$ is a surjection of vector bundles with kernel $K$ such that $\cA$ is supported over $\bP(\tV) \setminus \bP(K)$. 
Then $\cA$ has the structure of a Lefschetz category over $\bP(V)$ 
\textup(with the same center $\cA_0$\textup), 
and there is a~$\bP(\vV)$-linear equivalence 
\begin{equation*}
(\cA/{\bP(V)})^{\hpd} \simeq (\cA/\bP(\tV))^{\hpd} \otimes_{\Perf(\bP(\tV^{\svee}))} \Perf(\bP(\vV)). 
\end{equation*}
\end{proposition}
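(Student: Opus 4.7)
The plan is to first establish the $\bP(V)$-linear Lefschetz structure on $\cA$, then identify $\bH(\cA/\bP(V))$ as the base change of $\bH(\cA/\bP(\tV))$ along $\bP(\vV) \hookrightarrow \bP(\tV^{\svee})$, and finally deduce the HPD equivalence from a base change of the defining semiorthogonal decomposition. For the first step, since $\cA$ is supported over $U := \bP(\tV) \setminus \bP(K)$, we have $\cA \simeq \cA_U$, and the linear projection $\pi_V \colon U \to \bP(V)$ equips $\cA$ with a $\bP(V)$-linear structure. The composition $V^{\svee} \otimes \cO_U \hookrightarrow \tV^{\svee} \otimes \cO_U \twoheadrightarrow \cO_{\bP(\tV)}(1)\vert_U$ is surjective on $U$ (this is exactly where $\pi_V$ is regular), yielding a canonical isomorphism $\cO_{\bP(\tV)}(1)\vert_U \simeq \pi_V^*\cO_{\bP(V)}(1)$. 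Consequently the relative hyperplane class of $\cA$ is the same under both linear structures, so the given Lefschetz decompositions of $\cA$ with center $\cA_0$ also serve as Lefschetz decompositions over $\bP(V)$.

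For Step 2, unwinding definitions reduces the desired equivalence
\[
\bH(\cA/\bP(\tV)) \otimes_{\Perf(\bP(\tV^{\svee}))} \Perf(\bP(\vV)) \simeq \bH(\cA/\bP(V))
\]
to an analysis of the derived fiber product $Z := \bH(\bP(\tV)) \times_{\bP(\tV^{\svee})} \bP(\vV) \subset \bP(\tV) \times \bP(\vV)$. Because the incidence pairing $\tV \otimes \vV \to \cO_S$ factors through $V \otimes \vV$ (as $\vV = K^\perp$), the scheme $Z$ has two classical components: the base-locus piece $\bP(K) \times \bP(\vV)$, on which the pairing vanishes identically, and a proper transform which over $U$ reduces to $\bH(U/\bP(V)) := U \times_{\bP(V)} \bH(\bP(V))$. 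Since $\cA$ is supported over $U$, Lemma~\ref{lemma-C-support-fiber} allows us to replace $Z$ by its restriction to $U \times \bP(\vV)$, discarding the base-locus component; moreover $\pi_V$ is flat on $U$ and $\bH(\bP(V))$ is a Cartier divisor, so the derived restriction agrees with the classical divisor $\bH(U/\bP(V))$. A chain of base-change identities using associativity of tensor product and the support of $\cA$ then yields $\cA \otimes_{\Perf(\bP(\tV))} \Perf(\bH(U/\bP(V))) \simeq \cA \otimes_{\Perf(\bP(V))} \Perf(\bH(\bP(V))) = \bH(\cA/\bP(V))$.

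For Step 3, apply the base change $(-) \otimes_{\Perf(\bP(\tV^{\svee}))} \Perf(\bP(\vV))$ to the defining decomposition~\eqref{HC-sod}
\[
\bH(\cA/\bP(\tV)) = \llangle (\cA/\bP(\tV))^{\hpd},\ \delta^*(\cA_1(H) \sotimes \Perf(\bP(\tV^{\svee}))),\ \dots,\ \delta^*(\cA_{m-1}((m-1)H) \sotimes \Perf(\bP(\tV^{\svee}))) \rrangle.
\]
Base change preserves admissible semiorthogonal decompositions, and by functoriality together with Step~2 the ambient components $\delta^*(\cA_i(H) \sotimes \Perf(\bP(\tV^{\svee})))$ are sent to $\delta'^*(\cA_i(H) \sotimes \Perf(\bP(\vV)))$, where $\delta'$ is the incidence map for $\bP(V)$. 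This exhibits $(\cA/\bP(\tV))^{\hpd} \otimes_{\Perf(\bP(\tV^{\svee}))} \Perf(\bP(\vV))$ as the first component of the defining decomposition of $(\cA/\bP(V))^{\hpd}$, which yields the desired equivalence.

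The main obstacle lies in Step~2: the derived fiber product $Z$ is genuinely reducible whenever $K \neq 0$ and fails to be Tor-independent, because of the excess component $\bP(K) \times \bP(\vV)$. The entire purpose of the support hypothesis on $\cA$ is to kill this extraneous component, so that after restricting to $U$ one is left with an honest flat pullback of a Cartier divisor and the classical and derived pictures agree. Without the support hypothesis the statement would fail, and a precise bookkeeping of which linear structures are used when tensoring $\cA$ against $\Perf$ of the various schemes in sight will be needed to carry out the argument cleanly.
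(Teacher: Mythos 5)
Your overall strategy matches the paper's: establish the $\bP(V)$-linear Lefschetz structure from the support hypothesis and the observation that $\cO(H)$ and $\cO(\tilde{H})$ agree on $U$, identify the two universal hyperplane sections via an isomorphism of schemes over $U$, and deduce the HPD equivalence by base-changing the defining semiorthogonal decomposition \eqref{HC-sod}. The chain of equivalences in Step~2 and the conclusion in Step~3 are essentially what the paper does.

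However, the geometric picture you paint in Step~2 and especially in your final paragraph is wrong. The fiber product $Z = \bH(\bP(\tV)) \times_{\bP(\tV^{\svee})} \bP(\vV)$ is \emph{not} reducible for $\rank V \geq 2$, and it \emph{is} Tor-independent: the morphism $h \colon \bH(\bP(\tV)) \to \bP(\tV^{\svee})$ is a projective bundle (hence flat), so derived and underived fiber products agree, and $Z$ is itself a $\bP^{\rank\tV - 2}$-bundle over $\bP(\vV)$, in particular irreducible. The locus $\bP(K) \times \bP(\vV)$ is \emph{contained} in $Z$ (the pairing does vanish there), but it has codimension $\rank V - 1$ inside $Z$, so it is not a component of $Z$, let alone an "excess component." The actual role of the support hypothesis is different from what you describe: it is what makes the $\bP(V)$-linear structure on $\cA$ exist in the first place (the linear projection $\bP(\tV) \dashrightarrow \bP(V)$ is only defined on $U$), and it is what justifies the application of Lemma~\ref{lemma-C-support-fiber} to replace $Z$ by $Z|_U \cong U \times_{\bP(V)} \bH(\bP(V))$ in the tensor product against $\cA$. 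Your displayed chain of equivalences does not actually depend on the false claims about reducibility and failure of Tor-independence, so the proof goes through; but the explanatory narrative of what the support hypothesis is "fixing" is incorrect and should be revised.
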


\begin{remark}
The proposition can be generalized to the case where $\cA$ is not assumed to be supported over $\bP(\tV) \setminus \bP(K)$, 
by working with a suitable ``blowup'' of $\cA$. 
In the situation where $\cA$ is geometric, this is the main result of~\cite{carocci2015homological}; 
for general Lefschetz categories, see \cite[Proposition 7.1]{categorical-joins}. 
For convenience, we supply the proof in the simpler case needed in the paper. 
\end{remark}

\begin{proof}
Let $U = \bP(\tV) \setminus \bP(K)$. 
Then by the support assumption, $\cA$ has a $U$-linear structure such that 
the $\bP(\tV)$-linear structure is induced by pullback along $U \to \bP(\tV)$. 
Via the morphism~\mbox{$U \to \bP(V)$} given by linear projection, $\cA$ also carries a $\bP(V)$-linear structure. 
Let $H$ and~$\tilde{H}$ denote the relative hyperplane classes on $\bP(V)$ and $\bP(\tV)$.
Note that $\cO(H)$ and $\cO(\tilde{H})$ both pull back to the same object of $\Perf(U)$, and hence their actions on $\cA$ coincide. 
From this, it follows that the given Lefschetz center $\cA_0 \subset \cA$ is 
also a Lefschetz center with respect to the~$\bP(V)$-linear structure
{with the same Lefschetz components}. 

Consider the induced embedding $\vV \hookrightarrow \tV^{\svee}$.
There is a canonical isomorphism 
\begin{equation}
\label{hyperplane-V-VW}
U \times_{\bP(\tV)} \bH(\bP(\tV)) \times_{\bP(\tV^{\svee})} \bP(\vV) \cong 
U \times_{\bP(V)} \bH(\bP(V)) . 
\end{equation}
Using this, we deduce 
\begin{align*}
\bH(\cA/\bP(\tV)) &\otimes_{\Perf(\bP(\tV^{\svee}))} \Perf(\bP(\vV)) 
\\
& = \cA \otimes_{\Perf(\bP(\tV))} \Perf(\bH(\bP(\tV))) \otimes_{\Perf(\bP(\tV^{\svee}))} \Perf(\bP(\vV))
\\
& \simeq \cA \otimes_{\Perf(\bP(\tV))} \Perf(U) \otimes_{\Perf(\bP(\tV))} \Perf(\bH(\bP(\tV))) \otimes_{\Perf(\bP(\tV^{\svee}))} \Perf(\bP(\vV))
\\
& \simeq \cA \otimes_{\Perf(\bP(\tV))} \Perf\left(U \times_{\bP(\tV)} \bH(\bP(\tV)) \times_{\bP(\tV^{\svee})} \bP(\vV) \right) \\ 
& \simeq \cA \otimes_{\Perf(\bP(\tV))} \Perf \left(U \times_{\bP(V)} \bH(\bP(V)) \right) \\ 
& \simeq \cA \otimes_{\Perf(\bP(V))} \Perf(\bH(\bP(V))) \\ 
& = \bH(\cA/\bP(V)) .
\end{align*}
Indeed, the second line holds by definition of $\bH(\cA/\bP(\tV))$,
the third and the sixth follow from the fact that $\cA$ is supported over~$U$ 
(see Lemma~\ref{lemma-C-support-fiber}), 
the fourth holds by \cite[Theorem~1.2]{bzfn}, 
the fifth holds by~\eqref{hyperplane-V-VW}, and the last holds by definition. 
Using the semiorthogonal decomposition \eqref{HC-sod} defining the HPD category, 
it is easy to check
that this equivalence induces an equivalence between the subcategories 
\begin{align*}
(\cA/\bP(\tV))^{\hpd} \otimes_{\Perf(\bP(\tV^{\svee}))} \Perf(\bP(\vV)) & \subset 
\bH(\cA/\bP(\tV)) \otimes_{\Perf(\bP(\tV^{\svee}))} \Perf(\bP(\vV)) 
\intertext{and}
(\cA/{\bP(V)})^{\hpd} & \subset \bH(\cA/\bP(V)).  
\end{align*}
This completes the proof.
\end{proof}

\begin{remark} 
\label{remark-HPD-projection}
In the situation of Proposition~\ref{proposition-HPD-projection}, 
note that we have $K = (\vV)^\perp$ and $\cA \otimes_{\Perf(\bP(\tV))} \Perf(\bP(K)) = 0$ by the support assumption for $\cA$. 
Assume that $\cA$ is right strong and moderate as a Lefschetz category over $\bP(V)$ (and hence also over $\bP(\tV)$). 
Then~\cite[Theorem 8.7]{NCHPD} implies there is a semiorthogonal decomposition 
\begin{equation*}
(\cA/\bP(\tV))^{\hpd} \otimes_{\Perf(\bP(\tV^{\svee}))} \Perf(\bP(\vV)) = 
\llangle 
\cAd_{1-n}((1+r-n)H') , \dots, \cAd_{-r} \rrangle,
\end{equation*}
where $n = \length(\cAd)$ and $r = \rank(K)$.
This provides the left side with a Lefschetz structure of length $n - r$ and center $\cAd_{-r}$, 
with respect to which the equivalence of Proposition~\ref{proposition-HPD-projection} is a Lefschetz equivalence. 
We also note that $\cAd_{-r} = \cAd_0$. 
To explain this, we use the equivalence $\cA \simeq {^\hpd}(\cAd)$, see Remark~\ref{remark:left-hpd}.
By (the left HPD version of) \cite[Theorem 8.7(1)]{NCHPD} we have 
\begin{equation*}
\length(\cA) = \rank(\tV) - \# \{ \, i \leq 0 \mid \cAd_i = \cAd_0 \, \}. 
\end{equation*} 
On the other hand, by moderateness of $\cA$ over $\bP(V)$ we also have 
\begin{equation*}
\length(\cA) < \rank(V) = \rank(\tV) - r . 
\end{equation*} 
Hence $\# \{ \, i \leq 0 \mid \cAd_i = \cAd_0 \, \} > r$.  
\end{remark}


\providecommand{\bysame}{\leavevmode\hbox to3em{\hrulefill}\thinspace}
\providecommand{\MR}{\relax\ifhmode\unskip\space\fi MR }
\providecommand{\MRhref}[2]{%
  \href{http://www.ams.org/mathscinet-getitem?mr=#1}{#2}
}
\providecommand{\href}[2]{#2}


\end{document}